\titleformat{\section}{\large\bfseries\filcenter}{\thesection}{1em}{}
\titleformat{\subsection}{\bfseries}{\thesubsection}{1em}{}
\patchcmd{\ttlh@hang}{\parindent\z@}{\parindent\z@\leavevmode}{}{}
\patchcmd{\ttlh@hang}{\noindent}{}{}{}
\newtheorem{thm}{Theorem}[section]
\newtheorem{cor}[thm]{Corollary}
\newtheorem{lemma}[thm]{Lemma}
\newtheorem{prop}[thm]{Proposition}
\newtheorem{ass}[thm]{Assumption}
\newtheorem{conj}[thm]{Conjecture}
\theoremstyle{definition}
\newtheorem{defn}[thm]{Definition}
\theoremstyle{remark}
\theoremstyle{definition}
\newtheorem{rmk}[thm]{Remark}
\newtheorem{remark}[thm]{Remark}
\theoremstyle{example}
\newtheorem{exa}[thm]{Example}
\numberwithin{equation}{section}
\def\beq{\begin{equation}}
\def\eeq{\end{equation}}
\def\beqn{\begin{equation*}}
\def\eeqn{\end{equation*}}
\def\ben{\begin{enumerate}}
\def\een{\end{enumerate}}
\renewcommand\thanks[1]{%
  \begingroup
  \renewcommand\thefootnote{}\footnote{#1}%
  \addtocounter{footnote}{-1}%
  \endgroup
}
\renewcommand{\tilde}{\widetilde}
\renewcommand{\epsilon}{{\varepsilon}}
\def\0{{(0,0)}}
\def\C{{\mathbb C}}
\def\L{{\mathbb L}}
\def\Q{{\mathbb Q}}
\def\N{{\mathbb N}}
\renewcommand\P{{\mathbb P}}
\def\R{{\mathbb R}}
\def\Z{{\mathbb Z}}
\def\bS{\mathbb{S}}
\def\im{{\rm Im\,}}
\def\ker{{\rm Ker\,}}
\def\coker{{\rm Coker\,}}
\def\cf{\emph{cf.}~}
\def\ie{\emph{i.e.}~}
\def\eg{\emph{e.g.}~}
\def\cO{{\mathcal O}}
\def\cU{{\mathcal U}}
\def\sI{{\mathscr I}}
\def\sC{{\mathscr C}}
\def\sD{{\mathscr D}}
\def\sF{{\mathscr F}}
\def\sI{{\mathscr I}}
\def\sP{{\mathscr P}}
\def\bA{{\mathbf A}}
\def\bC{{\mathbf C}}
\def\bD{{\mathbf D}}
\def\bG{{\mathbf G}}
\def\bH{{\mathbf H}}
\def\bN{{\mathbf N}}
\def\bP{{\mathbf P}}
\def\bQ{{\mathbf Q}}
\def\bS{{\mathbf S}}
\def\bZ{{\mathbf Z}}
\def\a{\alpha}
\def\be{\beta}
\def\ol{\overline}
\def\Id{{\rm Id\,}}
\def\Hom{{\rm Hom\,}}
\def\Coeq{{\rm Coeq\,}}
\def\eq{{\rm eq\,}}
\def\Spec{{\rm Spec}}
\def\Zar{{\rm Zar\,}}
\def\op{{\rm op \,}}
\def\disc{{\rm disc\,}}
\def\ab{{\rm ab\,}}
\def\fin{{\rm rel\,}}
\def\fine{{\rm fine\,}}
\def\cofib{{\rm cofib\,}}
\def\id{{\rm id\,}}
\def\op{{\rm op\,}}
\def\lt{\langle}
\def\gt{\rangle}
\def\ul{\underline}
\def\ol{\overline}
\newcommand{\minus}{\scalebox{0.55}[1.0]{$-$}}
\def\Gelf{\mathscr{G}^{\mathtt {NC}}}
\def\bMod{{\mathbf{Mod}}}
\def\bSMod{{\mathbf{SMod}}}
\def\bBiMod{{\mathbf{BiMod}}}
\def\bSites{{\mathbf{Sites}}}
\def\bZar{{\mathbf{Zar}}}
\def\bOuv{{\mathbf{Ouv}}}
\def\bTop{{\mathbf{Top}}}
\def\bCoh{{\mathbf{Coh}}}
\def\bCo{{\mathbf{Co}}}
\def\bSets{{\mathbf{Sets}}}
\def\bAb{{\mathbf{Ab}}}
\def\bRings{{\mathbf{Rings}}}
\def\bDGA{{\mathbf{DGA}}}
\def\bCh{{\mathbf{Ch}}}
\def\bSRings{{\mathbf{SRings}}}
\def\bHo{{\mathbf{Ho}}}
\def\bAff{{\mathbf{Aff}}}
\def\bdAff{{\mathbf{dAff}}}
\def\bLoc{{\mathbf{Loc}}}
\def\bAlg{{\mathbf{Alg}}}
\def\bCAlg{{\mathbf{CAlg}}}
\def\bFunc{{\mathbf{Func}}}
\def\bPsh{{\mathbf{Psh}}}
\def\bSh{{\mathbf{Sh}}}
\def\bPoly{{\mathbf{Poly}}}
\def\bFree{{\mathbf{Free}}}
\def\bSma{{\mathbf{Sma}}}
\def\bThick{{\mathbf{Thick}}}
\def\SpecNC{{\mathrm{Spec}^{\hspace{-0.4mm}\tiny{\mathtt{NC}}}\hspace{-0.5mm}}}
\def\SpecG{{\mathrm{Spec}^{\hspace{-0.45mm}\tiny{\mathtt{G}}}\hspace{-0.5mm}}}
\def\void{{\rm \varnothing}}
\def\then{\Rightarrow}
\def\limpro{\mathop{\lim\limits_{\displaystyle\leftarrow}}}
\def\limind{\mathop{\lim\limits_{\displaystyle\rightarrow}}}
\def\ootimes{{\ol{\otimes}}}
\newcommand{\bigast}{\mathop{\Huge \mathlarger{\mathlarger{\ast}}}}
\begin{document}

\begin{flushright}

\baselineskip=4pt

\end{flushright}

\begin{center}
\vspace{5mm}

{\LARGE\bf {Noncommutative Gelfand Duality: \\[3mm]
the algebraic case}}
\medskip

%\thanks{F.B.\ is supported the DFG research grant BA 6560/1-1 ``Derived geometry and arithmetic''. M.C. gratefully acknowledges th Department of Mathematics of University College London, where part of this work was conducted.
%S.M. acknowledges the support
%by the research grant “Geometric boundary value problems for the Dirac operator”
%of the Juniorprofessurenprogramm Baden-W\"rttemberg as well as that by the DFG
%research training groups GRK
%1821 ``Cohomological Methods in Geometry'' during the initial stages of this project.}
% 

\vspace{5mm}

{\bf by}

\vspace{5mm}

 { \bf Federico Bambozzi$^1$, Matteo Capoferri$^{2,3}$, Simone Murro$^4$ }\\[1mm]
\noindent  {\it $^1$ Dipartimento di Matematica, Università di Padova \& GNSAGA-INdAM,  Italy}\\[1mm]
\noindent  {\it $^2$ Maxwell Institute for Mathematical Sciences \& Department of Mathematics, Heriot-Watt University, Edinburgh EH14~4AS, UK}\\[1mm]
\noindent  {\it $^3$ Dipartimento di Matematica ``Federigo Enriques'', Universit\`a degli Studi di Milano, Italy}\\[1mm]
\noindent  {\it $^4$Dipartimento di Matematica, Universit\`a di Genova \& GNFM-INdAM \& INFN, Italy}\\[1mm]
E-mail: {\tt federico.bambozzi@unipd.it, matteo.capoferri@unimi.it, simone.murro@unige.it}
\\[10mm]
\end{center}

\begin{abstract}

The goal of this paper is to define a notion of \emph{non-commutative Gelfand duality}.
Using techniques from derived algebraic geometry, we show that the category of rings is anti-equivalent to a subcategory of pre-ringed sites, inspired by Grothendieck's work on commutative rings. Our notion of spectrum, although formally reminiscent of the Grothendieck spectrum, is \emph{new}.
%; in particular, for commutative rings it does not always agree with the Grothendieck spectrum, but it always projects onto it. 
Remarkably, an appropriately refined relative version of our spectrum agrees with the Grothendieck spectrum for finitely generated commutative $\mathbb{C}$-algebras, among others. This work aims to represent the starting point for a rigorous study of geometric properties of quantum spacetimes. 
\end{abstract}

\paragraph*{Keywords:} derived geometry, homotopical epimorphism, noncommutative Gelfand duality, noncommutative space, noncommutative spectrum.
\paragraph*{MSC 2020:} Primary: 14A22; Secondary: 14A30, 16E35, 18F20.
\\[0.5mm]
\tableofcontents

\renewcommand{\thefootnote}{\arabic{footnote}}
\setcounter{footnote}{0}

\section{Introduction and main result}

The notion of duality lies at the heart of many of the paradigm shifts that took place in mathematics and physics in the last couple of centuries. On the one hand, switching perspective brings a breath of fresh air and new insight in situations where progress seemed precluded.
On the other hand, changing point of view reveals new aspects that remained otherwise hidden. In physics this is the case, for example, for Einstein's theory of General Relativity: describing the gravitational interactions in purely geometric terms unveiled striking and unexpected physical phenomena, such as black holes, gravitational deflection of light, and gravitational waves, which could not be seen through the prism of Newton's classical theory of gravitation. In mathematics, to name but one example, we can mention Grothendieck's theory of schemes, which has been used to define \'etale cohomology leading to the solution of the Riemann hypothesis for varieties over finite fields.

In this paper, we are interested  in the duality between algebraic structures and geometric spaces. 
One of the major contributions in this area is due to Gelfand. In his seminal paper~\cite{gelfand}, he showed that a compact Hausdorff topological space can be functorially reconstructed from its Banach algebra of continuous functions. Conversely, the \emph{Gelfand spectrum} of the commutative $C^*$-algebra of continuous functions is homeomorphic to the underlying compact Hausdorff topological space.  
Loosely speaking, Gelfand promoted the idea that any abstract commutative complex Banach algebra can be represented as the ring of functions on a topological space whose points are in correspondence with the maximal ideals of the algebra. It is no overstatement to say that this deep intuition had a transformative effect across all areas of pure mathematics, from functional analysis to --- crucially ---  modern algebraic geometry, and continues to inform and inspire cutting-edge research to this day.

\paragraph{Algebraic geometry and the spectrum of a commutative ring.}

The branch of mathematics where Gelfand's ideas have had the strongest impact is algebraic geometry. The modern foundations of algebraic geometry were laid in the second half of the twentieth century by the French school led by Grothendieck, based on a variant of Gelfand duality for the category of commutative rings. In this case, to a commutative ring one associates a topological space --- called \emph{spectrum} --- whose points correspond to the prime ideals of the ring, and whose topology is defined in the style of the topology defined by Zariski in his study of algebraic varieties over $\C$. Elements of the ring are interpreted as functions over the spectrum, so that a perfect duality between spaces and commutative rings is obtained via the theory of locally ringed spaces, briefly recalled in Section~\ref{sec:gelfand}. In this work, we focus on generalising to noncommutative rings the duality between commutative rings and affine schemes pertinent to algebraic geometry. However, the essential ideas behind of our methods apply to the $C^*$-algebra setting as well, or even to more general situations. To obtain the latter (analytic) versions of our noncommutative Gelfand duality, one is required to work in a different framework, one where the methods from homological algebra can be applied effectively to the theory of Banach spaces, or more general functional-analytic settings. This is the case, for example, for the bornological HAG context developed in the series of papers \cite{bambi3,bambi4,bambi2,bambi,BamMi,BamMi2} (see also the recent \cite{BKK} for a more comprehensive treatment of the subject). Alternatively, one could also work in the framework of condensed mathematics set out in \cite{Scholze}. In the present work, for the sake of clarity, we avoid these further complications and stick with the algebraic setting where the methods of homological and homotopical algebra are applied in the classical setting of the categories of modules over a ring. Derived categories and homological algebra are the key tools we will use to move from the commutative to the noncommutative setting.

\paragraph{Motivation and heuristics for noncommutative dualities.} Despite being of interest in its own right, an important motivation for seeking a noncommutive analogue of the Gelfand duality originates from mathematical physics, in that such a duality could lead to the rigorous formulation of a \emph{quantum} theory of gravitation. Indeed, although a full quantisation scheme for Einstein's equation is still beyond reach, it is clear from a physical perspective that spacetime should eventually become ``noncommutative''~\cite{Froeb}.
The general idea is to construct a noncommutative algebra which possesses a ring epimorphism to the ring of smooth functions over a manifolds, the latter being the  ring of functions of the spacetime at hand via the smooth version of the Gelfand duality~\cite{smoothGelfand}. 
This procedure is conceptually analogous to the quantisation of the classical phase space in the quantum-mechanical setting.
This physical principle has been realised in a variety of concrete approaches to the problem, including  Connes's noncommutative spectral
triples~\cite{connes1,connes2}, Lorentzian spectral triples~\cite{Franco,verch}, and deformation quantisation~\cite{Buchholz,waldmann,Grosse}.
A very popular strategy is to promote Cartesian coordinates describing the flat Minkowski
spacetime to operators, and postulate commutation relations in terms of a constant skew-symmetric matrix whose entries are of the order of the Planck length, resulting in the so-called DFR spacetime~\cite{DFR}. The spacetime is then interpreted as a noncommutative Frechét algebra which becomes commutative in the classical limit.
The main shortcoming in all these approaches is that the definition of ``noncommutative space'' is realised only at a formal level, so that the geometric properties of such noncommutative space are difficult to capture; indeed, the existing strategies do not even lead to a satisfactory definition of what a point of a noncommutative space would be.
Remarkably, there has been evidence that noncommutative geometry manifests itself ``in real life"; for instance, 
%\sout{it makes its appearance in the mathematical description of aperiodic crystals~\cite{Bellissard1,Bellissard2} or in dirty superconductors~\cite{DeNittis} } 
aperiodic crystals~\cite{Bellissard1,Bellissard2} and dirty superconductors~\cite{DeNittis} are mathematically modelled by noncommutative tori. Therefore, it would be rather interesting to study the spectrum of a noncommutative torus or more generally of a symplectic twisted group algebras, see e.g.~\cite{NCtorus,NCtori}.
\color{black}

\paragraph{Reyes's no-go theorem.} 
In the literature, there have been many attempts aimed at extending the Gelfand spectrum to the
\emph{noncommutive} $C^*$-algebra. In parallel, there have been efforts to extend the definition of the \emph{spectrum} functor from the category of commutative rings to the category of rings. 
A robust notion of noncommutative spectrum should possess, at the very minimum, the following properties.
\begin{itemize}
\item[(A)] The spectrum of every
nonzero ring is nonempty.
\item[(B)] The spectrum, viewed in the appropriate framework, is a contravariant functor from the category of rings to the category
of sets.
\end{itemize} 

These two properties are usually required to hold in conjunction with the extension property

\begin{itemize}
\item[(*)] The notion of spectrum defined on the category of all rings must agree with the Grothendieck spectrum of prime ideals when restricted to commutative rings.
\end{itemize} 

Goldman’s prime torsion theories~\cite{Goldman} and the ``left spectrum'' of Rosenberg~\cite{rosenberg} are examples of extensions of the spectrum of prime ideals for which only property (A) is satisfied. Examples of approaches that satisfy property (B) are the spectrum of the ``abelianization'' $R \mapsto \Spec(R/[R, R])$, the set of completely prime ideals, and the ``field
spectrum'' of Cohn~\cite{Cohn}.
In fact, it turns out that any extension of the Grothendieck spectrum of primes ideals leads to a notion of noncommutative spectrum that can only satisfy either property (A) or property (B) but not both.
This was shown quite recently by Reyes in~\cite{nogo}, rigorously demonstrating that a functor satisfying the properties (A), (B) and (*) cannot exist. Therefore, if one wants to successfully extend the notion of spectrum to noncommutative rings, a different and, in a way, more sophisticated approach is needed. In \cite{nogo}, Reyes has proved a similar no-go theorem for $C^*$-algebras and the Gelfand spectrum. In the present paper, we focus on the algebraic setting, leaving for future work a similar study for $C^*$-algebras (and more general algebras) in the aforementioned bornological HAG framework of \cite{bambi2} and \cite{BKK}.

\medskip

\paragraph{Main result.}
The goal of this paper is to define a sufficiently robust notion of \emph{noncommutative} spectrum that allows one to implement a \emph{noncommutative Gelfand duality}. 

We should immediately emphasise that the most appropriate framework in which to develop a theory of noncommutative schemes would be that of $\infty$-categories. However, in this paper we use derived and homotopy categories, instead. The main underlying reason for this choice is that we would like to make our paper accessible to a rather diverse readership, a large portion of which may not be familiar with $\infty$-categories\footnote{Readers with a background in the theory of $\infty$-categories will have no problem in translating our results in that framework.}.
Indeed, the duality we will be developing sits at the intersection of many different areas of mathematics, ranging from the very pure to the more applied, and different communities may find our results useful. As a final disclaimer, in this manuscript we only consider the categories of rings $\bRings_\Z$ and homotopical rings $\bH \bRings_\Z$, postponing until a future publication the more technically demanding case of Banach algebras. 

\

The following represents the central result of the paper, see Corollary~\ref{cor:Gelfand_duality}.

\begin{thm}[Classical form of noncommutative Gelfand duality]\label{thm:main}
The category $\bRings_\Z$ is anti-equivalent to a subcategory of \emph{pre-ringed sites} $\mathbf{PreRingSites}$ (as per  Definition~\ref{defn:pre-ringed_space}).
\end{thm}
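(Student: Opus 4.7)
The plan is to construct a contravariant functor $\SpecNC\colon \bRings_\Z \to \mathbf{PreRingSites}$ and prove that it is fully faithful; the target subcategory is then defined to be its essential image, making the anti-equivalence a tautology on objects. The core difficulty is therefore the construction of $\SpecNC$ together with the verification of full faithfulness.

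For each ring $R$, I would define the underlying category $\bOuv(R)$ of the site $\SpecNC(R)$ as the category of \emph{homotopical epimorphisms} out of $R$, that is, ring maps $R \to S$ for which the multiplication $S \otimes^{L}_{R} S \to S$ is an isomorphism in the derived category (equivalently, epimorphisms in the homotopy category $\bH\bRings_\Z$). Reyes's no-go theorem precludes using prime ideals as points, and homotopical epimorphisms are the natural replacement because they specialise to Zariski localisations $R \to R[f^{-1}]$ in the commutative case, are closed under composition, and admit a well-behaved derived base change. I would equip $\bOuv(R)^{\op}$ with a Grothendieck topology whose covering families $\{R \to S_i\}$ are those that are jointly \emph{derived faithfully flat} (i.e.\ the induced family of derived extension-of-scalars functors is jointly conservative on the derived category of $R$-modules). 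The structure presheaf $\cO_R$ on $\bOuv(R)^{\op}$ is then the tautological assignment $(R \to S) \mapsto S$, and functoriality in $R$ is obtained by sending a ring map $R \to R'$ to the base-change functor $(R \to S) \mapsto (R' \to S \otimes^{L}_{R} R')$.

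Full faithfulness would be established as follows. \emph{Faithfulness} is essentially immediate: the identity $R \to R$ is the initial object of $\bOuv(R)$, so $\Gamma(\SpecNC(R), \cO_R) \cong R$, and any ring map is recovered from the induced morphism of pre-ringed sites by evaluating the comorphism on global sections. \emph{Fullness} requires showing that any morphism of pre-ringed sites $\SpecNC(R') \to \SpecNC(R)$ lying in $\mathbf{PreRingSites}$ comes from a unique ring homomorphism $R \to R'$, which one extracts by evaluating the comorphism on initial objects and extending the resulting map by naturality against all homotopical epimorphisms.

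The main obstacle is verifying that the class of homotopical epimorphisms, together with the derived faithfully flat covering families, assembles into a bona fide functorial site: stability under base change, the transitivity axiom for covers, and the presheaf property of $\cO_R$ all require nontrivial work inside the derived category of $R$-bimodules, since the underived notions of flatness and of epimorphism behave poorly in the noncommutative setting. Once these properties are secured, the compatibility with the Grothendieck spectrum for finitely generated commutative $\C$-algebras mentioned in the abstract provides a consistency check that this is indeed the correct noncommutative enhancement of affine schemes, and the anti-equivalence of Corollary~\ref{cor:Gelfand_duality} follows formally.
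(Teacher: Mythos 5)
Your proposal identifies the right class of ``open embeddings'' (homotopical epimorphisms) and your faithfulness argument via evaluation on the initial object of $\bOuv(R)$ is essentially the paper's proof of Theorem~\ref{thm:Gelfand_duality}. However, there are three genuine gaps in the rest of the construction.

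First, your functoriality prescription $(R \to S) \mapsto (R' \to S \otimes^{L}_{R} R')$ uses the derived \emph{tensor} product as the base change. In $\bRings_\Z$ the pushout of algebras is the derived \emph{free} product $\ast^{\L}_R$, not $\otimes^{\L}_R$; for noncommutative $R'$ the object $S \otimes^{\L}_R R'$ is not naturally an $R'$-algebra, and homotopical epimorphisms are not preserved by the tensor-product base change. This is exactly the failure of the base change theorem the paper addresses in Remark~\ref{rmk:failure_base_chage}, and it is why the functoriality in Proposition~\ref{prop:funct} is built out of $(\minus) \ast^{\L}_A B$.

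Second, your covers are defined by joint conservativity of the pullback functors on \emph{modules} (``derived faithfully flat''). The paper explicitly rejects this choice: Example~\ref{exa:Kanda_counterexample} shows that the two projections $k \oplus k \rightrightarrows k$ are jointly conservative on modules but fail the algebra-level conservativity condition of Definition~\ref{defn:homotopical_Zariski_open_immersion}, and Remark~\ref{rmk:fine_Zariski_topology_not_functorial} shows the resulting ``fine'' topology is not functorial on $\bH\bRings_\Z$. With the module-level notion you do not obtain a functor $\bRings_\Z \to \mathbf{PreRingSites}$ at all, because pushing a cover forward along $k\oplus k \to M_2(k)$ produces a non-cover. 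The paper's topology instead imposes conservativity of the base change $(\minus)\ast^{\L}_A B_i$ on $\bH\bRings_A$, which is strictly stronger and is stable under $\ast^{\L}$-pushout (Proposition~\ref{prop:Zar site}).

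Third, and most importantly for the stated theorem, you plan to prove \emph{full} faithfulness and take the \emph{full} essential image as the target subcategory. The paper proves only faithfulness, and the resulting subcategory of $\mathbf{PreRingSites}$ is non-full by design — this is the same phenomenon as in the commutative case, where $(\SpecG, \cO)$ is faithful but not full into ringed spaces (Example~\ref{exa:morphism_ringed_spaces_not_form_crings}: the map $\Z_{(p)} \to \Q$ gives a morphism of ringed spaces not induced by any ring map $\Z \to \Q$). Your sketch of fullness (``extract a ring map from global sections and extend by naturality'') will not close this gap: the global-sections comorphism gives a candidate ring map, but nothing forces the given morphism of pre-ringed sites to equal the one it induces, precisely because no locality condition has been imposed. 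The paper explicitly lists the description of the essential image — the noncommutative analogue of passing to \emph{locally} ringed spaces — as an open problem in the ``Future outlook'' paragraph. For the theorem as stated, faithfulness alone suffices (giving an isomorphism onto a non-full subcategory), so you should drop the fullness claim rather than try to prove it.
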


In the remainder of the introduction we shall explain the meaning of the above result and outline how it is obtained. The first thing to notice is that, although the statement of Theorem~\ref{thm:main} is phrased in classical terms, it appears to be unreachable working within such framework. The main reason why one is forced to go beyond the category of rings $\bRings_\Z$ is that the spectrum of noncommutative rings have open subsets whose natural algebra of functions is a \emph{differential graded algebra} (dg-algebra), unlike the commutative case where such open subsets do not exist. A similar phenomenon happens in analytic geometry, and its study recently led to the resolution to the so-called \emph{sheafyness problem} for Huber spaces \cite{bambi} (a similar solution has been give also by Clausen and Scholze). If we denote by $\bH\bRings_\Z := \bHo(\bD\bG\bA_\Z^{\le 0})$ the homotopy category of \emph{connective dg-algebras over $\Z$}, our main result shows Gelfand duality for $\bH\bRings_\Z$; the classical formulation of Theorem~\ref{thm:main} is then obtained by restricting the latter via the canonical fully faithful inclusion $\bRings_\Z \subset \bH\bRings_\Z$. We would like to stress that the data needed to obtain such a faithful representation of a ring on a topological space (\ie the structure pre-sheaf of functions on the spectrum) is not concentrated in degree $0$ even if we start with a ring concentrated in degree $0$ (see Example~\ref{exa:homotopy_epimorphism} for more details on this). We also observe that if one accepts to use $\bH\bRings_\Z$ as the basic category to study, then our approach represents one of the easiest and most natural ways to develop a geometric study of this category.

The (anti-)equivalence of Theorem~\ref{thm:main} is obtained as a two-step procedure. First, we construct a functor
that assigns to any connective dg-algebra $R\in\bH\bRings_\Z$ a topological space $\SpecNC(R)$, that we call \emph{noncommutative spectrum}, and to any algebra homomorphism a continuous map of the associated topological spaces. 
Then we construct a homotopical pre-sheaf of connective dg-algebras on $\SpecNC(R)$, so that the global section $\mathcal{O}_X(\SpecNC(R))$ is isomorphic to $R$ in $\bH\bRings_\Z$. 
Combining these two steps, we obtain a contravariant duality functor
\begin{align*}
    \Gelf : & \bH\bRings_\Z \to \mathbf{PreRingSites} 
\end{align*} 
that we call \emph{noncommutative Gelfand duality}. By this we mean that the functor $\Gelf$ is a faithful functor. This is reminiscent of how affine schemes can be described as a (non-full) subcategory of $\mathbf{PreRingSp}$. We elaborate further on this matter in Section~\ref{sec:gelfand}, especially on our use of pre-ringed sites instead of (classical) ringed spaces.

Our notion of spectrum, although formally similar the Grothendieck spectrum of a commutative ring, is \emph{new} and does not usually agree with it, unless very specific cases are considered, like fields and commutative discrete valuation rings (see Section~\ref{exa:spectra}). However, given a commutative ring, it is always possible to compare 
% a canonical map of pre-ringed spaces\footnote{Actually, for commutative rings we have that $\SpecNC(R)$ is a ringed space --- see Proposition~\ref{prop:sheafy}.} from the 
our noncommutative spectrum with the Grothendieck spectrum (see Propositions~\ref{prop:compatision with Grothendieck} and~\ref{prop:projection_spectra_prepringed}).
 Remarkably, it is possible to define a relative version of our spectrum that agrees with Grothendieck spectrum for finitely generated commutative $\mathbb{C}$-algebras (see Subsection~\ref{sec:relative spectrum}). 

\paragraph{Challenges and novelty.}
To achieve our goal, one has to overcome a series of difficulties. The first consists in devising a Grothendieck topology suitable for noncommutative rings. In analogy with the classical Zariski topology of scheme theory, we defined what we call the \emph{formal homotopy Zariski topology} on $\bdAff_\Z=\bH\bRings_\Z^\op$ (see Defition~\ref{defn:homotopical_Zariski_open_immersion}). It is a highly non-trivial fact that such a topology even exists. Natural candidates for generalisations of the Zariski topology in $\bRings_\Z^\op$ fail to be suitable classes of maps. In the category of commutative rings $\bC\bRings_\Z$, the Zariski topology is defined by the class of flat epimorphisms of finite presentation. But flat maps are not stable by pushouts in $\bRings_\Z$, thus cannot be used to define a topology. On the other hand, the class of all epimorphisms of rings obviously defines a topology but it is too large, encompassing morphisms that do not correspond to Zariski open immersions of classical affine schemes. Therefore, the task is to find a class of morphisms that is not quite flat but close to be flat, such that in the commutative case it reduces to the class of flat epimorphisms. Surprisingly, such a class of morphisms does exist in $\bH\bRings_\Z$. This is the class of \emph{homotopical epimorphisms}. Similarly to the class of epimorphisms in $\bRings_\Z$, the class of homotopical epimorphisms is suitable to define a topology on $\bH\bRings_\Z$. Moreover, the computations of \cite{Laz2} show that the commutative and noncommutative notions of homotopical epimorphisms are compatible. Even more, \cite{TV3} shows that a morphism between discrete commutative rings is a flat epimorphism of finite presentation if and only if it is a homotopical epimorphism of finite presentation.

Let us now move to the second main obstacle. In all our previous considerations, we have been using morphisms of \emph{finite presentation}. It turns out that the notions of finite presentation in the commutative and in the noncommutative settings are fundamentally incompatible. We will devote Subsection~\ref{On the notion of finite presentation} to clarifying this matter. To circumvent such compatibility issue, unlike for the classical Zariski topology, in our definition we do not require the localization morphisms to be of finite presentation.
Dropping this requirement turns out to be essential to obtain a meaningful spectrum out of our topology on $\bdAff_\Z$. Indeed, 
if one assumes the finite presentation of the localization morphisms in $\bH \bRings_\Z$, then the topological space obtained as spectrum will be just a singleton for most commutative rings (see Proposition~\ref{prop:wrong_spectrum}).

The third main challenge one faces is to come up with a good notion of coverage in $\bH\bRings_\Z$. Now, in the commutative setting the notion of coverage is traditionally defined by requiring that the family of base change functors at the level of modules associated with localizations is a faithful family of functors. A closer examination reveals that the latter is not a suitable notion for the noncommutative setting, in that it does not satisfy the axioms of a Grothendieck topology. As it turns out, one can circumvent this issue by working directly in the category $\bH\bRings_\Z$ and requiring conservativity for the base change functors at the level of algebras. The price one has to pay is that the notion of coverage one obtains is harder to handle in many respects and, furthermore, a direct comparison with the classical Zariski topology is precluded. With this in mind, we introduce a stronger notion of cover leading to a finer topology --- the \emph{fine Zariski topology} (see Definition~\ref{defn:fine_Zariski_topology}) --- obtained by imposing the conservativity of the base change only for the subclass of \emph{homotopical epimorphisms}. As a matter of fact, this finer topology is no longer a Grothendieck topology on $\bH\bRings_\Z$ --- in general it is only functorial for homotopical epimorphism --- but it is easier to compute and, when worked out for commutative rings, it allows one to compare the topological spaces endowed with the different topologies.

Using the homotopical Zariski topology briefly outlined above, one can associate to any fixed object $A \in \bH\bRings_\Z$ the (essentially small) category of localizations (\ie homotopical epimorphism) starting from $A$. This category is, in fact, a poset that agrees with the poset of smashing localizations of $\bH \bMod_A$ (see Proposition \ref{prop:iso_smashing_poset}). This kind of posets have already been considered in the literature, mainly in the context of triangulated categories. One of the main observations about them is that these posets tend not to be distributive when the ring $A$ is not commutative. Therefore, it is not clear how to use them to obtain a topological space in a similar fashion to how Stone's duality associates a spectral topological space to any distributive lattice. In our setting, we have the extra datum of the homotopical Zariski topology, which endows the lattice of homotopical epimorphisms with a notion of coverage that allows one to canonically associate to it a sober topological space. The topological space thus obtained is denoted by $\SpecNC(A)$, is functorially associated to $A$ and always non-empty (actually, often of huge size). In Section~\ref{exa:spectra} we provide a description of the topological space $\SpecNC(A)$ for some key examples, both for commutative and noncommutative rings. Remarkably, the same construction also applies to the fine Zariski topology;  the resulting topological space will be denoted $\SpecNC_\fine$. As already mentioned, this association is not functorial, but it always admit canonical maps to both $\SpecNC$ and to $\SpecG$, allowing one to compare our spectrum $\SpecNC$ with Grothendieck's $\SpecG$ (see Proposition~\ref{prop:compatision with Grothendieck}).

This brings us to the last hurdle we faced in proving Theorem~\ref{thm:main}. Already in the setting of the theory of affine schemes, the topological space $\SpecG(A)$ does not contain enough information to allow one to reconstruct the ring from which it originated. The solution to this problem is to add to $\SpecG(A)$ the extra datum of a structure sheaf of rings. One would like to perform the same enrichment for the topological space $\SpecNC(A)$, with the requirement that the pullback of the structure sheaf thereon to $\SpecNC_\fine(A)$ remains a sheaf. It turns out that for noncommutative rings the structure presheaf on $\SpecNC_\fine(A)$ is not a sheaf and its sheafification does not satisfy the properties expected of a structure sheaf (see Example~\ref{exa:structure_not_sheaf}).

 We do not view this as a shortcoming of our construction, but actually as an interesting feature deserving deeper study and understanding. Indeed, although the structure pre-sheaf is not a sheaf in the usual sense, it still satisfies a form of comonadic descent that is reminiscent of the sheaf condition (see Theorem~\ref{thm:comonadic_descent} ), and actually reduces to it in the commutative case (see Proposition \ref{prop:sheafy}). Therefore, the structure pre-sheaf belong to a new class of pre-sheaves that we call \emph{descendable}, whose sections have the property of being reconstructible from their localizations on a cover, although this reconstruction procedure does not agree with the sheaf theoretic procedure in general. We will devote Section~\ref{sec:gelfand} to the study of this issue and to the proof of Gelfand duality.

\paragraph{Future outlook.} We would like to point out that the Gelfand duality  obtained in Theorem~\ref{thm:main} is not yet a perfect duality, and there is room for improvement. Let us elaborate on this.

For starters, we expect that the structural presheaf on $\SpecNC$ is, in fact, a sheaf --- see Conjecture~\ref{conj:structure_sheaf}.
This would automatically upgrade the Gelfand duality to a contravariant equivalence of $\bH\bRings_\Z$ to a subcategory of $\mathbf{PreRingSp}$.

Furthermore, one would like to
explicitly described the essential image of the duality functor $\Gelf$. This is analogous to describing the category of affine schemes as a non-full subcategory of the category of ringed spaces. In the latter situation, it has been realised that affine schemes embeds fully faithfully into the category of locally ringed spaces, giving a precise description of the essential image of the duality functor for affine schemes. We explain this matter in Section~\ref{sec:gelfand}. 
    The study of the essential image of the functor $\Gelf$ will be addressed in a separate work.

\paragraph{Structure of the paper.}

The paper is organised as follows. 

Section \ref{sec:preliminaries} contains some background material and fixes the notation used in the rest of the paper. 

Section \ref{sec:homotopy_algebras} recalls materials from homotopical algebra and, in particular, studies the differences (and incompatibility) between the notions of finite presentation of an algebra in the commutative and in the noncommutative settings. This incompatibility has profound implications on the relationship between the commutative and noncommutative versions of the theory of schemes. 

In Section~\ref{sec:spectrum}  we define the homotopical Zariski topology on $\bH \bRings_\Z^\op$ and explain how to construct a spectrum functor out of this datum. 

In Section~\ref{sec:properties_spectrum} we study some basic properties of the spectrum functor obtained previously and compare it with other notions present in literature. We introduce the fine Zariski spectrum as the main tool to achieve this goal.
%In particular, we show that our spectrum always projects onto Grothendieck's spectrum of prime ideals when the ring is commutative, and that it can be computed using the lattice of smashing subcategories of the category of (indifferently, left or right) modules. 

In Section~\ref{exa:spectra} we provide some explicit computations of the topological space $\SpecNC(A)$ and $\SpecNC_\fine(A)$ for some key examples, both commutative and noncommutative. 

In Section~\ref{sec:gelfand} we examine the issue of descent of modules for the (fine) homotopical Zariski topology. We show that modules can be reconstructed from their localizations using the machinery of comonadic descent. This, combined with the observation that the structure sheaf on $\SpecNC_\fine$ is not a sheaf for noncommutative rings, leads to the observation that the classical notion of sheaf is not adequate for noncommutative geometry and some sort of generalisation thereof is needed to progress further. We provide a first attempt in this direction and verify that our result is consistent with the theory developed in the commutative case. This leads us to a duality theory for $\bH \bRings_\Z$ that generalises and is compatible with the duality between affine schemes and commutative rings.

\subsection*{Acknowledgments}
We are grateful to Lidia Angeleri H\"ugel, Silvana Bazzoni, Ryo Kanda, Kobi Kremnizer, Rosanna Laking, Filippo Papallo, Nicola Pinamonti, Pino Rosolini, Manuel Saorin, Alexander Schenkel, Matteo Varbaro for helpful discussions related to the topic of this paper.

The authors acknowledge the affiliation to the GNAMPA, GNFM, and GNSAGA groups of INdAM.

\subsection*{Funding}
FB was partially supported by DFG research grant BA 6560/1-1 ``Derived geometry and arithmetic''. MC was supported by EPSRC Fellowship EP/X01021X/1. SM was partially supported by the MIUR Excellence Department Project 2023-2027 awarded to the Department of Mathematics of the University of Genoa. 

The completion of this paper was made possible by a Research-in-Groups programme funded by the International Centre for Mathematical Sciences, Edinburgh.

\subsection*{Notation}
\addcontentsline{toc}{subsection}{Notation}

The following table summarises the notation most often used throughout the paper.

%\begin{multicols}{2}
\begin{longtable}{l l}
\hline
\\ [-1em]
\multicolumn{1}{c}{\textbf{Symbol}} & 
  \multicolumn{1}{c}{\textbf{Description}} \\ \\ [-1em]
 \hline \hline \\ [-1em]
$\bH\bC\bRings_R:=\bHo(\bS\bC\bRings_R)$  & homotopy category of \emph{commutative simplicial rings over $R$} \\ \\ [-1em]
$\bH\bZ\bRings_R:=\bHo(\bZ\bD\bG\bA_R^{\le 0})$  & homotopy category of \emph{connective central dg-algebras over $R$} \\ \\ [-1em]
$\bH\bRings_R:=\bHo(\bD\bG\bA_R^{\le 0})$  & homotopy category of \emph{connective dg-algebras over $R$} \\ \\ [-1em]
$\bH\bMod_R:=\bHo(\bSMod_R)$  & homotopy category of \emph{simplicial right modules over $R$} \\ \\ [-1em]
$\bdAff_R := \bH\bRings_R^\op$ & category of \emph{derived noncommutative affine schemes}  \\ \\ [-1em]
$\bLoc(R)$ & category of \emph{localizations} of $R\in\bH\bRings_\Z$ (see Def.~\ref{def:loc-ouv}) \\ \\ [-1em]
$\bOuv(X)$ & category of \emph{open immersions} of $X\in\bdAff_\Z$ (see Def.~\ref{def:loc-ouv}) \\ \\ [-1em] 
$\bZar_X $ & small Zariski site (see Def.~\ref{defn:small_Zatiski_site})\\  \\ [-1em] 
$\mathbf{PreRingSites}$  & category of \emph{pre-ringed sites} (see Def.~\ref{defn:pre-ringed_space}) \\ \\ [-1em] 
$\textrm{Spec} :  \;\bH\bRings_\Z \to \bdAff_\Z$ & formal duality functor (see Sect.~\ref{sec:gelfand}) \\ \\ [-1em]
$\SpecNC :  \;\bH\bRings_R \to \bTop$ & noncommutative spectrum (see Sect.~\ref{sec:defncspec}) \\ \\ [-1em]
$\SpecNC_\fine :  \;\bC\bRings_\Z \to \bTop$ & fine noncommutative spectrum (see Proposition \ref{prop:funct} \\ \\ [-1em]
$\SpecG :  \;\bC\bRings_\Z \to \bTop$ & Grothendieck's spectrum of prime ideals \\ \\ [-1em]
\hline
\end{longtable}
%\end{multicols}

\section{Preliminaries}
\label{sec:preliminaries}

We collect here standard notions from category theory in an abridged fashion, mostly to settle notation, and refer the reader to \cite{Mac, Wei} for further details.

If $\sC$ is a category, we use
the same symbol, $\sC$, to denote the class of objects of $\sC$, so that $X \in \sC$ means that $X$ is an
object of $\sC$. We denote by $\id_\sC: \sC \to \sC$ the {\it identity functor}.
For an object $X \in \sC$ we denote by $\sC/X$ the category of {\it objects over $X$} and by $X/\sC$ the category of {\it objects under $X$}. For two objects $X, Y \in \sC$ we denote by $\Hom_{\sC}(X, Y)$ the set of morphisms between them. These are also called \emph{hom-sets}. If the category to which the hom-sets are referring to is clear from the context, we drop the $\sC$ and simply write $\Hom(X, Y)$.

A {\it monoidal structure} on a category $\sC$ is the data of a functor $\ootimes: \sC \times \sC \to \sC$ and an object $1_\ootimes \in \sC$, equipped with natural isomorphisms $1_\ootimes \ootimes X \stackrel{\cong}{\to} X$, $X \ootimes 1_\ootimes \stackrel{\cong}{\to} X$ and $(X \ootimes Y) \ootimes Z \stackrel{\cong}{\to} X \ootimes (Y \ootimes Z)$, for objects $X, Y, Z \in \sC$. The natural isomorphisms are required to satisfy appropriate coherence relations that we omit for the sake of brevity. The object $1_\ootimes$ is called the {\it identity} (or {\it unit}) of the monoidal structure $\ootimes$. The data of a category with a choice of a monoidal structure $(\sC, \ootimes, 1_\ootimes)$ is called {\it monoidal category}. If we also require the datum of a natural isomorphism $X \ootimes Y \stackrel{\cong}{\to} Y \ootimes X$, satisfying some further coherence relations, then the monoidal structure is called {\it symmetric} and $(\sC, \ootimes, 1_\ootimes)$ is called {\it symmetric monoidal category}. Note that the symmetry condition amounts to prescribing extra data. 

In this paper we will work with several symmetric monoidal categories. The most basic example of a symmetric monoidal category relevant to our work is the category of abelian groups $\bAb$ equipped with the standard tensor product $\otimes_\Z$. Another basic example is the category of sets $\bSets$ equipped with the binary direct product $\prod$.

Given a monoidal category $(\sC, \ootimes, 1_\ootimes)$ we define the category of {\it algebras} (also called {\it monoids}) over it, denoted by $\bAlg(\sC)$, as follows. The objects of $\bAlg(\sC)$ are objects $X \in \sC$ equipped with a morphism $m: X \ootimes X \to X$, called {\it multiplication}, and a morphism $\eta: 1_\ootimes \to X$, called {\it identity}, satisfying the usual relations (written in a diagrammatic form). 
Morphisms in $\bAlg(\sC)$ are morphisms of the underlying objects of $\sC$ that commute with multiplication and identities. If $\ootimes$ is symmetric, then we can also consider the category of {\it commutative algebras} (or {\it commutative monoids}), denoted by $\bCAlg(\sC)$, namely the full subcategory of $\bAlg(\sC)$ of objects whose multiplication map commutes with the symmetric natural transformation of $\ootimes$. For example, if we consider $(\bAb, \otimes_\Z)$, then $\bAlg(\bAb)$ is the category of rings and $\bCAlg(\bAb)$ is the category of commutative rings, whereas if we consider $(\bSets, \prod)$ then $\bAlg(\bSets)$ is the category of monoids and $\bCAlg(\bSets)$ is the category of commutative monoids.

To any object $A \in \bAlg(\sC)$ we associate the corresponding category of left (resp.~right) modules ${}_A \bMod$ (resp.~$\bMod_A$) as follows. Objects of ${}_A \bMod$ are objects $M \in \sC$ equipped with an action $A \ootimes M \to M$ and morphisms are morphisms of $\sC$ commuting with the action. If $\ootimes$ is symmetric and $A$ is commutative (\ie, $A\in \bCAlg(\sC)$), then ${}_A \bMod \cong \bMod_A$ canonically and $\bMod_A$ has an induced symmetric monoidal structure $\ootimes_A$. For example, $\bMod_\Z \cong \bAb$.

By $\bRings$ we denote the category of rings with identity-preserving homomorphisms as morphisms, while the full subcategory of commutative rings will be denoted by $\bC\bRings$. Observe that $\bRings \cong \bAlg(\bMod_\Z)$ and $\bC\bRings \cong \bCAlg(\bMod_\Z)$.
For a ring $R \in \bRings$ we call the category $\bRings_R := R/\bRings$ the category of {\it $R$-algebras} and by $\bMod_R$ the category of (right) modules over a ring $R \in \bRings$. We denote by $\bCh(\bMod_R)$ the category of (unbounded) {\it chain complexes} over $\bMod_R$ and by $\bCh^{\le 0}(\bMod_R) \subset \bCh(\bMod_R)$ the category of {\it connective complexes}, \ie, complexes concentrated in negative degrees (we use the cohomological convention for differentials, \ie~differentials increase the degree).\\
Given a commutative ring $R$, we can endow the categories $\bCh(\bMod_R)$ and $\bCh^{\le 0}(\bMod_R)$ with a (symmetric) monoidal structure as follow:  let $X^\bullet, Y^\bullet \in \bCh(\bMod_R)$, then
\begin{equation}
\label{section 2 equation 1}
(X^\bullet \otimes_R Y^\bullet)^n = \bigoplus_{i+j = n} X^i \otimes_R Y^j,
\end{equation}
and the unit is the complex whose only non-zero element is $R$ in degree $0$. A straightforward computation shows that $X^\bullet, Y^\bullet \in \bCh^{\le 0}(\bMod_R)$ implies $X^\bullet \otimes_R Y^\bullet \in \bCh^{\le 0}(\bMod_R)$.  Algebras over $\bCh(\bMod_R)$ for this monoidal structure are called\footnote{Here ``dg" stands for ``Differential Graded".} {\it (unbounded) dg-algebras over $R$} and denoted $\bD\bG\bA_R$, and algebras over $\bCh^{\le 0}(\bMod_R)$ are called {\it connective dg-algebras over $R$} and denoted $\bD\bG\bA_R^{\le 0}$.
Concretely, a dg-algebra over $R$ is a complex of $R$-modules $A^\bullet$ together with a graded multiplication $A^n \otimes A^m \to A^{n +m}$, satisfying the Leibniz rule
\[ d(a b) = d(a) b + (-1)^n a d(b) \]
for $a \in A^n, b \in A^m$. A dg-algebra is commutative if its multiplication is graded commutative, \ie~if
\[ a b = (-1)^{n m} b a \]
for all $a \in A^n, b \in A^m$.

We denote by $\Delta$ the \emph{simplex category}, namely the category of finite sets where each set $[n]:= \{ 0, \ldots,  n \}$ is equipped with the canonical total ordering $0 < 1 < \ldots < n$ and where $\Hom_\Delta([n], [m])$ consists only of non-decreasing maps for this ordering. A functor $\Delta^\op \to \sC$ is called a \emph{simplicial object} in $\sC$ and a functor $\Delta \to \sC$ is called a \emph{cosimplicial object} in $\sC$.
The class of simplicial and cosimplicial objects in a category is organised into a category whose morphisms are natural transformations of functors. In general we will denote the category of simplicial objects in $\sC$ by $\bS \sC$. We also notice that the set $\Hom_\Delta([n], [m])$ is generated under composition by simple injective and surjective maps called \emph{coface} and \emph{codegeneracy} maps. Therefore, in order to specify a functor $\Delta^\op \to \sC$ it is enough to specify its values on these generating morphisms.

As per the above, we denote by $\bSMod_R$ the category of simplicial objects in $\bMod_R$, \ie~the category of functors $\Delta^\op \to \bMod_R$. If $R$ is commutative, the category $\bSMod_R$ is a symmetric monoidal category with monoidal structure given by
\[ (X^\bullet \otimes_R Y^\bullet)^n = X^n \otimes_R Y^n, \qquad  X^\bullet, Y^\bullet \in \bSMod_R \,.\]
Notice that $\bAlg(\bSMod_R)$ is equivalent to the category of simplicial objects in $\bAlg(\bMod_R)$ and $\bCAlg(\bSMod_R)$ is equivalent to simplical objects in $\bCAlg(\bMod_R)$. For this reason, we will also use the notation $\bS\bRings_R := \bAlg(\bSMod_R)$ and $\bS\bC \bRings_R := \bCAlg(\bSMod_R)$.
 
The Dold--Kan correspondence states that $\bCh^{\le 0}(\bMod_R)$ is equivalent to the category $\bSMod_R$. The equivalence is realised by an adjoint pair of functors
\[ N: \bS\bMod_R \leftrightarrows \bCh^{\le 0}(\bMod_R): \Gamma\,, \] 
where $N$ is called the {\it normalised complex functor} and $\Gamma$ the {\it nerve functor}. 
We will not be defining the functors $N$ and $\Gamma$ explicitly here, in that we will only be relying on some of their formal properties. Their most important property is that, remarkably, $\Gamma$ sends quasi-isomorphisms of complexes to weak equivalences of simplicial set and, dually, $N$ sends weak equivalences to quasi-isomorphisms. This property implies that one can compute the cohomology of $X \in \bCh^{\le 0}(\bMod_R)$ as the homotopy groups of $\Gamma(X)$, and, viceversa, one can compute the homotopy groups of $S \in \bS\bMod_R$ as the cohomology of the complex $N(S)$. We will further elaborate on this later on, when we will introduce model structures and homotopy categories.

Another important property of the Dold--Kan correspondence is that, when $R$ is commutative, the functors $N$ and $\Gamma$ are both lax monoidal and oplax monoidal functors, \ie~there exist natural transformations
\[ \Delta_{S, T}: N(S) \otimes_R N(T) \to N(S \otimes_R T), \qquad \ \nabla_{S, T}: N(S \otimes_R T) \to N(S) \otimes_R N (T) \]
for all $S, T \in \bS\bMod_R$ called the \emph{Eilenberg-Zibler map} and the \emph{Alexander-Whitney map}, respectively, which, by formal properties of adjoint functors, induce natural transformations
\[ \Gamma(X) \otimes_R \Gamma(Y) \to \Gamma(X \otimes_R Y), \qquad \ \Gamma(S \otimes_R T) \to \Gamma(S) \otimes_R \Gamma(T). \]
Neither of these maps is a natural isomorphism. Thus, whilst the lax monoidal property implies that $(N \vdash \Gamma)$ induces an adjunction
\[ N: \bS\bRings_R \leftrightarrows \bDGA_R^{\le 0}: \Gamma\,, \]
this adjunction is not an equivalence. However, it is possible to show that $\Gamma$ is fully faithful \cite[Proposition 2.13]{SS}. This means, loosely speaking, that the two categories of algebras are quite close to each other. The same cannot be said for commutative algebras as the lax structure on $\Gamma$ does not preserve the symmetry of the tensor product and, hence, does not send commutative algebras to commutative algebras. Therefore, there is no commutative analogue of the adjunction between simplicial algebras and dg-algebras, and the commutativity of an algebra is not a property that is preserved by these functors.

Let us briefly recall some notions from homotopy theory needed in this work. 
Rather than using the language of $\infty$-categories, which would be preferable but is less accessible, we will work here, for simplicity, in the framework of model categories. A model structure on a category $\sC$ is the datum of three distinguished classes $(W, F, C)$ of morphisms of $\sC$, called \emph{weak equivalences, fibrations and cofibrations}, respectively, satisfying appropriate conditions, for which we refer the reader to \cite[Definition 1.1.3]{Hov}. 
For the reader less accustomed to model categories, it may be helpful to think about weak equivalences as the analogue of (weak) `homotopy equivalences', fibrations as the analogue `well-behaved surjections' and cofibrations as analogue of `well-behaved inclusions'.
Of these three classes of maps the most important are weak equivalences $W$, in that $W$ is used to define the homotopy category of $\sC$. The latter is defined as $\bHo(\sC):=\sC[W^{-1}]$, \ie~ as the category obtained from $\sC$ by formally inverting the morphisms in the class $W$. The other two classes are additional data used for computing objects and morphisms in $\sC[W^{-1}]$. Let us emphasise that the notion of model structure contains redundant information. For example, we will use the fact that the classes $W$ and $F$ uniquely determine $C$ (and, \emph{vice versa}, that $W$ and $C$ determine $F$).

A {\it model category} is a complete and cocomplete category $\sC$ equipped with a model structure $(W,F,C)$. We recall that morphisms belonging to $W \cap F$ are called \emph{trivial fibrations} and to $W \cap C$ are called \emph{trivial cofibrations}. Also, an object $X \in \sC$ such that the morphism from the initial object $\void \to X$ is a cofibration is called \emph{cofibrant}, whereas an object from which the canonical morphism to the final object $X \to 0$ is a fibration is called \emph{fibrant}.

Given two model categories $\sC$ and $\sD$, a pair of adjoint functors 
$(F\dashv G): \sC \leftrightarrows \sD$
is said to be a \emph{Quillen adjunction} if $F$ preserves cofibrations and acyclic cofibrations (or, equivalently, if $G$ preserves fibrations and acyclic fibrations). Given a pair of Quillen adjoint functors $(F\dashv G)$, the \emph{left} (resp.~\emph{right}) \emph{derived functor} $\L F$ (resp.~$\R G$) is obtained by applying $F$ (resp.~$G$) to cofibrant (resp.~fibrant) objects of $\sC$ (resp.~$\sD$). These latter functors form an adjoint pair $\L F: \bHo(\sC) \leftrightarrows \bHo(\sD) : \R G$ fitting into a commutative (up to natural transformation) diagram of functors
\[
\begin{tikzpicture}
\matrix(m)[matrix of math nodes,
row sep=2.6em, column sep=2.8em,
text height=1.5ex, text depth=0.25ex]
{ \sC &  \sD  \\
\bHo(\sC)  & \bHo(\sD) \\};
\path[->]
($(m-1-1.east)+(0,0.1)$) edge node[auto] {$F$} ($(m-1-2.west)+(0,0.1)$);
\path[->]
($(m-1-2.west)+(0,-0.1)$) edge node[auto] {$G$} ($(m-1-1.east)-(0,0.1)$);
\path[->]
(m-1-2) edge node[auto] {} (m-2-2);
\path[->]
(m-1-1) edge node[auto] {} (m-2-1);
\path[->]
($(m-2-1.east)+(0,0.1)$) edge node[auto] {$\L F$} ($(m-2-2.west)+(0,0.1)$);
\path[<-]
($(m-2-1.east)+(0,-0.1)$) edge node[below] {$\R G$} ($(m-2-2.west)+(0,-0.1)$);
\end{tikzpicture}.
\]
A Quillen adjoint pair of functors $(F \dashv G)$ between two model categories such that $(\L F \dashv \R G)$ is an equivalence adjunction is called \emph{Quillen equivalence}. We refer the reader to \cite[Section 1.3]{Hov} for more about Quillen functors.

 We will use the following standard model structures and we refer to~\cite[Chapter 2]{Hov} for further details. 
\begin{itemize}
\item
 On $\bCh^{\le 0}(\bMod_R)$ we consider the \emph{projective model structure}, for which weak equivalences are quasi-isomorphisms of complexes, fibrations are morphisms of complexes that are surjective in negative degrees and cofibrations are morphisms of complexes with projective cokernel that are injective in all degrees. 
\item 
 On $\bS\bMod_R$ we consider the \emph{standard model structure}, for which weak equivalences are weak homotopy equivalences, fibrations are the Kan fibrations of the underlying simplicial sets and cofibrations are uniquely determined.
\end{itemize} 

The next proposition, whose proof can be found in \cite[Section 4.1]{SS}, relates the two model structures above.

\begin{prop} \label{prop:derived_Dold_Kan}
The Dold--Kan correspondence sends the weak equivalences for the projective model structure on $\bCh^{\le 0}(\bMod_R)$ to weak equivalences for the standard model structure on $\bS\bMod_R$ and viceversa. More precisely, it induces an equivalence 
\[ \L N: \bHo(\bS\bMod_R) \leftrightarrows \bHo(\bCh^{\le 0}(\bMod_R)): \R \Gamma \]
between homotopy categories, and $(N\vdash\Gamma)$ is a Quillen equivalence.
\end{prop}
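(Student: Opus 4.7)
The plan is to reduce everything to two classical facts that I would invoke as black boxes: (i) the un-modelled Dold--Kan theorem, which states that $(N \dashv \Gamma)$ is already an equivalence of the underlying 1-categories $\bSMod_R \simeq \bCh^{\le 0}(\bMod_R)$, and (ii) the natural identification
\[ \pi_n(X) \;\cong\; H_{-n}(N(X)) \qquad (n \ge 0) \]
for every $X \in \bSMod_R$, where $\pi_n(X)$ denotes the simplicial homotopy groups of the underlying Kan complex of $X$ (for simplicial abelian groups these agree with the homology of the Moore complex, and the Moore and normalised complexes are chain-homotopy equivalent).

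First I would use (ii) to show that $N$ preserves and reflects weak equivalences: a morphism $f \colon X \to Y$ in $\bSMod_R$ is a weak equivalence in the standard model structure iff it induces isomorphisms on all $\pi_n$, iff $N(f)$ induces isomorphisms on all $H_{-n}$, iff $N(f)$ is a quasi-isomorphism, \ie~a weak equivalence in the projective model structure on $\bCh^{\le 0}(\bMod_R)$. Since $\Gamma$ is a quasi-inverse to $N$ by (i), the symmetric statement for $\Gamma$ follows immediately.

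Next I would verify that $(N \dashv \Gamma)$ is a Quillen adjunction. The cleanest route is via the right adjoint: one shows that $\Gamma$ sends (trivial) fibrations to (trivial) fibrations. This rests on the classical characterisation of Kan fibrations between simplicial abelian groups as precisely those maps which are degreewise surjective in positive degrees on the normalised (equivalently, Moore) chain complex; concretely, if $\phi \colon A^\bullet \to B^\bullet$ is surjective in negative degrees, then $\Gamma(\phi)$ satisfies the horn-filling condition because the kernel of $\phi$, regarded via $\Gamma$, is itself a simplicial abelian group and hence a Kan complex. Combined with the previous paragraph, $\Gamma$ preserves trivial fibrations as well, so $(N \dashv \Gamma)$ is Quillen.

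Finally, upgrading to a Quillen equivalence is essentially formal once (i) is known. Indeed, the unit $\eta \colon \id \Rightarrow \Gamma N$ and counit $\epsilon \colon N \Gamma \Rightarrow \id$ of the classical Dold--Kan adjunction are natural \emph{isomorphisms} of the underlying 1-categories, hence in particular objectwise weak equivalences in both model structures. Since the derived unit/counit evaluated on a cofibrant (resp.\ fibrant) object differ from $\eta$ (resp.\ $\epsilon$) only by a weakly equivalent cofibrant (resp.\ fibrant) replacement, they remain weak equivalences. This gives the Quillen equivalence and, passing to homotopy categories, the adjoint equivalence $\L N \dashv \R \Gamma$. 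The only mildly technical step is the fibration characterisation invoked in the third paragraph; everything else reduces to (i) and (ii) and elementary model-categorical formalism.
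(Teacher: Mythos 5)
The paper does not provide its own proof of this proposition---it simply cites \cite[Section 4.1]{SS}---so your blind argument is a genuine supplement rather than a parallel to an argument in the text. Your overall strategy is sound: since the classical Dold--Kan correspondence is an equivalence of underlying 1-categories, once you know that $N$ and $\Gamma$ both preserve weak equivalences and that $\Gamma$ preserves fibrations, the Quillen adjunction and Quillen equivalence follow formally. Two remarks on precision.

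First, the step establishing that $\Gamma$ preserves fibrations is where the real mathematical content lives, and your justification for it is not quite a proof. The parenthetical ``because the kernel of $\phi$, regarded via $\Gamma$, is itself a simplicial abelian group and hence a Kan complex'' does not by itself yield the horn-filling property for $\Gamma(\phi)$: having a Kan fibre is necessary but not sufficient. What you actually need is the classical theorem (May, \emph{Simplicial Objects in Algebraic Topology}, or Goerss--Jardine III.2.11) that a map of simplicial groups is a Kan fibration if and only if the induced map on Moore/normalised complexes is surjective in strictly positive (homological) degrees. Since the fibrations in the projective model structure on $\bCh^{\le 0}(\bMod_R)$ are exactly the maps surjective in negative (cohomological) degrees, this is precisely the statement that $\Gamma$ sends fibrations to fibrations and $N$ sends fibrations to fibrations; you should cite it as such rather than gesture at the kernel.

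Second, your final paragraph can be made shorter and cleaner. Because you have already shown (via the identification $\pi_n \cong H_{-n} \circ N$) that both $N$ and $\Gamma$ preserve \emph{all} weak equivalences---not merely those between cofibrant or fibrant objects---the derived functors $\L N$ and $\R\Gamma$ are computed without any (co)fibrant replacement; they are simply the induced functors on the localised categories. Since $N$ and $\Gamma$ are inverse equivalences whose unit and counit are natural isomorphisms, $\L N$ and $\R\Gamma$ are immediately inverse equivalences of the homotopy categories. There is then no need to reason about what happens to the unit/counit after replacement: that machinery is only required when a Quillen pair is not already an equivalence of underlying categories. Spelling this out would make the formal reduction in your last paragraph transparent rather than merely plausible.
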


Let us also observe that by definition 
\[ \bHo(\bCh^{\le 0}(\bMod_R)) \cong D^{\le 0}(\bMod_R) \]
where $D^{\le 0}(\bMod_R)$ is the standard derived category of connective complexes of $\bMod_R$.

Now, the two model structures introduced so far -- the standard model structure on $\bS\bMod_R$ and the projective model structure on $\bCh^{\le 0}(\bMod_R)$   -- possess numerous special properties, in addition to those shared by all model structures. 
As far as our paper is concerned, a key property thereof is the remarkable fact that
when $R$ is commutative both model structures satisfy the \emph{monoid axiom} with respect to the monoidal structures $\otimes_R$ presented earlier in this section. This implies that the categories $\bD\bG\bA_R^{\le 0}$ and $\bSRings_R$ inherit model structures form $\bCh^{\le 0}(\bMod_R)$ and $\bS\bMod_R$, respectively, by pulling back the model structure via the forgetful functor. Thus, a morphism in $\bD\bG\bA_R^{\le 0}$ (resp.~$\bSRings_R$) is a weak equivalence (resp.~a fibration) if and only if the morphism of underlying objects of $\bCh^{\le 0}(\bMod_R)$ (resp.~$\bMod_R$) is a weak equivalence (resp.~a fibration). This also uniquely determines cofibrations.

In light of the above discussion, the most interesting consequence of the Dold--Kan correspondence for this work is the following, see \cite[Theorem 1.1 (3)]{SS}.

\begin{prop} \label{prop:monoidal_Dold_Kan}
Let $R$ be a commutative ring. Then the monoidal Dold--Kan correspondence 
\[ N: \bSRings_R \leftrightarrows \bD\bG\bA_R^{\le 0}: \Gamma \]
induces Quillen equivalences
\begin{equation}
\label{prop:monoidal_Dold_Kan equation 1}
\L N: \bHo(\bSRings_R) \leftrightarrows \bHo(\bD\bG\bA_R^{\le 0}): (\L N)^{-1}
\end{equation}
and 
\begin{equation}
\label{prop:monoidal_Dold_Kan equation 2}
(\R \Gamma)^{-1}: \bHo(\bSRings_R) \leftrightarrows \bHo(\bD\bG\bA_R^{\le 0}): \R \Gamma\,. 
\end{equation}
\end{prop}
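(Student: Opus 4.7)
The plan is to reduce the statement to the general machinery of Schwede-Shipley that promotes a Quillen equivalence between symmetric monoidal model categories to a Quillen equivalence on the associated categories of monoids, provided the pair satisfies a mild homotopy-monoidality condition (a \emph{weak monoidal Quillen pair}). In our situation the underlying Quillen equivalence on modules is already recorded in Proposition~\ref{prop:derived_Dold_Kan}, and the transferred model structures on $\bSRings_R$ and $\bDGA_R^{\le 0}$ exist precisely because the monoid axiom holds on both $\bSMod_R$ and $\bCh^{\le 0}(\bMod_R)$, as emphasised in the paragraph immediately preceding the statement.

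Next, I would verify that $(N,\Gamma)$ is a weak monoidal Quillen pair, which amounts to two conditions: (i) the Eilenberg-Zilber comparison $\Delta_{S,T}: N(S) \otimes_R N(T) \to N(S \otimes_R T)$ is a weak equivalence for cofibrant $S, T \in \bSMod_R$; and (ii) the unit comparison $R \to N(R)$ (with $R$ viewed as a constant simplicial module) is a weak equivalence. Both are classical consequences of the Eilenberg-Zilber theorem: the Alexander-Whitney map $\nabla_{S,T}$ is a chain homotopy inverse to $\Delta_{S,T}$, so in fact $\Delta_{S,T}$ is a quasi-isomorphism on every pair of inputs and not only on cofibrant ones; and the unit comparison is immediate by inspection of $N$ on the constant simplicial module $R$. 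With these verifications in hand, the Schwede-Shipley transfer theorem applies and produces a Quillen equivalence $N: \bSRings_R \leftrightarrows \bDGA_R^{\le 0}: \Gamma$ at the level of algebras.

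Finally, the two displayed Quillen equivalences in the statement are equivalent presentations of this outcome on homotopy categories. The first names the equivalence by its derived left adjoint $\L N$ together with its quasi-inverse $(\L N)^{-1}$ (which is $\R\Gamma$); the second names it by the inverse of the derived right adjoint $(\R\Gamma)^{-1}$ (which coincides with $\L N$) together with $\R\Gamma$ as its right adjoint. Both express the same formal fact: for any Quillen equivalence the derived left and right adjoints are mutually inverse equivalences of homotopy categories. The main technical point is really the weak monoidality of the Dold-Kan pair; however, since the relevant Eilenberg-Zilber/Alexander-Whitney estimates are entirely classical, no genuinely new obstacle is encountered, and the substance of the proof reduces to the careful bookkeeping that Schwede and Shipley carry out in their work.
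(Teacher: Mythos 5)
Your main strategy --- invoking Schwede--Shipley's transfer of a weak monoidal Quillen equivalence to the associated categories of monoids, with the weak monoidality supplied by the Eilenberg--Zilber and Alexander--Whitney homotopy equivalences --- is exactly the paper's own treatment: the paper simply cites \cite[Theorem 1.1(3)]{SS} together with the observation that the EZ/AW maps, while not isomorphisms, are homotopy equivalences, so that $N$ and $\Gamma$ are \emph{homotopically} strongly monoidal.

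However, the final paragraph of your proposal contains a genuine error. You assert that $(\L N)^{-1} = \R\Gamma$ and $(\R\Gamma)^{-1} = \L N$, and that \eqref{prop:monoidal_Dold_Kan equation 1} and \eqref{prop:monoidal_Dold_Kan equation 2} are ``equivalent presentations of the same outcome.'' This directly contradicts the paragraph of the paper immediately following the proposition, which emphasises that $\L N$ and $\R\Gamma$ are both equivalences of homotopy categories but are \emph{not} inverses of each other (and therefore do not even form a Quillen adjunction); each has its own adjoint, described in \cite{SS}. The underlying reason is that $N$ and $\Gamma$, although mutually inverse at the module level, are each lax monoidal via a \emph{different} structure map (EZ for $N$, the AW-adjunct for $\Gamma$), and these structure maps are only homotopy-inverse, not strictly inverse. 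Consequently Schwede--Shipley produce \emph{two} distinct Quillen equivalences on monoids: one in which the lax monoidal functor $N$ plays the role of the right adjoint (whose left adjoint is a free-type construction, not $\Gamma$), and one in which $\Gamma$ plays the role of the right adjoint (whose left adjoint is likewise not $N$). Your argument runs the transfer theorem once and then tries to recover the second equivalence by formally inverting, but that step fails: you have only established one of \eqref{prop:monoidal_Dold_Kan equation 1} and \eqref{prop:monoidal_Dold_Kan equation 2}, not both. To repair the proof, apply the transfer theorem twice, once for the weak monoidal Quillen pair with $N$ as the lax right adjoint and once with $\Gamma$ as the lax right adjoint; both verifications rest on the same EZ/AW homotopy equivalence, just with the roles of the two functors swapped.
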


Let us emphasise that the derived functors $\L N$ and $\R \Gamma$ appearing in Proposition \ref{prop:monoidal_Dold_Kan} are both Quillen equivalences, but they are not inverses of each other, so they do not form a Quillen adjunction. Nevertheless, they both have their respective adjoints, described in \cite{SS}. A key observation to understand \eqref{prop:monoidal_Dold_Kan equation 1} and \eqref{prop:monoidal_Dold_Kan equation 2} is that, although the Eilenberg-Zibler maps and the Alexander-Whitney maps are not invertible (and in particular they are not inverse of each other), they are homotopy equivalences. Therefore, homotopically $N$ and $\Gamma$ are strongly monoidal functors. 

We denote the category $\bHo(\bD\bG\bA_R^{\le 0})$ by $\bH\bRings_R$ and refer to it as the category of \emph{homotopy (connective) $R$-algebras}. Henceforth we will make no distinction between $\bHo(\bSRings_R)$ and $\bHo(\bD\bG\bA_R^{\le 0})$, unless otherwise stated, implicitly using the above equivalences. If $A \in \bHo(\bD\bG\bA_R^{\le 0})$ we define $\bH\bRings_A$ as the category of objects under $A$.

Perhaps surprisingly at first, the picture is more complicated for commutative differential graded algebras, which explains the need to use simplicial objects in this paper. Chain complexes are easier to deal with, are preferable for doing computations, and some of the key computations in literature upon which this work relies have been carried out using chain complexes. Hence, working with chain complexes would be a natural choice for us. However, we also would like to compare our results about noncommutative geometry with the usual derived algebraic geometry --- that is, the commutative version of our theory. 
Whilst in this case, too, chain complexes may be preferable, their use is not straightforward, because the model structure on chain complexes does not always satisfy the \emph{symmetric monoid axiom} (see~\cite{Whi} for further details). The issue here is related to the fact that the functors $N$ and $\Gamma$ given by the Dold--Kan correspondence are not symmetric monoidal, so that, although the two underlying categories are equivalent, their respective monoidal structures have different properties. In particular, for $R$ commutative, the monoidal structure on $\bS \bMod_R$ always satisfies the symmetric monoid axiom, whereas the monoidal structure on $\bCh^{\le 0}(\bMod_R)$ satisfies the symmetric monoid axiom only if the characteristic of $R$ is $0$. This leads to two possible solutions. The first solution consists in using the functor $N$ to transfer the well-behaved monoidal structure from $\bS \bMod_R$ to $\bCh^{\le 0}(\bMod_R)$. The resulting monoidal structure is often called \emph{shuffle product}. Alternatively, one can work with the category $\bS \bMod_R$ and use (with all the care required) the Dold--Kan correspondence to move back and forth between chain complexes and simplicial objects whenever one of the two settings is more convenient. We pursue the second approach. Therefore, if $R$ is a commutative ring, we define the category of \emph{homotopy (connective) commutative $R$-algebras} as
\[ \bH\bC\bRings_R := \bHo(\bS\bC\bRings_R), \]
where we used the fact that the monoidal structure on $\bS\bMod_R$ satisfies the symmetric monoid axiom and therefore it induces naturally a model structure on $\bCAlg(\bS\bMod_R) \cong \bS\bC\bRings_R$ by the right transfer of model structures. In other words, weak-equivalences (resp.~fibrations) of objects of $\bS\bC\bRings_R$ are weak-equivalences (resp.~fibrations) of the underlying objects of $\bS\bMod_R$.

\section{The homotopy category of algebras}
\label{sec:homotopy_algebras}

The main goal of this section is to study some of the basic properties of the category $\bH\bRings_R$ introduced in the previous section --- properties needed for developing and understanding the derived geometry associated with it. 

\subsection{Categories of algebras} \label{Categories of algebras}

Throughout this subsection $R\in \bC\bRings$ is a fixed commutative $\Z$-algebra.

We first recall how the various categories of algebras we have introduced so far are related. There is a diagram of faithful inclusions of categories
\[
\begin{tikzpicture}
\matrix(m)[matrix of math nodes,
row sep=2.6em, column sep=2.8em,
text height=1.5ex, text depth=0.25ex]
{ \bC\bRings_R &  \bH\bC\bRings_R  \\
\bRings_R  & \bH\bRings_R \\};
\path[->]
(m-1-1) edge node[auto] {} (m-1-2);
\path[->]
(m-1-2) edge node[auto] {} (m-2-2);
\path[->]
(m-1-1) edge node[auto] {} (m-2-1);
\path[->]
(m-2-1) edge node[auto] {} (m-2-2);
\end{tikzpicture}.
\]
The vertical functors are just the inclusions of commutative rings into the noncommutative ones, whereas the horizontal functors are the inclusion of rings in the homotopy category as discrete objects. We notice that all the functors are fully faithful but the inclusion $\bH \bC \bRings_R \to \bH \bRings_R$ that is only faithful (later on we will describe the hom-sets of the homotopy categories and it will be clear why this is the case). We also notice that the fact that $\bH \bC \bRings_R$ is a balanced category implies that the inclusion into $\bH \bRings_R$ is a conservative functor.

The functors above can be described explicitly as follows. An object $A \in \bRings_R$ can be mapped functorially to the simplicial ring
\[ A^\disc = [ \ldots \mathrel{\substack{\textstyle\rightarrow\\[-0.6ex]
                      \textstyle\rightarrow \\[-0.6ex]
                      \textstyle\rightarrow\\[-0.6ex]
                      \textstyle\rightarrow}}
                       A \mathrel{\substack{\textstyle\rightarrow\\[-0.6ex]
                      \textstyle\rightarrow \\[-0.6ex]
                      \textstyle\rightarrow}} A \rightrightarrows A ] \]
where all the face and degeneracy maps are the identity map. Then, 
\[ \pi^n(A^\disc) = \begin{cases}
A & \text{ for } n = 0 \\
0 & \text{ for } n \ne 0
\end{cases}
 \]
where $\pi^n(A^\disc)$ denotes the $n$-th homotopy group of the simplicial set underlying $A^\disc$. Note that for any $B \in \bS \bRings_R$ the set $\pi^0(B)$ has an induced structure of $R$-algebra.
 
The functor $(\minus)^\disc$ realises the horizontal full embeddings of the diagram above. 
We observe that all the embedding functors above are the right adjoints of an adjoint pair of functors. Therefore, the complete diagram of functors reads
\[
\begin{tikzpicture}
\matrix(m)[matrix of math nodes,
row sep=2.6em, column sep=2.8em,
text height=1.5ex, text depth=0.25ex]
{ \bC\bRings_R &  \bH\bC\bRings_R  \\
\bRings_R  & \bH\bRings_R \\};
\path[right hook->]
(m-1-1) edge node[auto] {$(\minus)^\disc$} (m-1-2);
\path[->, transform canvas={yshift=-2.5mm}, font=\scriptsize]
(m-1-2) edge node[auto] {$\pi_0$} (m-1-1);
\path[->]
(m-1-2) edge node[auto] {} (m-2-2);
\path[->, transform canvas={xshift=-2.5mm}, font=\scriptsize]
(m-2-2) edge node[auto] {$(\minus)^{\L \ab}$} (m-1-2);
\path[right hook->]
(m-1-1) edge node[auto] {} (m-2-1);
\path[->, transform canvas={xshift=-2.5mm}, font=\scriptsize]
(m-2-1) edge node[auto] {$(\minus)^\ab$} (m-1-1);
\path[right hook->]
(m-2-1) edge node[auto] {$(\minus)^\disc$} (m-2-2);
\path[->, transform canvas={yshift=-2.5mm}, font=\scriptsize]
(m-2-2) edge node[auto] {$\pi_0$} (m-2-1);
\end{tikzpicture}.
\]
The functor $\pi_0$ just maps a simplicial ring to its $0$-th homotopy ring, $(\minus)^\ab$ is the \emph{abelianization} functor defined by
\[ A \mapsto \frac{A}{(xy - yx\ |\ x,y \in A)} \]
and $(\minus)^{\L \ab}$ is the derived functor of the abelianization functor. We observe that this diagram of functors is commutative up to natural isomorphism. Also, the fact that $(\minus)^\disc$ is fully faithful can be checked easily by noticing that the counit map of the adjunction $\pi^0(A^\disc) \to A$ is a natural isomorphism. 

The categories $\bC\bRings_R$ and $\bRings_R$ have all limits and colimits. We briefly describe them. In both $\bC\bRings_R$ and $\bRings_R$ products are given by the Cartesian product of the underlying set with coordinatewise operations. Similarly, pullbacks are given by the pullbacks of the underlying sets equipped with the natural ring operations induced by the inclusion into the Cartesian product. On the other hand, coproducts and pushouts in $\bC\bRings_R$ are very different from those in $\bRings_R$. We first describe the former case. The pushout of $A \to B$ and $A \to C$ in $\bC\bRings_R$ is given by the ring
\[ B \otimes_A C \]
and the coproduct of any small family $\{ A_i \}_{i \in I} \subset \bC\bRings_R$ is given by
\[ \bigotimes_{i \in I} A_i = \limind_{(i_1, \ldots, i_n) \in I^n} A_{i_1} \otimes_R \cdots \otimes_R A_{i_n}  \] 
where the colimit is taken over the family of all finite subsets of $I$. The pushout of two objects in $\bRings_R$ is given by the \emph{free product} of algebras. For two morphisms $A \to B$ and $A \to C$ in $\bRings_R$ this is defined by
\begin{equation}
\label{free product}
B \ast_A C = \frac{T_{\otimes_A}(B \oplus C)}{({ b \otimes_A b' - b b', c \otimes_A c'- cc', 1_B - 1_C \ |\  1_B, b,b' \in B, 1_C, c, c' \in C})}
\end{equation}
where $T_{\otimes_A}(\minus)$ denotes the tensor algebra construction over $A$. The coproduct of any small family $\{ A_i \}_{i \in I} \subset \bRings_R$ is given by
\[ \bigast_{i \in I} A_i = \limind_{(i_1, \ldots, i_n) \in I^n} A_{i_1} \ast_R \cdots \ast_R A_{i_n} \] 
where again the colimit is indexed by all finite subsets of $I$.
We thus notice that the inclusion $\bC\bRings_R \to \bRings_R$ does not commute with colimits. But the fact that $(\minus)^\ab$ is left adjoint immediately implies that
\[ \left ( \bigast_{i \in I} A_i \right )^\ab = \bigotimes_{i \in I} A_i^\ab, \ \  (B \ast_A C)^\ab = B^\ab \otimes_{A^\ab} C^\ab.  \]

We recall that a colimit $\limind_{i \in I} X_i$ in a category $\bC$ is said to be \emph{filtered} if the indexing set $I$ is a directed poset.
Although the functor $\bC\bRings_R \to \bRings_R$ does not in general commute with colimits, it has the following (well-known) property.

\begin{prop} \label{prop:inclusion_commutes_fil_colim}
The inclusion $\bC\bRings_R \to \bRings_R$ commutes with filtered colimits.
\end{prop}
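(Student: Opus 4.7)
The plan is to show that when we take a filtered colimit of a diagram $\{A_i\}_{i\in I}$ in $\bC\bRings_R$, the computation performed in the larger category $\bRings_R$ already lands in the subcategory $\bC\bRings_R$ and has the correct universal property there. The first step is to recall that, in both $\bC\bRings_R$ and $\bRings_R$, filtered colimits are computed at the level of underlying $R$-modules (equivalently, underlying sets); this is the standard fact that the forgetful functor to $\bMod_R$ preserves filtered colimits, which follows from the existence of a filtered-colimit-preserving left adjoint (the free $R$-algebra functor, commutative or not) and the fact that a forgetful functor from a category of algebras for a finitary monad creates filtered colimits.

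The second step, which is the real content, is to verify that the filtered colimit computed in $\bRings_R$ is commutative. Let $A = \limind_{i\in I} A_i$ be the filtered colimit in $\bRings_R$, and let $x, y \in A$. By the explicit description of filtered colimits on underlying sets, there exist indices $i, j \in I$ and elements $x_i \in A_i$, $y_j \in A_j$ mapping to $x, y$. Since $I$ is filtered, pick $k \in I$ with arrows $i \to k$ and $j \to k$; push $x_i$ and $y_j$ to elements $x_k, y_k \in A_k$. Because $A_k \in \bC\bRings_R$ we have $x_k y_k = y_k x_k$ in $A_k$, and applying the canonical map $A_k \to A$ yields $xy = yx$ in $A$. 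Hence $A \in \bC\bRings_R$.

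The final step is the universal property: for any $B \in \bC\bRings_R$, a morphism $A \to B$ in $\bRings_R$ is automatically a morphism in $\bC\bRings_R$, since $\bC\bRings_R$ is a full subcategory of $\bRings_R$. Therefore
\[
\Hom_{\bC\bRings_R}(A, B) = \Hom_{\bRings_R}(A, B) = \limpro_{i \in I} \Hom_{\bRings_R}(A_i, B) = \limpro_{i\in I} \Hom_{\bC\bRings_R}(A_i, B),
\]
so $A$ represents the colimit of the diagram in $\bC\bRings_R$ as well, and the inclusion preserves it. No serious obstacle is expected; the only delicate point is ensuring one appeals to filteredness (rather than mere directedness in the naive sense) to bring a finite collection of elements into a common $A_k$.
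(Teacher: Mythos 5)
Your proof is correct, and the central argument --- pushing two elements into a common $A_k$ via filteredness and using commutativity there --- is exactly the one the paper uses. The only difference is in the final bookkeeping: the paper concludes by applying the adjunction $(\minus)^\ab \dashv \iota$ to identify the colimit, whereas you verify the universal property directly via fullness of $\bC\bRings_R \subset \bRings_R$; both are routine and yield the same result, with your finish arguably a touch more elementary since it avoids invoking the abelianization functor.
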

\begin{proof}
Consider $\limind_{i \in I} A_i \in \bC\bRings_R$ and let us denote $\iota: \bC\bRings_R \to \bRings_R$ the inclusion functor. Then, we need to show that the canonical map
\[ \limind_{i \in I} \iota(A_i) \to \iota(\limind_{i \in I} A_i) \]
is an isomorphism. Notice that since $\iota$ is the right adjoint to the abelianization functor we have that
\[ \left ( \limind_{i \in I} \iota(A_i) \right )^\ab \cong  \limind_{i \in I} \iota(A_i)^\ab \cong \limind_{i \in I} A_i \]
because $\iota$ is fully faithful. It is thus enough to check that $\limind_{i \in I} \iota(A_i)$ is commutative. Let us consider $x, y \in \limind_{i \in I} \iota(A_i)$. Since the colimit is filtered and the forgetful functor $\bRings_R \to \bSets$ commutes with filtered colimits, there exists two elements $\tilde{x} \in \iota(A_{i_x})$ and $\tilde{y} \in \iota(A_{i_y})$ such that $\a_{i_x}(\tilde{x}) = x$ and $\a_{i_y}(\tilde{y}) = y$, where $\a_x: \iota(A_{i_x}) \to \limind_{i \in I} \iota(A_i)$ and $\a_y: \iota(A_{i_y}) \to \limind_{i \in I} \iota(A_i)$ are the canonical maps. The fact that $I$ is directed implies that there exists $k \in I$ such that $i_x \le k$ and $i_y \le k$. Thus, since $A_k$ is commutative we have
\begin{multline}
 x \cdot y = \a_k(\phi_{i_x}(\tilde{x})) \cdot \a_k(\phi_{i_y}(\tilde{y})) = \a_k(\phi_{i_x}(\tilde{x}) \cdot \phi_{i_y}(\tilde{y})) 
\\
=  \a_k(\phi_{i_y}(\tilde{y}) \cdot \phi_{i_x}(\tilde{x})) = \a_k(\phi_{i_y}(\tilde{y})) \cdot \a_k(\phi_{i_x}(\tilde{x})) = y \cdot x \,,
\end{multline}
where $\phi_{i_x}: A_{i_x} \to A_k$ and $\phi_{i_y}: A_{i_y} \to A_k$ are morphisms of the filtered system. Therefore $\limind_{i \in I} \iota(A_i)$ is commutative, and this completes the proof.
\end{proof}

\subsubsection{Finitely presented algebras}
\label{On the notion of finite presentation}

Before moving on to the homotopical setting, let us recall some fundamental differences in the notion of finite presentation between the commutative and noncommutative settings. For the sake of clarity, we shall discuss objects and morphisms separately. Let us start with the former. 
Let $R$ be a commutative ring.

\begin{defn}[Objects of finite presentation]
\label{defn obj fin pres}
A ring $A \in \bRings_R$ is said to be of \emph{finite presentation} (or \emph{finitely presented}) if the canonical morphism
\[ \limind_{i \in I} \Hom_{\bRings_R}(A, B_i) \to \Hom_{\bRings_R}(A, \limind_{i \in I} B_i) \]
is an isomorphism, for all filtered colimits $\limind_{i \in I} B_i$ of  $R$-algebras. An analogous definition is given for objects in $\bC\bRings_R$.
\end{defn}

We would like to recall explicitly, as this will play a key role in the forthcoming discussion, that the generators of the polynomial algebra with coefficients in $R$ can be obtained as the image, via the tensor product functor $(\minus) \otimes_\Z R$, of the generators of polynomial algebra with coefficients in $\Z$. So, we have the canonical isomorphism
\begin{equation}
\label{commmutative ring}
R [x_1, \ldots, x_n]\cong \Z[x_1, \ldots, x_n] \otimes_\Z R\,.
\end{equation}

For $R \in \bC \bRings$, let us denote by $R\langle x_1, \ldots, x_n\rangle$ the \emph{free algebra} with coefficients in $R$, with the property that $R$ is in the centre of the algebra. Concretely,
\[ R\langle x_1, \ldots, x_n\rangle :=  \left \{  \sum a_w w \, |\, w \text{ is a word in the variables } x_1, \ldots, x_n  \right \} \]

In the noncommutative case the computation of the free product functor $(\minus) \ast_\Z R$ on the above free algebras with coefficients in $\Z$ gives rise to the family of algebras
\begin{equation}
\label{noncommutative ring}
 R \{ x_1, \ldots, x_n \}:=\Z\langle x_1, \ldots, x_n\rangle \ast_\Z R.
\end{equation}

One might expect that $R\langle x_1, \ldots, x_n\rangle \cong R \{ x_1, \ldots, x_n \}$. However, for a general $R$, this is \emph{not} the case, even when $R$ is commutative. This happens because the generators of~\eqref{noncommutative ring} do not commute with all the elements of $R$.

\begin{exa} \label{exa:non-central_free_algebras}
    Consider the case $R = \Z[x]$. Then, 
    \[ R \{y\} = \Z[y] \ast_\Z R = \Z[y] \ast_\Z \Z[x] \cong \Z \lt x, y \gt.  \]
    In particular, the image of $R \to R \{y\}$ does not lie in the center, whereas 
    \[ R \lt y \gt = R[y] \cong \Z[x,y] \ne \Z \lt x, y \gt \cong R \{y\}. \]
\end{exa}

\begin{prop} \label{prop:finitely_presented_objects}
We have the following.
\begin{enumerate}[(i)]
\item A ring $A \in \bC\bRings_\Z$ is of finite presentation if and only if it is a quotient of $\Z[x_1, \ldots, x_n]$.
\item A ring $A \in \bRings_\Z$ is of finite presentation if and only if it is a quotient of $\Z\{ x_1, \ldots, x_n \}$ by a finitely generated ideal.
\end{enumerate}
\end{prop}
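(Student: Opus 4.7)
The plan is to handle both parts in parallel, letting $F$ denote $\Z[x_1,\ldots,x_n]$ for part (i) and $F = \Z\{x_1,\ldots,x_n\} = \Z\langle x_1,\ldots,x_n\rangle$ for part (ii); in both cases $F$ represents the set-valued functor $B \mapsto B^n$ on the relevant category, and filtered colimits in $\bC\bRings_\Z$ and $\bRings_\Z$ are computed on underlying sets. Throughout, ``ideal'' means two-sided ideal in the noncommutative setting.

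\textbf{Backward implication.} For $A = F/\langle f_1,\ldots,f_m\rangle$ one has
\[ \Hom(A,B) \cong \{(b_1,\ldots,b_n)\in B^n \mid f_j(b_1,\ldots,b_n)=0,\ j=1,\ldots,m\}, \]
an equalizer-type finite limit of the representable $B\mapsto B^n$. Since finite limits commute with filtered colimits in $\bSets$, $\Hom(A,-)$ does so too, giving finite presentation.

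\textbf{Forward implication.} First I would show $A$ is finitely generated: express $A$ as the filtered colimit $A = \colim_\alpha A_\alpha$ of its finitely generated subalgebras (the union of two fg subalgebras is fg, so the system is directed), then use finite presentation to factor $\id_A$ through some $A_{\alpha_0}$, so that $A$ is a quotient of $A_{\alpha_0}$ and hence itself finitely generated. Fix a surjection $\pi\colon F\to A$ and set $I=\ker\pi$. Writing $I = \colim_\beta I_\beta$ as the filtered union of its finitely generated (two-sided) sub-ideals, we have $A = \colim_\beta F/I_\beta$, so finite presentation again produces a ring-theoretic section $s\colon A \to F/I_{\beta_0}$ of the canonical projection $p\colon F/I_{\beta_0}\to A$ for some $\beta_0$.

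The crux is to enlarge $I_{\beta_0}$ to a finitely generated ideal $J\subseteq I$ with $F/J \cong A$. Consider the two ring maps $\sigma := s\circ\pi$ and the canonical quotient $\pi_0\colon F\to F/I_{\beta_0}$; both compose with $p$ to yield $\pi$, so on each generator $\sigma(x_i)-\pi_0(x_i)\in\ker p = I/I_{\beta_0}$. Choose lifts $g_i\in I$ and set $J := I_{\beta_0} + \langle g_1,\ldots,g_n\rangle$ (a two-sided ideal), which is finitely generated. After the further quotient $q\colon F/I_{\beta_0}\to F/J$, the maps $q\sigma, q\pi_0\colon F\to F/J$ agree on the generators $x_i$ (by construction of $J$), hence are equal as ring homomorphisms out of the free object $F$; both therefore coincide with the canonical projection onto $F/J$. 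Now for any $f\in I$ we compute $\sigma(f) = s(\pi(f)) = 0$ in $F/I_{\beta_0}$, so $q\sigma(f) = 0$ in $F/J$, i.e.\ $f\in J$. This gives $I\subseteq J$ and hence $I = J$ is finitely generated.

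The only subtlety — and the place where one must be most attentive — is in the noncommutative half: the ``lift and enlarge'' step relies on the universal property that a ring map out of the free algebra $\Z\langle x_1,\ldots,x_n\rangle$ is uniquely determined by its values on the $x_i$, and on interpreting ``finitely generated'' as finitely generated \emph{as a two-sided ideal}. With these conventions the argument is uniform in the two cases; no new ideas beyond those already in the commutative proof are required.
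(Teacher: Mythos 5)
Your proof is correct. The backward implication is conceptually the same as the paper's: you exhibit $\Hom(A,-)$ as a finite limit (fiber of $B^n\to B^m$) of representables $B\mapsto B^k$ and invoke the commutation of finite limits with filtered colimits in $\bSets$, whereas the paper packages the same idea as a coequalizer presentation $A\cong\Coeq(\Z[y_1,\ldots,y_m]\rightrightarrows\Z[x_1,\ldots,x_n])$ and uses the free–forgetful adjunction to reduce to finite sets; these are equivalent bookkeeping choices. For part (i) you should add one sentence noting that a general quotient $\Z[x_1,\ldots,x_n]/I$ automatically has $I$ finitely generated by Noetherianity before applying your backward argument — you implicitly assume a finite list of relations.

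Where you genuinely depart from the paper is in the forward implication, and this is where your write-up is an improvement. The paper argues this direction by asserting that a ring without a finite presentation ``has infinitely many generators'' and so $\Hom(A,-)$ cannot commute with filtered colimits; this sketch only addresses \emph{finite generation of the ring}, which suffices for (i) by Noetherianity but leaves the finite generation of the kernel ideal unaddressed in (ii), where the paper merely says the proof is ``analogous.'' Your argument fills precisely this gap. You first run the standard ``factor $\id_A$ through the colimit of finitely generated subalgebras'' step to obtain a surjection $\pi\colon F\to A$, and then — this is the new content — run it again over the filtered system $A=\colim_\beta F/I_\beta$ of quotients by finitely generated sub-ideals to obtain a section $s$ of $p\colon F/I_{\beta_0}\to A$, and finally perform the enlargement trick: define $J=I_{\beta_0}+\langle g_1,\ldots,g_n\rangle$ using lifts $g_i\in I$ of $\sigma(x_i)-\pi_0(x_i)\in\ker p$, observe that $q\sigma=q\pi_0$ on generators and hence everywhere (universal property of the free algebra), and conclude $I\subseteq J\subseteq I$. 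This cleanly establishes that the kernel is a finitely generated two-sided ideal, exactly what part (ii) requires, and the argument is uniform across the commutative and noncommutative cases. In short: correct, different in the forward direction, and more complete than the paper's own proof of (ii).
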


\begin{proof}
\begin{enumerate}[(i)]
\item This result is well-known. However, we provide a full proof for reader's convenience. 

Since $\Z[x_1, \ldots, x_n]$ is Noetherian, all its ideals are finitely generated. Suppose
\[ A \cong \frac{\Z[x_1, \ldots, x_n]}{I} \cong \Coeq(\Z[y_1, \ldots, y_m] \stackrel{\phi_1, \phi_2}{\rightrightarrows} \Z[x_1, \ldots, x_n]) \] 
where $\phi_1, \phi_2$ are ring homomorphisms such that $\im(\phi_1 - \phi_2)$ generates \footnote{For example, if $I = (f_1, \ldots, f_m)$ then we can consider $\phi_1(y_i) = 1$ for all $i$ and $\phi_2(y_i) = 1 - f_i$. So, $(\phi_1 - \phi_2)(y) \in I$ for all $y$ and the generators of $I$ are in the image.} the ideal $I$. Then,
\begin{multline*} \Hom_{\bC \bRings_\Z}(A, \limind_{i \in I} B_i)  \cong \Hom_{\bC \bRings_\Z}( \Coeq(\Z[y_1, \ldots, y_m] \stackrel{\phi_1, \phi_2}{\rightrightarrows} \Z[x_1, \ldots, x_n]), \limind_{i \in I} B_i)   
\\  \cong \eq\left( \Hom_{\bC \bRings_\Z}(\Z[y_1, \ldots, y_m], \limind_{i \in I} B_i) \stackrel{\phi_{1, *}, \phi_{2,*}}{\rightrightarrows}  \Hom_{\bC \bRings_\Z}( \Z[x_1, \ldots, x_n], \limind_{i \in I} B_i)   \right)
\\  \cong\eq\left( \Hom_{\bSets}(\{ y_1, \ldots, y_m \}, \limind_{i \in I} B_i) \stackrel{\phi_{1, *}, \phi_{2,*}}{\rightrightarrows}  \Hom_{\bSets}( \{ x_1, \ldots, x_n \}, \limind_{i \in I} B_i) \right)
\end{multline*}
where the last isomorphism comes from the adjunction between the category of sets and the category of commutative rings. But then we have 
\begin{equation}
\label{prop3.4 what fails}
\begin{aligned}
 \eq\left( \Hom_{\bSets}(\{ y_1, \ldots, y_m \}, \limind_{i \in I} B_i) \stackrel{\phi_{1, *}, \phi_{2,*}}{\rightrightarrows}  \Hom_{\bSets}( \{ x_1, \ldots, x_n \}, \limind_{i \in I} B_i) \right)
 \\
  \cong
 \limind_{i \in I} \eq\left( \Hom_{\bSets}(\{ y_1, \ldots, y_m \}, B_i) \stackrel{\phi_{1, *}, \phi_{2,*}}{\rightrightarrows}  \Hom_{\bSets}( \{ x_1, \ldots, x_n \}, B_i) \right) \,.
 \end{aligned}
\end{equation}
Here we are using the fact that finite sets are finitely presented objects in the category of sets and that filtered colimits of sets commute with equalizers. It then follows that
\begin{multline*}\limind_{i \in I} \eq\left( \Hom_{\bSets}(\{ y_1, \ldots, y_m \}, B_i) \stackrel{\phi_{1, *}, \phi_{2,*}}{\rightrightarrows}  \Hom_{\bSets}( \{ x_1, \ldots, x_n \}, B_i)   \right)
\\ \cong \limind_{i \in I} \eq\left( \Hom_{\bC \bRings_\Z}(\Z[ y_1, \ldots, y_m], B_i) \stackrel{\phi_{1, *}, \phi_{2,*}}{\rightrightarrows}  \Hom_{\bRings_\Z}( \Z[ x_1, \ldots, x_n], B_i)  \right)
\\  \cong \limind_{i \in I}  \Hom_{\bC \bRings_\Z}\Bigl(\Coeq( \Z[ y_1, \ldots, y_m] \stackrel{\phi_1, \phi_2}{\rightrightarrows} \Z[ x_1, \ldots, x_n]), B_i\Bigr) 
\\ 
\cong \limind_{i \in I}  \Hom_{\bC \bRings_\Z}(A, B_i)
\end{multline*}
where we just performed similar steps to those in the first chain of isomorphisms above, in reversed order.
On the other hand, it is clear that if $A$ does not have a presentation as a quotient of $\Z[ x_1, \ldots, x_n]$ then it must have infinitely many generators and the isomorphism~\eqref{prop3.4 what fails} does not hold, because $\Hom$ out of an infinite set would have to commute with filtered colimits, which it does not. Hence $A$ is not of finite presentation.

\item The proof for the noncommutative case is analogous to that for the commutative case, modulo the following changes:
\begin{itemize}
\item replace $\Z [x_1, \ldots x_n ]$ with $\Z \lt x_1, \ldots x_n \gt$; 
\item observe that the algebras $\Z \lt x_1, \ldots, x_n \gt$ are not Noetherian, hence require that the ideal $I$ be \emph{finitely generated}.
\end{itemize}
\end{enumerate}
This concludes the proof.
\end{proof}

As a straightforward consequence of Proposition~\ref{prop:finitely_presented_objects} we have the following.

\begin{cor} \label{cor:finitely_presented_objects}
Let $R$ be a commutative $\Z$-algebra. We have the following.
\begin{enumerate}[(i)]
\item The ring $A \in \bC\bRings_R$ is of finite presentation if and only if it is a quotient of $R[x_1, \ldots, x_n]$ by a finitely generated ideal.
\item The ring $A \in \bRings_R$ is of finite presentation if and only if it is a quotient of $R \{ x_1, \ldots, x_n \}$ by a finitely generated ideal.
\end{enumerate}
\end{cor}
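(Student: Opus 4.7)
The plan is to deduce the corollary directly from Proposition~\ref{prop:finitely_presented_objects} by reducing to the case $R = \Z$ via the base change adjunctions. The key observation is that, in light of the canonical isomorphisms $R[x_1,\ldots,x_n] \cong \Z[x_1,\ldots,x_n] \otimes_\Z R$ and $R\{x_1,\ldots,x_n\} \cong \Z\langle x_1,\ldots,x_n\rangle \ast_\Z R$, the functors $(-) \otimes_\Z R : \bC\bRings_\Z \to \bC\bRings_R$ and $(-) \ast_\Z R : \bRings_\Z \to \bRings_R$ are left adjoint to the respective forgetful functors. Composing with the free-algebra adjunctions from $\bSets$, one obtains that $R[x_1,\ldots,x_n]$ (resp. $R\{x_1,\ldots,x_n\}$) is the free object on $n$ generators in $\bC\bRings_R$ (resp. $\bRings_R$).

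For the ``if'' direction of both (i) and (ii), I would argue as follows. The forgetful functors $\bC\bRings_R \to \bSets$ and $\bRings_R \to \bSets$ preserve filtered colimits (filtered colimits in these categories being computed on underlying sets), so their left adjoints preserve finitely presentable objects. Since finite sets are finitely presentable in $\bSets$, the free $R$-algebras $R[x_1,\ldots,x_n]$ and $R\{x_1,\ldots,x_n\}$ are finitely presentable objects of $\bC\bRings_R$ and $\bRings_R$ respectively. A quotient by a finitely generated ideal $(f_1,\ldots,f_m)$ can then be exhibited as the coequalizer of two maps between free algebras on $m$ and $n$ generators, that is, as a finite colimit of finitely presentable objects, hence finitely presentable.

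For the ``only if'' direction, I would mimic the argument of Proposition~\ref{prop:finitely_presented_objects}. Given $A \in \bC\bRings_R$ finitely presented, present $A$ as a filtered colimit
\[
A \;\cong\; \colim_{(F,G)} R[x_f : f \in F]/(G),
\]
where the indexing category consists of pairs $(F,G)$ with $F$ a finite subset of $A$ and $G$ a finite subset of the kernel of the natural surjection $R[x_f : f \in F] \twoheadrightarrow \langle F \rangle_R \subseteq A$, with morphisms being the obvious compatibility inclusions; this diagram is filtered and its colimit is $A$ because every element of $A$ lies in some finitely generated subalgebra and every relation involves only finitely many generators and equations. Finite presentability of $A$ then forces $\mathrm{id}_A$ to factor through one of the terms, and one concludes that $A$ itself admits a presentation of the required form by the same equalizer/$\Hom$ bookkeeping as in the proof of Proposition~\ref{prop:finitely_presented_objects}. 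The noncommutative case (ii) is handled identically, replacing $R[x_f]$ by the free $R$-algebra $R\{x_f\}$ throughout.

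The main technical point to be careful about is the last step: a priori the filtered colimit argument only yields that $A$ is a retract of some $R[x_1,\ldots,x_n]/(f_1,\ldots,f_m)$. One must verify that this retract itself admits a finite presentation of the same shape. I would handle this by repeating the bookkeeping in Proposition~\ref{prop:finitely_presented_objects} (equating $\Hom_{\bC\bRings_R}(A,-)$ with a set-theoretic equalizer of Hom-sets out of finite sets) rather than invoking a general closure-under-retracts result, since this directly produces the presentation. In the noncommutative case an additional subtlety is that $R\{x_1,\ldots,x_n\}$ is \emph{not} the central free algebra $R\langle x_1,\ldots,x_n\rangle$ (see Example~\ref{exa:non-central_free_algebras}); however, the above arguments use only the free-algebra universal property in $\bRings_R$, which singles out exactly $R\{x_1,\ldots,x_n\}$, so no conflict arises.
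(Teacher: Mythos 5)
Your argument is sound and follows, in expanded form, the same route the paper intends: rerun the proof of Proposition~\ref{prop:finitely_presented_objects} with the free $R$-algebras $R[x_1,\ldots,x_n]$ and $R\{x_1,\ldots,x_n\}$ (obtained from the free $\Z$-algebras via $(\minus)\otimes_\Z R$ and $(\minus)\ast_\Z R$, cf.~\eqref{commmutative ring} and \eqref{noncommutative ring}) standing in for $\Z[\cdots]$ and $\Z\langle\cdots\rangle$; this is precisely what the paper's one-line appeal to ``base change'' abbreviates. Your ``if'' direction is complete as stated: the free $R$-algebra on a finite set is compact because it is the value on a compact object of a left adjoint to a filtered-colimit-preserving forgetful functor, and compact objects are closed under finite colimits.

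The retract issue you flag in the ``only if'' direction is a genuine one --- indeed it is already present, silently, in the paper's own proof of Proposition~\ref{prop:finitely_presented_objects}(ii), where Noetherianity is unavailable --- but ``repeating the bookkeeping'' does not by itself close the gap, since that bookkeeping established the ``if'' direction, not this one. Here is the missing step. Once $\mathrm{id}_A$ factors through some $B_0 := R\{x_f : f\in F_0\}/(G_0)$ via a section $\alpha$ of the structure map $q_0 : B_0 \to A$, that map $q_0$ is surjective, so $F_0$ generates $A$; you may therefore pass to the cofinal filtered subsystem $\{B_G := R\{x_f : f\in F_0\}/(G)\}_{G\supset G_0}$, in which every transition map and every structure map $q_G : B_G \to A$ is surjective. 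Since $B_0$ is itself compact (by the ``if'' direction) and $q_0 = q_0\circ(\alpha q_0)$, the two endomorphisms $\mathrm{id}_{B_0}$ and $\alpha q_0$ of $B_0$ become equal in $\Hom(B_0,A)$, hence are coequalised by some transition map $t : B_0 \to B_{G_1}$, giving $t = t\alpha q_0$. Then $\beta := t\alpha : A \to B_{G_1}$ is split mono, since $q_{G_1}\beta = q_0\alpha = \mathrm{id}_A$, and epi, since $\beta q_0 = t$ is surjective; so $\beta$ is an isomorphism, exhibiting $A$ as a genuine quotient $R\{x_f : f\in F_0\}/(G_1)$ rather than merely a retract of one. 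The same argument works verbatim in $\bC\bRings_R$.
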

\begin{proof}
\begin{enumerate}[(i)]
\item The claim for a general $R$ is obtained from the case $R=\Z$ --- i.e., Proposition~\ref{prop:finitely_presented_objects}(i) --- via the tensor product functor (base change) and formula~\eqref{commmutative ring}.
\item The claim for a general $R$ is obtained from the case $R=\Z$ --- i.e., Proposition~\ref{prop:finitely_presented_objects}(ii) --- via the free product functor and formula~\eqref{noncommutative ring}.
\end{enumerate}
\end{proof}

Let us now discuss the corresponding notion for morphisms.

\begin{defn}[Morphisms of finite presentation]
\label{defn mor fin pres}
A morphism $A \to B \in \bRings_R$ is said to be of \emph{finite presentation} (or \emph{finitely presented}) if the canonical morphism
\[ \limind_{i \in I} \Hom_{\bRings_A}(B, C_i) \to \Hom_{\bRings_A}(B, \limind_{i \in I} C_i) \]
is an isomorphism, for all filtered colimits $\limind_{i \in I} C_i$ of $A$-algebras. 
An analogous definition is given for morphisms in $\bC\bRings_R$.
\end{defn}

\begin{remark}
\label{remark confuse mor and obj}
Note that an object $A \in \bRings_R$ is of finite presentation if and only if the structure map $R \to A$ is a morphism of finite presentation, and a morphism $A \to B$ is of finite presentation if and only if $B$ is an object of finite presentation in $\bRings_A$. The same is true for commutative rings. Therefore, when it comes to finite presentation, the use of the terminology for morphisms and objects is compatible. Henceforth, we will often tacitly identify the two notions.
\end{remark}

The notions of finite presentation in the commutative and noncommutative cases are formally the same. Therefore, it is natural to ask whether the two are compatible; namely, whether a morphism in $\bC\bRings_R$ is of finite presentation if and only if it is of finite presentation as a morphism in $\bRings_R$. Perhaps surprisingly, this matter turns out to be very subtle, and the answer is in the negative, as the following example demonstrates.

\begin{exa} \label{exa:non_finitely_generated_free_algebras}
Let $R$ be the commutative ring of integral polynomials in countably many variables, $R:=\Z[x_1,x_2, \dots]$, and consider the polynomial ring $S:=R[y]$ in one variable with coefficients in $R$. Clearly, $S$ is commutative and the canonical inclusion morphism $R \to S$ is of finite presentation in $\bC \bRings_R$, since $S$ is a free finitely generated commutative $R$-algebra.

As an object in $\bRings_R$, $S$ can be realised as the quotient of $ R \{ y \} =  \Z\langle y\rangle \ast_\Z  R $
by the ideal 
\[
I:=\{ ( x_j y - y x_j  ) \ |\ j\in \N \}.
\]
Namely,
\[ S \cong \frac{R \{ y \}}{I}. \]
Let us emphasise that the variable $y$ of $R \{ y \}$ does not commute with the variables $x_j$, $j\in \N$, of $R$.

We claim that the morphism $f:R \to S$ is \emph{not} of finite presentation in $\bRings_R$. Indeed, for $n\in \N$ define $I_n:=\{ ( x_j y - y x_j ) \in I \ |\ j\le n \}$. 
Then the canonical map
\[ \underset{n \in \N}{\limind} \Hom_{\bRings_R} \left (S, \frac{R\{ y \}}{I_n} \right) \to  \Hom_{\bRings_R} \left(S, \underset{n \in \N}\limind \frac{ R\{ y \}}{I_n} \right ) \cong  \Hom_{\bRings_R} \left(S, S \right ) \]
fails to be an isomorphism because the identity $S \to S$ does not factor through any of the quotients on the left-hand-side. In particular, this shows that $S$ is not finitely presented as an object in $\bRings_R$, recall Remark~\ref{remark confuse mor and obj}.
\end{exa}

In plain English, the above example tells us that the incompatibility between the commutative and noncommutative notions of finite presentation boils down to the fact that the condition on the commutation of the $\Hom$ functor with filtered colimits is required on two different classes of diagrams. 

Now, in Example~\ref{exa:non_finitely_generated_free_algebras} the (commutative) ring $R$ does not map into the center of $R\{y\}$. This leads to a violation of the finite presentation property --- as soon as $R$ is not finitely generated --- because in order to write $R[y]$ as a quotient of $R \{y \}$ one must add infinitely many relations to ensure that in the resulting quotient $y$ commutes with all elements. 
There are (at least) two possible ways to reconcile the notions of finite presentation in the commutative and noncommutative settings. One way is to restrict oneself to the case where both $R$ and $R\{y\}$ (or more general version thereof) are finitely generated over $\Z$; for example, this holds for discrete algebras (also known as $\Z$-algebras). Another way is to consider the category of $R$-algebras for which $R$ maps into their centre. The latter opens up an intermediate situation which we will now examine.

\

Motivated by the above discussion, given a commutative ring $R$ we define $\bZ\bRings_R$ be the subcategory of $\bRings_R$ given by
\begin{equation}
\label{def ZRings}
\bZ\bRings_R:=\{S\in \bRings_R \ | \ R \to S \ \text{has image in the centre $Z(S)$ of S}\}\,.
\end{equation}
By definition the inclusion $\bC\bRings_R\subset \bZ\bRings_R$ as categories is full. As it turns out, despite $\bZ\bRings_R$ contains noncommutative rings, we have following result.

\begin{prop} \label{prop:finitely_presented_objects_central_algebras}
The following statements hold.
\begin{enumerate}[(i)]
\item $A \in \bZ\bRings_R$ is of finite presentation if and only if it is a quotient of $R \lt x_1, \ldots, x_n \gt$ by a finitely generated ideal.
\item $A \in \bC \bRings_R$ is of finite presentation if and only if it is of finite presentation in $\bZ \bRings_R$.
\item In general, objects of finite presentation of $\bZ \bRings_R$ may not be of finite presentation as objects of $\bRings_R$.
\end{enumerate}
\end{prop}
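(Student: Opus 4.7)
The plan for part (i) is to mimic the proof of Proposition~\ref{prop:finitely_presented_objects}(ii), with $\Z\{x_1,\ldots,x_n\}$ replaced by $R\langle x_1,\ldots,x_n\rangle$ and $\bRings_\Z$ replaced by $\bZ\bRings_R$. The key observation is that $R\langle x_1,\ldots,x_n\rangle$ is the free object on $n$ generators in $\bZ\bRings_R$: specifying a morphism out of it into any $A\in\bZ\bRings_R$ amounts to prescribing arbitrary images for the $x_i$, with no commutation constraint required, since the structure map $R\to A$ lands in $Z(A)$ by hypothesis and thus automatically commutes with any prescribed images. This produces the adjunction between $\bZ\bRings_R$ and $\bSets$ used in the proof of Proposition~\ref{prop:finitely_presented_objects}(ii); the remaining formal manipulation of equalisers and filtered colimits of sets then transposes verbatim. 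As in the noncommutative case, the finite generation of the ideal of relations must be imposed explicitly, since $R\langle x_1,\ldots,x_n\rangle$ is not Noetherian in general.

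For part (ii), I would combine (i) with Corollary~\ref{cor:finitely_presented_objects}(i) via the canonical surjection $R\langle x_1,\ldots,x_n\rangle \twoheadrightarrow R[x_1,\ldots,x_n]$, whose kernel is generated by the $\binom{n}{2}$ commutators $[x_i,x_j]$. If $A\in\bC\bRings_R$ is finitely presented in $\bC\bRings_R$, writing $A\cong R[x_1,\ldots,x_n]/(f_1,\ldots,f_m)$ one lifts the $f_j$ to $R\langle x_1,\ldots,x_n\rangle$ and adjoins the commutators to produce a finitely generated presentation in $\bZ\bRings_R$. Conversely, if $A\in\bC\bRings_R$ is finitely presented in $\bZ\bRings_R$ as $R\langle x_1,\ldots,x_n\rangle/I$ with $I$ finitely generated, then the commutativity of $A$ forces $[x_i,x_j]\in I$, so the surjection factors through $R[x_1,\ldots,x_n]$ and the resulting kernel is the image of $I$, which remains finitely generated.

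For part (iii), I would exhibit an explicit counterexample generalising Example~\ref{exa:non_finitely_generated_free_algebras}: set $R:=\Z[x_1,x_2,\ldots]$ and $A:=R[y]\cong R\langle y\rangle$. By (i), $A$ is finitely presented in $\bZ\bRings_R$, being the free central $R$-algebra on one generator with no relations. However, $R\{y\}\cong \Z\langle x_1,x_2,\ldots\rangle \ast_\Z \Z\langle y\rangle$, and any presentation of $A$ in $\bRings_R$ as a quotient of $R\{y\}$ requires the entire countable family of commutator relations $[x_i,y]$; the filtered-colimit argument of Example~\ref{exa:non_finitely_generated_free_algebras} then shows that $A$ is not finitely presented in $\bRings_R$. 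The main technical obstacle I expect to encounter is the careful bookkeeping in (i), namely confirming that the adjunction with $\bSets$ really does enable the same equaliser/colimit manipulation as in Proposition~\ref{prop:finitely_presented_objects}; parts (ii) and (iii) are then essentially formal consequences and an explicit citation, respectively.
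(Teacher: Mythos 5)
Your proof is correct and follows the same overall blueprint as the paper's. The one genuine point of divergence is in the reverse implication of part (ii): you observe directly that the commutativity of $A$ forces every commutator $[x_i,x_j]$ into the presenting ideal $I$, so the quotient $R\langle x_1,\ldots,x_n\rangle \twoheadrightarrow A$ factors through $R[x_1,\ldots,x_n]$ and the resulting kernel is the (still finitely generated) image of $I$. The paper instead reaches the same endpoint by unwinding the abelianization adjunction $(\minus)^\ab \dashv (\bC\bRings_R \hookrightarrow \bZ\bRings_R)$ through a coequalizer presentation and a chain of $\Hom$-isomorphisms. Your argument is more elementary and avoids that bookkeeping; the paper's version has the stylistic advantage of foregrounding the functor $(\minus)^\ab$, which it uses repeatedly elsewhere. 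Two small remarks on part (iii): first, since $R=\Z[x_1,x_2,\ldots]$ is a \emph{commutative} polynomial ring, one has $R\{y\} = \Z\langle y\rangle \ast_\Z R \cong \Z[x_1,x_2,\ldots] \ast_\Z \Z\langle y\rangle$, not $\Z\langle x_1,x_2,\ldots\rangle \ast_\Z \Z\langle y\rangle$ as you wrote (the ambient ring is not the fully free algebra on $\{x_i\}\cup\{y\}$); this does not affect the filtered-colimit argument, which only needs the relations $[x_i,y]$. Second, the paper derives finite presentation of $R[y]$ in $\bZ\bRings_R$ from part (ii) (via $R[y]\in\bC\bRings_R$) rather than from part (i); both routes are valid.
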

\begin{proof}
\begin{enumerate}[(i)]
\item The claim follows by arguing as in Proposition~\ref{prop:finitely_presented_objects} and using the fact that all objects of $\bZ\bRings_R$ can be written as suitable quotients of inductive limits of the algebras $R \langle x_1, \ldots, x_n \rangle$. We will revisit this in more detail in the next subsection.

\item
Suppose that $A \in \bC\bRings_R$ is of finite presentation as an object of $\bC\bRings_R$, then 
\[ A \cong \frac{R[x_1, \ldots, x_n]}{I} \]
for some $n$ and some (finitely generated) ideal $I$. 

Let $I_n$ be the ideal generated by the elements
\begin{equation}
\{x_jx_k-x_kx_j \ | \ j,k\in\{1,\dots, n\}, \ j<k\}
\end{equation}
in $R \lt x_1, \ldots, x_n \gt$.
Since 
\[ R[x_1, \ldots, x_n] \cong \frac{R \lt x_1, \ldots, x_n \gt}{I_n}\,, \]
then $A$ can be written as the quotient 
\[
A\cong\frac{R \lt x_1, \ldots, x_n \gt}{\widetilde I}
\]
where the ideal $\widetilde I$ is the preimage of $I$ through the canonical map $R \lt x_1, \ldots, x_n \gt \to R [x_1, \ldots, x_n ]$. Hence, $A$ is finitely presented in $\bZ\bRings_R$ because $\widetilde I$ is clearly finitely generated. 

Now suppose that $A \in \bC\bRings_R$ is finitely presented as an object of $\bZ\bRings_R$. This means that one can write
\[ A \cong \frac{R \lt x_1, \ldots, x_n \gt}{I}\,, \]
where $I$ is some finitely generated ideal. Then one has
\begin{equation}
\label{eq:proofZring equation}
 A \cong A^\ab \cong \left (  \frac{R \lt x_1, \ldots, x_n \gt}{I} \right )^\ab \cong \frac{R \lt x_1, \ldots, x_n \gt^\ab}{I^\ab} = \frac{R[ x_1, \ldots, x_n ]}{I^\ab}\,.
\end{equation}
Here to obtain the third isomorphism we used the fact that the functor $(\minus)^\ab$ is left adjoint to $\bC\bRings_R \hookrightarrow \bZ\bRings_R$:
\begin{multline*}
\Hom_{\bC \bRings_R}(A^\ab, S) 
\cong 
\Hom_{\bZ\bRings_R}(A, S) 
\\
 \cong
\Hom_{\bZ\bRings_R}( \Coeq(R\langle y_1, \ldots, y_m\rangle \stackrel{\phi_1, \phi_2}{\rightrightarrows} R\langle x_1, \ldots, x_n\rangle), S)
\\
 \cong
\eq\left(\Hom_{\bZ\bRings_R}(R\langle y_1, \ldots, y_m\rangle,S) \stackrel{\phi_1, \phi_2}{\rightrightarrows} \Hom_{\bZ\bRings_R}(R\langle x_1, \ldots, x_n\rangle, S)\right)
\\
 \cong
\eq\left(\Hom_{\bC\bRings_R}(R\langle y_1, \ldots, y_m\rangle^\ab,S) \stackrel{\phi_1, \phi_2}{\rightrightarrows} \Hom_{\bC\bRings_R}(R\langle x_1, \ldots, x_n\rangle^\ab, S)\right) 
\\
 \cong
\Hom_{\bC\bRings_R}( \Coeq(R\langle y_1, \ldots, y_m\rangle^\ab \stackrel{\phi_1, \phi_2}{\rightrightarrows} R\langle x_1, \ldots, x_n\rangle^\ab), S) 
\\
 \cong
\Hom_{\bC \bRings_R}\left(\frac{R \lt x_1, \ldots, x_n \gt^\ab}{I^\ab}, S\right) .
\end{multline*}
Formula~\eqref{eq:proofZring equation} implies that $A$ is finitely presented as an object of $\bC\bRings_R$, which concludes the proof.
\item
A counterexample is provided by
Example~\ref{exa:non_finitely_generated_free_algebras}, upon observing that $R[y]$ is also an element in $\bZ\bRings_R$ and using part (ii) of this proposition.
\end{enumerate}
\end{proof}

Proposition~\ref{prop:finitely_presented_objects_central_algebras}(iii) has profound implications for the noncommutative geometry over $R$, and warrants a remark.

\begin{rmk} \label{rmk:finite_presentation}
In algebraic geometry the open localizations of a commutative ring are taken to be morphisms of finite presentation (in most topologies), because, geometrically, this means that they are morphisms of finite dimension between the associated affine schemes. 
However, as illustrated by Example~\ref{exa:non_finitely_generated_free_algebras}, the notions of finite presentation in the commutative and noncommutative settings are not compatible.
Indeed, the Zariski open localizations used, say, in the theory of schemes or in the theory of algebraic varieties are no longer of finite presentation as morphisms of $\bRings_R$ for a general commutative ring $R$, which makes noncommutative geometry fundamentally incompatible with the theory of schemes, when the notion of finite presentation comes into play. Namely, in our setting, if one were to use morphisms of finite presentation in $\bRings_R$ to define the Zariski topology, then the associated spectrum built out of this theory would be a singleton --- cf.~Proposition~\ref{prop:wrong_spectrum}. 
\end{rmk}

\subsubsection{Reformulation in terms of algebraic theories}
\label{Reformulation as an algebraic category}

The best way to explain the incompatibility described so far between the notions of finite presentation is perhaps to think in terms of algebraic theories in the sense of Lawvere~\cite{Lawvere}. In this subsection we briefly explain how this is done. 

\

Let $\sC$ be an essentially small category with finite coproducts. The \emph{algebraic category} of $\sC$ is the full subcategory of the category of pre-sheaves on $\sC$
\[ \bP_\Sigma(\sC) = \bFunc^\times(\sC^\op, \bSets) \subset \bFunc(\sC^\op, \bSets) = \bPsh(\sC) \]
where $\bFunc^\times$ denotes the class of functors that preserve finite products (of $\sC^\op$). Therefore, objects of $\bFunc^\times(\sC^\op, \bSets)$ are contravariant functors $\sF: \sC^\op \to \bSets$ such that the canonical morphism
\[ \sF(A \coprod B) \to \sF(A) \times \sF(B) \]
is an isomorphism. 

In the same way it is possible to show that objects of $\bPsh(\sC)$ can be written as arbitrary colimits in $\bPsh(\sC)$ of representable pre-sheaves, one can also show that objects of $\bP_\Sigma(\sC)$ are precisely those pre-sheaves that can be written as \emph{sifted colimits}\footnote{A colimit $\limind_{i \in I} A_i$ is said \emph{sifted} if the partially ordered set $I$ is such that the diagonal map $I \to I \times I$ is cofinal. For example, filtered colimits are sifted, but not all sifted colimits are filtered.} of representable pre-sheaves. 
For this reason, the reader should think of the objects of $\sC$ as the free and finitely generated objects of some class of algebraic structures and of the objects of $\bP_\Sigma(\sC)$ as obtained from the free ones by considering infinite unions and quotients by relations. Almost all categories of algebraic objects can be presented as $\bP_\Sigma(\sC)$ for a suitable $\sC$. We provide below some basic examples.

\begin{exa} \label{exa:algebriac_theories}
\begin{itemize}
\item If $\sC$ is the category of finite sets, then $\bP_\Sigma(\sC)$ is the category of sets.
\item If $\sC$ is the category of finitely generated and free abelian groups, \ie abelian groups of the form $\Z^n$ for $n \in \N$, then  $\bP_\Sigma(\sC) \cong \bAb$. This is (roughly) equivalent to say that any abelian group can be written as a quotient of a free abelian group, \ie it has a presentation in terms of (possibly infinitely many) generators and relations.
\item The previous example can be generalized immediately to left (or right) modules over any ring. Indeed, if $R$ is any ring and $\sC$ is the category of finitely generated and free left (or right) modules then $\bP_\Sigma(\sC) \cong {}_R \bMod$ (resp. $\bP_\Sigma(\sC) \cong \bMod_R$).
\end{itemize}
\end{exa}

In the spirit of the previous subsection, let us focus our attention on the algebraic theories of rings. Let us start with the case $R = \Z$. 
Let $\bPoly_\Z \subset \bC\bRings_\Z$ denote the full subcategory spanned by polynomial algebras in finitely many variables, \ie the algebras $\Z[x_1, \ldots, x_n]$. It is well-known that
\[ \bP_\Sigma(\bPoly_\Z) \cong \bC\bRings_\Z. \]
One way to prove this equivalence is by applying \cite[Theorem 6.9]{ARV} noticing that $\bPoly_\Z$ is precisely the subcategory of perfectly presented generators of $\bC\bRings_\Z$, in the sense of \cite[Definition 5.3]{ARV}.
Intuitively, this is equivalent to saying that every commutative ring can be presented as a quotient of an algebra of polynomials in an arbitrary number of variables by some relations.

 The noncommutative situation is very similar. 
Let $\bFree_\Z \subset \bRings_\Z$ denote the full subcategory spanned by free algebras in finitely many variables, \ie the algebras $\Z \lt x_1, \ldots, x_n \gt$ introduced so far. It is well-known that
\[ \bP_\Sigma(\bFree_\Z) \cong \bRings_\Z. \]
Again the equivalence can be easily reduced to an application of \cite[Theorem 6.9]{ARV}.

Let us now recast the case of a general (commutative) $R$ as an algebraic category. Since the generators of the polynomial algebra with coefficients in $R$ can be obtained as the image, via the tensor product functor $(\minus) \otimes_\Z R$, of the polynomial algebras with coefficients in $\Z$ --- see~\eqref{commmutative ring} --- the slice category $\bC\bRings_R$ of $R$-algebras is the algebraic category associated to $\bPoly_R$. 

Let us define the category $\bFree_R \subset \bRings_R$ as the category spanned by the free noncommutative algebras in finitely many variables over $R$, \ie the algebras of the form $R \lt x_1, \ldots, x_n \gt$. One might wonder whether these objects generate the category $\bRings_R$; this turns out not to be the case, because $\bRings_R$ contains a lot of objects for which $R$ does not map into the center, see \eg Example \ref{exa:non-central_free_algebras} and  Example~\ref{exa:non_finitely_generated_free_algebras}. Consequently, the category $\bRings_R$ is not the algebraic category associated to the category $\bFree_R$ of free algebras over $R$, because it contains algebras that cannot be written as quotient of free algebras in any number of variables. Below we complete the analysis of Example~\ref{exa:non-central_free_algebras}, to impress on this crucial phenomenon.

\begin{exa} \label{exa:non_central_algebra}
Consider the inclusion $\Z[x] \to \Z \lt x, y \gt$. The ring on the LHS is commutative, whereas in the target $x y \ne y x$. Hence, $\Z[x]$ is not mapped into the center of the target ring. In particular, the $\Z[x]$-algebra $\Z \lt x, y \gt$ that cannot be written as a quotient of 
\[ \Z[x] \lt \{ y_k \}_{k \in K} \gt \]
for any set of variables, where $K$ is an arbitrary indexing set.
\end{exa}

Inspired by the above example, and in the spirit of what done in the previous subsection, one turns to $\bZ \bRings_R$, looking for an intermediate situation. One can show that $\bZ \bRings_R$ is, in fact, the algebraic category $\bP_\Sigma(\bFree_R)$:
\[ \bP_\Sigma(\bFree_R) \cong \bZ\bRings_R\,.\]

Finally, in order to view $\bRings_R$ as an algebraic category, we denote by $\bN\bC\bFree_R$ the full subcategory of $\bRings_R$ comprising algebras $R \{ x_1, \ldots, x_n \}$ defined in accordance with~\eqref{noncommutative ring}.
Then we have that
\[ \bP_\Sigma(\bN\bC\bFree_R) \cong \bRings_R. \]
Again, these claims can be proved by a straightforward application of \cite[Theorem 6.9]{ARV}.

\begin{rmk}
We should like to point out that the equivalence $\bP_\Sigma(\bN\bC\bFree_R) \cong \bRings_R$ is true and makes sense also when $R$ is noncommutative, whereas the category $\bZ \bRings_R$ does not make sense (or at least is a very unnatural object to consider) for noncommutative rings.
\end{rmk}

\subsection{Homotopical algebras}

As in the previous subsection, $R\in \bC\bRings$ is a fixed commutative $\Z$-algebra.
After recalling the theory of discrete rings over $R$, we now would like to provide a similar discussion in the homotopical setting. The categories $\bH\bC\bRings_R$ and $\bH\bRings_R$ have formal properties that are very different from the ones of discrete objects, if they are considered as bare categories, \ie \,without considering their model structures that enrich them to $\infty$-categories. 
The main difference between the discrete and the homotopical settings is that homotopy categories of model categories usually do not have all limits and colimits. In particular, they usually do not have all pullbacks nor all pushouts. 
Nevertheless, it is possible to construct objects that are very similar to pullbacks and pushouts but lack the universal property\footnote{To restore the universal properties of these constructions and get rid of the ``negligible indeterminacy" of homotopy pushouts and homotopy pullbacks, it is necessary to use the language of $\infty$-categories. However, there is no need for us to discuss such matters in detail, in that it is possible to define homotopy pullbacks and homotopy pushouts directly in $\bH\bC\bRings_R$ and $\bH\bRings_R$; these pullbacks and pushouts will just lack universal properties characterising as $\infty$-categorical limits or colimits.}. These objects are called \emph{homotopy pullbacks} and \emph{homotopy pushouts} and we shall define them in the framework of model categories\footnote{
Homotopy limits and homotopy colimits can be considered in all the different settings where homotopy theory can be developed. We presented our homotopy categories via model categories, therefore we will discuss the definition of homotopy limits and colimits only for model categories.}. 

Let us start by recalling the following definition.

\begin{defn} \label{defn:derived_functor}
Let $F: \sC \to \sD$ be a functor between model categories. The \emph{left derived functor} of $F$, if it exists, is the right Kan extension
\[
\begin{tikzpicture}
\matrix(m)[matrix of math nodes,
row sep=2.6em, column sep=2.8em,
text height=1.5ex, text depth=0.25ex]
{ \sC &  \sD  \\
\bHo(\sC)  & \bHo(\sD) \\};
\path[->]
(m-1-1) edge node[auto] {$F$} (m-1-2);
\path[->]
(m-1-2) edge node[auto] {} (m-2-2);
\path[->]
(m-1-1) edge node[auto,xshift=1cm, yshift=0.1cm] {$\Downarrow$} (m-2-1);
\path[->]
(m-2-1) edge node[auto] {$\L F$} (m-2-2);
\end{tikzpicture}.
\]
The \emph{right derived functor} of $F$, if it exists, is the left Kan extension
\[
\begin{tikzpicture}
\matrix(m)[matrix of math nodes,
row sep=2.6em, column sep=2.8em,
text height=1.5ex, text depth=0.25ex]
{ \bC &  \bD  \\
\bHo(\sC)  & \bHo(\sD) \\};
\path[->]
(m-1-1) edge node[auto] {$F$} (m-1-2);
\path[->]
(m-1-2) edge node[auto] {} (m-2-2);
\path[->]
(m-1-1) edge node[auto,xshift=1cm, yshift=0.1cm] {$\Uparrow$} (m-2-1);
\path[->]
(m-2-1) edge node[auto] {$\R F$} (m-2-2);
\end{tikzpicture}.
\] 
\end{defn}

 Notice that the notion of derived functor introduced in Definition \ref{defn:derived_functor} is compatible with the notion of derived functors of a Quillen adjoint pair introduced before.
In general, derived functors may not exist because Kan extensions of functors do not always exist. The notion of derived functor permits us to define homotopy limits and homotopy colimits.

\begin{defn} \label{defn:homotopy_limit}
Let $\sC$ be a model category\footnote{We recall that by definition model categories have all limits and colimits, in the $1$-categorical sense.}, $I$ a poset and consider the functor $\limpro_{i \in I} (\minus): \sC^I \to \sC$. Then, the \emph{homotopy limit functor} is the derived functor $\R \limpro_{i \in I} (\minus)$. Similarly, the \emph{homotopy colimit functor} is the derived functor $\L \limind_{i \in I} (\minus)$ of the functor $\limind_{i \in I} (\minus)$.
\end{defn}

In Definition \ref{defn:homotopy_limit} we have taken for granted that a model structure on $\sC$ induces a model structure on the category of diagrams $\sC^I$. We refer to \cite[Chapter 13]{Dug} for an exposition of the standard methods to endow $\sC^I$ with model structures.

\begin{prop} \label{prop:existence_homotopy_lim_colim}
The categories $\bH\bC\bRings_R$ and $\bH\bRings_R$ have all homotopy limits and homotopy colimits.
\end{prop}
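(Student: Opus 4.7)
The plan is to reduce the existence of homotopy limits and homotopy colimits in $\bH\bC\bRings_R$ and $\bH\bRings_R$ to the general machinery of derived functors of Quillen adjunctions, applied to the constant diagram/limit/colimit adjunctions on the underlying model categories. The key point is that, by the discussion in Section~\ref{sec:preliminaries}, both $\bS\bC\bRings_R$ and $\bS\bRings_R$ (the latter being Quillen equivalent to $\bDGA_R^{\le 0}$ by Proposition~\ref{prop:monoidal_Dold_Kan}) carry model structures obtained by transfer along the forgetful functors to $\bS\bMod_R$ via the monoid (respectively symmetric monoid) axiom. These transferred model structures are cofibrantly generated, as they inherit cofibrant generation from $\bS\bMod_R$.

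First I would fix a small indexing category $I$ and consider the diagram categories $(\bS\bC\bRings_R)^I$ and $(\bS\bRings_R)^I$. Since both source model categories are cofibrantly generated and complete and cocomplete, a standard result (e.g.~\cite[Chapter 13]{Dug} or the classical Hirschhorn theorem) equips these diagram categories with the projective model structure, in which weak equivalences and fibrations are computed objectwise. In this model structure, the constant-diagram functor $\text{const}: \sC \to \sC^I$ preserves fibrations and trivial fibrations, hence defines the right adjoint of a Quillen adjunction
\[ \limind_{i \in I} : \sC^I \leftrightarrows \sC : \text{const}, \]
while, dually, endowing $\sC^I$ with the injective model structure (which again exists for $\sC$ combinatorial and cofibrantly generated, as is the case here) yields the Quillen adjunction
\[ \text{const} : \sC \leftrightarrows \sC^I : \limpro_{i \in I}. \]

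Next I would invoke the general theorem that every Quillen adjunction $(F \dashv G)$ between model categories induces an adjunction of derived functors $\L F \dashv \R G$ between the corresponding homotopy categories (\cite[Section 1.3]{Hov}). Applied to the two Quillen pairs above, this immediately produces the homotopy colimit functor $\L \limind_{i \in I}(\minus)$ and the homotopy limit functor $\R \limpro_{i \in I}(\minus)$ at the level of the homotopy categories $\bHo(\sC) = \bH\bC\bRings_R$ or $\bH\bRings_R$, precisely matching Definition~\ref{defn:homotopy_limit}. Since $I$ was arbitrary, both homotopy categories admit all homotopy limits and homotopy colimits.

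The main technical obstacle is verifying that the projective and injective model structures exist on the diagram categories; this requires the underlying model categories to be cofibrantly generated (and, for the injective structure, combinatorial). Once one grants the cofibrant generation of the transferred model structures on $\bS\bC\bRings_R$ and $\bS\bRings_R$ — which is standard and follows from the cofibrant generation of $\bS\bMod_R$ together with the (symmetric) monoid axiom recalled in Section~\ref{sec:preliminaries} — the rest of the argument is a formal application of Quillen's machinery. Passing to $\bH\bRings_R = \bHo(\bDGA_R^{\le 0})$ via the Quillen equivalence of Proposition~\ref{prop:monoidal_Dold_Kan} ensures that the dg-incarnation of the category inherits the same homotopy (co)limits.
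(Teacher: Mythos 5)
Your proposal follows essentially the same route as the paper's own proof: both argue that $\bS\bC\bRings_R$ and $\bS\bRings_R$ are combinatorial, endow the diagram categories $\sC^I$ with the projective (for colimits) and injective (for limits) model structures so that $\limind \dashv \text{const}$ and $\text{const} \dashv \limpro$ are Quillen adjunctions, and then invoke the existence of derived functors for Quillen pairs to obtain homotopy (co)limits in the homotopy categories. The paper is terser and simply cites \cite[Chapter~13]{Dug}, while you spell out the cofibrant-generation/combinatoriality hypotheses and the final transport along the Dold--Kan Quillen equivalence, but the substance is identical.
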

\begin{proof}
This is a well known result for which we provide a brief justification.
We notice that the categories of simplicial rings and simplicial commutative rings have all limits and colimits because they are categories of functors valued in complete and cocomplete categories and therefore limits and colimits are computed objectwise. Therefore, we only need to check that the limit and colimit functors are Quillen functors, which implies that they are derivable in the sense of Definition \ref{defn:homotopy_limit}. Since both $\bS \bC\bRings_R$ and $\bS \bRings_R$ are combinatorial model categories, for any diagram $I$ we can endow $(\bS \bC\bRings_R)^I$ (resp. $(\bS\bRings_R)^I$) with the projective (resp. injective) model structure making all the required adjunctions Quillen. Once again, we refer to \cite[Chapter 13]{Dug} for further details about these constructions.
\end{proof}

Later on, we will be interested in computing homotopy pushouts (of homotopy rings) and homotopy pullbacks (of affine noncommutative spaces). By Definition \ref{defn:homotopy_limit} the homotopy pushouts in $\bH\bC\bRings_R$ and $\bH\bRings_R$ are given by the derived functor of the pushout in $\bS\bC\bRings_R$ and $\bS\bRings_R$, respectively. By the description of pushouts in $\bC\bRings_R$ and $\bRings_R$, we can deduce that the pushouts in $\bS\bC\bRings_R$ are given by computing the commutative simplicial ring
\[ (X^\bullet \otimes_{Y^\bullet} Z^\bullet)^n = X^n \otimes_{Y_n} Z^n \]
for $X^\bullet, Y^\bullet, Z^\bullet \in \bS\bC\bRings_R$. Whereas, the the pushouts in $\bS\bRings_R$ are given by the simplicial ring
\[ (X^\bullet \ast_{Y^\bullet} Z^\bullet)^n = X^n \ast_{Y_n} Z^n \]
for $X^\bullet, Y^\bullet, Z^\bullet \in \bS\bRings_R$, see \cite[Section 14.8]{Stack}. Therefore, the homotopy pushouts in $\bH\bC\bRings_R$ and $\bH\bRings_R$ are computed as
\[ A \otimes_B^\L C \]
and 
\[ A \ast_B^\L C \,, \]
respectively, and Proposition \ref{prop:existence_homotopy_lim_colim} ensures that these derived functors exist.

To define finitely presented objects in $\bH\bC\bRings_R$ and $\bH\bRings_R$ we need to recall another concept from homotopy theory. We recall that both $\bS\bRings_R$ and $\bS \bC\bRings_R$ are simplicial model categories. This means that given two objects $X, Y \in \bS\bC\bRings_R$ (or $X, Y \in \bC\bRings_R$) there is a natural simplicial set of morphisms between them
\[ \ul{\Hom}(X, Y). \] 
To describe the simplicial set $\ul{\Hom}(X, Y)$ we notice that the categories $\bS\bRings_R$ and $\bS\bC\bRings_R$ satisfy the hypothesis of \cite[Proposition 4.2]{RSS} and therefore we can use their formula to define it. So, for any set $X$ and any $A \in \bRings_R$ (resp. $A \in \bC \bRings_R$) we define
\[ A \cdot X = \bigast_{x \in X} A \ \ \ \ \text{(resp. } A \cdot X = \bigotimes_{x \in X} A  \text{)} \]
and then for any $X^\bullet \in \bS\bSets$ and $A^\bullet \in \bS\bRings_R$ (resp. $A^\bullet \in \bH\bC\bRings_R$) we define the simplicial set
\[ (A \otimes X)^n = A^n \cdot X^n. \]
With this notation, we define
\[ \ul{\Hom}_{\bS\bRings_R}(X, Y)^n = \Hom_{\bS\bRings_R}(X \otimes \Delta_n, Y) \]
for $X, Y \in \bS\bRings_R$, and similarly in the case of $\bS\bC\bRings_R$. One can show that the description just given for the mapping spaces of $\bS\bRings_R$ and $\bRings_R$ implies that the inclusion $\bH\bC\bRings_R \to \bH\bRings_R$ is not fully faithful.

We also recall that the category of simplicial sets has a natural model structure defined by Quillen (see \eg~\cite[Chapter 2]{Hov}), therefore it makes sense to ask for the existence of the derived functor that would make the following diagram commutative
\[
\begin{tikzpicture}
\matrix(m)[matrix of math nodes,
row sep=2.6em, column sep=2.8em,
text height=1.5ex, text depth=0.25ex]
{ \bS\bC\bRings_R &  \bS\bSets  \\
\bH\bC\bRings_R  & \bH\bSets \\};
\path[->]
(m-1-1) edge node[auto] {$\ul{\Hom}$} (m-1-2);
\path[->]
(m-1-2) edge node[auto] {} (m-2-2);
\path[->]
(m-1-1) edge node[auto] {} (m-2-1);
\path[->, dashed]
(m-2-1) edge node[auto] {$\R \ul{\Hom}$} (m-2-2);
\end{tikzpicture},
\]  
where $\bH\bSets = \bHo(\bS\bSets)$ is the homotopy category of $\bS\bSets$ with respect to the Quillen model structure, that is equivalent to the homotopy category of topological spaces with the standard model structure.
It turns out that the derived functor $\R \ul{\Hom}$ exists and can be computed as
\[ \R \ul{\Hom}(X, Y) =  \ul{\Hom}(Q(X), Q(Y)) \]
where $Q(\minus)$ is the cofibrant replacement functor. One important property of the derived hom-spaces is that
\[ \pi_0(\R \ul{\Hom}(X, Y)) = \Hom_{\bH \bC \bRings_R}(Q(X), Q(Y)). \]

\begin{defn} \label{defn:homotopically_finite_presentation}
An object $X \in \bH\bRings_R$ (resp. $X \in \bH\bC\bRings_R$) is said \emph{homotopically of finite presentation} if the natural map
\[ \L \limind_{i \in I} \R \ul{\Hom} (X, Y_i) \to \R \ul{\Hom} (X, \L \limind_{i \in I} Y_i ) \]
is an equivalence for all filtered systems $\{ Y_i \}_{i \in I}$.
\end{defn}

\begin{prop} \label{prop:homotopically_finite_presentation}
\begin{enumerate}
\item  $X \in \bH\bRings_R$ is of finite presentation if and only if it is a retract of a finite cell of finitely generated free algebras.
\item  $X \in \bH\bC\bRings_R$ is of finite presentation if and only if it is a retract of a finite cell of finitely generated polynomial algebras.
\end{enumerate}
\end{prop}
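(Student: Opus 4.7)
My plan is to prove statements (i) and (ii) in parallel, since the arguments are formally identical modulo replacing the free algebra generators $R\{x_1,\ldots,x_n\}$ with the polynomial generators $R[x_1,\ldots,x_n]$ and $\ast$-coproducts with $\otimes$-coproducts. The strategy is the standard one for finitely presented objects in cofibrantly generated model categories: combine the small object argument with the fact that finite homotopy colimits commute with filtered homotopy colimits in the target $\bH\bSets$.

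For the \emph{if} direction, I would first verify that a finitely generated free algebra $F:=R\{x_1,\ldots,x_n\}$ (resp.\ polynomial algebra $R[x_1,\ldots,x_n]$) is homotopically of finite presentation. Indeed $F$ corepresents the functor $Y\mapsto \R\ul{\Hom}(F,Y)$, which after cofibrant replacement is weakly equivalent to the $n$-fold product of the underlying simplicial set of $Y$, and the underlying simplicial set functor commutes with filtered colimits; the claim then reduces to the fact that finite simplicial sets are homotopically finitely presented in $\bH\bSets$. Next, I would show that the class of homotopically finitely presented objects is closed under finite homotopy pushouts and retracts. Closure under retracts is immediate from the functoriality of $\R\ul{\Hom}$. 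For finite homotopy pushouts, one uses the natural equivalence
\[ \R\ul{\Hom}(A \ast_C^\L B,\, Y) \;\simeq\; \R\ul{\Hom}(A,Y) \times^{\R}_{\R\ul{\Hom}(C,Y)} \R\ul{\Hom}(B,Y) \]
together with the fact that finite homotopy limits commute with filtered homotopy colimits in $\bH\bSets$. By induction on the number of attached cells, any finite cell object built from finitely generated free (resp.\ polynomial) algebras is then homotopically of finite presentation, and so is any retract thereof.

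For the \emph{only if} direction, I would invoke the fact that the model categories $\bS\bRings_R$ and $\bS\bC\bRings_R$ are cofibrantly generated, with generating (trivial) cofibrations obtained by applying the free algebra (resp.\ polynomial algebra) functor to the generating (trivial) cofibrations of $\bS\bMod_R$; in particular these are maps between finitely generated free (resp.\ polynomial) algebras. The small object argument then provides, for any $X \in \bH\bRings_R$, a cofibrant replacement $\tilde X$ which is exhibited as a filtered colimit $\tilde X \cong \limind_\alpha X_\alpha$ of finite cell objects $X_\alpha$ in the relevant category of simplicial algebras. Since weak equivalences in $\bS\bRings_R$ (resp.\ $\bS\bC\bRings_R$) are detected on the underlying simplicial sets and filtered colimits of simplicial sets preserve weak equivalences, this filtered colimit is already a homotopy colimit. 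If $X$ is homotopically finitely presented, applying the definition to the identity map yields
\[ \id_{\tilde X} \in \pi_0\, \R\ul{\Hom}(\tilde X,\tilde X) \;\cong\; \pi_0\, \L\!\limind_\alpha \R\ul{\Hom}(\tilde X,X_\alpha) \;\cong\; \limind_\alpha \pi_0\, \R\ul{\Hom}(\tilde X,X_\alpha)\,, \]
so the identity factors in $\bH\bRings_R$ through some finite cell $X_\beta$, exhibiting $X\cong \tilde X$ as a retract of $X_\beta$ in the homotopy category.

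The main obstacle is of a bookkeeping nature: one must carefully justify that the filtered colimit appearing in the cell decomposition of a cofibrant replacement is indeed a homotopy colimit, and that the generating cofibrations produced by the transfer of model structures from $\bS\bMod_R$ really are, after transfer, pushouts along maps between \emph{finitely generated} free (resp.\ polynomial) algebras. Both points are standard once the monoid axiom and the description of free algebras recalled in Section~\ref{Reformulation as an algebraic category} are combined; I would only spell out the verification for the noncommutative case (i), the commutative case (ii) being entirely analogous.
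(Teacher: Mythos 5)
Your proof is correct and follows essentially the same logical skeleton as the paper's. The paper is terse: it observes that $\bH\bRings_R$ and $\bH\bC\bRings_R$ are compactly generated by the $\omega$-compact and $\omega$-small objects $R\{x_1,\ldots,x_m\}$ (resp.\ $R[x_1,\ldots,x_m]$), cites \cite[Proposition~1.2.3.5~(4)]{TV2} to conclude these generators are homotopically finitely presented, and then declares the retract-of-finite-cell characterisation ``easy to check.'' Your proposal unwinds exactly that black box: the corepresentability argument for the generators (mapping space $\simeq$ a finite power of the underlying space, whose formation commutes with filtered colimits), closure of the finitely presented class under retracts and finite homotopy pushouts via the homotopy pullback formula for $\R\ul{\Hom}$, and the small-object-argument retract trick for the converse. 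One point worth spelling out a bit more carefully if you were to write this up in full: after the small object argument, the cofibrant replacement $\tilde X$ is a \emph{transfinite composition} of pushouts of generating cofibrations, and you should explicitly pass to the filtered system of finite subcomplexes of $\tilde X$ (which are the finite cell objects) rather than the stages of the transfinite composition, since the latter need not themselves be finite cells; but this is the standard bookkeeping you already flagged. With that caveat, the two proofs are the same argument presented at different levels of detail.
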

Before proving the proposition we briefly explain the terminology used. We refer to \cite[Definition 1.2.3.4]{TV2} for a more detailed discussion and to \cite[Proposition 1.2.3.5]{TV2} for a more general version of our Proposition \ref{prop:homotopically_finite_presentation}. We say that an object $X \in \bH\bRings_R$ is a \emph{strict finite cell of finitely generated free algebras} if there exists a finite sequence of morphisms
\[ R = X_0 \to X_1 \to \cdots \to X_n = X  \]
and for any $i$ a push-out square 
\[
\begin{tikzpicture}
\matrix(m)[matrix of math nodes,
row sep=2.6em, column sep=2.8em,
text height=1.5ex, text depth=0.25ex]
{ R \{ x_1, \ldots x_m \} &  R \{ y_1, \ldots y_l \}  \\
  X_i  & X_{i+1} \\ };
\path[->]
(m-1-1) edge node[auto] {} (m-1-2);
\path[->]
(m-1-2) edge node[auto] {} (m-2-2);
\path[->]
(m-1-1) edge node[auto] {} (m-2-1);
\path[->]
(m-2-1) edge node[auto] {} (m-2-2);
\end{tikzpicture}.
\] 
We say that $X$ is a \emph{finite cell of finitely generated free algebras} if it is equivalent to a strict finite cell of finitely generated free algebras. The same definition is given for objects of $\bH\bC\bRings_R$ replacing the free algebras with polynomial algebras.
\begin{proof}[Proof of Proposition~\ref{prop:homotopically_finite_presentation}
]
We first notice that $\bH\bRings_R$ (resp. $\bH\bC\bRings_R$) are compactly generated and the objects $R \{ x_1, \ldots, x_m \}$ (resp. $R[x_1, \ldots, x_m]$) are a set of $\omega$-compact and $\omega$-small generators. Therefore, \cite[Proposition 1.2.3.5 (4)]{TV2}, applies to both $\bH\bRings_R$ and $\bH\bC\bRings_R$. More precisely, we can use the same reasoning of \cite[Proposition 1.2.3.5 (4)]{TV2} to show that all the objects $R \{ x_1, \ldots, x_m \}$ (resp. $R[x_1, \ldots, x_m]$) are homotopically finitely presented. Once this is proved it is easy to check that reatracts of finite cells of the $\omega$-compact and $\omega$-small generators are finitely generated and, conversely, every finitely presented object is a retract of such a finite cell.
\end{proof} 

Recall that there are fully faithful inclusions $\bRings_R \subset \bH\bRings_R$ and $\bC\bRings_R \subset \bH\bC\bRings_R$ where the classical categories are embedded as discrete objects. So, it is natural to ask what is the relation between the classical notion of finite presentation and the homotopical version of finite presentation. It turns out that the homotopical version is strictly stronger than the classical version. We briefly explain why this is the case in the next example.

\begin{exa} \label{exa:classical_vs_homotopical_finite_pres}
Suppose that $A \in \bC\bRings_R$ is of finite presentation (in the noncommutative setting we can reason in a similar fashion). This means that we can write
\[ A \cong \coker( R[x_1, \ldots, x_n] \to R[y_1, \ldots, y_m]) \]
in $\bC\bRings_R$. Now, the operation of taking cokernel does not exist in $\bH\bC\bRings_R$ and is replaced by the homotopical cokernel (also known as cofiber or cone). So, we can consider the object
\[ \L \coker( R[x_1, \ldots, x_n] \to R[y_1, \ldots, y_m]) \]
that is well-defined in $\bH\bRings_R$. Notice that it is always true that 
\[ \pi_0( \L \coker( R[x_1, \ldots, x_n] \to R[y_1, \ldots, y_m]) ) \cong A \]
but, in general, for specific examples of $A$,  $\pi_n( \L \coker( R[x_1, \ldots, x_n] \to R[y_1, \ldots, y_m]) ) \ne 0$ for $n > 0$. Such examples are obtained by quotienting $R[y_1, \ldots, y_m]$ by an ideal whose generators do not form a regular sequence. Therefore, the (finite) presentation given by the homotopical cokernel above is not a presentation of $A$ in $\bH\bC\bRings_R$, but it presents another object whose homotopy groups are not concentrated in degree $0$. This means that $A$ has a different presentation in $\bH\bC\bRings_R$ and this different presentation may be finite or not. We do not provide an explicit example of an object of finite presentation in $\bC\bRings_R$ for which its presentation in $\bH\bC\bRings_R$ is not finite anymore, but the reader should be convinced that it is possible to manufacture one. 
\end{exa}

We would like to emphasise that although Example~\ref{exa:classical_vs_homotopical_finite_pres} shows that being homotopically of finite presentation is different from being classically of finite presentation for a morphism of commutative rings, for many important classes of morphisms the two notions are equivalent. Since the mismatch between the two notions is given by higher cohomology groups, for flat maps the two notions of finite presentation agree. Therefore, for example, for smooth or \'etale maps of commutative rings, being of finite presentation or homotopical finite presentation is equivalent.

Moreover, the above discussion implies that in the homotopical world the same incompatibility between the notions of finite presentation in the commutative and noncommutative settings remains. Indeed, since for a morphism of commutative rings being homotopically of finite presentation implies being classically of finite presentation, then being not classically of finite presentation implies being not homotopically of finite presentation. Thus, we have see in Example \ref{exa:non_finitely_generated_free_algebras} that there are commutative rings $R$ for which the algebras $R[x_1, \ldots, x_n]$ are not of finite presentation in $\bRings_R$ and hence these are not of finite presentation in $\bH\bRings_R$. We do not dwell further into the study of the homotopical finite presentation of morphisms of $\bH \bRings_R$ because the main constructions of this work get rid of the hypothesis of finite presentations on the morphisms.

We will also use the following property of the homotopical monoidal Dold-Kan correspondence.

\begin{lemma} \label{lemma:monoidal_Dold_Kan_commute_free_product}
Let 
\[ \L N: \bH\bRings_R \leftrightarrows \bHo(\bD\bG\bA_R^{\le 0}): ( \L N:)^{-1} \]
and 
\[ (\R \Gamma)^{-1}: \bH\bRings_R \leftrightarrows \bHo(\bD\bG\bA_R^{\le 0}) : \R \Gamma. \]
be the adjunctions given by the homotopical monoidal Dold-Kan equivalence, \cf Proposition \ref{prop:monoidal_Dold_Kan}.
Then,
\[ \L N(A \ast_R^\L B) \cong \L N (A) \ast_R^\L \L N (B) \]
and 
\[ \R \Gamma(C \ast_R^\L D) \cong \R \Gamma (C) \ast_R^\L \R \Gamma (D) \]
for all $A, B \in \bHo(\bSRings_R)$ and $C, D \in \bHo(\bD\bG\bA_R^{\le 0})$.
\end{lemma}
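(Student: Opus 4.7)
\emph{Proof plan.} The approach is to recognise $A \ast_R^\L B$ as a homotopy colimit in $\bH\bRings_R$ and then exploit the fact that the two equivalences of Proposition~\ref{prop:monoidal_Dold_Kan} preserve all homotopy colimits.

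The first step is to unwind the definition of the derived free product. As explained in the paragraph following Proposition~\ref{prop:existence_homotopy_lim_colim}, $A \ast_R^\L B$ is the total left derived functor of the ordinary pushout $(A,B) \mapsto A \ast_R B$ computed in $\bSRings_R$ (and analogously in $\bDGA_R^{\le 0}$). Equivalently, $A \ast_R^\L B$ is the homotopy pushout of the diagram $A \leftarrow R \to B$ in the relevant homotopy category, where $R$ is viewed as a discrete object. Hence the lemma amounts to showing that $\L N$ and $\R \Gamma$ preserve this particular class of homotopy pushouts.

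The second step is to appeal to Proposition~\ref{prop:monoidal_Dold_Kan}: both $\L N$ and $\R \Gamma$ are equivalences of homotopy categories, coming from Quillen equivalences of model categories. Since $\L N$ is the total left derived functor of a left Quillen functor, it automatically preserves homotopy colimits, and in particular homotopy pushouts. For $\R \Gamma$ the argument is slightly more delicate, as a total right derived functor a priori only preserves homotopy limits; however, Proposition~\ref{prop:monoidal_Dold_Kan} also provides its (quasi-)inverse $(\R \Gamma)^{-1}$ as the left Quillen functor of a Quillen equivalence, so $(\R \Gamma)^{-1}$ preserves homotopy colimits. Being a quasi-inverse to a colimit-preserving equivalence, $\R \Gamma$ must itself preserve homotopy colimits.

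The third and final step is to observe that $R$, being concentrated in degree zero, is fixed up to canonical isomorphism by both functors: $\L N(R) \cong R$ and $\R \Gamma(R) \cong R$. Combined with the preservation of homotopy pushouts, this produces the desired natural isomorphisms
\[ \L N(A \ast_R^\L B) \cong \L N(A) \ast_R^\L \L N(B), \qquad \R \Gamma(C \ast_R^\L D) \cong \R \Gamma(C) \ast_R^\L \R \Gamma(D). \]
The main obstacle is the verification that $\R \Gamma$ -- a priori only a right derived functor -- commutes with homotopy colimits; this is the point where it is essential to use the \emph{two} distinct Quillen equivalences of Proposition~\ref{prop:monoidal_Dold_Kan}, rather than just the adjoint pair $(N \dashv \Gamma)$, as the second equivalence realises $\R \Gamma$ as (the inverse of) a left Quillen functor.
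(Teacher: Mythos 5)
Your proof is correct and coincides with the second argument the paper offers for this lemma, namely the remark that Quillen equivalences commute with homotopy colimits and that the derived free product is precisely a homotopy pushout. You carefully unpack the one genuinely subtle point, which the paper states in one line: \emph{a priori} $\R\Gamma$ is only a right derived functor and so only preserves homotopy limits; to conclude that it also preserves homotopy colimits one must realise it as the quasi-inverse of a \emph{left} Quillen equivalence, which is exactly what the second adjunction in Proposition~\ref{prop:monoidal_Dold_Kan} supplies. Your step observing that $R$ itself is fixed up to canonical isomorphism by both functors is a detail the paper leaves implicit, and it is good to make it explicit, since the free product is a pushout \emph{under} $R$.

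The paper also records a first, parallel route that you do not take: by Proposition~\ref{prop:monoidal_Dold_Kan}, both $\L N$ and $\R\Gamma$ are strongly monoidal equivalences, and the free product~\eqref{free product} is built from direct sums and tensor products of the underlying modules (via the tensor algebra construction), hence is preserved by any additive monoidal equivalence. Each route is sound; the monoidal route leans on the explicit module-level description of $\ast_R$, while yours (and the paper's ``or equivalently'') is more structural, using only that $\ast_R^\L$ is a homotopy colimit and that equivalences arising from Quillen equivalences preserve all homotopy (co)limits.
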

\begin{proof}
By Proposition~\ref{prop:monoidal_Dold_Kan} both $\L N$ and $\R \Gamma$ commute with the tensor product. The free product of algebras can be written in terms of direct sums and tensor products, therefore both $\L N$ and $\R \Gamma$ commute with both these operation because they are both equivalences. Or, equivalently, we can notice that Quillen equivalences commute with homotopy limits and homotopy colimits in general, so all the above functors commute with homotopy pushouts.
\end{proof}

\subsection{Modules over homotopical rings}
\label{Modules over homotopical rings}

We now consider $R \in \bH\bRings_\Z$. So $R$ is a fixed, not necessarily commutative, homotopical ring in this subsection. We briefly discuss the category of (left, right or bi-) modules over an object $A \in \bH\bRings_R$ (or $A \in \bH\bC\bRings_R$ if $R$ is commutative). It is convenient to work at the level of model categories. We can think of an object $A \in \bH\bRings_R$ as a simplicial ring; the theory of monoidal categories allows one to associate to $A$ the category of simplicial left modules ${}_A \bS \bMod$, simplicial right modules $\bS \bMod_A$ and simplicial bimodules $\bS \bBiMod_A$ over $A$. All these categories inherit a model structure from the model structure on ${}_R \bS \bMod$, $\bS \bMod_R$, and $\bS \bBiMod_R$, respectively. For each of these categories one can then construct the associated homotopy category:
\[ {}_A \bH \bMod := \bHo({}_A \bS \bMod), \ \ \bH \bMod_A := \bHo(\bS \bMod_A), \ \ \bH \bBiMod_A := \bHo(\bS \bBiMod_A). \]

For later convenience, let us discuss the relation between a morphism $f: A \to B$ in $\bH\bRings_R$ and the corresponding induced functors between the homotopy categories just introduced. The morphism $f$ induces the following adjunctions:
\begin{eqnarray} 
\label{eq:adj_R_mod}
\L f^* =  B \otimes_A^\L (\minus) &:& {}_A \bH \bMod \leftrightarrows {}_B \bH \bMod: f_*\,, \\ 
\label{eq:adj_L_mod}
\L f^* = (\minus) \otimes_A^\L B &:& \bH \bMod_A \leftrightarrows \bH \bMod_B: f_*\,, \\
\label{eq:adj_bimod}
\L f^* = (\minus) \otimes_A^\L B &:& \bH \bBiMod_A 
\leftrightarrows \bH \bBiMod_B: f_*\,, \\  \label{eq:adj_alg}
\L f^* = (\minus) \ast_A^\L B &:& \bH \bRings_A \leftrightarrows \bH \bRings_B: \R f_*\,, 
\end{eqnarray} 
where the right adjoints are obtained via the morphism $f$ by forgetting the $B$-module (resp. $B$-algebra) structure. These functors will play a fundamental role in all that follows. As a final remark, we notice that when we write $f_*$ in place of $\R f_*$ we mean that the derived functor functor is trivial, in the sense that the left Kan extension of Definition \ref{defn:derived_functor} is a natural isomorphism.

\section{The noncommutative spectrum}  \label{sec:spectrum}

This section is the core of this paper. Using derived geometry, we will introduce a robust notion of spectrum for homotopical rings. This will be achieved by defining a Grothendieck topology on the category $\bH\bRings_\Z^\op$, in analogy with the Zariski topology used in scheme theory. 
We will argue that $\bH \bRings_\Z^\op$ is a good candidate for a noncommutative replacement of the category of affine schemes. Denoting by $\bdAff_\Z:=\bH\bRings_\Z^\op$ the category of \emph{derived noncommutative affine schemes}, let us introduce the formal duality functor (\ie~the formal contravariant functor
mapping an object to itself and a morphism to the opposite morphism)
\begin{align}
\label{Spec formal}
\textrm{Spec} : & \;\bH\bRings_\Z \to \bdAff_\Z\\
\nonumber
& A \mapsto \Spec(A)=:X\,. 
\end{align}

In the remainder of this section we will provide a geometric construction of the functor $\Spec$ that closely mimics the classical construction of the spectrum of a commutative ring given by Grothendieck. Namely, we will define a Grothendieck topology on $\bdAff_\Z$ that almost precisely generalises the (classical) Zariski topology of scheme theory.

Let us begin by giving a preliminary definition.

\begin{defn} \label{defn:homological_homotopical_epimorphism}
A morphism $f: A \to B$ in the category $\bH \bRings_\Z$ is said to be a 
\begin{itemize}
\item \emph{homological epimorphism} if the canonical map $B \otimes_A^\L B \to B$ is an equivalence;
\item \emph{homotopical epimorphism} if the codiagonal map $B \ast_A^\L B \to B$ is an equivalence.
\end{itemize}
\end{defn}

The next proposition motivates the choice of homotopical epimorphisms as noncommutative localizations and guarantees that our construction will be `compatible' with the classical one when applied to commutative rings. For further details, we refer to Section \ref{sec:grothen}.

\begin{prop} \label{prop:characterization_homotopy_epi}
Let $f$ be a morphism in $\bH\bRings_\Z$. The following statements are equivalent:
\begin{enumerate}[(a)]
\item The morphism $f$ is a homological epimorphism.
\item The morphism $f$ is a homotopical epimorphism.
\item The functor $\R f_*:\bH\bRings_B \to \bH\bRings_A$ is fully faithful. 
\item The functor $f_*:\bH\bMod_B \to \bH\bMod_A$ is fully faithful. 
\item The functor $f_*:{}_B \bH\bMod \to {}_A \bH\bMod$ is fully faithful. 
\item The functor $f_*:\bH\bBiMod_B \to \bH\bBiMod_A$ is fully faithful.
\end{enumerate}
 
\end{prop}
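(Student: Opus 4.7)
The plan is to organise the argument into two loops joined by a bridge. The module loop (a) $\Leftrightarrow$ (d) $\Leftrightarrow$ (e) $\Leftrightarrow$ (f) and the algebra loop (b) $\Leftrightarrow$ (c) are each proven by a standard counit argument; the bridge (a) $\Leftrightarrow$ (b) is the genuinely nontrivial content.

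For the module loop, I would invoke the classical fact that a right adjoint is fully faithful if and only if its counit is a natural equivalence. Applied to the adjunctions \eqref{eq:adj_R_mod}--\eqref{eq:adj_bimod}, this reduces each of (d), (e), (f) to the assertion that $N \otimes_A^\L B \simeq N$ (or the left/bimodule variant) for every $N$ in the appropriate category of $B$-modules. Specialising at $N = B$ recovers (a) in each case, so each of (d), (e), (f) implies (a). Conversely, assuming (a) and using the associativity of the derived tensor product together with the identification $N \simeq N \otimes_B^\L B$, one obtains
\[
N \otimes_A^\L B \;\simeq\; N \otimes_B^\L B \otimes_A^\L B \;\simeq\; N \otimes_B^\L (B \otimes_A^\L B) \;\simeq\; N \otimes_B^\L B \;\simeq\; N,
\]
which propagates the equivalence from the generator $B$ to all objects and establishes (d), (e), (f). The loop (b) $\Leftrightarrow$ (c) is entirely parallel, applied to the algebra-level adjunction \eqref{eq:adj_alg}: the counit at the initial $B$-algebra $B$ is precisely the codiagonal $B \ast_A^\L B \to B$, and propagation to a general $C \in \bH\bRings_B$ uses $C \ast_A^\L B \simeq C \ast_B^\L B \ast_A^\L B \simeq C$.

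The main difficulty lies in the bridge (a) $\Leftrightarrow$ (b). For (a) $\Rightarrow$ (b), my plan is to exploit the equivalent formulation (f): the hypothesis identifies $\bH\bBiMod_B$ with a reflective subcategory of $\bH\bBiMod_A$, with reflector $(-) \otimes_A^\L B$ which is moreover monoidal (the derived $B$-bimodule tensor product agrees with the localisation of the derived $A$-bimodule tensor product, as a direct consequence of (a) applied to both tensor factors). Consequently, the free $A$-algebra monad on $\bH\bBiMod_A$ descends to the free $B$-algebra monad on $\bH\bBiMod_B$, so $\bH\bRings_B$ embeds as a reflective subcategory of $\bH\bRings_A$, which is condition (c) and hence (b). For (b) $\Rightarrow$ (a), I would employ the free-forgetful adjunction $T_A \dashv U_A$ between $\bH\bBiMod_A$ and $\bH\bRings_A$ in order to transfer the codiagonal equivalence at the algebra level to an equivalence of underlying $A$-bimodules, then isolate the bimodule summand that records $B \otimes_A^\L B$ via the word-length filtration of the free product.

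The hardest step is ensuring the monoidal compatibility across the module/algebra divide in the derived setting, i.e.\ that the free product and tensor product of algebras can be reconciled via a spectral sequence or filtration argument that is well-behaved after derivation. This is precisely the content of the computations in \cite{Laz2} referenced in the introduction, which reconcile the noncommutative notion of homotopical epimorphism with the homological one, and is what makes the coincidence (a) $\Leftrightarrow$ (b) possible at all.
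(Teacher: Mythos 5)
Your decomposition --- counit arguments for the module loop (a), (d), (e), (f) and the algebra loop (b), (c), joined by the bridge (a) $\Leftrightarrow$ (b) --- matches the paper's organization exactly, and the counit arguments themselves are correct: fully faithfulness of a right adjoint is equivalent to its counit being an equivalence, and specialising the counits of the adjunctions \eqref{eq:adj_R_mod}--\eqref{eq:adj_alg} recovers the derived tensor, resp.\ codiagonal, maps.

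Where you and the paper diverge is the bridge (a) $\Leftrightarrow$ (b). The paper does \emph{not} prove this equivalence: it invokes \cite[Theorem 4.4]{Laz2} verbatim, and the only genuine work in that step is transferring the unbounded-dga result to the connective category $\bH\bRings_\Z$, using that free products of connective dg-algebras stay connective and that $\R\Gamma$ commutes with derived free products (Lemma \ref{lemma:monoidal_Dold_Kan_commute_free_product}). You correctly locate the substance in \cite{Laz2}, but you then attempt a direct proof, which is a more ambitious route than the paper takes, and your sketch for the direction (a) $\Rightarrow$ (b) has a real gap. You assert that the reflector $L = B \otimes_A^\L (-) \otimes_A^\L B: \bH\bBiMod_A \to \bH\bBiMod_B$ is monoidal ``as a direct consequence of (a) applied to both tensor factors.'' Condition (a) does give the cancellation $M \otimes_A^\L N \simeq M \otimes_B^\L N$ when $M$ is a right $B$-module and $N$ a left $B$-module, and hence that the lax structure map $L(M \otimes_A^\L N) \to L(M) \otimes_B^\L L(N)$ is an equivalence for $M, N$ already in the image of restriction. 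But for arbitrary $A$-bimodules $M, N$ the comparison is between $B \otimes_A^\L M \otimes_A^\L N \otimes_A^\L B$ and $B \otimes_A^\L M \otimes_A^\L B \otimes_A^\L N \otimes_A^\L B$, with an extra $B$ sandwiched at the junction that (a) gives no handle on; equivalently, the class of $L$-equivalences is not visibly stable under $(-) \otimes_A^\L X$ for arbitrary $X$. So the Lurie-style descent of the free-algebra monad along a monoidal reflective localisation does not apply as stated, and the inference from reflectivity at the bimodule level to reflectivity at the algebra level needs a different mechanism --- precisely what \cite{Laz2} supplies via the word-length filtration of the free product. Your sketch of (b) $\Rightarrow$ (a) via that filtration is the right idea (it is the same filtration the paper later uses in the proof of Theorem \ref{thm:commutative_faithful_conjecture}), but as written it is a plan rather than an argument; and you also do not address the connective-vs-unbounded compatibility that the paper is careful to check before citing \cite{Laz2}.
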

\begin{proof}
The equivalence between assertions $(a)$ and $(b)$ was proved in \cite[Theorem 4.4]{Laz2}. Observe that \cite[Theorem 4.4]{Laz2} is stated for unbounded dg-algebras but it directly implies our statement in the category $\bH\bRings_\Z$. Indeed, the homotopy category of connective dg-algebras $\bHo(\bDGA^{\le 0})$ sits fully faithfully into the homotopy category of unbounded dg-algebras $\bHo(\bDGA)$ and, since the free product is a left derived functor, the free product of connective dg-algebras is always connective, independently of whether it is considered as objects in $\bHo(\bDGA^{\le 0})$ or in $\bHo(\bDGA)$. So, for a morphism $f: A \to B$ in $\bHo(\bDGA^{\le 0})$ the condition
$B \ast_A^\L B \stackrel{\simeq}{\to} B $
is equivalent to require that the same condition holds in $\bHo(\bDGA)$. Then, by Lemma~\ref{lemma:monoidal_Dold_Kan_commute_free_product} the functor $\R \Gamma$ commutes with free products and therefore
\[ B \ast_A^\L B \stackrel{\cong}{\to} B \ \ \ \then \ \ \ \R \Gamma (B) \ast_{\R \Gamma(A)}^\L \R \Gamma(B) \stackrel{\cong}{\to} \R \Gamma(B)\,; \]
since $\R \Gamma$ is a monoidal equivalence, its quasi-inverse $\L \Gamma^{-1}$ is strongly monoidal, too. Therefore, the isomorphism
$ \L \Gamma^{-1}(B \ast_A^\L B) \stackrel{\cong}{\to} \L \Gamma^{-1}(B)$ implies $\L \Gamma^{-1}(B) \ast_{\L \Gamma^{-1}(A)}^\L \L \Gamma^{-1}(B) \stackrel{\cong}{\to} \L \Gamma^{-1}(B)$;
this gives us that also in the connective case the notions of homological and homotopical epimorphism coincide, because $\R \Gamma$ is also compatible with the (derived) tensor product.

For the equivalence of the assertion $(a)$ with each of $(d)$, $(e)$, and $(f)$, we notice that the canonical morphism
$ B \otimes_A^\L B \to B $
is the counit of the adjunctions \eqref{eq:adj_R_mod}, \eqref{eq:adj_L_mod}, and \eqref{eq:adj_bimod}, respectively, because it is a map of bimodules. Then, the requirement that the counit transformation of an adjunction is a natural isomorphism is equivalent to the right adjoint being fully faithful. Similarly, the equivalence between $(b)$ and $(c)$ is obtained by observing that the codiagonal morphism
$B \ast_A^\L B \to B $
is the counit of the adjunction \eqref{eq:adj_alg}, and asking that it is a natural isomorphism is equivalent to ask that the right adjoint is fully faithful. This completes the proof.
\end{proof}

\subsection{The formal homotopy Zariski topology}

In order to define a Grothendieck topology on $\bdAff_\Z$ we first need to specify appropriate notions of \textit{open embedding} and \textit{cover} for the topology. More explicitly:

\begin{defn} \label{defn:homotopical_Zariski_open_immersion}
We call  \emph{formal homotopy Zariski topology} the topology $\mathscr{T}_{\Zar}$ whose open embeddings are formal homotopical Zariski open embedding and the covers are formal covers, where:
\begin{itemize}
\item A morphism $f: \Spec(B) \to \Spec(A)$ in $\bdAff_\Z$ is said to be a \textit{formal homotopical Zariski open embedding} if it is a homotopical epimorphism in the category $\bH \bRings_\Z$. 
\item A finite family of formal homotopical Zariski open embedding  $\{f_i:\Spec(B_i) \to \Spec(A)\}_i$ is called a \textit{formal cover} if the family of functors 
$\{\L f_i^*: \bH \bRings_A \to \bH \bRings_{B_i} \}_i$ is conservative.
\end{itemize}
\end{defn}

\begin{prop}\label{prop:Zar site}
The formal homotopy Zariski topology on $\bdAff_\Z$ defines a Grothendieck topology.
\end{prop}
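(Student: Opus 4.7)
The plan is to verify the standard Grothendieck pretopology axioms for $(\bdAff_\Z,\mathscr{T}_{\Zar})$ by translating everything to $\bH\bRings_\Z$ via the duality $\Spec$, so that pullbacks in $\bdAff_\Z$ become homotopy pushouts of rings. Before tackling the axioms themselves I would first establish a preliminary stability result for the class of homotopical epimorphisms, since otherwise the base-changed family in the stability axiom is not even well-defined as a candidate cover.

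The required preliminary statement is that homotopical epimorphisms are closed under composition and under homotopy pushout. Composition is immediate from characterisation $(c)$ of Proposition~\ref{prop:characterization_homotopy_epi}: the composite of two fully faithful right adjoints is fully faithful. Pushout-stability is a direct calculation using the pasting law for homotopy pushouts: given $f\colon A\to B$ a homotopical epimorphism and $g\colon A\to C$ any map,
\[
(B\ast_A^\L C)\ast_C^\L (B\ast_A^\L C)\simeq B\ast_A^\L B\ast_A^\L C\simeq B\ast_A^\L C,
\]
the first equivalence being the identification of this iterated pushout with a threefold pushout over $A$, and the second using the homotopical epimorphism property of $f$.

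With these in place, I would then verify the three pretopology axioms. The identity-cover axiom is trivial. For stability of covers under base change, given a cover $\{A\to B_i\}_i$ and any $A\to C$, each $C\to B_i\ast_A^\L C$ is a homotopical epimorphism by the above; for conservativity, a further application of the pasting law gives
\[
(B_i\ast_A^\L C)\ast_C^\L X\simeq B_i\ast_A^\L X
\]
for $X\in\bH\bRings_C$, so the problem reduces, via the evidently conservative forgetful functor $\bH\bRings_C\to\bH\bRings_A$, to the conservativity of the original family. For transitivity, given covers $\{A\to B_i\}_i$ and, for each $i$, $\{B_i\to C_{ij}\}_j$, the composites $A\to C_{ij}$ are homotopical epimorphisms by the preliminary stability; conservativity reduces, via $C_{ij}\ast_A^\L\phi\simeq C_{ij}\ast_{B_i}^\L(B_i\ast_A^\L\phi)$, to a two-step application of the assumed conservativities.

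None of the above involves a deep obstacle, but some care is needed in handling the associativity and symmetry identities for derived free products over noncommutative rings: the free product is not strictly associative or symmetric, yet the identities above hold as equivalences in $\bH\bRings_\Z$ by the general formalism of homotopy pushouts, whose existence is guaranteed by Proposition~\ref{prop:existence_homotopy_lim_colim}. The point worth highlighting is that the conservativity in Definition~\ref{defn:homotopical_Zariski_open_immersion} is imposed at the level of \emph{algebras}, not modules; this is precisely what makes the base-change and transitivity arguments go through cleanly, and is consistent with the discussion in the introduction explaining why algebra-level conservativity is the correct notion in the noncommutative setting.
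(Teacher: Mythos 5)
Your proposal is correct and follows the same essential route as the paper's proof: verify the pretopology axioms by translating to $\bH\bRings_\Z$ and applying the pasting law for homotopy pushouts (which is exactly the ``dual of the homotopical pullback lemma'' the paper invokes), thereby reducing conservativity of the base-changed cover to conservativity of the original cover combined with conservativity of the forgetful functor $\bH\bRings_B \to \bH\bRings_A$. The one genuine addition in your write-up is that you explicitly verify pushout-stability of homotopical epimorphisms via the chain
\[
(B\ast_A^\L C)\ast_C^\L (B\ast_A^\L C)\simeq B\ast_A^\L B\ast_A^\L C\simeq B\ast_A^\L C,
\]
whereas the paper simply asserts ``homotopy pushouts preserve homotopy epimorphisms'' without proof; your composition-stability remark via Proposition~\ref{prop:characterization_homotopy_epi}(c) is likewise a detail the paper leaves implicit. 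This makes your account slightly more self-contained, but it is not a different strategy.
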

\begin{proof}
To prove our claim we have to check that three conditions are satisfied.
\begin{enumerate}
\item That if a morphism $f \in \bdAff_\Z$ is an isomorphism, then it is also a homotopical Zariski open embeddings.  This is clearly true.
\item That, given a formal homotopical Zariski cover $\{U_i\to X\}$ and a morphism $Y \to X$, the pullback family $\{U_i\times_X^\R Y \to Y\}$ is formal homotopical Zariski cover. To this end,  if we write $X = \Spec(A)$, $Y = \Spec(B)$, and $U_i = \Spec(A_i)$ then $U_i\times_X^\R Y = \Spec(A_i \ast_A^\L B)$. So, since homotopy pushouts preserve homotopy epimorphisms, then the fact that the morphisms $f_i: A \to A_i$ are homotopy epimorphisms imply that the morphisms $f_i \ast^\L_A B: B \to A_i \ast^\L_A B$ are homotopy epimorphisms as well. Furthermore, we have that the family for functors $\{ \L f_i^*: \bH \bRings_A \to \bH \bRings_{A_i} \}$ is conservative by hypothesis. Let $C,C' \in \bH \bRings_B$ and assume that there is a morphism $C \to C'$ that induces isomorphisms  
$$C \ast_B^\L B \ast_B^\L A_i\simeq C' \ast_B^\L B \ast_A^\L A_i\,$$ 
for all $i$. We need to check that $C \ast_A^\L A_i \simeq C' \ast_A^\L A_i$. But by the dual of the (homotopical) pullback lemma (see \cite[Lemma 4.4.2.1]{HTT}), we get that in the diagram
\[
\begin{tikzpicture}
\matrix(m)[matrix of math nodes,
row sep=2.6em, column sep=2.8em,
text height=1.5ex, text depth=0.25ex]
{ A &  B & C  \\
A_i & B \ast^\L_A A_i & C \ast_B^\L B \ast^\L_A A_i  \\
};
\path[->, font=\scriptsize]
(m-1-1) edge node[auto] {} (m-1-2);\\
\path[->, font=\scriptsize]
(m-1-1) edge node[auto] {} (m-2-1);
\path[->, font=\scriptsize]
(m-1-2) edge node[auto] {} (m-2-2);
\path[->, font=\scriptsize]
(m-1-2) edge node[auto] {} (m-1-3);
\path[->,font=\scriptsize]
(m-2-1) edge node[auto] {} (m-2-2);
\path[->,font=\scriptsize]
(m-2-2) edge node[auto] {} (m-2-3);
\path[->,font=\scriptsize]
(m-1-3) edge node[auto] {} (m-2-3);
\end{tikzpicture}
\]
the fact that the two small squares are homotopy pushouts imply that also the outer square is a homotopy pushout. Therefore, we get a (non-canonical, up to a contractible space of choices) isomorphism
\[ C \ast_B^\L B \ast^\L_A A_i \cong C \ast^\L_A A_i. \]
And since the same is true for $C'$, we get isomorphisms
\[ C \ast^\L_A A_i \cong C' \ast^\L_A A_i \]
for all $i$. By hypothesis this implies that $C$ and $C'$ are isomorphic as objects of $\bH \bRings_A$; but the forgetful functor $\bH \bRings_B \to \bH \bRings_A$ is conservative, therefore the isomorphism lifts to an isomorphism of objects of $\bH \bRings_B$.

\item Finally it remains to check  the stability of covers by composition. Given a homotopical Zariski cover $\{U_i\to X\}$ and, for every $i$, homotopical Zariski covers $\{V_{i,j}\to U_i\}$ of $U_i$, this amounts to check that the family $\{V_{i,j} \to X\}$ is a homotopical Zariski cover of $X$. Again, let us adopt the notation $X = \Spec(A)$, $U_i = \Spec(A_i)$, and $V_{i, j} = \Spec(B_{i, j})$, and let us work algebraically in the category $\bH \bRings_\Z$.
It is easy to check that the composition of two homotopy epimorphisms is a homotopy epimorphism, so the composite morphism $V_{i, j} \to U_i\to X$ is a homotopy epimorphism for all $i,j$. It remains to check the conservativity condition of the cover. Let $C, C' \in \bH \bRings_A$ and assume that there is a morphism $C \to C'$ that induces isomorphisms  
$$ C \ast_A^\L B_{i,j} \simeq C' \ast_A^\L B_{i,j}\,$$ 
for all $i, j$. We use again the (dual of the) homotopical pullback lemma (\cf \cite[Lemma 4.4.2.1]{HTT}) to deduce that in the diagram
\[
\begin{tikzpicture}
\matrix(m)[matrix of math nodes,
row sep=2.6em, column sep=2.8em,
text height=1.5ex, text depth=0.25ex]
{ A &  A_i & B_{i,j}  \\
 C & C \ast^\L_A A_i & C \ast^\L_A  A_i \ast^\L_{A_i} B_{i,j} \\
};
\path[->, font=\scriptsize]
(m-1-1) edge node[auto] {} (m-1-2);\\
\path[->, font=\scriptsize]
(m-1-1) edge node[auto] {} (m-2-1);
\path[->, font=\scriptsize]
(m-1-2) edge node[auto] {} (m-2-2);
\path[->, font=\scriptsize]
(m-1-2) edge node[auto] {} (m-1-3);
\path[->,font=\scriptsize]
(m-2-1) edge node[auto] {} (m-2-2);
\path[->,font=\scriptsize]
(m-2-2) edge node[auto] {} (m-2-3);
\path[->,font=\scriptsize]
(m-1-3) edge node[auto] {} (m-2-3);
\end{tikzpicture}
\]
the outer square is a homotopy pushout square. And the same with $C$ replaced by $C'$. This gives isomorphisms
\[ C \ast^\L_A B_{i,j} \cong C \ast^\L_A  A_i \ast^\L_{A_i} B_{i,j}, \ \  C' \ast^\L_A B_{i,j} \cong C' \ast^\L_A  A_i \ast^\L_{A_i} B_{i,j}. \]
So, the data of these isomorphism and the fact that for all $j$ the family $\{V_{i,j}\to U_i\}$ is a cover imply that for all $i$ we have isomorphisms
\[ C \ast^\L_A A_i \cong C' \ast^\L_A A_i\,. \]
This family of isomorphisms plus the fact that $\{U_i\to X\}$ is a cover imply that
\[ C \cong C' \]
as claimed.
\end{enumerate}
This concludes the proof.
\end{proof}

Proposition \ref{prop:Zar site} can be easily generalised as follows.

\begin{cor}
Let $R \in \bH\bRings_\Z$, not necessarily discrete. The formal homotopy Zariski topology on $\bH\bRings_\Z$ restricts to a topology on $\bH\bRings_R$.
\end{cor}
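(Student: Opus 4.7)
My plan is to argue that the entire construction of the formal homotopy Zariski topology is intrinsic to the slice categories involved, and therefore descends automatically from $\bH\bRings_\Z$ to $\bH\bRings_R$ for any $R\in\bH\bRings_\Z$.

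First, I would observe the following fundamental identification: for any $A\in\bH\bRings_R$, the under-category $A/\bH\bRings_\Z$ is canonically equivalent to $A/\bH\bRings_R$, because any morphism $A\to B$ in $\bH\bRings_\Z$ exhibits $B$ as an $R$-algebra via the composite $R\to A\to B$. Thus the category $\bH\bRings_A$ is intrinsic to $A$ and does not depend on whether $A$ is viewed as an object of $\bH\bRings_\Z$ or of $\bH\bRings_R$. In particular, the homotopy pushouts $B\ast_A^\L C$ that appear in Definition~\ref{defn:homotopical_Zariski_open_immersion} and in the proof of Proposition~\ref{prop:Zar site} are computed identically in both settings.

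Next, I would apply this observation to transfer the two defining data of the topology. For the open embeddings: a morphism $f:A\to B$ in $\bH\bRings_R$ is a homotopical epimorphism in $\bH\bRings_R$ if and only if the codiagonal $B\ast_A^\L B\to B$ is an equivalence, and this condition is independent of the base ring since the free product is computed over $A$. For the covers: the conservativity of the family $\{\L f_i^*:\bH\bRings_A\to\bH\bRings_{A_i}\}$ only involves slice categories, which by the previous paragraph coincide in $\bH\bRings_\Z$ and in $\bH\bRings_R$. Hence the classes of open embeddings and of covers in $\bH\bRings_R^{\op}$ are exactly those inherited by restriction from $\bH\bRings_\Z^{\op}$.

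Finally, I would verify the three axioms of a Grothendieck topology. Identity morphisms are clearly open embeddings. Stability under pullback: given a cover $\{A\to A_i\}$ in $\bH\bRings_R$ and a morphism $A\to B$ in $\bH\bRings_R$, the pullback family is $\{B\to A_i\ast_A^\L B\}$, and the arguments in Proposition~\ref{prop:Zar site} (relying on stability of homotopical epimorphisms under homotopy pushout, the homotopical pullback lemma, and conservativity of forgetful functors) go through verbatim, since all the homotopy pushouts involved are computed in the same slice categories as before. The composition axiom follows by the identical argument. The only conceptual point requiring care — which I expect to be the only genuine, albeit minor, subtlety — is the canonical identification of the slice categories above; once this is established, the proof reduces to a direct transcription of Proposition~\ref{prop:Zar site}.
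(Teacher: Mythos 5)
Your proof is correct and takes essentially the same route as the paper, which simply observes that the proof of Proposition~\ref{prop:Zar site} uses only formal properties of homotopical epimorphisms and that the base ring $\Z$ plays no role. Your explicit identification of the under-categories $A/\bH\bRings_\Z \cong A/\bH\bRings_R$ for $A\in\bH\bRings_R$ is a nice sharpening that makes precise exactly why the paper's terse assertion holds.
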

\begin{proof}
In the proof of Proposition \ref{prop:Zar site} only formal properties of homotopical epimorphisms have been used: the base ring $\Z$ plays no role. It is straightforward to check that the same arguments work in $\bH\bRings_R$.
\end{proof}

\begin{rmk} \label{rmk:failure_base_chage}
We point out another striking difference between the commutative and the noncommutative situations. In (derived) algebraic geometry, for affine morphisms of schemes the Base Change Theorem holds. In this situation, the theorem takes the following form. Let
\[
\begin{tikzpicture}
\matrix(m)[matrix of math nodes,
row sep=2.6em, column sep=2.8em,
text height=1.5ex, text depth=0.25ex]
{ A &  B  \\
  C & C \otimes_A^\L B \\
};
\path[->, font=\scriptsize]
(m-1-1) edge node[auto] {} (m-1-2);
\path[->, font=\scriptsize]
(m-1-1) edge node[auto] {} (m-2-1);
\path[->,font=\scriptsize]
(m-2-1) edge node[auto] {} (m-2-2);
\path[->,font=\scriptsize]
(m-1-2) edge node[auto] {} (m-2-2);
\end{tikzpicture}
\]
be a pushout square of commutative algebras, and let $M \in \bH\bMod_C$, then $M \otimes_C^\L C \otimes_A^\L B \cong M \otimes_A^\L B$. Of course, in this situation, the theorem is just a basic cancellation property of the tensor product functor. This theorem has several consequences in terms of relating the geometry of $\Spec(A)$ to the category $\bH\bMod_A$. But in the noncommutative situation, pushouts are computed via the free product of algebras. So, in this case we have that pushouts of algebras are given by diagrams of the form
\[
\begin{tikzpicture}
\matrix(m)[matrix of math nodes,
row sep=2.6em, column sep=2.8em,
text height=1.5ex, text depth=0.25ex]
{ A &  B  \\
  C & C \ast_A^\L B \\
};
\path[->, font=\scriptsize]
(m-1-1) edge node[auto] {} (m-1-2);
\path[->, font=\scriptsize]
(m-1-1) edge node[auto] {} (m-2-1);
\path[->,font=\scriptsize]
(m-2-1) edge node[auto] {} (m-2-2);
\path[->,font=\scriptsize]
(m-1-2) edge node[auto] {} (m-2-2);
\end{tikzpicture}
\]
therefore, the base change theorem would have the form of an isomorphism
\[ M \otimes_C^\L C \ast_A^\L B \cong M \otimes_A^\L B \]
for all $M \in \bH \bMod_C$, that is not true in general, not even with $M = C$. This creates extra complications in the relations between the geometry of $\Spec(A)$ and the properties of $\bH \bMod_A$ in the noncommutative case that are at the heart of the discussion in Section \ref{sec:gelfand}. We also notice, that instead, if we work with algebras over $A$, the base change theorem holds, \ie it is true that
\[ D \ast_C^\L C \ast_A^\L B \cong D \ast_A^\L B \]
for any $D \in \bH \bRings_C$. And this property is what we used to prove Proposition \ref{prop:Zar site}.
\end{rmk}

We will see in Section \ref{sec:properties_spectrum} that the connection between the geometry of $\Spec(A)$ and the category $\bH \bMod_A$ is not lost in the noncommutative case as homotopical epimorphisms are precisely determined by specific localizations of the category $\bH \bMod_A$.

\subsection{The category of localizations}

We now want to associate to any object $R \in \bH\bRings_\Z$ a geometric space using the homotopy Zariski topology on $\bH\bRings_R$. The procedure we adopt here is the same that can be used to obtain the Grothendieck spectrum of a commutative ring from the data of the classical Zariski topology on the category of affine schemes\footnote{In Proposition \ref{prop:compatision with Grothendieck} we will give more details about this procedure in the classical setting.}. 
To do this we introduce the following notation.

\begin{defn}\label{def:loc-ouv}
Let $R \in \bH\bRings_\Z$ and let us denote $X = \Spec(R) \in \bdAff_\Z$. 
\begin{itemize}
\item We define $\bLoc(R)$, the category of \emph{localizations}  of $R$, to be the subcategory of all homotopy epimorphisms in $\bH\bRings_R$ whose domain is $R$. 
\item We define $\bOuv(X)$, the category of \emph{open immersions} of $X$, to be the subcategory of all homotopy monomorphisms in $\bdAff_R$ 
whose codomain is $X$. 
\end{itemize}
\end{defn}

Observe that, by definition, we have $\bLoc(R) = \bOuv(X)^\op$.
Before moving forward with the study of the categories $\bLoc(R)$ and $\bOuv(X)$, let us discuss some examples of formal homotopy Zariski localizations.

\begin{exa} \label{exa:homotopy_epimorphism}
\begin{enumerate}[(i)]
\item \textbf{Classical Zariski localizations.} Let $R$ be a (discrete) commutative ring and consider a \emph{Zariski localization}, namely a morphism of commutative rings $R \to S$ that induces an open embedding $\Spec(S) \to \Spec(R)$ for the classical Zariski topology. As shown in \cite[Lemma 2.1.4 (1)]{TV3}, a Zariski localization induces a homotopical epimorphism in $\bH\bRings_R$. For the sake of completeness, let us mention that the family of Zariski localizations can be also characterised algebraically as the family of flat epimorphisms of finite presentation. Notice also that \cite[Lemma 2.1.4]{TV3} shows that the homotopy Zariski topology on commutative simplicial rings precisely restricts to the classical Zariski topology on discrete commutative rings. In this sense, the homotopical Zariski topology is a generalization of the classical Zariski topology of scheme theory.

\item \textbf{Flat epimorphisms.} A generalisation of the previous example is the class of flat epimorphisms between (discrete) rings $f: R \to S$. This class encompasses the class of Zariski localizations and although it is \emph{a priori} a huge class, it is well-known that the family of isomorphism classes (in the category of $R$-algebras) of epimorphisms with fixed source ring $R$ is a set. The sub-class of flat ring epimorphisms has been thoroughly studied in the literature and, by definition, is a sub-class of homotopical epimorphisms of ring. Indeed, if $S$ is flat over $R$, then the canonical map $S \otimes_R^\L S \to S \otimes_R S$ is an equivalence; since $f$ is an epimorphism, $S \otimes_R S \to S$ is an isomorphism, and the fact that the composition $S \otimes_R^\L S \to S \otimes_R S \to S$ is an equivalence is precisely the request of $R \to S$ being a homotopical epimorphism. 

The class of flat ring epimorphism is a nice and easy-to-study class of morphisms --- a natural candidate for a noncommutative generalisation of the class of Zariski localizations. However, at closer inspection one realises that this class cannot be used in the definition of a Grothendieck topology for noncommutative rings, because it is not stable under push-outs of rings. To see this, consider the following situation. Let $R$ be a ring and $s \in R$ be such that the multiplicative subset $\{1, s, s^2, \ldots \} \subset R$ is not an Ore subset and the localization $R \to R[s^{-1}]$ is not flat. Note that such examples exist (see, \eg, Example (vi) below). Then the diagram
\[
\begin{tikzpicture}
\matrix(m)[matrix of math nodes,
row sep=2.6em, column sep=2.8em,
text height=1.5ex, text depth=0.25ex]
{ \Z[x] &  \Z[x, x^{-1}]  \\
 R & R[s^{-1}] \\
};
\path[->, font=\scriptsize]
(m-1-1) edge node[auto] {} (m-1-2);
\path[->, font=\scriptsize]
(m-1-1) edge node[auto] {} (m-2-1);
\path[->,font=\scriptsize]
(m-2-1) edge node[auto] {} (m-2-2);
\path[->,font=\scriptsize]
(m-1-2) edge node[auto] {} (m-2-2);
\end{tikzpicture}
\]
is a cocartesian diagram of rings (\ie, $R[s^{-1}] \cong R \ast_{\Z[x]} \Z[x, x^{-1}]$), where the left vertical map is given by $x \mapsto s$. But $\Z[x] \to \Z[x, x^{-1}]$ is the prototype of a flat morphism of commutative rings, whereas $R \to R[s^{-1}]$ is by construction not flat.

\item \textbf{Ore localizations.} If $S \subset R$ is a multiplicative subset that satisfies the  Ore condition\footnote{The (right) Ore condition for a multiplicative subset $S$ of a ring $R$ is that for $a \in R$ and $s \in S$, the intersection $aS \cap sR \neq \emptyset$.}, then the localization map $R \to R[S^{-1}]$ is a flat ring epimorphism, and therefore it is a homotopical epimorphism. If $S$ does not satisfy the Ore conditions, then the localization map $R \to R[S^{-1}]$ is not flat and in general is not even a homotopical epimorphism.

\item \textbf{Localizations of discrete commutative rings.} If $A$ is a commutative simplicial ring (\ie, $A \in \bH \bC \bRings_\Z$) and $A \to B$ a homotopical epimorphism then $B$ is commutative (\ie, $B \in \bH \bC \bRings_\Z$) \footnote{An easy way to prove this assertion is the following. It is known, and we will give a proof in Proposition \ref{prop:smashing_homotopi_epi}, that any homotopy epimorphism $A \to B$ determines a unique smashing localization of $D(\bMod_A)$. In this case, the smashing subcategory generated by $A$, that is the unit of the $\otimes$-triangulated structure in $D(\bMod_A)$, is $D(\bMod_A)$ itself and it follows that all smashing subcategories, \ie\ kernels of smashing localizations, are $\otimes$-ideals. It is then a standard consequence of the Eckmann–Hilton argument that the algebra constructed in Proposition \ref{prop:smashing_homotopi_epi} is commutative in this situation.}. Moreover, if $A$ is Noetherian and discrete, then $B$ is also discrete by \cite[Theorem 1.1 (2)]{AMSTV}. In the next example we briefly discuss the non-Noetherian case, so let us focus on the Noetherian case here. In this situation, by the main results of \cite{NB}, localizing subcategories of $D(\bMod_A)$  precisely correspond to subsets of prime ideals spectrum $\Spec(A)$ and so do in particular smashing subcategories. Therefore, the smashing localizations of $D^{\le 0}(\bMod_A)$ correspond to inclusions $\Spec(B) \to \Spec(A)$ where the image is a generalization closed subset. 
This property of homotopical localizations of discrete rings being concentrated in degree $0$ crucially fails for noncommutative rings because of the existence of non-flat localizations. The Example (vi) illustrates an explicit instance where this phenomenon occurs.

\item \textbf{Localizations of non-Noetherian valuation rings.}  In the case of non-Noetherian commutative rings, the situation is more complicated than in the Noetherian case. Still, at least in some case, the family of homotopy epimorphisms can be explicitly described. In \cite[Section 2]{Baz} there is a summary of the classification of homotopical epimorphisms for valuation rings $R$. For example, in this situation, if $R$ is a valuation ring with maximal ideal $\mathfrak{m}$ such that $\mathfrak{m} = \mathfrak{m}^2$, then it is possible to show that the (non-flat) homomorphism $R \to \frac{R}{\mathfrak{m}}$ is a homotopical epimoprhism. A similar phenomenon happens in the theory of commutative $C^*$-algebras as shown in \cite{BamMi}.

\item \textbf{Localizations of discrete noncommutative rings.} We now give an example of a homotopical epimorphism from a discrete noncommutative ring to a non-discrete simplicial ring. This example is taken from \cite[Example 5.5]{Laz} and adapted to the conventions used in this paper. Let us work with connective dg-algebras and tacitly use the Dold--Kan equivalence whenever needed.  Let $R$ be a commutative ring that, for simplicity, we assume to be of characteristic zero. Consider the $R$-algebra
\[ A = \frac{R \lt s, t, u \gt}{(s t, u s)}. \]
One can show that the multiplicative subset $\{1, s, s^2, s^3, \ldots \} \subset A$ does not satisfy the Ore conditions and therefore the localization $A \to A[s^{-1}]$ is not flat. Moreover, one can check that it is not true that $A[s^{-1}] \ast_A^\L A[s^{-1}] \cong A[s^{-1}]$ and therefore this localization is not a homotopical epimorphism. This failure can be corrected by considering the homotopical localization of $A$ at $s^{-1}$, obtained, roughly speaking, by inverting $s$ homotopically rather than in the category of rings. We refer to \cite{Laz} for more details about this  procedure of homotopical inversion of elements of dg-algebras in general. For the purpose of this example it is enough to know that the result of this operation is always a homotopical epimorphism but the target ring might not be discrete even if the domain is. In our situation we can define the localization of $A$ at $s$ via the free product
\[ A \ast_{R[s]}^{\L} R [s, s^{-1}], \]
that is meant to be the push-out of the diagram
\[
\begin{tikzpicture}
\matrix(m)[matrix of math nodes,
row sep=2.6em, column sep=2.8em,
text height=1.5ex, text depth=0.25ex]
{ R[x] &  R[x, x^{-1}]  \\
 A & A \ast_{R[s]}^{\L} R [s, s^{-1}] \\
};
\path[->, font=\scriptsize]
(m-1-1) edge node[auto] {} (m-1-2);
\path[->, font=\scriptsize]
(m-1-1) edge node[auto] {} (m-2-1);
\path[->,font=\scriptsize]
(m-2-1) edge node[auto] {} (m-2-2);
\path[->,font=\scriptsize]
(m-1-2) edge node[auto] {} (m-2-2);
\end{tikzpicture},
\]
where the map $R[x] \to A$ is defined by $x \mapsto s$. Notice that $R[x] \to R[x, x^{-1}]$ is one of the most basic Zariski localizations of algebraic geometry, and hence it is a homotopical epimorphism by Example (i). And by Proposition \ref{prop:Zar site} this implies that $A \to A \ast_{R[s]}^{\L} R [s, s^{-1}]$ is a homotopical epimorphism. To explicitly compute the derived free product, we can follow \cite[Example 5.5]{Laz} where it is shown that the cofibrant $R$-dg-algebra
\[ C = R \lt s, t, u, z, w, a \gt \]
is a $R[s]-$cofibrant replacement of $A$.
Here $z$ and $w$ are generators in degree\footnote{We should warn the reader that in \cite[Example 5.5]{Laz} the degrees are positive, because \cite{Laz} adopts homological differentials for dg-modules, rather than cohomological differentials used in the present paper.} $-1$, $a$ is a generator in degree $-2$, and the differentials are defined by $dz = st$, $dw = u s$ and $da = uz - wt$.  Therefore, a calculation gives us
\[ A \ast_{R[s]}^{\L} R [s, s^{-1}] \cong C \ast_{R[s]} R[s, s^{-1}]. \]
One can check that the latter dg-algebra has cohomology in all even negative degrees (see~\cite[Example 5.5]{Laz} for a detailed computation).

\end{enumerate}
\end{exa}

With the next proposition, we shall show that the category $\bLoc(R)$  is equivalent to a complete join semi-lattice, namely, a partially ordered set that admits a least upper bound (join) for any nonempty arbitrary collection of subsets.	
 
\begin{prop} \label{prop:Loc_poset}
The category $\bLoc(R)$ is essentially small and is equivalent to a complete join semi-lattice.
\end{prop}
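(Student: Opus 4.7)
The plan is to split the claim into three assertions: (i) $\bLoc(R)$ is thin, i.e.\ has at most one morphism between any two objects up to isomorphism in the homotopy category; (ii) $\bLoc(R)$ is essentially small; and (iii) $\bLoc(R)$ admits arbitrary set-indexed joins. Assertions (i) and (ii) together will exhibit $\bLoc(R)$ as equivalent to a partially ordered set, and (iii) will promote this to a complete join semi-lattice.

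I would start with thinness, which follows rather cleanly from the very definition of a homotopical epimorphism. Given $f\colon R\to S$ and $g\colon R\to T$ in $\bLoc(R)$ and two morphisms $\alpha,\beta\colon S\to T$ in $\bH\bRings_R$, the universal property of the homotopy pushout produces a map $\langle\alpha,\beta\rangle\colon S\ast_R^\L S\to T$ whose pre-compositions with the canonical inclusions $\iota_1,\iota_2\colon S\to S\ast_R^\L S$ recover $\alpha$ and $\beta$ respectively. Since $f$ is a homotopical epimorphism, the codiagonal $\nabla\colon S\ast_R^\L S\to S$ is an equivalence in $\bH\bRings_\Z$; both $\iota_1$ and $\iota_2$ are sections of $\nabla$, hence $\iota_1\simeq\iota_2$, forcing $\alpha\simeq\beta$.

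For (ii), I would invoke Proposition~\ref{prop:characterization_homotopy_epi}(d): each $f\colon R\to S$ in $\bLoc(R)$ is determined, up to isomorphism, by the essential image of the fully faithful functor $f_*\colon\bH\bMod_S\hookrightarrow\bH\bMod_R$, which is a smashing (in particular reflective) full subcategory of $\bH\bMod_R$. Since $\bH\bMod_R$ is (stably) locally presentable, its reflective full subcategories form an honest set, and combining this with thinness of (i) gives essential smallness. For (iii), I would first construct binary joins: given $f\colon R\to S$ and $g\colon R\to T$, set the join to be the homotopy pushout $R\to S\ast_R^\L T$. The map $T\to S\ast_R^\L T$ is a homotopical epimorphism by the stability property verified in the proof of Proposition~\ref{prop:Zar site}, and the composite $R\to T\to S\ast_R^\L T$ is again a homotopical epimorphism because composition preserves this property. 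Indeed, if $A\to B$ and $B\to C$ are homotopical epimorphisms, then $B\ast_A^\L C\simeq B\ast_A^\L B\ast_B^\L C\simeq B\ast_B^\L C\simeq C$, and therefore $C\ast_A^\L C\simeq C\ast_B^\L(B\ast_A^\L C)\simeq C\ast_B^\L C\simeq C$. The universal property of the homotopy pushout together with the thinness established in (i) makes $R\to S\ast_R^\L T$ the join in the poset. For a general family $\{R\to S_i\}_{i\in I}$ I would pass to the directed system of finite free products $\bigast_{i\in F,R}^\L S_i$ indexed by finite $F\subseteq I$; each is a homotopy epimorphism by iteration of the binary case, and the directed homotopy colimit $S:=\hocolim_F\bigast_{i\in F,R}^\L S_i$ satisfies $S\ast_R^\L S\simeq S$ because the left adjoint $\ast_R^\L$ and directed colimits commute with each other and with binary free products.

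The main obstacle I anticipate is the essential smallness in step (ii): one must rigorously bound the class of smashing localizations of $\bH\bMod_R$ by a set without circular appeal to Proposition~\ref{prop:iso_smashing_poset}. The argument via reflective subcategories of a presentable category is standard but deserves to be spelled out carefully. A secondary technical point is the verification that arbitrary directed homotopy colimits in $\bH\bRings_R$ behave well enough with respect to homotopical epimorphisms; this is where one relies on the fact that the free product is a left adjoint and that Proposition~\ref{prop:existence_homotopy_lim_colim} guarantees the existence of these colimits in a computable way.
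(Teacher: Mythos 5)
Your proof follows the same overall decomposition as the paper — thinness (poset structure), essential smallness via the correspondence with localizations of module categories, joins via the derived free product — and the details you fill in for the poset structure (via sections of the codiagonal) and for the passage from finite to arbitrary joins (directed colimit of finite free products, using cofinality of the diagonal and left-adjointness of the free product) are correct and are exactly the kind of elaboration the paper leaves implicit.

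The one genuine gap is in your step (ii). You argue that each homotopical epimorphism out of $R$ is determined by a reflective full subcategory of $\bH\bMod_R$ and then assert that ``reflective full subcategories of a locally presentable category form an honest set.'' That statement is not a theorem of ZFC: already for ordinary locally presentable categories the class of all reflective full subcategories need not be a set without large-cardinal hypotheses (Vop\v{e}nka's principle), and the same subtlety persists stably. What rescues the argument is precisely the \emph{smashing} condition, which you flag parenthetically but then drop: a homotopical epimorphism of $R$-algebras gives a localization of $\bH\bMod_R$ whose inclusion has \emph{both} a left and a right adjoint (equivalently, which preserves all coproducts). For the compactly generated triangulated category $D(R)$, the smashing (Bousfield) localizations are known to form a set — this is the fact the paper invokes via the Krause reference — and $\bLoc(R)$ embeds into that set by Proposition~\ref{prop:iso_smashing_poset} / Proposition~\ref{prop:smashing_homotopi_epi}. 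Replacing ``reflective'' by ``smashing'' and citing the compact-generation result makes your step (ii) rigorous and aligns it with the paper's argument; as written, the appeal to arbitrary reflective subcategories is a non sequitur.
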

\begin{proof}
To prove that $\bLoc(R)$ is essentially small we notice that $\bH\bRings_\Z$ is a homotopically well-copowered category. One way see why this is true\footnote{Another possibility is to notice that well-copoweredness is always true for an algebraic category and also its homotopical version is true for homotopical algebraic categories. From our description of the category $\bRings_R$ as an algebraic category the reader can imagine that $\bH \bRings_R$ is a homotopical algebraic category. Since we have not introduced the homotopical version of the notion of algebraic category we prefer to argue in a more down-to-earth way to prove well-copoweredness.}, is to observe that Proposition \ref{prop:characterization_homotopy_epi} implies that the datum of a homotopy epimorphism is equivalent to a smashing localization of $\bH \bMod_R$ (see subsection~\ref{Comparison with the spectrum of smashing subcategories} for further details). These in turn are particular cases of smashing localizations of the unbounded derived category $D(R)$ and form a set, because $D(R)$ is compactly generated (\cf the introduction of \cite{Kra}). This shows that $\bLoc(R)$ is essentially small. To show that it is equivalent to a poset, we observe that for any $A, A' \in \bLoc(R)$ the set
\[ \Hom_{\bH \bRings_R}(A, A') \]
is either empty or a singleton, because the structure maps $R \to A$ and $R \to A'$ are homotopical epimorphisms. This poset has all joins that are given by the derived free product of algebras because it is the homotopical coproduct of algebras. 
\end{proof}

By duality, the above proposition tells us that the category $\bOuv(X)$ is equivalent to a complete meet semi-lattice, namely, a partially ordered set that admits a greatest lower bound (meet) for any nonempty arbitrary subset.  We will interpret the objects of $\bOuv(X)$ as open subsets of $X = \Spec(R)$, their meets as intersections of open subsets. We do not interpret joins of $\bOuv(X)$ as unions in $X$, because unions will be prescribed by the homotopy Zariski topology and the latter is not always compatible with the lattice structure of $\bOuv(X)$. There are two main reasons for why the joins in $\bOuv(X)$ do not precisely correspond to geometric unions in $X$. One is a phenomenon proper of noncommutative geometry that will be presented later on. The other is more classical, it is explained in the next remark and boils down to the fact that the objects of $\bOuv(X)$ form just a base for a topology not all the opens subsets of a topological space.

\begin{rmk} \label{rmk:finite_unions}
Already in the classical setting of the Zariski topology for commutative rings, it can happen that given two $U, V \in \bOuv(X)$, their join in $\bOuv(X)$ does not agree with their geometric union (this is equivalent to say that the union of two open affine subschemes should be computed in the category of schemes and not in the category of affine schemes). A simple example of this phenomenon happens when $X = \Spec(\C[x,y])$, $U = \Spec(\C[x,y][x^{-1}])$, and $V = \Spec(\C[x,y][y^{-1}])$. In this case $U \cup V$ is not an affine scheme (in the usual sense) and therefore it is not equivalent to $\Spec(A)$ for any $A \in \bH \bRings_\Z$. But clearly $U \cup V$ makes sense as a (non-affine) scheme. Alternatively, one can think of the scheme $U \cup V$ as an affine stacks in the sense of To\"en (see \cite{toen}) or even better in the framework of complicial algebraic geometry of \cite[Chapter 2.3]{TV2}. This amounts to work with the whole category of unbounded dg-algebras in place of the category of connective dg-algebras, which coincidentally is the setting of \cite{Laz} and \cite{Laz2}. In this alternative setting, we would get that the construction of $\bLoc(R)$ and $\bOuv(X)$ gives the (distributive if $R$ is commutative) lattice of smashing localizations of $D(\bMod_R)$ into which the $\bLoc(R)$ canonically embeds. But this difference is negligible because the sites associated to these two versions of the constructions are equivalent (see \cite{NB} for more information on the commutative side of this). We prefer to work with connective dg-algebras because we would like to distinguish the cohomological and the infinitesimal directions of derived geometry.
\end{rmk}

 Building upon Remark \ref{rmk:finite_unions}, it is easy to find homotopy epimorphisms from a discrete ring (even a commutative and Noetherian one) to a dg-algebra whose cohomology is in positive degrees (or equivalently to cosimplicial rings, via the  cosimplicial version or the stable version of the Dold--Kan equivalence). In classical algebraic geometry, such examples correspond to non-affine sub-schemes of affine schemes, and their dg-algebras correspond to their sheaf cohomology dg-algebra and are examples of affine stacks in the sense of To\"en (see \cite{toen} and \cite[Chapter 2.3]{TV2} for more on the notion of affine stacks).
 The fact that non-affine schemes must be detected by their cohomology is an immediate consequence of Serre's criterion for affineness. This observation says nothing new about the geometry of affine schemes, it just underlines that the geometry of affine schemes is interesting to study. A fact that does not require derived geometry to be appreciated. Therefore, we omit to discuss the ``complicial" version of the theory in this work (\ie\ using unbounded dg-algebras and modules), and focus on what we believe to be the more interesting connective part of the theory, from which the complicial theory follows formally by stabilisation.

\subsection{The noncommutative spectrum}\label{sec:defncspec}
By restricting the formal homotopy Zariski topology, that is defined on $\bdAff_\Z$, to the subcategory $\bOuv(X)$, with $X = \Spec(R)$, we are finally in a position to introduce the small homotopy Zariski site. Notice that for all $X$ we have that $\bOuv(X)$ is stable with respect to the homotopy Zariski topology in $\bH\bRings_\Z$, \ie it is stable by base changes and by homotopy Zariski covers.

\begin{defn} \label{defn:small_Zatiski_site}
For $X \in \bdAff_\Z$, we define $\bZar_X$ to be the small site whose underlying category is $\bOuv(X)$ and covers are homotopical Zariski covers.
\end{defn}

Since $\bOuv(X)$ is stable under pullbacks and covers in $\bdAff_\Z$, then Definition \ref{defn:small_Zatiski_site} is well posed. The association 
\begin{equation}
    \label{X to Zar(X)}
    X \mapsto \bZar_X
\end{equation}
immediately warrants the following questions.
\begin{itemize}
\item Does the site $\bZar_X$ have enough points?
\item Is the association \eqref{X to Zar(X)} functorial?
\item Is $\bZar_X$ equivalent\footnote{We recall that two sites $\bC$ and $\bD$ are called equivalent if there is a morphism of sites $\bC \to \bD$ that induces an equivalence $\bSh(\bD) \to \bSh(\bC)$ between the categories of sheaves. We underline that sites that a priori look very different can be equivalent.} to the site of a sober topological space?
\end{itemize}

In the remainder of this subsection, we will address these questions in turn, and answer each of them in the positive. 

\begin{prop} \label{prop:enough_points}
The site $\bZar_X$ has enough points and is equivalent to the site of a sober topological space.
\end{prop}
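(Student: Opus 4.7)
The plan is to exhibit $\bZar_X$ as the canonical site of a sober topological space by passing through the associated locale. First, observe that by Proposition~\ref{prop:Loc_poset} and the duality $\bOuv(X) = \bLoc(R)^\op$, the category $\bOuv(X)$ is essentially small and equivalent to a complete meet semi-lattice; combined with the formal homotopical Zariski covers of Definition~\ref{defn:small_Zatiski_site}, which are required to be finite by Definition~\ref{defn:homotopical_Zariski_open_immersion}, this endows $\bZar_X$ with the structure of a small site whose underlying category is a meet semi-lattice equipped with a coverage consisting only of finite covering families.

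I would then construct the \emph{frame of formal opens} $\mathcal{F}_X$ as the frame freely generated by the poset $\bOuv(X)$ modulo the relations (i) that the finite meets in $\bOuv(X)$ are preserved and (ii) that for every formal Zariski cover $\{U_i \to U\}_{i \in I}$ one has $U = \bigvee_{i \in I} U_i$ in $\mathcal{F}_X$. Concretely, $\mathcal{F}_X$ can be realised as the poset of coverage-closed sieves on $\bOuv(X)$ under inclusion. The Comparison Lemma for sites (see Johnstone, \emph{Sketches of an Elephant}, C2.2) then yields an equivalence $\bSh(\bZar_X) \simeq \bSh(\mathcal{F}_X)$, so that $\bZar_X$ is equivalent to the canonical site of the locale $\mathcal{F}_X$.

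The main step is to prove that $\mathcal{F}_X$ is \emph{spatial}, \ie that its completely prime filters (equivalently, the frame homomorphisms $\mathcal{F}_X \to \{0,1\}$) separate its elements. Given $V \in \mathcal{F}_X$ and a generator $U$ of $\mathcal{F}_X$ with $U \not\leq V$, I would apply Zorn's lemma to the collection of filters of $\mathcal{F}_X$ containing $U$ but avoiding $V$: the finiteness of covers is then the crucial input, since it guarantees that any maximal such filter is automatically completely prime (if $\bigvee_i U_i$ lay in such a maximal filter but none of the finitely many $U_i$ did, one could enlarge the filter by one of them, contradicting maximality). This produces, for every strict inequality in $\mathcal{F}_X$, a point witnessing it, establishing spatiality.

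Once $\mathcal{F}_X$ is known to be spatial, standard point-free topology (see e.g.\ Johnstone, \emph{Stone Spaces}, II.1.7) yields a sober topological space $|X|$ whose frame of open subsets is isomorphic to $\mathcal{F}_X$ and whose canonical site is equivalent to $\bZar_X$; in particular, $\bZar_X$ has enough points. The main obstacle I anticipate is the spatiality argument of the third paragraph, where the finiteness of formal Zariski covers is essential: without it, a maximal filter produced by Zorn's lemma would in general fail to be completely prime, and the resulting locale could well be non-spatial.
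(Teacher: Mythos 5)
Your proposal is correct in outline and arrives at the same conclusion, but it takes a more hands-on route than the paper. The paper treats the claim as a sequence of citations: finiteness of covers makes $\bZar_X$ a coherent site, Deligne's theorem (Mac Lane--Moerdijk, Corollary IX.11.3) gives enough points, and then the localic-topos machinery (Theorem IX.5.1 and Proposition IX.3.3 of the same reference) upgrades the localic topos on a poset with enough points to the topos of a sober space. You instead unpack exactly what those citations prove: you construct the frame $\mathcal{F}_X$ of coverage-closed sieves (which is precisely the frame $\Id(\bZar_X)$ the paper introduces a few lines later in its Proposition~\ref{prop:points_as_completelyprime_ultrafilters}), invoke the Comparison Lemma to identify the sheaf topoi, and then prove spatiality directly. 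In effect you are reproving the localic case of Deligne's theorem. What you gain is self-containedness and a clear view of where finiteness of covers is actually used; what the paper gains is brevity.

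One point in your third paragraph needs tightening. Your Zorn argument, as stated, handles only the case where $\bigvee_i U_i$ is a \emph{finite} join coming from a formal Zariski cover; but a completely prime filter of the frame $\mathcal{F}_X$ must detect \emph{arbitrary} joins, and $\mathcal{F}_X$ has plenty of infinite ones (arbitrary unions of coverage-closed sieves). The maximal filter you produce is therefore prime but not obviously completely prime. The standard fix is exactly coherence: the representable sieves are compact in $\mathcal{F}_X$ (this is where finiteness of covers enters), one runs the prime-filter argument in the distributive lattice of compact elements, and one then checks that the induced filter on the whole frame is completely prime using compactness of the generators. As sketched, your argument omits this last transfer step, so a reader could reasonably object that you have only produced a prime, not a completely prime, filter. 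The gap is entirely routine to fill, but it is the kind of thing that the citation to Deligne's theorem in the paper is precisely meant to take care of.
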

\begin{proof}
The topology of $\bZar_X$ is defined by families of covers whose cardinality is finite; therefore, it is a coherent site. By Deligne's Theorem, coherent sites have enough points (see \cite[Corollary IX.11.3]{MM}). Then, by \cite[Theorem IX.5.1]{MM} a topos is localic (\ie,  equivalent to the topos of a locale) if and only if can be defined on a poset. Finally, by \cite[Proposition IX.3.3]{MM} a localic topos is equivalent to the topos of a topological space if and only if it has enough points.
\end{proof}

A description of the points of the topos $\bZar_X$ will be provided later in Proposition \ref{prop:points_as_completelyprime_ultrafilters}.

\begin{prop}\label{prop:funct}
The association~\eqref{X to Zar(X)} is a 2-1 pseudofunctor\footnote{Since the class of all sites forms a $2$-category, not just an ordinary category, the correct notion of ``morphism" from a category to a $2$-category is that of a 2-1 pseudofunctor. This is a minor technical detail that disappears when working with $\infty$-categories, which is necessary because notions in category theory are defined up to equivalence and not up to isomorphism. It is possible to avoid the use of $2-1$ pseudofunctors by passing to equivalences classes and ``strictifying" 2-1 pseudofunctors to actual functors. The result of this process is not very natural, hence we refrain from doing this. The reader unfamiliar with the notion of 2-1 pseudofunctors and $2$-categories can ignore this nuance and just think in terms of functors: this usually does not lead to mistakes.} 
$\bdAff_R \to \bSites$, where $\bSites$ is the 2-category of small sites whose morphisms are continuous functors\footnote{These are functors that preserve cover families and are cover-flat.}.
\end{prop}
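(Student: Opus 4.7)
The plan is as follows. To a morphism $f: X \to Y$ in $\bdAff_\Z$, encoded by $\phi: B \to A$ in $\bH\bRings_\Z$ (with $X = \Spec(A)$, $Y = \Spec(B)$), I would associate the functor
\[ u_f : \bOuv(Y) \longrightarrow \bOuv(X), \qquad [B \to B'] \longmapsto [A \to A \ast_B^\L B'], \]
and take this as the underlying functor of the morphism of sites $\bZar_X \to \bZar_Y$ to be constructed. Stability of homotopical epimorphisms under homotopy pushouts, already exploited in the proof of Proposition~\ref{prop:Zar site}, ensures that $A \to A \ast_B^\L B'$ does belong to $\bLoc(A)$; and since $\bOuv(X)$ and $\bOuv(Y)$ are posets by Proposition~\ref{prop:Loc_poset}, this object-level assignment uniquely determines a functor.

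The next step would be to verify that $u_f$ is continuous. Preservation of covers is exactly the stability of homotopy Zariski covers under base change established as condition (2) in the proof of Proposition~\ref{prop:Zar site}. For cover-flatness, I would use the fact that both $\bOuv(Y)$ and $\bOuv(X)$ are posets whose binary meets are represented by free products of algebras; the cover-flatness of $u_f$ then reduces to the preservation of these meets, which follows from the cancellation equivalence
\[ (A \ast_B^\L B') \ast_A^\L (A \ast_B^\L B'') \simeq A \ast_B^\L (B' \ast_B^\L B'') \]
for $B', B'' \in \bLoc(B)$, \ie\ from the associativity of homotopy pushouts.

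For pseudofunctoriality, given a composable pair $g: X \to Y$, $f: Y \to Z$ in $\bdAff_\Z$ corresponding to $C \xrightarrow{\psi} B \xrightarrow{\phi} A$, the same associativity provides natural equivalences
\[ A \ast_B^\L (B \ast_C^\L C') \simeq A \ast_C^\L C', \qquad C' \in \bLoc(C), \]
which furnish the invertible $2$-cells $u_g \circ u_f \Rightarrow u_{f \circ g}$. The pentagon and unit coherence identities are automatic: both sides of each coherence diagram are natural isomorphisms in the poset $\bOuv(X)$, and any pair of parallel isomorphisms between objects of a poset necessarily agree, so all higher data is trivially coherent.

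The most delicate point I anticipate is the cover-flatness check; however, as just sketched, the poset structure of $\bOuv(X)$ and $\bOuv(Y)$ reduces this to an essentially formal verification using iterated homotopy pushouts, so the entire argument boils down to repeated applications of facts already assembled in the proof of Proposition~\ref{prop:Zar site} and to the associativity of the derived free product.
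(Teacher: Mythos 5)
Your proof is correct and follows essentially the same route as the paper's: the underlying functor is base change of localizations, preservation of covers is inherited from the stability arguments already given in Proposition~\ref{prop:Zar site}, and cover-flatness is reduced to preservation of finite meets via the cancellation/associativity of derived free products, which is exactly the computation $(D\ast_A^\L B)\ast_{C\ast_A^\L B}^\L(E\ast_A^\L B)\simeq D\ast_C^\L E\ast_A^\L B$ carried out in the paper. The one small added value in your write-up is the explicit observation that the coherence data of the pseudofunctor is automatic because $\bOuv(X)$ is a poset, which the paper leaves implicit.
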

\begin{proof}
Let $f: B \to A$ be a morphism in $\bH \bRings_\Z$ and $\Spec(A)=Y\xrightarrow{f^{\op}}X=\Spec(B)$ the corresponding morphism in $\bdAff_\Z$. We define the functor $\tilde f:\bZar_X \to \bZar_Y \qquad $ as
$$(U\to X) \mapsto (Y\times_X^\R U \to Y) \,.$$
We first observe that this construction gives a 2-1 pseudofunctor rather than a functor. Indeed, if we consider another morphism $Y\xrightarrow{g^{\op}} Z=\Spec(C)$, then we have a canonical natural isomorphism of functors
$$ \widetilde {g \circ f} \cong \tilde g \circ \tilde f  $$
instead of an equality. This is due to the fact that the canonical isomorphism 
\[ Z \times_Y^\R (Y\times_X^\R U) \cong Z \times_X^\R U  \]
is not the identity in general.

We need to check that the functor $\tilde{f}$ is well defined and preserves covers for the homotopy Zariski topology. This is equivalent to show that the functor 
\[ f^*: \bH \bRings_A \to \bH \bRings_B \]
defined by $f^*(C) := C \ast_A^\L B$ preserves homotopy epimorphisms and homotopical Zariski covers. For the first property, we retrace the arguments from the proof of Proposition~\ref{prop:Zar site}, where a similar statement was proved for a morphism $A \to B$ that is a homotopy epimorphism, observing that the latter property did not play any role in the proof. As to the second property, let us consider a homotopical Zariski cover $\{U_i \to X\}$. We first notice that the morphisms of the family $\{ U_i\times_X^\R Y \to Y\}$ are homotopical Zariski open embeddings because the functor $(\minus) \ast_A^\L B$ preserves homotopy epimorphisms. Then, the fact that $\{ U_i\times_X^\R Y \to Y\}$ is a cover follows using the homotopical pullback lemma, as in Proposition~\ref{prop:Zar site}.

It remains to show that $\tilde{f}$ is cover-flat, \ie, that $\tilde{f}$ commutes with finite homotopy limits. 
To this end, it is enough to show that for any pair of maps $U \to V$ and $U \to W$ in $\bZar_X$ we have that $\tilde{f}(V\times_U^\R W)=\tilde{f}(V)\times_{\tilde{f}(U)}^\R \tilde{f}(W)$. To do this computation, we switch to the opposite category. By putting $U = \Spec(C)$, $V = \Spec(D)$ and $W = \Spec(E)$, we get that the homotopy push-out square
\[
\begin{tikzpicture}
\matrix(m)[matrix of math nodes,
row sep=2.6em, column sep=2.8em,
text height=1.5ex, text depth=0.25ex]
{ C &  D    \\
E & D\ast^\L_C E \\};
\path[->, font=\scriptsize]
(m-1-1) edge node[auto] {} (m-1-2);\\
\path[->, font=\scriptsize]
(m-1-1) edge node[auto] {} (m-2-1);
\path[->, font=\scriptsize]
(m-1-2) edge node[auto] {} (m-2-2);
\path[->,font=\scriptsize]
(m-2-1) edge node[auto] {} (m-2-2);
\end{tikzpicture}
\]
is mapped to the homotopy push-out square
\[
\begin{tikzpicture}
\matrix(m)[matrix of math nodes,
row sep=2.6em, column sep=2.8em,
text height=1.5ex, text depth=0.25ex]
{ C\ast^\L_A B & D \ast^\L_A B  \\
 E\ast^\L_A B & D\ast^\L_C E*^\L_A B \\};
\path[->, font=\scriptsize]
(m-1-1) edge node[auto] {} (m-1-2);\\
\path[->, font=\scriptsize]
(m-1-1) edge node[auto] {} (m-2-1);
\path[->, font=\scriptsize]
(m-1-2) edge node[auto] {} (m-2-2);
\path[->,font=\scriptsize]
(m-2-1) edge node[auto] {} (m-2-2);
\end{tikzpicture}
\]
because the derived free product preserves homotopy push-outs. Alternatively, one can check that there is a canonical chain of isomorphisms
$$D\ast_C^\L E\ast_A^\L B\simeq D\ast_C^\L C\ast_A^\L B\ast_{C\ast_A^\L B}^\L E\ast_A^\L B\simeq (D\ast_A^\L B)\ast_{C\ast_A^\L B}^\L (E\ast_A^\L B) $$
where we use the canonical isomorphism of functors
\[ D \ast^\L_C (\minus) \cong D\ast_C^\L C\ast_A^\L B\ast_{C\ast_A^\L B}^\L (\minus) \]
and
\[ D \ast^\L_C C \ast_A^\L (\minus) \cong D \ast_A^\L (\minus). \]
\end{proof}

Motivated by the previous results, we are finally in the position to define the noncommutative spectrum for a noncommutative ring.

\begin{defn}
For any $R\in\bH\bRings_\Z$, we call \emph{noncommutative spectrum} $\SpecNC(R)$ the sober topological space  equivalent to $\bZar_X$ given by Proposition \ref{prop:enough_points}. 
\end{defn}
Observe that the space $\SpecNC(R)$ is defined up to homeomorphism.

\begin{cor} \label{cor::funct}
The noncommutative spectrum 
\begin{equation}
\label{X to |Zar(X)|}
 \SpecNC: \bH\bRings_\Z \to \bTop \,,
\end{equation}
 is a functor from the homotopy category of connective dg-algebras over $\Z$ to the category of topological spaces.
\end{cor}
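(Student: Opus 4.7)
The plan is to assemble $\SpecNC$ from Propositions \ref{prop:funct} and \ref{prop:enough_points}. On objects, $\SpecNC(R)$ has already been defined as the sober topological space associated with the site $\bZar_{\Spec(R)}$ of Definition \ref{defn:small_Zatiski_site}, well-defined up to homeomorphism. The task reduces to defining $\SpecNC$ on morphisms and verifying strict functoriality.

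For a morphism $f\colon A\to B$ in $\bH\bRings_\Z$, let $f^\op\colon\Spec(B)\to\Spec(A)$ be the corresponding morphism in $\bdAff_\Z$. Proposition \ref{prop:funct} associates with it a continuous, cover-flat functor $\widetilde{f^\op}\colon\bZar_{\Spec(A)}\to\bZar_{\Spec(B)}$. By Proposition \ref{prop:enough_points}, both sites are coherent and localic, and correspond canonically to the toposes of sheaves on the sober topological spaces $\SpecNC(A)$ and $\SpecNC(B)$. Under this correspondence, $\widetilde{f^\op}$ induces a geometric morphism $\bSh(\bZar_{\Spec(B)})\to\bSh(\bZar_{\Spec(A)})$ (the direction reverses in the usual way when passing from continuous functors of sites to geometric morphisms of sheaf toposes) and, via the equivalence between localic toposes with enough points and sober topological spaces, a continuous map $\SpecNC(B)\to\SpecNC(A)$. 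This is what I take as $\SpecNC(f)$.

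It remains to verify functoriality. Identities are handled immediately from the construction in the proof of Proposition \ref{prop:funct}: $\widetilde{\id^\op}$ is (naturally isomorphic to) the identity functor on $\bZar_{\Spec(A)}$, inducing the identity continuous map on $\SpecNC(A)$. For composition, given $A\xrightarrow{f}B\xrightarrow{g}C$, Proposition \ref{prop:funct} supplies only a canonical natural isomorphism $\widetilde{(g\circ f)^\op}\cong\widetilde{f^\op}\circ\widetilde{g^\op}$ — which is precisely why the assignment is a $2$-$1$ pseudofunctor and not a strict functor to $\bSites$.

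The main (and only genuine) obstacle is to argue that this $2$-isomorphism does not obstruct strict functoriality at the level of topological spaces. This follows from sobriety: continuous maps between sober topological spaces are in bijection with isomorphism classes of geometric morphisms between the corresponding sheaf toposes, and naturally isomorphic continuous cover-flat functors of sites induce isomorphic geometric morphisms. Hence the two functors $\widetilde{(g\circ f)^\op}$ and $\widetilde{f^\op}\circ\widetilde{g^\op}$ yield the \emph{same} continuous map, so that $\SpecNC(g\circ f)=\SpecNC(f)\circ\SpecNC(g)$. In short, the ``space-of-points'' construction from coherent localic sites with enough points to $\bTop$ strictifies the $2$-$1$ pseudofunctor of Proposition \ref{prop:funct} to an honest $1$-functor $\bdAff_\Z\to\bTop$; via the duality $\bdAff_\Z=\bH\bRings_\Z^{\op}$, this gives the (contravariant) functor $\SpecNC\colon\bH\bRings_\Z\to\bTop$ asserted by the corollary.
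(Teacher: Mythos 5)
Your proposal is correct and follows essentially the same route as the paper: the paper's proof simply observes that the passage from (coherent, localic) sites to their sober spaces of points is a genuine functor on the 2-category of sites, so that pre-composing with the 2-1 pseudofunctor of Proposition~\ref{prop:funct} strictifies it to a functor to $\bTop$. You have merely spelled out why naturally isomorphic continuous functors of sites induce the \emph{same} continuous map between sober spaces, which is the implicit content of the paper's one-line argument.
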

\begin{proof}
The association $\bZar_X \mapsto \SpecNC(R)$ is a functor from the 2-category of sites to the category of topological spaces. The pre-composition with the 2-1 pseudofunctor described in Proposition~\ref{prop:funct} yields the functor~\eqref{X to |Zar(X)|}.
\end{proof}

\subsection{The notion of point in the noncommutative spectrum}

In Proposition \ref{prop:enough_points} we have used abstract results from topos theory and the theory of locales to show that the topos associated to $\bZar_X$ has enough points. We now give a more concrete description of the points of the topological space $\SpecNC(A)$. In preparation for this, let us recall the following definition from topos theory.

\begin{defn} \label{defn:ideal_posite}
Let $(S, \tau)$ be a poset equipped with a topology. An \emph{ideal} of $(S, \tau)$ is a subset $I \subset S$ such that 
\begin{enumerate}[(i)]
\item given $s, t \in S$, the conditions $s \le t$ and $t \in I$ imply $s \in I$;
\item if $\{s_i \to s\}$ is a cover and $s_i \in I$ for all $i$, then $s \in I$.
\end{enumerate}
We denote the set of all ideals of $(S, \tau)$ by $\Id((S, \tau))$.
\end{defn}

Observe that, when $S$ is a distributive lattice and $\tau$ is the family of covers given by finite joins, it is well known that $\Id((S, \tau))$ is the frame of open subsets of a spectral topological space whose lattice of compact open subsets is isomorphic to $S$. Furthermore, all spectral topological spaces can be written as the frame of ideals of some distributive lattice.

\begin{prop} \label{prop:points_as_completelyprime_ultrafilters}
Let $A \in \bH\bRings_R$ and $X = \Spec(A)$. Then $\SpecNC(A)$ is the sober topological space associated with the frame of ideals $\Id(\bZar_X)$. Therefore, points of $\SpecNC(A)$ correspond to completely prime ultrafilters of $\Id(\bZar_X)$.
\end{prop}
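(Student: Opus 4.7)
The strategy is to unpack the abstract result of Proposition~\ref{prop:enough_points} into an explicit description, using the fact --- already established in Proposition~\ref{prop:Loc_poset} --- that the underlying category of $\bZar_X$ is (equivalent to) a poset. This turns $\bZar_X$ into a \emph{posite}, that is, a poset equipped with a Grothendieck topology, and for such objects the passage to the sheaf topos is well understood.

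First, I would recall the standard identification: for any posite $(P,\tau)$, the sheaf topos $\bSh(P,\tau)$ is localic, and its corresponding locale has frame of opens given exactly by $\Id((P,\tau))$ as defined in Definition~\ref{defn:ideal_posite}. Concretely, an object of $\Id((P,\tau))$ is a $\tau$-closed sieve on $P$, and these form a frame under intersection (meets) and $\tau$-closure of union (joins); one checks that this frame is canonically the frame of subterminal objects of $\bSh(P,\tau)$. The distributivity inherent in a frame holds because meets in $\bOuv(X)$ commute with the $\tau$-covers (this is the content of the site axioms verified in Proposition~\ref{prop:Zar site}).

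Second, by Proposition~\ref{prop:enough_points} the topos $\bSh(\bZar_X)$ has enough points and is equivalent to the topos of sheaves on the sober topological space $\SpecNC(A)$. Under the equivalence between sober spaces and spatial locales, this forces the frame of opens of $\SpecNC(A)$ to be isomorphic to the locale $\Id(\bZar_X)$ constructed in the previous step; this proves the first assertion.

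Third, to deduce the concrete description of points, I would invoke the well-known correspondence between points of a spatial locale with frame $L$ and completely prime filters of $L$: a point, viewed as a frame homomorphism $p:L\to\{0,1\}$, yields the completely prime filter $p^{-1}(1)$ (upward closed, closed under finite meets, proper, and completely prime because $p$ preserves arbitrary joins); conversely, the indicator function of a completely prime filter is a frame homomorphism. Applied to $L=\Id(\bZar_X)$, this gives the identification of points of $\SpecNC(A)$ with completely prime ultrafilters in $\Id(\bZar_X)$, as claimed.

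The only delicate point is the first step, namely the identification of $\bSh(\bZar_X)$ with sheaves on the frame $\Id(\bZar_X)$. This is essentially formal once one observes that $\bOuv(X)$ is a poset and the covers in the formal homotopy Zariski topology are finite (so the site is coherent, cf.\ the proof of Proposition~\ref{prop:enough_points}); the rest is a matter of unpacking the definitions of a sheaf on a posite versus a sheaf on the associated locale, and the verification is standard (Johnstone-style).
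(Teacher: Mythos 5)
Your proposal follows essentially the same route as the paper's own proof: both identify $\Id(\bZar_X)$ as a frame whose associated site is equivalent to $\bZar_X$, invoke Proposition~\ref{prop:enough_points} to pass from the localic topos to the sober space $\SpecNC(A)$, and then appeal to the standard correspondence between points of a locale and completely prime filters. The only difference is that you unpack the two ingredients (that $\Id$ is a frame, and that $(P,\tau)$ and $\Id(P,\tau)$ are equivalent sites) in slightly more detail, whereas the paper cites \cite[Proposition II.2.11]{Stone} and \cite[Exercise IX.5(c)]{MM} for them directly; logically the two arguments coincide.
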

\begin{proof}
Given a poset with a topology $(S, \tau)$, the poset $\Id((S, \tau))$ with respect to the inclusion of sets is a frame (see \cite[Proposition II.2.11]{Stone}). The fact that $(S, \tau)$ and $\Id((S, \tau))$ are equivalent sites is the content of \cite[Exercise IX.5 (c)]{MM}. Finally, the last claim is a straightforward application of the definition of points of a locale to $\SpecNC(A)$ (see, e.g., \cite[Chapter IX]{MM} and \cite[Chapter II]{Stone} for the basics of the theory of locales).
\end{proof}

The poset $\bZar_X$ and its topology contain all the information required to construct the topological space $\SpecNC(A)$ (in concrete terms, it contains a base for the topology). In particular, the frame $\Id(\bZar_X)$ is the frame of open subsets of $\SpecNC(A)$. 
A point $x \in \SpecNC(A)$ belongs to an open subset $U \subset \SpecNC(A)$ corresponding to an element $\cU \in \Id(\bZar_X)$ if and only if $\cU \in \sI_x$, where $\sI_x$ is the completely prime filter associated to $x$. This correspondence maps objects of $\Id(\bZar_X)$ to subsets of $\SpecNC(A)$, thus yielding an injective map $\Id(\bZar_X) \to \sP(\SpecNC(A))$ that intertwines the partial order in $\Id(\bZar_X)$ with the inclusion order in $\sP(\SpecNC(A))$. This map commutes with arbitrary unions and finite intersections, but does not commute with infinite intersections in general (because the intersection of an infinite family of open subsets is not an open subset in general).

\section{Properties of the noncommutative spectrum}
\label{sec:properties_spectrum}

The aim of this section is twofold: 
\begin{enumerate}
    \item  To construct a map from the noncommutative spectrum of a commutative ring to its Grothendieck spectrum of prime ideals;
    \item  To recast our definition of the spectrum in terms of smashing subcategories of $\bH \bMod_R$.
\end{enumerate}

\subsection{Comparison with the Grothendieck spectrum}\label{sec:grothen}

Our definition of (formal) Zariski covers differs from the one commonly used in derived algebraic geometry in a key aspect. In derived algebraic geometry one requires the pullback functors $\{\L f_i^*: \bH \bMod_A \to \bH \bMod_{B_i} \}_i $ to be conservative. If Definition~\ref{defn:homotopical_Zariski_open_immersion} is reinterpreted in the category of commutative (homotopical) rings, then one recovers the usual notion of cover, because for commutative (homotopical) rings the base change of algebras essentially agrees with the base change of modules. However, in the noncommutative setting the base changes of algebras is given by the free product, which usually does not agree with the base change of the underlying module. 

This raises the issue of comparing the new notion of cover with its commonly used counterpart. Before discussing this matter further, let us show by a simple counterexample that the two notions are, indeed, not equivalent for commutative rings.

\begin{exa} \label{exa:Kanda_counterexample}
Let $k$ be a commutative field. In the classical setting, the Grothendieck spectrum of $k\oplus k$ is nothing but $\SpecG(k \oplus k) = \{ \star, \star \}$ and a cover is given by the two projections $\pi_1: k \oplus k \to k$ and $\pi_2: k \oplus k \to k$. However, this is not the case in our setting, since the family of homotopical epimorphisms $\{\pi_1, \pi_2\}$ is not a cover in the sense of Definition~\ref{defn:homotopical_Zariski_open_immersion}. Indeed, consider the diagonal embedding $k \oplus k \to M_2(k)$ into the algebra of $2 \times 2$ square matrices. Then, it is easy to check that $k \ast_{k \oplus k}^\L M_2(k) \cong 0$ when computed via both maps $\pi_1$ and $\pi_2$. But $M_2(k) \ne 0$, therefore the family $\{\pi_1, \pi_2\}$ is not conservative in the sense of the definition\footnote{We would like to thank Ryo Kanda for drawing our attention to this example.}.
\end{exa}

\begin{comment}
\begin{lemma} \label{lemma:commutative_ast=tensor}
Let $A \in \bH\bC\bRings_\Z$, $A \to B$ be a homotopical epimorphism and $A \to C$ an object of $\bH\bRings_A$. Then we have a canonical isomorphism
\[ B \otimes_A^\L C \cong B \ast_A^\L C. \]
\end{lemma}
\begin{proof}
It is enough to prove that
\[ B \otimes_A C \cong B \ast_A C \]
when both $B$ and $C$ are cofibrant as $A$-algebras. In this case the structure maps $A \to B$ and $A \to C$ are split injections of the underlying $A$-modules, and therefore we can write
\[ B = A \oplus \ol{B}, \qquad C = A \oplus \ol{C}, \]
where $\ol{B}$ and $\ol{C}$ are the cokernels of the structure maps. By \cite[text after Definition 3.1 and Lemma 4.3]{Laz2} we have that $B \otimes_A \ol{B} = 0$, and by \cite[Lemma 2.1]{Laz2} there exists a filtration
\[ B \ast_A C = \bigcup_{n \in \N} F_n \]
with $F_0 = 0, F_1 = B$, and
\[ F_{2n+1}/F_{2n} \cong B \otimes_A (\ol{C} \otimes_A \ol{B})^{\otimes_A^n}, \]
\[ F_{2n+2}/F_{2n+1} \cong B \otimes_A (\ol{C} \otimes_A \ol{B})^{\otimes_A^n} \otimes_A \ol{C}. \]
So, if $A$ is commutative we get 
\[ F_{n+1}/F_n \cong 0 \]
for all $n > 1$. Thus, we get
\[  B \ast_A C = B \oplus B \otimes_A \ol{C} \cong B \otimes_A A \oplus B \otimes_A \ol{C} \cong B \otimes_A (A \oplus \ol{C}) \cong B \otimes_A C.  \qedhere\]
\end{proof}

Let us emphasise the fact that Lemma \ref{lemma:commutative_ast=tensor} holds only if $A$ is commutative. 

\end{comment}

In order to compare our new notion of cover with the classical one, we will resort to the following.

\begin{lemma} \label{lemma:conservative_zero_algebras}
Let $A \in \bH\bRings_\Z$.  A finite family of homotopical epimophisms $\{ A \to B_i \}_i$ is conservative if and only if for $C \in \bH\bRings_A$  it holds that 
$$C \cong 0 \iff C \ast_A^\L B_i \cong 0 \qquad \text{for all $i$.}$$ 
\end{lemma}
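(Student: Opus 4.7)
The forward direction is immediate. Assuming the family $\{\L f_i^* = (\minus)\ast_A^\L B_i\}_i$ is conservative, let $C\in\bH\bRings_A$ satisfy $C\ast_A^\L B_i\cong 0$ for all $i$. One checks via the universal property of the pushout that $0\ast_A^\L B_i\cong 0$: indeed, a map out of $0$ in $\bRings$ exists only when the target is $0$, so the pushout is again terminal. Thus the canonical morphism $C\to 0$ becomes an isomorphism after each base change, and conservativity forces $C\cong 0$.

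For the backward direction, assume zero-detection and let $f:C\to D$ in $\bH\bRings_A$ satisfy $f\ast_A^\L B_i$ iso for all $i$. I would first reduce to the case $A=C$: the family $\{C\to C\ast_A^\L B_i\}_i$ obtained by base-changing $\{A\to B_i\}_i$ along $A\to C$ consists of homotopical epimorphisms (which are stable under pushout, as observed in the proof of Proposition~\ref{prop:Zar site}) and still satisfies zero-detection, since for $E\in\bH\bRings_C$ one has $E\ast_C^\L(C\ast_A^\L B_i)\cong E\ast_A^\L B_i$, and vanishing of the latter for all $i$ forces $E\cong 0$ by the hypothesis applied over $A$. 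Under this reduction the hypothesis reads $D\ast_A^\L B_i\cong B_i$ canonically, and the task is to prove $D\cong A$. The strategy is then to construct an auxiliary $A$-algebra $K(f)$ satisfying $K(f)\cong 0$ if and only if $f$ is an isomorphism, and for which $K(f)\ast_A^\L B_i\cong 0$ for every $i$; zero-detection applied to $K(f)$ then finishes the proof. The natural building blocks for $K(f)$ are the pushout $E:=D\ast_A^\L D$ with its codiagonal $\nabla:E\to D$ (for which $E\ast_A^\L B_i\cong B_i\ast_{B_i}^\L B_i\cong B_i$ and $\nabla\ast_A^\L B_i=\mathrm{id}_{B_i}$), together with the units of the adjunctions $\L f_i^*\dashv(f_i)_*$ and the reflective embeddings $\bH\bRings_{B_i}\hookrightarrow\bH\bRings_A$ guaranteed by the full faithfulness of $(f_i)_*$ from Proposition~\ref{prop:characterization_homotopy_epi}.

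The main obstacle is precisely the construction of $K(f)$. The category $\bH\bRings_A$ is not pointed: its initial object $A$ and terminal object $0$ are distinct, and the terminal absorbs in pushouts (\eg $0\ast_A^\L X\cong 0$), so the standard stable- or abelian-categorical reduction of iso-reflection to zero-reflection via cofibers is unavailable in this setting. Therefore a naive cone/cofiber candidate for $K(f)$ is trivially zero and gives no information. One must instead exploit the rigidity $B_i\ast_A^\L B_i\cong B_i$ coming from the homotopical epi hypothesis, together with the reflective structure of $(f_i)_*$, to assemble $K(f)$ from the pushout $D\ast_A^\L D$ (and possibly higher iterated pushouts) in such a way that its base changes vanish exactly because $f\ast_A^\L B_i$ is an isomorphism. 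Carrying out this assembly cleanly, and verifying that the resulting $K(f)$ detects $f$ being iso, is the delicate technical step of the argument.
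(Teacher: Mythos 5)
Your forward direction is fine and agrees with the paper's. The backward direction, however, is not a proof: you reduce to the case $C=A$, announce that the argument hinges on constructing an auxiliary algebra $K(f)$ that vanishes if and only if $f$ is an isomorphism and whose base changes vanish, and then explicitly state that carrying out this construction is ``the delicate technical step'' which you do not perform. Since the entire content of the reverse implication is exactly that step, the proposal has a genuine gap: nothing you have written rules out a non-isomorphism $f:C\to D$ in $\bH\bRings_A$ all of whose base changes $f\ast_A^\L B_i$ are isomorphisms while zero-detection still holds. The preliminary reduction to $C=A$ is harmless but does not advance matters, because the difficulty you identify (the non-pointedness of $\bH\bRings_A$, with initial object $A$ and absorbing terminal object $0$) is unchanged after the reduction.

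For comparison, the paper's proof of this direction is short: given a non-isomorphism $C\to D$ whose base changes are all isomorphisms, it forms $\cofib(C\to D)$, asserts that this is a non-zero object whose base changes are all zero because left derived functors commute with cofibers and the cofiber of an equivalence is zero, and then invokes zero-detection. In other words, the paper uses precisely the cone/cofiber device that you declare ``unavailable'' — the cofiber is taken at the level of the underlying (bi)modules rather than as a pushout against the terminal algebra, so it is not ``trivially zero.'' Your observation that one must be careful here because $\bH\bRings_A$ is not stable is a reasonable critical remark about how the cofiber should be interpreted, but raising the difficulty is not the same as resolving it; as submitted, the lemma's harder implication remains unproved in your write-up.
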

\begin{proof}
    Suppose that there exists $B \in \bH\bRings_A$ such that $C \ne 0$ and $C \ast_A^\L B_i = 0$ for all $i$. Then, the canonical morphism $C \to 0$ is sent to an isomorphism by all functors $(\minus) \ast_A^\L B_i$, but it is not an isomorphism. Thus, we do not have a conservative family of functors.
    
    Conversely, if $\{ A \to B_i \}_i$ is not conservative there exists a morphism $C \to D$ in $\bH\bRings_A$ that is not an isomorphism such that $C \ast_A^\L B_i \to D \ast_A^\L B_i$ is an isomorphism for all $i$. Taking the cofibers of the last morphisms gives $0$, so $\cofib(C \to D)$ is a non-zero object of $\bH\bRings_A$ such that $\cofib(C \to D) \ast_A^\L B_i = 0$ for all $i$, because derived functors commute with cofibers.
\end{proof}

\begin{prop} \label{prop:faithful_covers}
Let $\{ f_i: A \to B_i \}_{i \in I}$ be a cover for the homotopy Zariski topology. Then the family of functors
    \[ \{ \L f_i^*: \bH \bMod_A \to \bH \bMod_{B_i} \}_{i \in I} \]
    is faithful.
\end{prop}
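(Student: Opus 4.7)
By additivity of $\bH\bMod_A$ and of each base-change functor, joint faithfulness of the family $\{\L f_i^*\}_{i\in I}$ will be equivalent to the following zero-reflection statement: every morphism $\phi\colon M\to N$ in $\bH\bMod_A$ satisfying $\phi\otimes_A^{\L} B_i\simeq 0$ in $\bH\bMod_{B_i}$ for every $i\in I$ is itself nullhomotopic. I will fix such a $\phi$ and aim at showing $\phi\simeq 0$.

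First, I would exploit full faithfulness of the pushforwards. Since each $f_i$ is a homotopical epimorphism, Proposition~\ref{prop:characterization_homotopy_epi} gives that $(f_i)_*\colon \bH\bMod_{B_i}\to \bH\bMod_A$ is fully faithful, so the adjunction $\L f_i^*\dashv (f_i)_*$ converts the vanishing of $\phi\otimes_A^{\L} B_i$ into the vanishing in $\bH\bMod_A$ of the composite $\eta_i\circ \phi$, where $\eta_i\colon N\to N\otimes_A^{\L} B_i$ is the unit. Consequently $\phi$ factors as $\phi=k\circ \tilde\phi$ with $\tilde\phi\colon M\to K$ and $k\colon K\to N$ the canonical map from
\[ K\ :=\ \mathrm{fib}\Bigl(N\xrightarrow{(\eta_i)_{i\in I}}\ \textstyle\prod_{i\in I} N\otimes_A^{\L} B_i\Bigr). \]

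Next, I would show that $K\otimes_A^{\L} B_j\simeq 0$ for every $j\in I$. For each such $j$, the $j$-th component of the map $N\otimes_A^{\L} B_j\to \bigl(\prod_i N\otimes_A^{\L} B_i\bigr)\otimes_A^{\L} B_j$ is the canonical map $N\otimes_A^{\L} B_j\to N\otimes_A^{\L} B_j\otimes_A^{\L} B_j$, which is an equivalence because $B_j\otimes_A^{\L} B_j\simeq B_j$ (cf.\ Definition~\ref{defn:homological_homotopical_epimorphism}). Projecting onto the $j$-th factor therefore supplies a retraction, so the map is a split monomorphism in the stable category $\bH\bMod_{B_j}$, and its fibre vanishes.

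The hard part will be to leverage the cover hypothesis, which is phrased at the level of algebras (Definition~\ref{defn:homotopical_Zariski_open_immersion} and Lemma~\ref{lemma:conservative_zero_algebras}). My plan is to construct functorially from the factorisation $M\xrightarrow{\tilde\phi} K\xrightarrow{k} N$ an $A$-algebra $C$ whose triviality in $\bH\bRings_A$ is equivalent to $\phi\simeq 0$, and to verify---using the module-level vanishing $K\otimes_A^{\L} B_j\simeq 0$---that $C\ast_A^{\L} B_j\simeq 0$ for every $j$. The algebra-level cover property would then force $C\simeq 0$, yielding the conclusion. The principal obstacle is exactly this final construction: because $\ast_A^{\L}$ and $\otimes_A^{\L}$ do not share base-change properties in the noncommutative setting (Remark~\ref{rmk:failure_base_chage}), and a bare right $A$-module does not canonically carry an $A$-algebra structure, producing such a $C$ requires care---for instance by passing through a tensor algebra on a symmetrised bimodule or through a derived endomorphism algebra attached to $M\oplus N$---and must be arranged to interact correctly with the free-product base change.
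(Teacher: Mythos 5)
Your opening reductions are sound and consistent with how the paper begins: faithfulness of an additive family is equivalent to reflecting zero morphisms, and the paper likewise tests the family on a single object/morphism being sent to zero. However, your proposal stops exactly at the crux and leaves it as an acknowledged ``principal obstacle'': you never actually produce the $A$-algebra $C$ that converts the module-level vanishing into an algebra-level vanishing to which the cover condition (Lemma~\ref{lemma:conservative_zero_algebras}) applies. Without that construction there is no proof, because the cover hypothesis lives entirely in $\bH\bRings_A$ and, as you correctly note via Remark~\ref{rmk:failure_base_chage}, there is no general base-change formula relating $\otimes_A^\L$ to $\ast_A^\L$.

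The paper's resolution of precisely this obstacle is the content you are missing. First, a right module is upgraded to a bimodule by the faithful base change $\bH\bMod_A\to\bH\bBiMod_A=\bH\bMod_{A\otimes_\Z^\L A^{\op}}$ along $a\mapsto a\otimes a^{\op}$. Second, one applies the faithful free (tensor) algebra functor $\bFree_A:\bH\bBiMod_A\to\bH\bRings_A$. The key computation is then the compatibility
\[ \bFree_A(M)\ast_A^\L B\ \cong\ \bFree_B(M\otimes_A^\L B) \]
for any homotopical epimorphism $A\to B$: on underlying modules this follows by expanding $\bFree_A(M)=A\oplus M\oplus M^{(\otimes_A^\L)^2}\oplus\cdots$ and using $B\otimes_A^\L B\simeq B$ iteratively on each tensor power, while the identification of the free product with $\bFree_B(M\otimes_A^\L B)$ is a universal-property argument in $\bH\bRings_B$. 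This is exactly the point where the homotopical-epimorphism hypothesis rescues the otherwise failing base change, and it is what makes the module-level vanishing imply $\bFree_A(\minus)\ast_A^\L B_i\cong 0$, whence the algebra-level conservativity of the cover forces the conclusion. Your suggested alternatives (a derived endomorphism algebra of $M\oplus N$, or an unspecified symmetrised tensor algebra) would each require their own, unproved, interaction with $\ast_A^\L$; as written, the argument is a plan rather than a proof. Incidentally, your intermediate detour through the fibre $K$ of $N\to\prod_i N\otimes_A^\L B_i$ is not needed once the free-algebra bridge is in place.
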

\begin{proof}
    Suppose that $f: B \to C$ is a morphism in $\bH \bRings_A$ such that $f \ast_A^\L B_i \cong 0$ for all $i$. Then, since the algebras in $\bH \bRings_{B_i}$ are unital algebras and morphisms are morphisms of unital algebras, this implies that $C \ast_A^\L B_i \cong 0$ for all $i \in I$. Applying Lemma~\ref{lemma:conservative_zero_algebras} we find that $C \cong 0$, and hence $f = 0$. 
    We then notice that for any $A \in \bH \bRings_\Z$ we can embed the category $\bH \bMod_A$ faithfully into the category $\bH \bBiMod_A$ noticing that $\bH \bBiMod_A = \bH \bMod_{A \otimes_\Z^\L A^\op}$ and use the base change via the canonical morphism of rings $A \to A \otimes_\Z^\L A^\op$ given by $a \mapsto a \otimes a^\op$. It is easy to check that the obtained base change functor is faithful. 
    
    Then, we can faithfully embed the category of bimodules over $A$ into $\bH \bRings_{A_i}$ by means of the free algebra functor $\bFree_A: \bH \bBiMod_A \to \bH \bRings_A$. Notice that if $A \to B$ is a homotopical epimorphism, then for all $M \in \bH \bBiMod_A$ we have
    \[ \bFree_A(M) \otimes_A^\L B \cong \bFree_{B}(M \otimes_A^\L B). \]
    Indeed,
    \[ \bFree_A(M) \otimes_A^\L B \cong (A \oplus M \oplus M^{(\otimes_A^\L)^2} \oplus \cdots) \otimes_A^\L B \cong \]
    \[ (B \oplus M \otimes_A^\L B \oplus (M \otimes_A^\L B)^{(\otimes_B^\L)^2} \oplus \cdots) \cong \bFree_B(M \otimes_A^\L B)  \]
    because the relation $B \cong B \otimes_A^\L B$ can be used iteratively. We also observe that there is a natural isomorphism
    \[ \bFree_A(M) \ast_A^\L B \cong \bFree_{B}(M \otimes_A^\L B) \]
    because both algebras satisfy the same universal property. Indeed, let $C \in \bH \bRings_B$, then by working out definitions we get
    \[ \Hom_{\bH \bRings_B}(\bFree_{B}(M \otimes_A^\L B), C) \cong \Hom_{\bH \bBiMod_B}(M \otimes_A^\L B, C) \cong \] \[ \Hom_{\bH \bBiMod_A}(M, C) \cong \Hom_{\bH \bRings_A}(\bFree_A(M), C) \cong \Hom_{\bH \bRings_B}(\bFree_A(M) \ast_A^\L B, C). \]
    This shows that there is a commutative (up to natural isomorphism) square of functors
    \[
\begin{tikzpicture}
\matrix(m)[matrix of math nodes,
row sep=2.6em, column sep=4.8em,
text height=1.5ex, text depth=0.25ex]
{ \bH \bMod_A & \bH \bBiMod_A &  \bH \bRings_A  \\
\bH \bMod_B & \bH \bBiMod_{B}  & \bH \bRings_{B} \\};
\path[->] (m-1-1) edge node[auto] {$\scriptscriptstyle{(\minus) \otimes_A^\L (A \otimes_\Z^\L A^\op)}$} (m-1-2);
\path[->] (m-1-2) edge node[auto] {} (m-2-2);
\path[->] (m-2-1) edge node[auto] {$\scriptscriptstyle{(\minus) \otimes_B^\L (B \otimes_\Z^\L B^\op)}$} (m-2-2);
\path[->] (m-1-1) edge node[auto] {} (m-2-1);
\path[->] (m-1-2) edge node[auto] {} (m-1-3);
\path[->] (m-2-2) edge node[auto] {} (m-2-3);
\path[->] (m-1-3) edge node[auto] {} (m-2-3);
\end{tikzpicture}.
\]
We can put such squares together for the family of homotopical epimorphisms $A \to B_i$ in the statement of the proposition and thus obtain a commutative square of functors
    \[
\begin{tikzpicture}
\matrix(m)[matrix of math nodes,
row sep=2.6em, column sep=2.8em,
text height=1.5ex, text depth=0.25ex]
{ \bH \bMod_A &  \bH \bBiMod_A &  \bH \bRings_A  \\
\bigoplus_{i \in I}  \bH \bMod_{B_i} & \bigoplus_{i \in I} \bH \bBiMod_{B_i}  & \prod_{i \in I} \bH \bRings_{B_i} \\};
\path[->] (m-1-1) edge node[auto] {} (m-1-2);
\path[->] (m-1-2) edge node[auto] {} (m-2-2);
\path[->] (m-2-1) edge node[auto] {} (m-2-2);
\path[->] (m-1-1) edge node[auto] {} (m-2-1);
\path[->] (m-1-2) edge node[auto] {} (m-1-3);
\path[->] (m-2-2) edge node[auto] {} (m-2-3);
\path[->] (m-1-3) edge node[auto] {} (m-2-3);
\end{tikzpicture}
\]
where the horizontal functors are faithful. This fact plus the computation in the beginning of the proof show that the only morphism in $\bH \bMod_A$ that is sent to $0$ by the composition of the functors is the zero morphism. Since the bottom horizontal maps are faithful, the same is true also for remaining vertical maps. But this is equivalent to saying that the map is faithful because the left vertical functor is an additive functor. 
\end{proof}

\begin{rmk} \label{rmk:faithful_modules_not_covers}
 Note that Example \ref{exa:Kanda_counterexample} shows that, unlike in the classical situation, in the noncommutative setting there exist faithful families of localization functors $\{ \L f_i^*: \bH \bMod_A \to \bH \bMod_{B_i} \}_{i \in I}$ for $f_i$ homotopical epimorphisms that are not covers in the sense of Definition \ref{defn:homotopical_Zariski_open_immersion}.
\end{rmk}

Thus, even though --- as per Remark \ref{rmk:faithful_modules_not_covers} ---- the converse to Proposition \ref{prop:faithful_covers} does not hold, this is enough to compare $\SpecNC(A)$ to the classical $\SpecG(A)$ when $A$ is a classical commutative ring. To do this we introduce an intermediate topology.

\begin{defn} \label{defn:fine_Zariski_topology}
Let $A \in \bH\bRings_\Z$. We define the \emph{fine (homotopical) Zariski topology} on $\bOuv(A)$ to be the topology whose covers are finite families of homotopical epimorphisms $\{ B \to C_i \}_{i \in I}$ such that for all homotopical epimorphisms $B \to D$ we have $D \cong 0$ if and only if $D_i \ast_B^\L C_i \cong 0$ for all $i\in I$.
\end{defn}

\begin{rmk}
\label{rmk:fine_Zariski_topology}
Observe that by Lemma \ref{lemma:conservative_zero_algebras}, the condition of cover from Definition~\ref{defn:fine_Zariski_topology} is the same as that in Definition \ref{defn:homotopical_Zariski_open_immersion}, but restricted only to the subcategory of $\bH \bRings_A$ of homotopical epimorphisms out of $A$. 
\end{rmk}

\begin{prop} \label{prop:fine_Zariski_well_defined}
    The fine Zariski topology is well defined and endows $\bOuv(A)$ with the structure of a coherent posite, that is therefore equivalent to a topological space which we denote $\SpecNC_\fine(A)$.
\end{prop}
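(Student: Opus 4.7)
The plan is to verify the three axioms of a Grothendieck topology for Definition~\ref{defn:fine_Zariski_topology}, observe that the resulting site is coherent, and then invoke the same localic-topos argument used in Proposition~\ref{prop:enough_points} to produce $\SpecNC_\fine(A)$ as a sober topological space. Throughout, the guiding principle (made explicit by Remark~\ref{rmk:fine_Zariski_topology}) is that the cover condition is the classical one, but tested only on the subcategory $\bLoc(B) \subset \bH\bRings_B$ of homotopy epimorphisms out of $B$; thus I need to show that this restricted conservativity interacts well with the natural operations on $\bOuv(A)$.

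First I would dispose of the trivial axiom (identities are covers) and tackle \emph{stability under pullback}. Given a cover $\{B \to C_i\}_{i \in I}$ and a homotopy epimorphism $B \to B'$ in $\bOuv(A)$, the pulled-back family $\{B' \to C_i \ast_B^\L B'\}_{i \in I}$ consists of homotopy epimorphisms by stability of that class under homotopy pushout (as used already in the proof of Proposition~\ref{prop:Zar site}). For the cover condition, any homotopy epimorphism $B' \to D$ composes with $B \to B'$ to a homotopy epimorphism $B \to D$, so $D$ sits in $\bLoc(B)$; the base-change associativity
\[ D \ast_{B'}^\L (C_i \ast_B^\L B') \cong D \ast_B^\L C_i \]
then reduces the conservativity test to the cover property of $\{B \to C_i\}$.

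Next, for \emph{transitivity}, let $\{B \to C_i\}_{i \in I}$ be a cover together with covers $\{C_i \to D_{ij}\}_{j \in J_i}$; the composites $B \to D_{ij}$ are homotopy epimorphisms. Given a homotopy epimorphism $B \to E$ with $E \ast_B^\L D_{ij} \cong 0$ for all $i,j$, I would set $E_i := E \ast_B^\L C_i$; by associativity of the free product along $B \to C_i \to D_{ij}$,
\[ E_i \ast_{C_i}^\L D_{ij} \cong E \ast_B^\L D_{ij} \cong 0, \]
and $C_i \to E_i$ is a homotopy epimorphism by pushout-stability. The fine-cover property of $\{C_i \to D_{ij}\}$ then forces $E_i \cong 0$ for every $i$, and the fine-cover property of $\{B \to C_i\}$ yields $E \cong 0$. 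Finiteness of all covers is built into Definition~\ref{defn:fine_Zariski_topology}, so the site is coherent.

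For the last step, the dual of Proposition~\ref{prop:Loc_poset} ensures that $\bOuv(A)$ is a poset, hence the associated topos is localic; combined with coherence, Deligne's theorem furnishes enough points, and \cite[Theorem IX.5.1]{MM} identifies it with the topos of sheaves on a sober topological space, which we define to be $\SpecNC_\fine(A)$ --- exactly as in the proof of Proposition~\ref{prop:enough_points}. I do not foresee a serious obstacle; the only genuinely delicate point is the restricted conservativity condition, which must be traced carefully through pullback and composition (as above) because, unlike in the topology of Definition~\ref{defn:homotopical_Zariski_open_immersion}, one cannot test covers against arbitrary $A$-algebras.
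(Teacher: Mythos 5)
Your proposal is correct and follows essentially the same route as the paper's proof: both verify pullback-stability and transitivity of fine covers by combining associativity of the derived free product with the stability of homotopical epimorphisms under composition and homotopy pushout, and then obtain the space via coherence and the localic/Deligne argument of Proposition~\ref{prop:enough_points}. If anything, your transitivity step is written slightly more carefully than the paper's (you explicitly record that $C_i \to E \ast_B^\L C_i$ is a homotopical epimorphism, which is needed to apply the restricted cover condition), but the argument is the same.
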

\begin{proof}
    The proof is similar to that of Proposition~\ref{prop:Zar site}. Let $\{A \to B_i\}_{i \in I}$ be a cover for the fine Zariski topology, and let $A \to C$ be homotopical epimorphism. We need to show that the family $\{ C \to C \ast_A^\L B_i \}_{i \in I}$ is a cover as well. Let $C \to D$ be a homotopical epimorphism such that $C \ast_A^\L B_i \ast_C^\L D \cong 0$ for all $i$, then we have
    \[ C \ast_A^\L B_i \ast_C^\L D \cong B_i \ast_A^\L C \ast_C^\L D \cong B_i \ast_A^\L D\,. \]
   Since the composition $A \to C \to D$ is a homotopical epimorphism, one can conclude that $D \cong 0$, and, hence, that $\{ C \to C \ast_A^\L B_i \}_{i \in I}$ is a cover. 
   
   It remains only to check that if $\{ A \to B_i \}_{i \in I}$ is a cover and for every $i\in I$ we have covers $\{ B_i \to C_j \}_{j \in J_i}$, then the composition $\{  A \to C_j \}_{j \in \cup_{i \in I} J_i}$ is a cover. Let $A \to D$ be a homotopical epimorphism such that $A \ast_{C_j}^\L D \cong 0$ for all $j \in \bigcup_{i \in I} J_i$. Then, by the isomorphism
    \[ A \ast_{C_j}^\L D \cong A \ast_{B_i}^\L B_i \ast_{C_j}^\L D \]
    we deduce that $B_i \ast_{C_j}^\L D \cong 0$ for all $j$. Since the families $\{ B_i \to C_j \}_{j \in J_i}$ are covers, it ensues that $B_i \ast_{C_j}^\L D \cong 0$ for all $i\in I$. The latter, in turn, implies $D \cong 0$ because $\{ A \to B_i \}_{i \in I}$ is a cover.
\end{proof}

\begin{rmk}
\label{rmk:fine_Zariski_topology_not_functorial}
Let us emphasise that the fine Zariski topology is not functorial in $\bH \bRings_\Z$. Indeed, the map $k \oplus k \to M_2(k)$ from Example~\ref{exa:Kanda_counterexample} induces a functor $\bOuv(k \oplus k) \to \bOuv(M_2(k))$ given by $(\minus) \ast_{k \oplus k} M_2(k)$ which sends the cover (for the fine Zariski topology) $\{\pi_1, \pi_2\}$ to the family of maps $\{M_2(k) \to 0, M_2(k) \to 0\}$. But the latter is not a cover for the fine Zariski topology. It follows that the fine Zariski topology is functorial when restricted to the subcategory of homotopical epimorphisms.
We will elaborate further on the geometry of this situation in Example~\ref{exa:Kanda_morphism}.
\end{rmk}

\begin{comment}
We can now prove our claim.

\begin{prop}
\label{proposition remark 4.4}
Let $A \in \bH\bC\bRings_\Z$ and $\{ A \to B_i\}$ be a finite family of homotopical epimorphisms, then the base change functors
\[ \{\L f_i^*: \bH \bMod_A \to \bH \bMod_{B_i} \}_i \]
form a conservative family if and only if the base change functors
\[ \{\L f_i^*: \bH \bRings_A \to \bH \bRings_{B_i} \}_i \]
form a conservative family.
\end{prop}
\begin{proof}
By Lemma \ref{lemma:commutative_ast=tensor} the two functors are computed by the same formula and by Lemma \ref{lemma:conservative_zero_algebras} conservativity can be checked on the zero object. Therefore, the conservativity for the two families of functors is equivalent.
\end{proof}

\begin{rmk} \label{rmk:faithfulness}
In descent theory another notion is traditionally of relevance, the notion of faithfulness. It is a classical result that for the familiy of base change functors 
\[ \{\L f_i^*: \bH \bMod_A \to \bH \bMod_{B_i} \}_i, \]
in the case when $A$ is commutative, conservativity is equivalent to faithfulness. This is no longer true in the noncommutative case.
\end{rmk}

\end{comment}

In the light of Proposition~\ref{prop:fine_Zariski_well_defined}, we are now in a position to compare the classical spectrum of a commutative ring $A$ to $\SpecNC(A)$ via $\SpecNC_\fine(A)$ by means of the following lemma.

\begin{lemma} \label{lemma:fine_covers}
    Let $A \in \bC\bRings_\Z$. 
    \begin{enumerate}
        \item Localizations for the classical Zariski topology are homotopical epimorphism.
        \item A family of classical localizations forms a cover for the classical Zariski topology if and only if it forms a cover for the fine Zariski topology.
    \end{enumerate}
\end{lemma}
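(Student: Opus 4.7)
Part (1) follows immediately from the characterisation of classical Zariski localisations as flat epimorphisms of finite presentation (see Example~\ref{exa:homotopy_epimorphism}(i)--(ii)). Given $f : A \to S$ such a map, the $A$-flatness of $S$ yields $S \otimes_A^\L S \cong S \otimes_A S$, while the epimorphism condition in $\bC\bRings_\Z$ yields $S \otimes_A S \cong S$; composing, $S \otimes_A^\L S \cong S$, which is by definition a homological epimorphism, equivalent to being a homotopical epimorphism by Proposition~\ref{prop:characterization_homotopy_epi}.

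The technical crux of part (2) is the reduction of the free product to the derived tensor product in the commutative setting: for $A$ commutative and $A \to B$ a classical Zariski localisation, I claim that
\[
B \ast_A^\L D \,\cong\, B \otimes_A^\L D \,\cong\, B \otimes_A D
\]
for every $D \in \bH\bRings_A$. The first isomorphism rests on the vanishing $B \otimes_A^\L \ol{B} \cong 0$ (where $\ol B$ is the cofibre of $A\to B$), which follows from the homotopical epimorphism condition $B \otimes_A^\L B \cong B$ applied to the standard filtration of the free product in the style of \cite[Lemma~2.1]{Laz2}: the condition forces the successive quotients of that filtration to vanish past the first, leaving only the tensor product. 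The second isomorphism uses the $A$-flatness of $B$. This identification transports the cover conditions into the module-theoretic setting, where the classical theory of faithful flatness is at our disposal.

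The forward direction of (2), classical cover implies fine cover, is then straightforward: a classical Zariski cover $\{A \to B_i\}$ makes $\bigoplus_i B_i$ faithfully flat over $A$, and combining this with the reduction above, any homotopical epimorphism $A \to D$ satisfying $B_i \ast_A^\L D \cong 0$ for every $i$ must have vanishing underlying $A$-module, hence $D \cong 0$ in $\bH\bRings_A$. The converse direction is more delicate and proceeds by contrapositive: given a family $\{A \to B_i\}$ that is not a classical cover, one must construct a nonzero homotopical epimorphism $A \to D$ with $B_i \otimes_A D \cong 0$ for all $i$; writing $B_i = A[f_i^{-1}]$, the $f_i$ generate a proper ideal, and the target $D$ should be a localisation of $A$ whose spectrum lies in the non-empty closed set $V(f_1,\dots,f_n) \subset \SpecG(A)$. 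The main obstacle is precisely the existence of such a $D$: in the Noetherian case it is obtained from the classification of smashing localisations of \cite{NB} recalled in Example~\ref{exa:homotopy_epimorphism}(iv), by choosing a nonempty generalization-closed subset of $\Spec(A)$ contained in the uncovered locus and letting $D$ be the corresponding homotopical localisation; in broader generality one relies on the abundant supply of flat (Ore) localisations of a commutative ring described in Example~\ref{exa:homotopy_epimorphism}(iii)--(v) to build the required witness.
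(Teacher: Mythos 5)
Part (1) is fine and is exactly the argument the paper uses (Example~\ref{exa:homotopy_epimorphism}(i)--(ii), via \cite[Lemma 2.1.4(i)]{TV3}). Part (2), however, has two genuine gaps. First, your central reduction $B \ast_A^\L D \cong B \otimes_A^\L D$ is claimed ``for every $D \in \bH\bRings_A$'', and that is false: the paper's own Example~\ref{exa:homotopy_epimorphism}(vi) exhibits a classical Zariski localization $R[s] \to R[s,s^{-1}]$ and an $R[s]$-algebra $A = R\lt s,t,u\gt/(st,us)$ for which $A \ast_{R[s]}^\L R[s,s^{-1}]$ has cohomology in infinitely many negative degrees, whereas $A \otimes_{R[s]}^\L R[s,s^{-1}]$ is discrete (see also Remark~\ref{rmk:failure_base_chage}, which warns against precisely this identification). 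The filtration argument you invoke produces quotients of the form $B \otimes_A (\ol{D} \otimes_A \ol{B})^{\otimes_A^n}$, and the vanishing of $B \otimes_A \ol{B}$ only kills these after you commute $\ol{D}$ past $\ol{B}$ --- which requires $D$ commutative. This is exactly why Theorem~\ref{thm:commutative_faithful_conjecture} assumes that \emph{both} maps are homotopical epimorphisms out of the commutative ring $A$ (forcing $D$ commutative). Your forward direction survives because the fine cover condition only quantifies over homotopical epimorphisms $A \to D$, which are automatically commutative here; but you must state and use the reduction in that restricted form, not for arbitrary $D$.

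The second gap is in the converse. Your contrapositive asks for a nonzero homotopical epimorphism $A \to D$ with $\SpecG(D)$ contained in the uncovered closed locus $V(f_1,\dots,f_n)$, chosen as a nonempty generalization-closed subset therein. But a nonempty generalization-closed subset of $\SpecG(A)$ contains the generic point of every irreducible component it meets, so it can never sit inside a proper closed subset of that component: for $A$ a domain and a single missing closed point there is simply no such $D$ among flat epimorphisms, and the proposed witness does not exist by this route. The paper avoids constructing a witness altogether: it identifies a classical Zariski cover with conservativity of the module base-change functors $(\minus)\otimes_A^\L B_i$ via \cite[Lemma 2.1.4(ii)]{TV3}, and then identifies the fine cover condition with the statement that the $B_i$ generate $\bH\bMod_A$ as a pre-triangulated category, so that both conditions collapse to the same module-theoretic one. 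You should either adopt that reduction or explain how your witness can be produced; as written, the step fails.
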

\begin{proof}
    \begin{enumerate}
        \item This is a well known fact, whose proof we outline below. Let $f: A \to B$ be a homotopical epimorphism $\bH\bC\bRings_R$ that is of finite presentation (in the commutative sense). Then one can show that $B$ must be concentrated in degree $0$ (\cf \cite[Proposition 2.2.2.5 (4)]{TV2}) and that $f$ is a flat epimorphism of commutative rings (\cf \cite[Lemma 2.1.4 (i)]{TV3}). Therefore, the class of homotopical epimorphisms of finite presentation under $A$ precisely corresponds to the class of localizations for the classical Zariski topology on $A$. 
        \item Consider a cover $\{A \to B_i\}_{i \in I}$ for the classical Zariski topology. One can show (\cf \cite[Lemma 2.1.4 (ii)]{TV3}) that this is equivalent to require the derived descent condition for the category of modules (\ie by the conservativity of the family of functors $(\minus) \otimes_A^\L B_i$). But this is, in turn, equivalent to the condition of being a cover for the fine Zariski topology, because the latter is equivalent to ask that the objects $B_i$ generate $\bH \bMod_A$ as a pre-triangulated category.
    \end{enumerate}
\end{proof}

At this poing, we have all tools at our disposal to compare the noncommutative spectrum with the classical notion of spectrum of prime ideals of a commutative ring introduced by Grothendieck, which, for the sake of clarity, we denote by $\SpecG(A)$.

\begin{prop}\label{prop:compatision with Grothendieck}
Let $A \in \bC\bRings_\Z$. Then, there exist canonical maps of topological spaces 
\[
\begin{tikzpicture}
\matrix(m)[matrix of math nodes,
row sep=2.6em, column sep=2.8em,
text height=1.5ex, text depth=0.25ex]
{ \SpecNC_\fine(A) & \SpecG(A)  \\
  \SpecNC(A) \\};
\path[->, font=\scriptsize]
(m-1-1) edge node[auto] {$\pi_A$} (m-1-2);
\path[->, font=\scriptsize]
(m-1-1) edge node[auto] {$\iota_A$} (m-2-1);
\end{tikzpicture}.
\]
\end{prop}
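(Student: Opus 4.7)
The plan is to build both maps by exploiting the lattice-theoretic description of the three spectra. Recall that by Proposition~\ref{prop:enough_points} (and the analogous statement in Proposition~\ref{prop:fine_Zariski_well_defined}, together with the classical frame-theoretic description of $\SpecG$), each spectrum is the sober topological space associated to the corresponding site, with points given by completely prime filters in the underlying poset (Proposition~\ref{prop:points_as_completelyprime_ultrafilters}). I would therefore realise each of $\iota_A$ and $\pi_A$ on points as a restriction of completely prime filters, and then check continuity by comparing basic open sets in the associated frames of ideals.

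For $\iota_A$, the crucial observation is that the formal homotopy Zariski topology is \emph{coarser} than the fine Zariski topology on the same poset $\bOuv(A)$. Indeed, by Lemma~\ref{lemma:conservative_zero_algebras} a finite family $\{A \to B_i\}_{i \in I}$ of homotopical epimorphisms is a formal cover precisely when $C \cong 0 \iff C \ast_A^\L B_i \cong 0$ for all $C \in \bH\bRings_A$, whereas by Remark~\ref{rmk:fine_Zariski_topology} it is a fine cover precisely when this biconditional holds merely for homotopical epimorphisms $A \to C$. Since the latter is a smaller test class, every formal cover is a fine cover. Hence the identity functor on $\bOuv(A)$ is a morphism of sites $(\bOuv(A), \mathscr{T}_{\fine}) \to (\bOuv(A), \mathscr{T}_{\Zar})$, and any completely prime filter for the fine topology is \emph{a fortiori} a completely prime filter for the formal topology, producing the underlying set map $\iota_A$. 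Continuity is automatic because a basic open subset of $\SpecNC(A)$ (determined by containment of a fixed $U \in \bOuv(A)$) pulls back to the basic open subset of $\SpecNC_\fine(A)$ described by the same condition.

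For $\pi_A$, I would identify the classical small Zariski site $\bZar^{\mathrm{cl}}_A$ of the commutative ring $A$ — whose underlying poset consists of the basic opens $D(s) = \Spec(A[s^{-1}])$ and whose covers are the usual finite covers by such opens — as a subsite of $(\bOuv(A), \mathscr{T}_{\fine})$. By Lemma~\ref{lemma:fine_covers}(1), every classical localization is a homotopical epimorphism, yielding an inclusion of posets $\bZar^{\mathrm{cl}}_A \hookrightarrow \bOuv(A)$; by Lemma~\ref{lemma:fine_covers}(2), classical Zariski covers coincide with fine Zariski covers on this sub-poset. The inclusion is therefore a morphism of sites $\bZar^{\mathrm{cl}}_A \hookrightarrow (\bOuv(A), \mathscr{T}_{\fine})$, and restricting a completely prime filter on $(\bOuv(A), \mathscr{T}_{\fine})$ to $\bZar^{\mathrm{cl}}_A$ produces a completely prime filter there. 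Via the standard frame-theoretic description of $\SpecG(A)$ as the sober space of $\bZar^{\mathrm{cl}}_A$ (whose points are in bijection with prime ideals of $A$), this defines $\pi_A$ on points, and continuity is inherited from the morphism of sites.

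The main point to verify carefully is that the restriction operation on completely prime filters indeed takes values in completely prime filters — for $\iota_A$ this is immediate from the containment of cover families, while for $\pi_A$ it relies on the precise matching of covers established in Lemma~\ref{lemma:fine_covers}(2). Once these compatibilities are in place, the existence and continuity of both maps are straightforward consequences of the duality between sober topological spaces and spatial locales used in Proposition~\ref{prop:enough_points}.
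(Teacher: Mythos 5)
Your proposal is correct and follows essentially the same route as the paper: $\pi_A$ comes from embedding the classical Zariski posite into $\bOuv(A)$ with the fine topology via Lemma~\ref{lemma:fine_covers}, and $\iota_A$ from the fact that the identity on $\bOuv(A)$ sends formal homotopy Zariski covers to fine covers. Your justification of the latter containment via Lemma~\ref{lemma:conservative_zero_algebras} and Remark~\ref{rmk:fine_Zariski_topology} (conservativity tested on a smaller class) is if anything more direct than the paper's appeal to Proposition~\ref{prop:faithful_covers}, and your explicit point-level description via completely prime filters is a harmless elaboration of the same argument.
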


\begin{proof}
By Lemma \ref{lemma:fine_covers}, the classical Zariski topology admits a homotopical characterisation, which implies that the posite that determines $\SpecG(A)$ is canonically a sub-posite of $\bOuv(A)$ with the fine Zariski topology. Moreover, the inclusion functor identifies the restriction of the fine Zariski topology to $\SpecG(A)$ with the classical Zariski topology. This gives the projection map $\pi_A$. The map $\iota_A$ is obtained by observing that Proposition~\ref{prop:faithful_covers} implies that the identity functor $\bOuv(A) \to \bOuv(A)$ is continuous when the domain is endowed with the homotopy Zariski topology and the codomain with the fine Zariski topology.
\end{proof}

The map $\pi_A$ defines a projection from $\SpecNC_\fine(A)$ to $\SpecG(A)$. We expect this map to be surjective and even a quotient map, at least in favourable cases.

\

We conclude the subsection with a final remark on the functoriality of the fine Zariski topology. For any $A \in \bH \bRings$, one always has the canonical map $\iota_A: \SpecNC_\fine(A) \to \SpecNC(A)$, Defined as in the proof of Proposition~\ref{prop:compatision with Grothendieck}. Thus, although $\SpecNC_\fine(A)$ is not functorial in $A$, for any morphism $B \to A$ in $\bH \bRings$ we have a diagram

\[
\begin{tikzpicture}
\matrix(m)[matrix of math nodes,
row sep=2.6em, column sep=2.8em,
text height=1.5ex, text depth=0.25ex]
{ \SpecNC_\fine(A) & \SpecNC_\fine(B) \\
  \SpecNC(A) &  \SpecNC(B) \\};
\path[->, dashed, font=\scriptsize]
(m-1-1) edge node[auto] {} (m-1-2);
\path[->, font=\scriptsize]
(m-1-1) edge node[auto] {$\iota_A$} (m-2-1);
\path[->, font=\scriptsize]
(m-1-2) edge node[auto] {$\iota_B$} (m-2-2);
\path[->, font=\scriptsize]
(m-2-1) edge node[auto] {} (m-2-2);
\end{tikzpicture}
\]
where the dashed arrow does not always exists as a continuous map of topological spaces but always exists as a continuous correspondence 
\[  \SpecNC_\fine(A) \to \SpecNC(B) \leftarrow \SpecNC_\fine(B). \]
This is reminiscent of other constructions of spectra of rings previously appeared in literature: \cf \cite[Section 4]{AS}. Therefore, albeit in a weaker sense, the fine Zariski topology has functorial properties. We will see in Example~\ref{exa:Kanda_morphism} one such correspondence that is not a function.

\subsection{Comparison with the spectrum of smashing subcategories}
\label{Comparison with the spectrum of smashing subcategories}

In order to perform explicit computations of the topological space $\SpecNC(A)$, let us give an alternative characterisation of homotopy epimorphisms. This will reduce the problem of describing $\SpecNC(A)$ to the problem of understanding the lattice of some specific kind of subcategories of $\bH\bMod_A$ that we now specify.
Let $A \in \bH\bRings_R$ and consider a full embedding of categories $i_*:\bC \hookrightarrow \bH\bMod_A$ such that
\begin{enumerate}[(i)]
\item the functor $i_*$ is a triangulated functor, \ie, it commutes with the shift operation;
\item the category $\bC$ is closed by finite coproducts, direct summands and cones;
\item the functor $i_*$ has both a left and a right adjoint, denoted $i^*$ and $i^!$, respectively.
\end{enumerate}

\begin{defn} \label{defn:smashing_localization}
We call   \emph{smashing localization}  of $\bH\bMod_A$ the functor $i^*: \bH\bMod_A\to\bC$ and \emph{smashing subcategory} of $\bH\bMod_A$ the kernel of $i^*$.
\end{defn}

\begin{rmk}
Observe that $\bH\bMod_A$ is not triangulated because the suspension functor is not an equivalence. This kind of category is sometimes called \emph{pre-triangulated} category. By stabilizing $\bH\bMod_A$ we obtain a triangulated category that is precisely $D(\bMod_A)$, the unbounded derived category of $A$-modules. It is easy to check that the notion of smashing localization introduced in Definition \ref{defn:smashing_localization} is compatible with the standard notion of smashing localization from the theory of triangulated categories. Therefore, Definition~\ref{defn:smashing_localization} determines a subclass of the smashing localizations of $D(\bMod_A)$.
\end{rmk}

The next proposition already appears in literature in several different versions. We refer to \cite[§4]{NS} for one of the earliest appearances. We provide a full proof for the version that is more suitable to our setting.

\begin{prop} \label{prop:smashing_homotopi_epi}
Let $A \in \bH\bRings_R$.
Smashing localizations of $\bH\bMod_A$ and homotopy epimorphisms of $A$ to connective dg-algebras are in 1-to-1 correspondence.
\end{prop}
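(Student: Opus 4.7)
The plan is to establish the bijection by constructing inverse assignments in each direction.

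For the direction from homotopy epimorphisms to smashing localizations, I would start from $f: A\to B$ a homotopy epimorphism and set $\bC:=\bH\bMod_B$, with embedding $i_*:=f_*$ the restriction of scalars functor. By Proposition~\ref{prop:characterization_homotopy_epi}, equivalence $(d)$, $f_*$ is fully faithful. Its left adjoint is $i^*=\L f^* = (-)\otimes_A^\L B$ (see \eqref{eq:adj_L_mod}), and a right adjoint is given by $i^!:=\R\Hom_A(B,-)$ equipped with the natural $B$-action coming from the right $B$-action on $B$; this adjoint exists because $\bH\bMod_A$ is locally presentable. Conditions (i)--(iii) of the paragraph preceding Definition~\ref{defn:smashing_localization} are straightforward: triangulation of $f_*$ is automatic, and closure of $\bH\bMod_B$ under finite coproducts, direct summands and cones follows from the fact that $f_*$ commutes with these operations and reflects isomorphisms.

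For the reverse direction, given a smashing localization $i^*:\bH\bMod_A\to\bC$ with fully faithful $i_*$, I would construct an object $B\in\bH\bRings_R$ together with a morphism $f:A\to B$ whose associated $f_*$ recovers $i_*$. I would set $B:=i^*(A)$; the unit of the adjunction $\eta_A:A\to i_*i^*(A)$ gives a morphism in $\bH\bMod_A$. The key computation is that for every $M\in\bH\bMod_A$ there is a natural equivalence
\[
i^*(M)\;\simeq\;M\otimes_A^\L B\,,
\]
which is the defining property of a smashing localization (\cf \cite[§4]{NS}); this can be extracted from the fact that $i_*$ preserves all homotopy colimits (being a right adjoint that is smashing) together with the observation that both sides agree on $A$. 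Applying this to $M=B$ gives $B\otimes_A^\L B\simeq i^*(B)\simeq B$, since $B\in\bC$ and $i^*i_*\simeq \id$. This equivalence, together with the unit $\eta_A$, equips $B$ with the structure of an object of $\bH\bRings_R$ such that $\eta_A$ is an algebra map: the multiplication $B\otimes_A^\L B\to B$ is the counit $i^*i_*(B)\to B$, associativity and unitality follow from the triangle identities of the adjunction $(i^*\dashv i_*)$. By construction, $f:A\to B$ is a homotopy epimorphism in the sense of Definition~\ref{defn:homological_homotopical_epimorphism}.

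Finally I would check that these two constructions are mutually inverse. Going from $f$ to its associated smashing localization and back recovers $B$ as $\L f^*(A)=A\otimes_A^\L B\simeq B$ with its original algebra structure, because the unit $A\to B$ is preserved. Conversely, starting from $(i^*,i_*,i^!)$, the above equivalence $i^*(-)\simeq (-)\otimes_A^\L B$ identifies $\bC$ with the essential image of $f_*:\bH\bMod_B\hookrightarrow\bH\bMod_A$, so the smashing localization recovered from $f$ is the original one.

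The main obstacle is the equivalence $i^*(M)\simeq M\otimes_A^\L B$ in the noncommutative setting. Indeed, in the commutative context this is standard because kernels of smashing localizations are $\otimes$-ideals and the Eckmann--Hilton argument automatically promotes the idempotent $B$ to a commutative algebra. In the noncommutative setting one has to be careful with left/right module structures and must use that the smashing property precisely encodes that $i^*$ is compatible with tensoring on one side; this is where the identification of $B$ as an $A$-bimodule---rather than merely a right $A$-module---is crucial, and where the technical heart of the argument lies. Once the identification is established, the rest of the proof is formal adjunction yoga.
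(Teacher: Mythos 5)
Your proposal follows essentially the same route as the paper: restriction of scalars along a homotopy epimorphism is fully faithful with both adjoints, and conversely one sets $B := i^*(A)$, equips it with an algebra structure, identifies $i^*(\minus)$ with base change along $A \to B$ by checking agreement on the generator $A$ and compatibility with direct sums and cones, and reads the homotopy-epimorphism condition off the idempotency $i^* \circ i^* \cong i^*$. The only real difference is cosmetic: the paper extracts the dg-algebra structure on $i^*(A)$ from the equivalence $\R \Hom_{\bC}(i^*(A), i^*(A)) \cong i^*(A)$ (which packages associativity and unitality automatically), whereas you build the multiplication from the counit $i^* i_*(B) \to B$ and appeal to the triangle identities.
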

\begin{proof}
Let $A \to B$ be a homotopical epimorphism. Then the inclusion of the full subcategory $\bH \bMod_B \to \bH \bMod_A$ is a smashing localization because the restriction of scalars functor always has both adjoints.

Let $i^*: \bH \bMod_A \to \bC  $ be a smashing localization. Since
$i_*$ is fully faithful, we have that
\[ \R \Hom_\bC(i^*(A), i^*(A)) \cong \R \Hom_{\bH \bMod_A}(A, i_* i^*(A)) \cong i^*(A) \]
equips $i^*(A)$ with a structure of dg-algebra for which the canonical map $A \to i^*(A)$ is a morphism of dg-algebras. Then, since $i^*$ is a left adjoint functor for $i_*$, we have that
\[ i^*(M) \cong i^*(A) \otimes_A^\L M  \]
for all $M \in \bH \bMod_A$, because the functors $i^*$ and $i^*(A) \otimes_A^\L (\minus)$ have the same values on the generators of $\bH \bMod_A$\footnote{It is well-known that $\bH \bMod_A$ is generated by $A$, see for instance \cite[Theorem 3.1.1]{SS2} for a much stronger result.} and both commute with direct sums and cones. But since $i^*$ is a localization, the functorial isomorphism
\[ i^* \circ i^* \cong i^* \]
translates, via the functor $i^*(A) \otimes_A^\L (\minus)$, into
the functorial isomorphism
\[ i^*(A) \otimes_A^\L (\minus) \circ i^*(A) \otimes_A^\L (\minus) \cong i^*(A) \otimes_A^\L (\minus)\,. \]
This is equivalent to say that $A$ is a homotopical epimorphism.
\end{proof}

\begin{rmk}
The same argument as in Proposition~\ref{prop:smashing_homotopi_epi} shows that there is a 1-to-1 correspondence between smashing localizations of $D(\bMod_A)$ and homotopy epimorphisms of $A$ to (unbounded) dg-algebras. From our perspective, dg-algebras with cohomology in positive degrees correspond to non-affine objects which can be viewed as affine stacks in the sense of To\"en. Thus, in the setting of complicial algebraic geometry these are still affine objects determined by their dg-algebra of functions. We also mention that a similar setting has been recently considered in \cite{Rak} where such algebras are called \emph{derived algebras}.
\end{rmk}

\begin{defn}
  We denote by $\bSma(\bH \bMod_A)$ the poset of smashing subcategories of $\bH \bMod_A$.   
\end{defn}

\begin{prop} \label{prop:iso_smashing_poset}
The bijection established by Proposition~\ref{prop:smashing_homotopi_epi} yields an isomorphism of posets
\[ \bSma(\bH \bMod_A) \cong \bOuv(X)\,, \]
where $X = \Spec(A)$.
\end{prop}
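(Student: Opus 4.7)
The plan is to verify that the bijection of Proposition~\ref{prop:smashing_homotopi_epi} respects the natural partial orders on both sides. Recall that it sends a homotopy epimorphism $f \colon A \to B$ to the smashing subcategory $\bC_B := \ker\bigl((-) \otimes_A^\L B\bigr)$ of $\bH\bMod_A$, and dually to $\Spec(B) \to X$ in $\bOuv(X) = \bLoc(A)^\op$. The order on $\bLoc(A)$ is given by Proposition~\ref{prop:Loc_poset}: $(A \to B) \le (A \to B')$ iff there exists a morphism $B \to B'$ under $A$; on $\bSma(\bH\bMod_A)$ the order is inclusion of subcategories. It suffices to check that $B \le B'$ in $\bLoc(A)$ if and only if $\bC_B \subseteq \bC_{B'}$.

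The easy direction goes as follows: if $A \to B \to B'$ is a factorization under $A$, then for any $M \in \bH\bMod_A$ one has
\[
M \otimes_A^\L B' \simeq (M \otimes_A^\L B) \otimes_B^\L B',
\]
so $M \in \bC_B$ forces $M \in \bC_{B'}$. This is a direct consequence of the compatibility of base change along composed structure maps.

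The converse is the substantive content. The plan is to use the standard orthogonality description of smashing (Bousfield) localizations: the essential image $\bH\bMod_B \hookrightarrow \bH\bMod_A$ coincides with the right orthogonal complement $\bC_B^\perp$, and similarly for $B'$. Consequently, the inclusion $\bC_B \subseteq \bC_{B'}$ translates into the reverse inclusion of reflective subcategories $\bH\bMod_{B'} \subseteq \bH\bMod_B \subseteq \bH\bMod_A$. In particular $B'$, viewed as an $A$-module, is $B$-local, which yields a canonical morphism $B \to B'$ in $\bH\bMod_A$ under $A$, namely the composite $B \simeq B \otimes_A^\L A \to B \otimes_A^\L B' \simeq B'$, where the last equivalence uses the $B$-locality of $B'$.

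The main obstacle will be upgrading this $A$-module map to a morphism in $\bH\bRings_A$. The cleanest route is through Proposition~\ref{prop:characterization_homotopy_epi}: since $f$ is a homotopy epimorphism, the restriction of scalars $\R f_* \colon \bH\bRings_B \hookrightarrow \bH\bRings_A$ is fully faithful, with essential image cut out by the condition $C \simeq C \ast_A^\L B$. Running the module-level argument inside $\bH\bRings_A$ shows that $B'$ satisfies this condition, hence lifts uniquely to a $B$-algebra; its structure map under $A$ is the desired morphism $B \to B'$, and compatibility with the structure maps from $A$ is built into the adjunction, making the bijection a poset isomorphism.
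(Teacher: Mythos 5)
Your overall strategy is the same as the paper's: reduce the order-preservation to the equivalence between (i) existence of a morphism $B \to B'$ in $\bH\bRings_A$ and (ii) the inclusion $\ker(\L f_B^*) \subseteq \ker(\L f_{B'}^*)$, via the orthogonality description of smashing localizations. Your "easy direction" and the identification $\bH\bMod_{B'} = \bC_{B'}^{\perp} \subseteq \bC_B^{\perp} = \bH\bMod_B$ are both fine, and you are right that the converse is the substantive content (the paper's own proof passes over it rather quickly).

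The gap is in your last step. The essential image of $\R f_* \colon \bH\bRings_B \to \bH\bRings_A$ is cut out by the condition $C \simeq C \ast_A^\L B$, with the \emph{free} product, whereas your module-level argument only produces the equivalence $B' \simeq B' \otimes_A^\L B$, with the \emph{tensor} product. "Running the module-level argument inside $\bH\bRings_A$" does not bridge this: $\bH\bRings_A$ is not additive, there is no kernel/orthogonal-complement description of the reflective subcategory $\bH\bRings_B$, and the discrepancy between $\ast_A^\L$ and $\otimes_A^\L$ is precisely the failure of base change that the paper isolates in Remark~\ref{rmk:failure_base_chage} and Example~\ref{exa:Kanda_counterexample} (there $k \otimes_{k\oplus k} M_2(k) \ne 0$ while $k \ast_{k\oplus k}^\L M_2(k) \simeq 0$). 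So the locality of $B'$ as a module cannot simply be re-read as the algebra-level condition; as written, this step is unjustified.

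The way to close it is to promote the module map to a ring map by construction rather than by a locality criterion: since $\bH\bMod_{B'} \subseteq \bH\bMod_B$ is itself a smashing localization \emph{of} $\bH\bMod_B$ (the inclusion inherits both adjoints from the ambient inclusions into $\bH\bMod_A$), Proposition~\ref{prop:smashing_homotopi_epi} applied with $B$ in place of $A$ produces a homotopical epimorphism $B \to B''$ with $B'' \simeq B \otimes_A^\L B' \simeq B'$, the dg-algebra structure coming from $\R\Hom(i^*(B), i^*(B))$; one then checks that this ring structure and the composite $A \to B \to B''$ agree with the given ones on $B'$. The identity $B' \ast_A^\L B \simeq B'$ is a \emph{consequence} of the resulting algebra map (via the epimorphism property), not an input available beforehand.
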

\begin{proof}
We only need to check that the bijection in Proposition \ref{prop:smashing_homotopi_epi} preserves the partial orders. Let $f: A \to B$ be a homotopy epimorphism and $\L f^*: \bH \bMod_A \to \bH \bMod_B$ be the corresponding localization. The partial order in $\bOuv(X)$ is given by the reversing the inclusion of the localizations $f_*$ with $f$ ranging in the set of homotopy epimorphisms with domain $A$. This is isomorphic (by Proposition~\ref{prop:smashing_homotopi_epi}) to the poset obtained by equipping the kernels of the localizations $\L f^*$ with partial order given by inclusions. The latter poset is, by definition, $\bSma(\bH \bMod_A)$.
\end{proof}

We showed in Proposition \ref{prop:Loc_poset} that $\bOuv(X)$ is a complete meet semi-lattice. By \cite[Proposition I.4.3]{Stone} this implies that $\bOuv(X)$ is a complete lattice, so that by the isomorphism of Proposition \ref{prop:iso_smashing_poset} we have that meets and joins on $\bSma(\bH \bMod_A)$ and $\bOuv(X)$ agree. More precisely, meets in $\bSma(\bH \bMod_A)$ are given by the intersections of subcategories whereas meets in $\bOuv(X)$ are the fiber products of homotopical monomorphisms. 
Joins in $\bSma(\bH \bMod_A)$ are given by the smallest smashing subcategory generated by a family of objects therein, whereas joins in $\bOuv(X)$ are the corresponding localizations.

\

Proposition \ref{prop:iso_smashing_poset} offers a way to simplify the computation of $\bOuv(X)$. This happens in particular under additional conditions on $A$.

\begin{cor} \label{cor:iso_smashing_poset}
Suppose that $A$ is concentrated in degree $0$ and $\bH \bMod_A$ satisfy the telescope conjecture\footnote{The telescope conjecture --- see, e.g., \cite{Kra} --- has been verified for certain rings (for example, hereditary rings), disproven for others (for instance, non-Noetherian rings), and it is an open question to characterise the class of rings or dg-algebras for which it holds.}. Then 
\[ \bSma(\bH\bMod_A) \cong \bThick( \bH\bMod_A^c )\,, \]
where $\bThick( \bH \bMod_A^c )$ is the lattice of thick subcategories\footnote{A subcategory of $\bH\bMod_A$ is called \emph{thick} if it satisfies the first two conditions in Definition \ref{defn:smashing_localization}.} of $\bH\bMod_A^c$, and $\bH\bMod_A^c$ is the full subcategory of compact objects\footnote{An object $C \in \bH\bMod_A$ is called \emph{compact} if the functor $\Hom_{\bH\bMod_A}(C, \minus)$ commutes with direct sums.} of $\bH\bMod_A$.
\end{cor}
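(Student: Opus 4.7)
The proof amounts to invoking the telescope conjecture in its equivalent form: the map sending a smashing subcategory $\mathcal{S} \in \bSma(\bH\bMod_A)$ to $\mathcal{S} \cap \bH\bMod_A^c$ is a bijection onto $\bThick(\bH\bMod_A^c)$, with inverse given by $\mathcal{T} \mapsto \mathrm{Loc}(\mathcal{T})$, the smallest localizing subcategory of $\bH\bMod_A$ containing $\mathcal{T}$. The plan is to set up both maps and verify that they are mutually inverse.

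First I would check well-definedness. For the forward map, $\mathcal{S} \cap \bH\bMod_A^c$ is closed under finite coproducts, cones, and direct summands because each of the two factors is, so it is thick in $\bH\bMod_A^c$. For the backward map, a standard Bousfield-localization argument produces a recollement whose kernel is $\mathrm{Loc}(\mathcal{T})$, exhibiting the latter as a smashing subcategory; this uses that $\bH\bMod_A$ is compactly generated, which is where the hypothesis that $A$ is concentrated in degree $0$ enters --- it guarantees that $A$ itself is a compact generator of $\bH\bMod_A = D^{\le 0}(\bMod_A)$ and that $\bH\bMod_A^c$ has the expected description in terms of bounded-above perfect complexes.

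Next I would verify that the two composites are identities. The equality $\mathrm{Loc}(\mathcal{S} \cap \bH\bMod_A^c) = \mathcal{S}$ for every smashing $\mathcal{S}$ is precisely the content of the telescope conjecture, which is part of the hypothesis. The reverse equality $\mathrm{Loc}(\mathcal{T}) \cap \bH\bMod_A^c = \mathcal{T}$ is a formal consequence of compact generation (a Thomason--Neeman style argument on the compacts of the Verdier quotient): no new compacts appear in the localizing subcategory generated by compact objects. Monotonicity of both maps for the inclusion order is automatic, so mutual inverseness upgrades to an isomorphism of lattices.

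The main technical obstacle is that $\bH\bMod_A$ is only pre-triangulated --- its suspension is not invertible --- so the classical Krause--Neeman results for compactly generated triangulated categories cannot be quoted verbatim. I would circumvent this by stabilizing to the unbounded derived category $D(\bMod_A)$, using Proposition \ref{prop:smashing_homotopi_epi} together with the remark immediately following it to transport smashing subcategories and their compact generators back and forth between $\bH\bMod_A$ and $D(\bMod_A)$, then applying the classical telescope correspondence in $D(\bMod_A)$, and finally restricting to the connective part. The assumption that $A$ lies in degree $0$ is exactly what makes this restriction match the compact objects of $\bH\bMod_A$ on the nose, so that the lattice isomorphism descends cleanly to the statement claimed.
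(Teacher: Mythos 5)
Your proposal is correct and follows essentially the same route as the paper, which simply observes that the telescope conjecture (taken as a hypothesis) asserts precisely that every smashing subcategory is determined by its restriction to the compact objects. You have unpacked the standard details — the two mutually inverse order-preserving maps, the Neeman--Thomason argument that no new compacts appear, and the stabilization to $D(\bMod_A)$ to handle the pre-triangulated issue — all of which the paper leaves implicit, but the underlying argument is the same.
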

\begin{proof}
The telescope conjecture (for a given pre-triangulated category $\bC$)  asserts that every smashing subcategory is determined by its restriction to the category of compact objects, which is precisely the claim. 
\end{proof}

At this point, one might wonder whether it is possible to characterise the condition of cover for the homotopy Zariski topology in terms of the localization functors of the categories of modules. 
Unfortunately, such a characterisation seems precluded, as Example \ref{exa:Kanda_counterexample}~shows. Nevertheless, it could still be the case that such a characterisation holds for the fine Zariski topology, as stated in the following conjecture.

\begin{conj} \label{conj:faithfulness_conjecture}
    If a family of homotopical epimorphisms $\{ f_i: A \to B_i \}_{i \in I}$ is a cover for the fine Zariski topology then the family of restriction functors
\[ \{ \L f_i^*: \bH \bMod_A \to \bH \bMod_{B_i} \}_{i \in I} \]
is faithful.
\end{conj}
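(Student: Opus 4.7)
The plan is to adapt the strategy of Proposition~\ref{prop:faithful_covers} to the weaker setting of fine Zariski covers. First, I would reduce joint faithfulness of the family $\{\L f_i^*\}_{i \in I}$ to joint conservativity on zero objects: this is a standard fact for exact functors between pre-triangulated categories, since if $g : M \to N$ satisfies $\L f_i^*(g) = 0$ for all $i$, the exact triangle on the cofiber of $g$ splits after applying each $\L f_i^*$, and a diagram chase reduces the vanishing of $g$ to the vanishing of certain zero objects. It therefore suffices to show that $M \otimes_A^\L B_i \cong 0$ for every $i \in I$ implies $M \cong 0$.

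Next I would translate the problem into the smashing-subcategory framework developed earlier in Section~\ref{sec:properties_spectrum}. The hypothesis says that $M$ lies in every smashing subcategory $S_{f_i} := \ker\bigl((\minus) \otimes_A^\L B_i\bigr)$, hence in the intersection $S_\cap := \bigcap_{i \in I} S_{f_i}$. Since intersections of smashing subcategories are again smashing (meets in $\bSma(\bH\bMod_A)$ are intersections, per Proposition~\ref{prop:iso_smashing_poset}), Proposition~\ref{prop:smashing_homotopi_epi} yields a unique homotopical epimorphism $A \to B_\cap$ with $S_\cap = \ker\bigl((\minus) \otimes_A^\L B_\cap\bigr)$, and the conjecture reduces to showing that $A \to B_\cap$ is an equivalence.

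The main obstacle is a mismatch between two base-change operations: the fine Zariski cover condition (Remark~\ref{rmk:fine_Zariski_topology}) controls the vanishing of the \emph{free product} $D \ast_A^\L B_i$ for homotopical epimorphisms $A \to D$, whereas the information coming from membership in $S_\cap$ is only the vanishing of the \emph{tensor product}. In the commutative setting these coincide, as implicitly used in the proof of Proposition~\ref{prop:faithful_covers}, but in the noncommutative setting they differ substantially --- precisely the phenomenon underlined in Remark~\ref{rmk:failure_base_chage}. Bridging this gap is the core of the difficulty.

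A promising route is to adapt the free-algebra trick of Proposition~\ref{prop:faithful_covers}. Viewing $M \in \bH\bMod_A$ as a bimodule via the canonical embedding into $\bH\bBiMod_A$, the identification $\bFree_A(M) \ast_A^\L B_i \cong \bFree_{B_i}(M \otimes_A^\L B_i)$ shows that the hypothesis $M \otimes_A^\L B_i \cong 0$ forces $\bFree_A(M) \ast_A^\L B_i \cong B_i$; equivalently, the unit $A \to \bFree_A(M)$ becomes an equivalence after free-product base change to every $B_i$. What is then needed is a fine-Zariski descent statement for equivalences of algebras --- weaker than full descent for objects, but still nontrivial --- which I would expect to follow from the comonadic descent machinery developed in Section~\ref{sec:gelfand}. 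Verifying such a descent, and checking that the resulting equivalence genuinely detects the vanishing of $M$ (i.e.\ that $A \to \bFree_A(M)$ is an equivalence if and only if $M \cong 0$), appears to me to be the crux of the problem.
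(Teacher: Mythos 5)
First, a point of status: the statement you are addressing is Conjecture~\ref{conj:faithfulness_conjecture}, and the paper does \emph{not} prove it. It establishes it only when $A$ is a commutative (Noetherian) ring --- Corollary~\ref{cor:commutative_faithful_conjecture}, via Theorem~\ref{thm:commutative_faithful_conjecture}, which shows that $\otimes_A^\L$ and $\ast_A^\L$ coincide on homotopical epimorphisms over a commutative base --- and elsewhere simply takes it as Assumption~\ref{ass:conjecture}. So there is no general proof in the paper to compare yours against; a complete argument would be new. Your proposal is, by your own admission, a strategy with an unresolved crux rather than a proof, and you do correctly isolate the central obstruction: the mismatch between the free-product base change governing fine Zariski covers and the tensor-product base change governing modules (Remark~\ref{rmk:failure_base_chage}). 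The only case the paper settles is exactly the case where that mismatch disappears.

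That said, there is a genuine hole in your outline \emph{before} the crux you flag. The reduction of joint faithfulness to ``detecting zero objects'' is not a routine diagram chase for families of triangulated functors. If $\L f_i^*(g)=0$ for $g\colon M\to N$, then, since $\L f_i^*$ is a Bousfield localization, one learns only that $g$ factors through the $i$-th acyclization $\Gamma_i N\to N$; these factorizations are separate for each $i$ and do not assemble into a factorization through $\bigcap_i\ker(\L f_i^*)$, so conservativity of a family of localizations does not imply its faithfulness. The paper is explicitly aware of this distinction: the proof of Proposition~\ref{prop:faithful_covers} sidesteps it by transporting the question to categories of unital algebras (where a morphism is zero only if its target is the zero algebra) via $\bFree_A$, and that transport consumes precisely the algebra-level conservativity that a homotopy Zariski cover provides. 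A fine Zariski cover is conservative only on homotopical epimorphisms out of $A$, which is far weaker, so neither your reduction nor the paper's detour applies as stated. Your concluding free-algebra step runs into the same wall: $A\to\bFree_A(M)$ is not a homotopical epimorphism, so the fine cover condition says nothing about detecting that it becomes an equivalence after $\ast_A^\L B_i$; the ``descent for equivalences of algebras'' you would need is essentially the conjecture itself, making the argument circular unless that descent is established independently. If you want a provable statement along these lines, the commutative case via Theorem~\ref{thm:commutative_faithful_conjecture} is the place to start.
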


Evidence for this conjecture is supplied by the examples we can explicitly work out. In the case where $A$ is commutative Noetherian ring we can prove that conjecture is true.

\begin{thm} \label{thm:commutative_faithful_conjecture}
Let $A \in \bH\bC\bRings_\Z$, and let $A \to B$ and $A \to C$ be homotopical epimorphisms. Then we have a canonical isomorphism
\[ B \otimes_A^\L C \cong B \ast_A^\L C. \]
\end{thm}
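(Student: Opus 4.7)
The plan is to identify $B \otimes_A^\L C$ and $B \ast_A^\L C$ as the same element of the poset $\bLoc(A)$, via the correspondence with smashing subcategories of $\bH\bMod_A$ established in Proposition~\ref{prop:iso_smashing_poset}.

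First I would verify that both objects lie in $\bLoc(A)$. By Example~\ref{exa:homotopy_epimorphism}(iv), the homotopical epimorphisms $B$ and $C$ are automatically homotopy commutative, so $B \otimes_A^\L C$ carries a canonical algebra structure in $\bH\bC\bRings_A$; using the commutativity of $A$ to rearrange the tensor factors one finds
\begin{equation*}
(B \otimes_A^\L C) \otimes_A^\L (B \otimes_A^\L C) \simeq (B \otimes_A^\L B) \otimes_A^\L (C \otimes_A^\L C) \simeq B \otimes_A^\L C,
\end{equation*}
so $A \to B \otimes_A^\L C$ is a homotopical epimorphism. For $B \ast_A^\L C$, the map $B \to B \ast_A^\L C$ is the homotopy pushout of $A \to C$ along $A \to B$, hence a homotopical epimorphism by the base-change argument of Proposition~\ref{prop:Zar site}; then $A \to B \ast_A^\L C$ is the composition of two homotopical epimorphisms, which is again a homotopical epimorphism by Proposition~\ref{prop:characterization_homotopy_epi} (fully faithful functors are closed under composition).

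By Proposition~\ref{prop:iso_smashing_poset}, the equivalence of the two objects in $\bLoc(A)$ reduces to the equality of the associated smashing subcategories $\ker(L_{B\otimes C})$ and $\ker(L_{B\ast C})$ of $\bH\bMod_A$, where $L_X := (-) \otimes_A^\L X$. The inclusions $\ker(L_B), \ker(L_C) \subseteq \ker(L_{B\otimes C})$ are immediate; since $B \ast_A^\L C$ is by construction the join of $B$ and $C$ in $\bLoc(A)$, its associated smashing subcategory $\ker(L_{B\ast C})$ is the smallest one containing both $\ker(L_B)$ and $\ker(L_C)$, which gives $\ker(L_{B\ast C}) \subseteq \ker(L_{B\otimes C})$. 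For the reverse inclusion I would use the smashing localization triangle for $L_B$: given $M \in \ker(L_{B\otimes C})$, working in the stabilisation $D(\bMod_A)$, consider the fibre sequence
\begin{equation*}
N \longrightarrow M \longrightarrow M \otimes_A^\L B
\end{equation*}
so that $N \in \ker(L_B)$. Applying $(-) \otimes_A^\L C$ to the right-hand term gives $M \otimes_A^\L B \otimes_A^\L C \simeq 0$ by hypothesis, so $M \otimes_A^\L B \in \ker(L_C)$. Thus $M$ sits in a cofibre sequence with outer terms in $\ker(L_B) \cup \ker(L_C)$, and closure of smashing subcategories under cofibres forces $M \in \ker(L_{B\ast C})$.

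The main technical obstacle is this fibre/triangle argument: in the pre-triangulated setting of $\bH\bMod_A$ the fibre $N$ need not be connective, so one must pass transiently to $D(\bMod_A)$ to make the extension argument rigorous, and then check that the resulting statement descends back to $\bH\bMod_A$ (which it does, since the smashing subcategories being compared are restrictions of smashing subcategories in $D(\bMod_A)$). A secondary point to verify is that the unique morphism $B \ast_A^\L C \to B \otimes_A^\L C$ in $\bH\bRings_A$ arising from the universal property of the pushout coincides with the abstract identification in $\bLoc(A)$; this is automatic, since $\bLoc(A)$ is a poset and any two parallel morphisms between its objects agree.
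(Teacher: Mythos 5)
Your argument is correct, but it proves the theorem by an entirely different mechanism than the paper does. The paper's proof is a direct computation with cofibrant models: writing $B = A \oplus \ol{B}$ and $C = A \oplus \ol{C}$, it invokes the filtration of $B \ast_A C$ from \cite{Laz2}, whose associated graded pieces beyond the second stage all contain a factor $\ol{C} \otimes_A \ol{B}$; commutativity lets one rewrite this as $\ol{B} \otimes_A \ol{C}$, and the homotopy-epimorphism identity $B \otimes_A \ol{B} = 0$ then annihilates every higher piece, leaving $B \ast_A C \cong B \oplus (B \otimes_A \ol{C}) \cong B \otimes_A C$. You instead recognise both sides as idempotent $A$-algebras cutting out the same smashing subcategory of $\bH\bMod_A$ and conclude from the poset structure of $\bLoc(A)$; the localization-triangle argument you give for $\ker(L_{B\otimes C}) \subseteq \ker(L_{B\ast C})$ is the standard proof that the tensor product of idempotent algebras realises the join of smashing localizations. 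Your route is more conceptual and explains \emph{why} the statement holds, but it consumes more of the paper's machinery (Propositions~\ref{prop:characterization_homotopy_epi}, \ref{prop:Loc_poset}, \ref{prop:smashing_homotopi_epi} and~\ref{prop:iso_smashing_poset}) and needs the detour through $D(\bMod_A)$ that you rightly flag, whereas the paper's computation is elementary modulo \cite{Laz2} and exhibits the isomorphism explicitly. One simplification worth making: the inclusion $\ker(L_{B\ast C}) \subseteq \ker(L_{B\otimes C})$ need not go through the ``smallest smashing subcategory containing both kernels'' description of the join (whose variance, given the $\bLoc$/$\bOuv$ op-duality in Proposition~\ref{prop:iso_smashing_poset}, is easy to get turned around); the canonical comparison map $B \ast_A^\L C \to B \otimes_A^\L C$ makes $B \otimes_A^\L C$ a $(B \ast_A^\L C)$-algebra, so $M \otimes_A^\L (B \ast_A^\L C) \simeq 0$ forces $M \otimes_A^\L (B \otimes_A^\L C) \simeq M \otimes_A^\L (B \ast_A^\L C) \otimes_{B \ast_A^\L C}^\L (B \otimes_A^\L C) \simeq 0$ directly.
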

\begin{proof}
We observe that since $A$ is commutative, then both $B$ and $C$ are commutative.
It is enough to prove that
\[ B \otimes_A C \cong B \ast_A C \]
when both $B$ and $C$ are cofibrant as $A$-algebras. In this case, the structure maps $A \to B$ and $A \to C$ are split injections of the underlying $A$-modules, and therefore we can write
\[ B = A \oplus \ol{B}, \qquad C = A \oplus \ol{C}, \]
where $\ol{B}$ and $\ol{C}$ are the cokernels of the structure maps. By \cite[text after Definition 3.1 and Lemma 4.3]{Laz2} we have that $B \otimes_A \ol{B} = 0$, and by \cite[Lemma 2.1]{Laz2} there exists a filtration
\[ B \ast_A C = \bigcup_{n \in \N} F_n \]
with $F_0 = 0, F_1 = B$, and
\[ F_{2n+1}/F_{2n} \cong B \otimes_A (\ol{C} \otimes_A \ol{B})^{\otimes_A^n}, \]
\[ F_{2n+2}/F_{2n+1} \cong B \otimes_A (\ol{C} \otimes_A \ol{B})^{\otimes_A^n} \otimes_A \ol{C}. \]
So, since $A$, $B$ and $C$ are commutative we have that $\ol{C} \otimes_A \ol{B} \cong \ol{B} \otimes_A \ol{C}$. We thus get 
\[ F_{n+1}/F_n \cong 0 \]
for all $n > 1$. Finally, we get
\[  B \ast_A C = B \oplus B \otimes_A \ol{C} \cong B \otimes_A A \oplus B \otimes_A \ol{C} \cong B \otimes_A (A \oplus \ol{C}) \cong B \otimes_A C\,, \qedhere\]
which concludes the proof.
\end{proof}

\begin{cor}
\label{cor:commutative_faithful_conjecture}
    Let $A$ be a commutative Noetherian ring. Then a family of homotopical epimorphisms $\{ f_i: A \to B_i \}_{i \in I}$ is a cover for the fine Zariski topology if and only if the family of restriction functors
\[ \{ \L f_i^*: \bH \bMod_A \to \bH \bMod_{B_i} \}_{i \in I} \]
is faithful. For a general commutative ring, the same holds when the homotopical epimorphisms are of finite presentation. 
\end{cor}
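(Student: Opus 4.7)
The plan splits into two directions. For the easy direction, suppose $\{\L f_i^*\}$ is jointly faithful and let $A\to D$ be any homotopical epimorphism. Theorem~\ref{thm:commutative_faithful_conjecture} provides a canonical identification $D\ast_A^\L B_i\simeq D\otimes_A^\L B_i=\L f_i^*(D)$, so if this vanishes for every $i$ then $\L f_i^*(\mathrm{id}_D)=\mathrm{id}_{\L f_i^*(D)}=0$ for every $i$; joint faithfulness forces $\mathrm{id}_D=0$, whence $D\simeq 0$. This is precisely the fine Zariski cover condition of Definition~\ref{defn:fine_Zariski_topology}, and it works uniformly for any commutative $A$, discharging one direction of both halves of the corollary at once.

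For the reverse direction in the finitely presented case, one reduces to the classical Zariski topology: by \cite[Lemma 2.1.4]{TV3} a homotopical epimorphism of commutative rings of finite presentation is precisely a classical Zariski localization, and Lemma~\ref{lemma:fine_covers}(2) identifies the fine Zariski cover condition with the classical Zariski one. Classical Zariski descent then gives the joint faithfulness of $\{\L f_i^*\}$ on $\bH\bMod_A$.

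The Noetherian case is the technical heart. Via Theorem~\ref{thm:commutative_faithful_conjecture} the fine cover hypothesis reads: every homotopical epimorphism $A\to D$ with $D\otimes_A^\L B_i\simeq 0$ for all $i$ is zero. Combining Proposition~\ref{prop:iso_smashing_poset} with the Thomason--Neeman classification of smashing subcategories of $\bH\bMod_A$ for $A$ commutative Noetherian as specialization-closed subsets $V_i\subseteq\Spec(A)$, the condition $D\otimes_A^\L B_i\simeq 0$ translates into $\mathrm{supp}_A(D)\subseteq V_i$. Testing the cover hypothesis against the flat homotopical epimorphisms $A\to A_{\mathfrak p}$ for $\mathfrak p\in\bigcap_i V_i$ yields $\bigcap_i V_i=\emptyset$. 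Since an element of an $A$-module $P$ is killed by $(-)\otimes_A B_i$ exactly when its support lies in $V_i$, the emptiness of this intersection makes the canonical map $P\to\prod_i P\otimes_A B_i$ injective for every $A$-module $P$. Applying this to $P=H^0\mathrm{RHom}_A(M,N)$ for $M$ compact, together with the compact-object identity $\mathrm{RHom}_A(M,N)\otimes_A^\L B_i\simeq\mathrm{RHom}_A(M,N\otimes_A^\L B_i)$, gives joint faithfulness on compact $M$; the general case follows by writing $M=\hocolim_\alpha M_\alpha$ as a filtered colimit of compacts (available because $A$ is Noetherian) and extending by density.

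The main technical hurdle is the final upgrade, from joint conservativity on objects to joint faithfulness on morphisms. This implication fails for a generic jointly conservative family of exact functors on a triangulated category, but is rescued in our setting by the combination of commutativity, Noetherianity of $A$, and the specific tensor form of base change against homotopical epimorphisms afforded by Theorem~\ref{thm:commutative_faithful_conjecture}; concretely, the Thomason--Neeman support classification tightly links the smashing subcategories probed by the fine cover condition to the full module category $\bH\bMod_A$, a link that is unavailable for a general commutative ring and explains why the Noetherian hypothesis cannot be dropped without imposing finite presentation.
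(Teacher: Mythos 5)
Your first direction (faithful $\Rightarrow$ cover) and the finitely-presented case are correct and essentially match the paper's argument, which also routes everything through Theorem~\ref{thm:commutative_faithful_conjecture} and the identification of finitely presented homotopical epimorphisms with classical Zariski localizations. The problem is in the Noetherian converse, at the sentence ``Testing the cover hypothesis against the flat homotopical epimorphisms $A\to A_{\mathfrak p}$ for $\mathfrak p\in\bigcap_i V_i$ yields $\bigcap_i V_i=\emptyset$.'' For this test to produce a contradiction you need $A_{\mathfrak p}\otimes_A^\L B_i\simeq 0$, i.e.\ $\mathrm{supp}_A(A_{\mathfrak p})\subseteq V_i$. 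But $\mathrm{supp}_A(A_{\mathfrak p})$ is the \emph{generalization} closure of $\{\mathfrak p\}$, while $V_i$ is only \emph{specialization} closed; knowing $\mathfrak p\in V_i$ says nothing about the generalizations of $\mathfrak p$. Concretely, take $A=\Z$ and the single homotopical epimorphism $\Z\to\Q$, so $V_1$ is the set of closed points. For $\mathfrak p=(2)\in V_1$ one has $\Z_{(2)}\otimes_\Z\Q=\Q\neq 0$, so $A_{\mathfrak p}$ does not witness any failure of the cover condition. Worse, this example shows the implication you are after is simply not available from the fine cover condition as stated: every nonzero homotopical epimorphism out of $\Z$ is of the form $\Z\to\Z[S^{-1}]$ and satisfies $\Z[S^{-1}]\otimes_\Z\Q=\Q\neq 0$, so $\{\Z\to\Q\}$ \emph{does} satisfy Definition~\ref{defn:fine_Zariski_topology}, yet $\bigcap_i V_i\neq\emptyset$ and $(-)\otimes_\Z^\L\Q$ kills all torsion modules, hence is not faithful. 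Everything downstream of the claim $\bigcap_i V_i=\emptyset$ (the injectivity of $P\to\prod_i P\otimes_A B_i$, the compact-object argument) therefore collapses.

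For comparison, the paper's proof takes the route you anticipated --- homotopical epimorphisms of a commutative Noetherian ring are flat ring epimorphisms (by \cite{AMSTV}), their spectra are generalization-closed subsets (by \cite{NB}), faithfulness is equated with surjectivity onto $\SpecG(A)$, and surjectivity is asserted to be equivalent to the fine cover condition --- but it asserts that last equivalence without exhibiting test objects. Your more explicit attempt to supply the test objects is exactly where the equivalence breaks: the direction ``fine cover $\Rightarrow$ surjective on $\SpecG(A)$'' requires, for each $\mathfrak p\notin\bigcup_i U_i$, a \emph{nonzero homotopical epimorphism} supported inside $\bigcap_i V_i$, and no such object need exist (in the $\Z$ example none does). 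So this is not a patchable gap in your write-up; any correct treatment has to either strengthen the cover condition (e.g.\ test against all modules supported in $\bigcap_i V_i$, not just homotopical epimorphisms) or weaken the conclusion. Two smaller points: compact generation of $\bH\bMod_A$ does not require Noetherianity, and passing faithfulness from compact $M$ to arbitrary $M$ by ``density'' needs an argument, since $\Hom$ out of a homotopy colimit is not a limit on the nose.
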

\begin{proof}
    By the main results of \cite{AMSTV}, we know that homotopical epimorphisms of $A$ correspond to flat epimorphisms of rings (see also~Section \ref{exa:spectra} for more about this). By \cite{NB}, we know that $ f_i: A \to B_i$ identify $\SpecG(B_i)$ with a generalisation closed subset of $\SpecG(A)$. And a family $\{ f_i: A \to B_i \}_{i \in I}$ is faithful if and only if it is surjective on $\SpecG(A)$. Furthermore, the latter is equivalent to ask that for any other homotopical epimorphism $A \to C$ we have
    \[ \SpecG(C) \times_{\Spec(A)} \SpecG(B_i) \cong 0 \]
    for all $i \in I$ if and only if $C = 0$. But
    \[ \SpecG(C) \times_{\Spec(A)} \SpecG(B_i) \cong \SpecG(C \otimes_A B_i)\,. \]
    Since $B_i$ and $C$ are flat over $A$ we have
    \[ C \otimes_A B_i \cong C \otimes_A^\L B_i \]
    and by Theorem \ref{thm:commutative_faithful_conjecture} we have
    \[ C \otimes_A^\L B_i \cong C \ast_A^\L B_i, \]
    which yields the first claim. The second claim follows from the fact that homotopical epimorphisms of finite presentation are precisely open embeddings for the classical Zariski topology, and again faithfulness is equivalent to being surjective on the spectrum of prime ideals. 
\end{proof}

\subsection{The relative noncommutative spectrum}\label{sec:relative spectrum}

Motivated by the previous section, a natural question arises:
\textit{what happens if we use the homotopy Zariski topology\footnote{For the sake of clarity, let us recall that the homotopy Zariski topology differs from the formal homotopy Zariski topology by requiring that the family of open embeddings is formed by homotopical epimorphisms of finite presentation in the category $\bH\bRings_{R}$. } instead of the formal homotopy Zariski topology?}

From the point of view of formal logic, nothing prevents us from using the homotopy Zariski topology, and the resulting theory is still well posed. Let us denote by $\SpecNC_{\rm f.p.}$ the spectrum functor one would obtained by pursuing this alternative strategy, where ${\rm f.p.}$ stands for finite presentation. As it turns out,  $\SpecNC_{\rm f.p.}$ is badly incompatible with classical algebraic geometry in general. Indeed, Example \ref{exa:non_finitely_generated_free_algebras} shows that, for any commutative ring $R$ that is not finitely generated over $\Z$, any non-trivial classical Zariski localization $R \to S$ is a homotopy epimorphism that is not of finite presentation. So, there is no natural way of comparing $\SpecNC_{\rm f.p.}(R)$ with the Grothendieck spectrum $\SpecG(R)$, because the open localizations that form the basis of the topology of $\SpecG(R)$ are not in $\SpecNC_\fin(R)$. Therefore, nothing like the morphism $\iota_A$ of Proposition~\ref{prop:compatision with Grothendieck} exists in this case.
Let us make this observation more precise.

\begin{prop} \label{prop:wrong_spectrum}
Let $R$ be a commutative ring that is not finitely generated over $\Z$. Then
\[ \SpecNC_{\rm f.p.}(R) = \star \]
is the singleton topological space.
\end{prop}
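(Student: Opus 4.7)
The plan is to show that $\bOuv_{\rm f.p.}(X)$ contains (up to equivalence) essentially only $X$ itself and the empty open $\Spec(0)$, so that the frame of ideals of the associated site is the trivial two-element lattice and $\SpecNC_{\rm f.p.}(R)$ is a single point. The argument splits into a reduction to the commutative setting and a final step exploiting Example~\ref{exa:non_finitely_generated_free_algebras}. For the reduction, first I would invoke Example~\ref{exa:homotopy_epimorphism}(iv) to conclude that any homotopical epimorphism $R \to S$ has $S$ commutative, given that $R$ is commutative. Next, since the inclusion $\bH\bCRings_R \hookrightarrow \bH\bRings_R$ preserves filtered colimits (a homotopical refinement of Proposition~\ref{prop:inclusion_commutes_fil_colim}) and derived Hom-spaces out of a commutative source into commutative targets coincide in both categories, finite presentation of $S$ in $\bH\bRings_R$ yields finite presentation in $\bH\bCRings_R$. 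Combined with Example~\ref{exa:homotopy_epimorphism}(i) (that is, \cite[Lemma 2.1.4(i)]{TV3}), this forces $R \to S$ to be a classical Zariski open localization admitting a finite commutative presentation $S \simeq R[x_1, \ldots, x_k]/(f_1, \ldots, f_l)$.

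The crux is then to show that such a non-trivial commutative localization cannot be finitely presented in $\bH\bRings_R$, for exactly the reason isolated in Example~\ref{exa:non_finitely_generated_free_algebras}: expressing $S$ as a quotient of a free noncommutative $R$-algebra $R\{x_1, \ldots, x_k\}$ (in which $R$ does not act centrally) requires imposing the infinite family of commutativity relations $[r, x_p] = 0$ for all generators $r$ of $R$. To exhibit this obstruction concretely, I would write $R = \bigcup_n R_n$ as a filtered union of finitely generated $\Z$-subalgebras, chosen so that every $f_j$ lies in $R_n\{x_1, \ldots, x_k\}$ for $n$ large enough, and define the filtered system
\[
T_n := R\{x_1, \ldots, x_k\}\big/\bigl(f_1, \ldots, f_l,\ [x_i, x_j],\ [r, x_p] : r \in R_n,\ 1 \le i, j, p \le k\bigr) \in \bRings_R.
\]
Since filtered colimits of rings are homotopy colimits, and every $r \in R$ eventually commutes with every $x_p$ in the colimit, one checks that $\limind_n T_n \cong S$, hence $\Hom_{\bRings_R}(S, \limind_n T_n) \ni \id_S$ is non-empty. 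On the other hand, any $R$-algebra map $\phi: S \to T_n$ would send each $x_p$ to an element $t_p \in T_n$ commuting with all of $R$ and satisfying $f_j(t_1, \ldots, t_k) = 0$; I would argue that the centralizer of $R$ in $T_n$ is exactly $R$ itself, forcing $t_p \in R$ and thereby producing an $R$-algebra splitting $S \to R$ of the non-trivial Zariski localization $R \to S$, which is impossible. Consequently $\Hom_{\bRings_R}(S, T_n) = \varnothing$ for all sufficiently large $n$, contradicting the finite presentation of $S$ in $\bH\bRings_R$.

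The main obstacle is verifying that the centralizer of $R$ inside $T_n$ reduces to $R$. This is a structural computation in the free noncommutative algebra modulo the imposed relations, which I would tackle via a normal-form argument (for instance through Bergman's Diamond Lemma) or by constructing an explicit faithful representation of $T_n$ in which the non-commutation of each $x_p$ with any prescribed $r \in R \setminus R_n$ is visible. The key structural input making this possible is that only finitely many generators of $R$ appear in the relations defining $T_n$, so the generators $x_p$ genuinely retain non-central behaviour with respect to the remaining generators of $R$ — which is precisely the mismatch between the commutative and noncommutative notions of finite presentation highlighted throughout Subsection~\ref{On the notion of finite presentation}.
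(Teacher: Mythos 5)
Your proposal follows the same two-step skeleton as the paper's proof: first reduce to the case of a discrete commutative Zariski localization $R \to S$ (via Example~\ref{exa:homotopy_epimorphism} and the implication ``noncommutative finite presentation $\Rightarrow$ commutative finite presentation''), and then argue that such a localization cannot be finitely presented in $\bRings_R$ unless it is trivial. The paper disposes of the second step by citing Example~\ref{exa:non_finitely_generated_free_algebras}; you attempt to actually prove it, and that is exactly where the argument breaks.

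The gap is the deferred centralizer computation, and it is not a technical detail you can postpone: the intermediate claim is false already for the most basic Zariski localizations. Take $S = R[g^{-1}]$ for $g \in R$, presented as $R\{x\}/(gx-1,\, xg-1)$. Once $x$ is a two-sided inverse of $g$, it automatically centralizes all of $R$: for any $r \in R$ one has $xr = xr(gx) = x(rg)x = x(gr)x = (xg)rx = rx$, using only the commutativity of $R$. Hence $(gx-1, xg-1)$ is a finitely generated ideal presenting $R[g^{-1}]$ as a quotient of $R\{x\}$, and by Corollary~\ref{cor:finitely_presented_objects}(ii) the non-trivial localization $R \to R[g^{-1}]$ \emph{is} finitely presented in $\bRings_R$ --- no infinite family of commutators $[r,x]$ is needed, in contrast with the free variable of Example~\ref{exa:non_finitely_generated_free_algebras}, where nothing forces $y$ to commute with $R$. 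Concretely, in your filtered system, as soon as $g \in R_n$ you get $T_n \cong R[g^{-1}] = S$, the centralizer of $R$ in $T_n$ is all of $T_n$, $\Hom_{\bRings_R}(S, T_n)$ contains the identity, and no contradiction is produced. Your attempt to make the paper's citation rigorous thus exposes that Example~\ref{exa:non_finitely_generated_free_algebras} (which concerns the polynomial extension $R \to R[y]$, not a localization) does not cover the case actually needed here; if the proposition is to be salvaged, it must be through the \emph{homotopical} notion of finite presentation (Definition~\ref{defn:homotopically_finite_presentation}), not the classical $\Hom_{\bRings_R}$-against-filtered-colimits test your $T_n$ argument uses. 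A secondary weak point: your reduction step asserts that derived Hom-spaces between commutative objects agree in $\bH\bC\bRings_R$ and $\bH\bRings_R$, but the paper explicitly states that the inclusion $\bH\bC\bRings_R \to \bH\bRings_R$ is \emph{not} fully faithful, so that comparison needs a different justification (at the $1$-categorical level it is fine, since $\bC\bRings_R \subset \bRings_R$ is full and filtered colimits agree by Proposition~\ref{prop:inclusion_commutes_fil_colim}).
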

\begin{proof}
Let  $ f: R \to S$
be a homotopy epimorphism whose domain is $R$ and which is of finite presentation in $\bH\bRings_\Z$. Then, since $R$ is commutative, so is $S$ (see Example \ref{exa:homotopy_epimorphism} (iii)), and since $R$ is discrete, also $S$ must be such (because being of finite presentation in the noncommutative sense implies being of finite presentation in the commutative sense, but not the converse). Therefore, $f$ is a classical Zariski localization and, by Example~\ref{exa:non_finitely_generated_free_algebras}, $f$ is not finitely presented (in the noncommutative sense) unless it is trivial. Hence, $R \cong S$ and $f$ is an isomorphism.
\end{proof}

The above observations are in perfect agreement with the no-go theorem of Reyes~\cite{nogo}: it is not possible to define a functor from the category $\bRings$ to the category $\mathbf{Sets}$ such that the spectrum of every
nonzero ring is nonempty and that extends the Grothendieck spectrum functor.
The key point in the proof of Proposition~\ref{prop:wrong_spectrum} is that the notion of finite presentation in the category of commutative $R$-algebras is not compatible with the notion of finite presentation in noncommutative setting.
This notwithstanding, if one restricts oneself to finitely generated (central) algebras over a ring $R$, matters improve. Let us explain why this is the case. 

As we observed in Section \ref{Categories of algebras}, to a given commutative ring $R$ one can associate two different categories of algebras under it: the category $\bRings_R$ of all $R$-algebras and the category $\bZ\bRings_R$ of central algebras over $R$. The difference between the two categories can be understood very clearly in terms of algebraic theories: the category $\bRings_R$ is the algebraic category generated by the class of algebras
\[ \{ \Z \lt x_1, \ldots, x_n \gt \ast_\Z R \}_{n \in \N}, \]
whereas $\bZ \bRings_R$ is the algebraic category generated by the class of algebras
\[ \{ R \lt x_1, \ldots, x_n \gt \}_{n \in \N}. \]
So, in analogy with the homotopy theory of $\bRings_R$ presented so far,  one can develop a homotopy theory of $\bZ\bRings_R$ without additional obstacles. Moreover, the two theories are compatible; by this we mean that, since 
\[ R \lt x_1, \ldots, x_n \gt \ast_R R \lt y_1, \ldots, y_m \gt = R \lt x_1, \ldots, x_n, y_1, \ldots, y_m \gt \]
for all $n$ and $m$, the category $\bZ\bRings_R$ is closed by pushouts in $\bRings_R$ and similarly $\bH \bZ\bRings_R$ is closed by homotopical pushouts in $\bH\bRings_R$. Hence, a morphism $\bH \bZ\bRings_R$ is a homotopical epimorphism if and only if it is such when regarded as a morphism in $\bH\bRings_R$. However,  the notion of finite presentation for objects in $\bH \bZ\bRings_R$ and in $\bH\bRings_R$ are very different --- see Proposition~\ref{prop:finitely_presented_objects_central_algebras}.

With the above discussion in mind, we want to define a notion of the noncommutative spectrum \emph{relative} to a fixed base commutative ring $R$. This functor will be constructed arguing as in Section~\ref{sec:spectrum}, but this time we will use the homotopy Zariski topology, instead of the formal homotopy Zariski topology. This topology differs from the formal homotopy Zariski topology by requiring that the open embeddings are deterimined by homotopical epimorphism of finite presentation in $\bH\bZ\bRings_R$. In this way, it is singled out a subposet $\bLoc_\fin(A) \subset \bLoc(A)$ and a subposet $\bOuv_\fin(X) \subset \bOuv(X)$, for any $A \in \bH\bZ\bRings_R$ and $X = \Spec(A)$. Again, to $\bOuv_\fin(X)$ we can associate a site $\bZar_{X, \fin}$ that for the same reasons as before has enough points and is equivalent to a site of a sober topological space, that we denote $\bZar_{X, \fin}$.

\begin{defn} \label{defn:noncommutative_spectrum}
Let $R$ be a commutative ring and let $A \in \bH\bZ\bRings_R$. We define the \textit{relative noncommutative spectrum} of $A$
to be the topological space 
$\SpecNC_R(A)$ associated to the site $\bZar_{X,\fin}$. We define the fine version of the relative spectrum, denoted by $\SpecNC_{R, \fine}(A)$, analogously.
\end{defn}

\begin{rmk} \label{rmk:relative_fine_Zariski}
Once again the fine Zariski topology is strictly finer than the Zariski topology (but not functorial in general), and we obtain a canonical continuous map
\[ \SpecNC_{R, \fine}(A) \to \SpecNC_R(A). \]
\end{rmk}

It is clear from the above discussion that the topological space $\SpecNC_R(A)$ does not only depend on the choice of $A$ but crucially also on the choice of the base commutative ring $R$.

\begin{prop}\label{prop:relative spectrum}
Given a finitely presented commutative $R$-algebra $A$, we have a canonical homeomorphism 
\[ \SpecNC_{R, \fine}(A) \cong \SpecG(A) \]
of topological spaces.
\end{prop}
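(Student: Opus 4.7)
The plan is to show that the site underlying $\SpecNC_{R,\fine}(A)$ is equivalent to the classical Zariski site of $\SpecG(A)$. Both are coherent posites which, by the general topos-theoretic construction invoked in Proposition~\ref{prop:enough_points}, yield homeomorphic sober topological spaces. Throughout I identify $\bOuv_\fin(\Spec(A))$ with its opposite $\bLoc_\fin(A)$.

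\textbf{Step 1 --- Identification of objects.} I first claim that $\bOuv_\fin(\Spec(A))$ is canonically equivalent to the poset of affine Zariski open immersions into $\SpecG(A)$, which forms a base of the classical Zariski topology. Given any classical Zariski localization $A \to S$, Lemma~\ref{lemma:fine_covers}(1) ensures it is a homotopical epimorphism. It is also of finite presentation as a morphism of commutative rings; since $A$ is a finitely presented commutative $R$-algebra, Proposition~\ref{prop:finitely_presented_objects_central_algebras}(ii), coupled with the fact that for flat commutative morphisms classical and homotopical finite presentation coincide (\cf the discussion after Example~\ref{exa:classical_vs_homotopical_finite_pres}), gives that $A \to S$ is of finite presentation in $\bH\bZ\bRings_R$ as well. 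Thus classical Zariski localizations lie in $\bOuv_\fin(\Spec(A))$.

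\textbf{Step 2 --- Reverse inclusion.} Conversely, let $f: A \to B$ represent an element of $\bOuv_\fin(\Spec(A))$. By Example~\ref{exa:homotopy_epimorphism}(iv), $B$ is commutative. The inclusion $\bH\bC\bRings_R \hookrightarrow \bH\bZ\bRings_R$ preserves filtered homotopy colimits (the homotopical analogue of Proposition~\ref{prop:inclusion_commutes_fil_colim}, following from the fact that filtered colimits of simplicial rings are computed levelwise). Consequently, the finite presentation of $B$ in $\bH\bZ\bRings_R$ transfers to $\bH\bC\bRings_R$. Applying \cite[Lemma~2.1.4]{TV3} (as used in the proof of Lemma~\ref{lemma:fine_covers}(1)), $B$ is concentrated in degree $0$ and $f$ is a flat epimorphism of finite presentation, i.e., a classical Zariski localization of $A$.

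\textbf{Step 3 --- Matching covers and conclusion.} By Step~1 and Step~2, the underlying posets of $\bZar_{\Spec(A), \fin}$ and of the classical Zariski site of $\SpecG(A)$ coincide. Lemma~\ref{lemma:fine_covers}(2) further guarantees that fine Zariski covers (in the sense of Definition~\ref{defn:fine_Zariski_topology}), restricted to classical Zariski opens, agree with classical Zariski covers. Consequently the two posites are equivalent as sites, and passing to the associated sober topological spaces yields the canonical homeomorphism $\SpecNC_{R,\fine}(A) \cong \SpecG(A)$.

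The crux of the argument --- and the step demanding the most care --- is Step~2, where one needs the precise interplay between homotopical finite presentation in $\bH\bZ\bRings_R$ and in $\bH\bC\bRings_R$ in order to invoke the structural theorem of \cite{TV3} identifying homotopical epimorphisms of finite presentation in $\bH\bC\bRings_R$ with classical Zariski localizations.
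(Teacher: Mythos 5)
Your proof is correct and follows essentially the same route as the paper's: classical Zariski localizations are finitely presented in $\bH\bZ\bRings_R$ and hence lie in the relative site, conversely every finitely presented homotopical epimorphism out of $A$ in $\bH\bZ\bRings_R$ is a classical Zariski localization (the paper delegates this to ``arguing as in Proposition~\ref{prop:wrong_spectrum}''), and the covers are matched via the commutative comparison results of Section~5. The one point to tighten is Step~2: preservation of filtered colimits by the inclusion $\bH\bC\bRings_R\hookrightarrow\bH\bZ\bRings_R$ does not by itself transfer finite presentation, since that inclusion is only faithful (not fully faithful) on mapping spaces; the transfer should instead be routed through the (derived) abelianization left adjoint, exactly as in the proof of Proposition~\ref{prop:finitely_presented_objects_central_algebras}(ii).
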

\begin{proof}
 To establish that the relative noncommutative fine spectrum agrees with the Grothendieck spectrum, we observe that
\[ R[x_1, \ldots, x_n] \cong \frac{R \lt x_1, \ldots, x_n \gt}{(x_1 x_2 - x_2 x_1, \ldots, x_{n-1} x_n - x_n x_{n-1})} \]
is clearly a finitely presented $R$-algebra in $\bH\bZ\bRings_R$. Thus, all localizations of the classical Zariski spectrum are localizations in $\bH \bZ\bRings_R$. Finally, arguing as in Proposition~\ref{prop:wrong_spectrum}, one concludes that these are the only possible localizations. By Theorem \ref{thm:commutative_faithful_conjecture} (and argiung as in~Corollary \ref{cor:commutative_faithful_conjecture}) the notions of cover for the two topologies coincide.
\end{proof}

The relative spectrum could be useful to understand the ``local" geometry more easily compared to the ``global" perspective adopted in this work. The next example clarifies this heuristic remark.

\begin{exa}
Let $R = \C$, which is not finitely generated over $\Z$. For any finitely generated $\C$-algebra $A$ we have $$\SpecNC_\Z(A) = \star$$ but 
$$\SpecNC_{\C, \fine}(A) \cong \SpecG(A).$$
\end{exa}

\iffalse
  \fi

\subsection{A few properties of the noncommutative spectrum}

We conclude this section by discussing a few  basic properties of the new notion of spectrum defined above.

\begin{prop} \label{prop:spectrum_properties}
The following properties hold.
\begin{enumerate}
    \item If $A \cong A_1 \times A_2$  in $\bH\bRings_\Z$, then $\SpecNC(A) \not\cong \SpecNC(A_1) \coprod \SpecNC(A_2)$, but $\SpecNC_\fine(A) \cong \SpecNC_\fine(A_1) \coprod \SpecNC_\fine(A_2)$ as topological spaces.
    \item If $A \in \bRings_\Z$ is a domain\footnote{It is customary to call rings without zero divisors \emph{domains} and commutative rings without zero divisors \emph{integral domains}.}, then $\SpecNC_\fine(A)$ is connected.
    \item More generally, if we have two morphisms $\SpecNC(B) \to \SpecNC(A)$ and $\SpecNC(C) \to \SpecNC(A)$, then the fiber product exists and is computed as $\SpecNC(B \ast_A^\L C)$, which usually does not agree with $\SpecNC(B) \times_{\SpecNC(A)} \SpecNC(C)$ as topological space. 
    \item If for $A, B \in \bH \bRings_\Z$ are derived Morita equivalent (\ie $\bH \bMod_A \cong \bH \bMod_B$), then $\SpecNC_\fine(A) \cong \SpecNC_\fine(B)$. 
\end{enumerate}
\end{prop}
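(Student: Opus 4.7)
The plan is to dispatch the four assertions in turn, exploiting throughout the lattice-theoretic characterisation of $\bOuv(\Spec A)$ in terms of smashing subcategories of $\bH\bMod_A$ supplied by Proposition~\ref{prop:iso_smashing_poset}. For item~(1), write $A = A_1 \times A_2$ with central idempotents $e_1 = (1,0)$ and $e_2 = (0,1)$. Every $M \in \bH\bMod_A$ splits canonically as $M \cong Me_1 \oplus Me_2$ with $Me_i \in \bH\bMod_{A_i}$, giving an equivalence $\bH\bMod_A \cong \bH\bMod_{A_1} \times \bH\bMod_{A_2}$; passing to smashing subcategories and applying Proposition~\ref{prop:iso_smashing_poset} produces $\bOuv(\Spec A) \cong \bOuv(\Spec A_1) \times \bOuv(\Spec A_2)$. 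I would then verify that $\{A \to A_1,\, A \to A_2\}$ is itself a fine Zariski cover, using $D \ast_A^\L A_i \cong De_i$ for any $D \in \bH\bRings_A$, and that under the lattice decomposition fine covers on $A$ correspond to pairs of fine covers on the factors; the associated sober topological space then splits as the desired disjoint union for $\SpecNC_\fine$. The failure for the formal topology is witnessed directly by Example~\ref{exa:Kanda_counterexample}: in $\SpecNC(k \oplus k)$ the pair $\{\pi_1,\pi_2\}$ is \emph{not} a formal cover because $k \ast_{k \oplus k}^\L M_2(k) \cong 0$ with $M_2(k) \ne 0$, whereas any hypothetical homeomorphism to $\SpecNC(k) \sqcup \SpecNC(k)$ would force $\{\pi_1,\pi_2\}$ to cover tautologically.

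For item~(2), assume towards a contradiction $\SpecNC_\fine(A) = U_1 \sqcup U_2$ with each $U_i$ non-empty. These clopens come from homotopical epimorphisms $A \to B_i$ with $B_i \ne 0$, $B_1 \ast_A^\L B_2 \cong 0$, and $\{B_1, B_2\}$ a fine cover; via Propositions~\ref{prop:smashing_homotopi_epi} and~\ref{prop:iso_smashing_poset} this translates into a complementary pair of non-trivial smashing subcategories $\bC_1, \bC_2 \subset \bH\bMod_A$ whose meet is $0$. Applied to the unit $A$, the ensuing decomposition produces a splitting $A \cong B_1 \oplus B_2$ of right $A$-modules with $B_i \in \bC_i$, yielding orthogonal idempotents $e_1, e_2 \in A$ summing to $1$; one then checks centrality by identifying each $B_i$ with the quotient of $A$ by the complementary ideal coming from the smashing decomposition, so that $A \cong B_1 \times B_2$ as rings. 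But a domain admits no non-trivial central idempotents (any $e$ with $e(1-e) = 0$ must equal $0$ or $1$), contradicting the non-emptiness of both $U_i$. The main technical obstacle of the proposition resides in precisely this step: the lattice $\bSma(\bH\bMod_A)$ is typically non-distributive in the noncommutative setting, so upgrading the combination of lattice-complementarity and the fine cover condition to an honest orthogonal splitting of $\bH\bMod_A$ at the level of the unit requires careful use of the module-theoretic characterisation of homotopical epimorphisms.

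For item~(3), since $\bdAff_\Z = \bH\bRings_\Z^\op$ and $B \ast_A^\L C$ is the homotopy pushout in $\bH\bRings_\Z$, the object $\Spec(B \ast_A^\L C)$ is tautologically the fiber product of $\Spec(B) \to \Spec(A) \leftarrow \Spec(C)$ in $\bdAff_\Z$, and applying the functor $\SpecNC$ yields a canonical comparison map $\SpecNC(B \ast_A^\L C) \to \SpecNC(B) \times_{\SpecNC(A)} \SpecNC(C)$. That this comparison is not generally a homeomorphism can be traced to the noncommutative failure of base change highlighted in Remark~\ref{rmk:failure_base_chage}: the smashing-subcategory structure of $B \ast_A^\L C$ records noncommutative interactions between $B$ and $C$ over $A$ which are invisible to the separate lattices $\bSma(\bH\bMod_B)$ and $\bSma(\bH\bMod_C)$. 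An explicit counterexample is obtained by taking $A = k$ a field and $B = C = k\langle x\rangle$, so that $B \ast_A^\L C = k\langle x, y\rangle$ has a strictly richer lattice of smashing subcategories (encoding, for instance, noncommutative relations between $x$ and $y$) than $\SpecNC(k\langle x\rangle) \times \SpecNC(k\langle y\rangle)$.

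For item~(4), a derived Morita equivalence $\bH\bMod_A \simeq \bH\bMod_B$ induces a poset isomorphism $\bSma(\bH\bMod_A) \cong \bSma(\bH\bMod_B)$, which transports via Proposition~\ref{prop:iso_smashing_poset} to an isomorphism $\bOuv(\Spec A) \cong \bOuv(\Spec B)$ of lattices. The fine Zariski covering condition on a family $\{U_i\}$ --- namely, that for any $V \in \bOuv$ one has $V = \bot$ if and only if $V \wedge U_i = \bot$ for all $i$ --- is intrinsic to the lattice structure, since $\wedge$ in $\bOuv$ corresponds under the bijection to intersection of smashing subcategories, and hence to the derived free product $\ast_A^\L$ of the associated algebras. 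Consequently the Morita isomorphism preserves fine covers, the posites are equivalent, and taking associated sober topological spaces yields the desired homeomorphism $\SpecNC_\fine(A) \cong \SpecNC_\fine(B)$.
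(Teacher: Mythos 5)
Your proposal follows essentially the same route as the paper's proof: decompose $\bH\bMod_A \cong \bH\bMod_{A_1}\oplus\bH\bMod_{A_2}$ and read off complementary clopen subsets for (1) and (2), identify the fiber product with $\Spec(B\ast_A^\L C)$ via the opposite-category description for (3), and reduce (4) to the Morita-invariance of the lattice of smashing subcategories together with the identity $\bH\bMod_{B\ast_A^\L C}=\bH\bMod_B\cap\bH\bMod_C$. The one imprecision is the formula $D\ast_A^\L A_i\cong De_i$ asserted for arbitrary $D\in\bH\bRings_A$ --- it fails, e.g., for $D=M_2(k)$ over $k\oplus k$, where $D\ast_A^\L A_i\cong 0$ while $De_i\neq 0$ --- but since the fine-cover condition only quantifies over homotopical epimorphisms $A\to D$, whose smashing subcategories respect the splitting and hence force $D\cong D_1\times D_2$ with the $e_i$ central, the verification you actually need still goes through.
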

\begin{proof}
\begin{enumerate}
    \item The split cofiber sequence
    \[ \bH \bMod_{A_1} \to \bH \bMod_{A} \to \bH \bMod_{A_2}, \]
     identifies $\bH \bMod_{A_1}$ with the kernel of $\bH \bMod_{A_2}$, and a similar argument gives the opposite inclusion. This shows that $\bH \bMod_{A} \cong \bH \bMod_{A_1} \oplus \bH \bMod_{A_2}$.
     Hence, the projections $A \to A_1$ and $A \to A_2$ are both homotopical epimorphisms and kernels of homotopical epimorphisms. Their intersection is clearly empty. So, they determine subsets of $\SpecNC_\fine(A)$ that are both closed and open, and form a cover for the fine Zariski topology. Example \ref{exa:Kanda_counterexample} demonstrates that the two projections might not be covers for the homotopy Zariski topology.
    \item Similarly to the previous item, a closed and open subset of $\SpecNC_\fine(A)$ is uniquely determined by a smashing subcategory that is also a smashing localization. Moreover, this determines a decomposition of $A$ as a direct product of two subrings. Such a decomposition cannot exist if $A$ is a domain.
    \item Since $\bdAff_\Z$ is by definition the opposite category of $\bH\bRings_\Z$, fiber products in $\bdAff_\Z$ correspond to pushouts in $\bH\bRings_\Z$. The latter are given by the (derived) free product. It is not hard to come up with examples for which $\SpecNC(A \ast_\Z^\L B) \not\cong \SpecNC(A) \times \SpecNC(B)$ as the isomorphism rarely holds, already in classical algebraic geometry. Examples of a similar type can be devised for noncommutative spectra as well.
    \item By Proposition \ref{prop:iso_smashing_poset} the lattice of localizations of $A \in \bH \bRings_\Z$ depends only on $\bH \bMod_A$. The notion of cover for the fine Zariski topology also depends on the lattice of localizations only, because for two homotopical epimoprhisms $A \to B$, $A \to C$, one has $\bH \bMod_{B \ast_A^\L C} = \bH \bMod_B \cap \bH \bMod_C$ where the intersection happen inside $\bH \bMod_A$.
\end{enumerate}
\end{proof}

Further analysis is required to understand more properties of the functor $\SpecNC$ and the spectrum $\SpecNC_\fine$. For instance, it is natural to wonder whether given $A, B \in \bRings_\Z$ are that are derived Morita equivalent, then one has $\SpecNC(A) \cong \SpecNC(B)$. We do not know whether this is true in general, because even if the localizations of $A$ and $B$ depend only of the derived category, the notion of cover, in principle, does not.

%We would like also to comment on the classical scenario of affine commutative algebraic geometry, namely, that of a commutative ring $A$ and a finitely presented ideal $I \subset A$. This data determines a quotient ring $A/I$ and a corresponding geometric morphism 
%\[ \SpecG(A/I) \to \SpecG(A) \]
%which is a closed immersion of topological spaces. We expect the noncommutative spectrum to enjoy a similar property; we postpone a detailed examination of this matter until future works.

The analysis performed in this paper lays the foundations of the study of noncommutative geometry over $\bH\bRings_\Z$. So far, only affine objects have been considered; this is fine until one wishes to construct new spaces by gluing spaces together. In classical algebraic geometry this is done by computing colimits in the category of sheaves for the Zariski topology. And the natural approach would be to do the same in the noncommutative setting as well. This is not possible because representable pre-sheaves associated to noncommutative rings are not sheaves for the homotopical Zariski topology. But this has not to be considered as a problem of our theory, it is actually an interesting feature whose study we start in Section \ref{sec:gelfand}. We will see that noncommutative spaces manifest a local-to-global behaviour that is not fully captured by the notion of sheaf although they are objects that can be descended from covers of the underlying topological space.

\section{Examples of noncommutative spectra}\label{exa:spectra}

We now discuss some explicit examples of noncommutative spectra. We will examine relatively simple cases, as an illustration of the theory developed in this paper. A more thorough examination of the consequences of our new notion of noncommutative spectrum goes beyond the scope of the present paper and will be carried out in upcoming publications.
Let us emphasise that the computation of the spectrum requires the classification of smashing subcategories of $\bH\bMod_A$, which is usually not an easy task and an open problem for most classes of rings.
We plan to discuss further examples and provide additional details about the computations in future publications.

\subsection{Commutative rings}

Before moving on to noncommutative rings, we give a few key examples of noncommutative spectra of commutative rings. These can be compared with the classical notion of spectrum introduced by Grothendieck, as per Proposition \ref{prop:compatision with Grothendieck}; we will now see that they differ, except for very specific cases.

\paragraph{Commutative fields.} Let $k$ be a commutative field $k$. The class of smashing localizations of $\bH\bMod_k$ only contains trivial objects; hence, $\SpecNC(k) \cong \SpecNC_\fine(k) \cong \SpecG(k)$ and its underlying space is just a point.

 \paragraph{Discrete valuation rings.} This is the case, for example, of the ring of formal power series $R=k[\![x]\!]$ in one variable $x$ over some field $k$, or of the ring of p-adic integers $\Z_p$ for any prime $p$. 
The class of homotopy epimorphisms under $R$, by the classification results from~\cite[Theorem 1.1 (2)]{AMSTV}, contains only three elements: the identity $R \to R$, the localization $R \to {\rm Frac}(R)$, and $R \to 0$. All localizations are of finite presentation (in the commutative sense), so also in this case $\SpecNC(R) = \SpecG(R)$, and its underlying topological space is the usual Sierpinski space. Also in this case we have $\SpecNC(R) \cong \SpecNC_\fine(R)$.

\paragraph{The ring $k^n$.} Let $k$ be a field. Consider the direct product $k^n = k \times \cdots \times k$. Then, by generalising Example~\ref{exa:Kanda_counterexample}, one computes $\SpecNC(k)$ to be the boolean algebra of subsets of the set with $n$ elements equipped with the trivial topology. Therefore, $\SpecNC(k)$ has $2^n - n - 1$ specialization points and $n$ generic points. 
On the other hand, $\SpecNC_\fine(k)$ is the set of $n$ points with the discrete topology.

\paragraph{Noetherian commutative rings.}  This is the case, for example, of the ring of formal power series $R = N[\![x]\!]$ over a Noetherian ring $N$. The smashing localizations of $\bH\bMod_R$ have been classified in \cite[Theorem 1.1 (2)]{AMSTV}. These correspond precisely to flat ring epimorphisms $R \to S$. Thus, for such an $R$ the category $\bLoc_R$ is just the class of all flat ring epimorphisms whose domain is $R$. Note that in this case $S$ is necessarily commutative (see \cite[Corollary 1.2]{Sil}). Moreover, flat ring epimorphisms $R \to S$ determine generalization 
closed subsets of $\SpecG(R)$, as these are arbitrary intersections of open subsets (see \cite[Lemma 5.19.3]{Stack}). Therefore, $\SpecNC(R)$ is obtained from $\SpecG(R)$ by enlarging the family of compact open subset to encompass the family of all generalization closed subsets that are images of flat ring epimorphisms. Then, the cover condition for $\SpecNC(R)$ is difficult to check and not compatible with the geometry of $\SpecG(R)$ in general, as Example~\ref{exa:Kanda_counterexample} shows. Instead, Theorem \ref{thm:commutative_faithful_conjecture} shows that the cover condition for the topology of $\SpecNC_\fine(R)$ is that the family of generalization closed subsets covers $\SpecG(R)$ set-theoretically.

Let us focus on the special case $R = \Z$. The classification of smashing localization of $\bH\bMod_\Z$ performed in~\cite[Theorem 1.1 (2)]{AMSTV} implies that, in the case at hand, smashing localizations correspond to morphisms $\Z \to \Z[S^{-1}]$, where $S^{-1}$ is a subset of primes of $\Z$. Therefore, in this case the category $\bZar_{\Spec(\Z)}$ is a distributive lattice, where 
\[ \Spec(\Z[S^{-1}]) \wedge \Spec(\Z[T^{-1}]) \cong \Spec(\Z[S^{-1}] \otimes_\Z \Z[T^{-1}]) \cong  \Spec(\Z[(S \cup T)^{-1}]) \] 
and 
\[ \Spec(\Z[S^{-1}]) \lor \Spec(\Z[T^{-1}]) \cong \Spec(\Z[(S \cap T)^{-1}]). \] 
Now, upon establishing a correspondence between $\N$ and the set of prime numbers, we can view objects of $\bZar_{\Spec(\Z)}$ as subsets of $\N$, equipped with the order relation given by the opposite of inclusion and the lattice operations given by finite intersections and arbitrary unions. Therefore, the frame $\Id(\bZar_{\Spec(\Z)})$ is the frame of all ideals of the lattice of subsets of $\N$ plus the extra element corresponding to the trivial localization $\Z \to 0$. 
The topology of $\SpecNC(\Z)$ can be described as follows: a subset $C \subset \SpecNC(\Z)$ is closed if there exists $S \in \sP(\N)$ such that
\[ C = \{x \in \SpecNC(\Z) \ | \  S \in x \}. \]
In this particular case we have agreement between the commutative and noncommutative notions of cover because one can prove the following canonical isomorphism: $\Z \lt X \gt \ast_\Z^\L \Z [S^{-1}] \cong \Z \lt X \gt \otimes_\Z^\L \Z [S^{-1}]$ for all sets of variables $X$ and all subset $S$ of the prime numbers. This implies that for $A \in \bH \bRings_\Z$ one has $A \ast_\Z^\L \Z [S^{-1}] \cong 0$ if and only if $A \otimes_\Z^\L \Z [S^{-1}] \cong 0$, and hence $\SpecNC(\Z) \cong \SpecNC_\fine(\Z)$ because the cover conditions in the two situation happen to agree. 
This means that, as a set, $\SpecNC(\Z)$ corresponds to the set of ultrafilters of $\N$, plus an extra generic point.
This is a Zariski-like topology and it is not Hausdorff. An easy way to see that it is not Hausdorff is by observing that $\SpecNC(\Z_{(p)})$ is not Hausdorff, and the latter embeds into $\SpecNC(\Z)$ via the open localization $\Z \to \Z_{(p)}$.

We think that the given homeomorphism $\SpecNC(\Z) \cong \SpecNC_\fine(\Z)$ is very specific to $\Z$; in fact, we conjecture that this already fails for the affine line $\Z[x]$.  We postpone a deeper study of these example until future work.

\paragraph{Non-Noetherian commutative rings.}
Let $R$ be the non-Noetherian valuation domain considered in \cite[Example 5.24]{BazSto}. If $\mathfrak{m} \subset R$ denotes the maximal ideal of $R$ then $\mathfrak{m}^2 = \mathfrak{m}$ and the morphism $R \to \frac{R}{\mathfrak{m}}$ is a non-flat homotopy epimorphism. In \cite[Example 5.24]{BazSto} it is explained that $R \to \frac{R}{\mathfrak{m}}$, $R \to \rm{Frac}(R)$ and $R \to \frac{R}{\mathfrak{m}} \times \rm{Frac}(R)$ are the only non-trivial homotopical epimorphisms of $R$. If we denote $k = \frac{R}{\mathfrak{m}}$ and $Q = \rm{Frac}(R)$ we get the distributive lattice of homotopical epimorphisms
\[
\begin{tikzpicture}
\matrix(m)[matrix of math nodes,
row sep=2.6em, column sep=2.8em,
text height=1.5ex, text depth=0.25ex]
{ 
& R  \\
& Q \times k  \\
Q & & k  \\
& 0 \\};
\path[-, font=\scriptsize]
(m-1-2) edge node[auto] {} (m-2-2);\\
\path[-, font=\scriptsize]
(m-2-2) edge node[auto] {} (m-3-1);
\path[-, font=\scriptsize]
(m-2-2) edge node[auto] {} (m-3-3);
\path[-,font=\scriptsize]
(m-3-1) edge node[auto] {} (m-4-2);
\path[-,font=\scriptsize]
(m-3-3) edge node[auto] {} (m-4-2);
\end{tikzpicture}.
\]
The associated sober topological space for the fine Zariski topology has two generic points $\eta_1, \eta_2$ that share a common closed special points $s$, as per the following picture:
\[
\begin{tikzpicture}
\matrix(m)[matrix of math nodes,
row sep=2.6em, column sep=2.8em,
text height=1.5ex, text depth=0.25ex]
{
\eta_1 & & \eta_2 \\
 & s &  \\};
\path[-, font=\scriptsize]
(m-1-1) edge node[auto] {} (m-2-2);
\path[-, font=\scriptsize]
(m-1-3) edge node[auto] {} (m-2-2);

\end{tikzpicture}.
\]
In this representation, the open subsets of $\SpecNC_\fine(R) = \{ \eta_1, \eta_2, s\}$ are $\void$, $\{\eta_1 \}$, $\{\eta_2 \}$, $\{\eta_1, \eta_2 \}$, $\{\eta_1, \eta_2, s \}$. Thus, $s$ is a closed but not open point. Let us observe that the closure of $\{\eta_1 \}$ is the irreducible closed subset $\{\eta_1, s\}$ and the closure of $\{\eta_2 \}$ is the irreducible closed subset $\{\eta_2, s \}$. Therefore $\SpecNC_\fine(R)$ is not an irreducible topological space even if $R$ is an integral domain, unlike what happens in scheme theory. Finally, the projection 
\[ \SpecNC_\fine(R) \to \SpecG(R) \]
can be interpreted as the projection onto one of the closed irreducible subspaces $\{ \eta_1, s\}$ sending $\eta_2$ to $\eta_1$. It is straightforward to check that this map is continuous.

\subsection{Noncommutative rings}
\label{sec:noncommutative_spectra}

The noncommutative examples are more challenging to work out, and only few are currently known to us.

\paragraph{The ring $M_n(k)$ of square matrices over a field $k$.} 
 The first example is a trivial one. Since $M_n(k)$ is derived Morita equivalent to $k$, and by Proposition~\ref{prop:spectrum_properties} the noncommutative spectrum depends only on the derived category of left (or right) modules, we have that $\SpecNC(M_n(k)) \cong \SpecNC_\fine(M_n(k)) \cong \SpecNC(k)$ is a one point topological space. 

\paragraph{Division rings.}  Let $k$ be a division algebra. It is well known that the category $\bMod_k$ is formally identical to the category of vector spaces over a field: all objects are free, and hence projective. This immediately implies that $\bH \bMod_k$ has only trivial smashing localizations, like in the case of fields. So, $\SpecNC(k)$ is a singleton and, by the Morita invariance of the spectrum, so is $\SpecNC(M_n(k))$.

\paragraph{Path algebra for the quiver $A_n$.} Let us consider the path algebra for the quiver $A_n$ (over a field $k$) and denote it by $k A_n$. It is known that $k A_n$ has only a finite number of smashing localizations. In this example, we focus on the special case $k A_2$; the analysis for $k A_n$, $n>2$, is similar. 

We refer the reader to~\cite[Example 5.1.4]{GS} for the classification of the smashing subcategories of $k A_2$, from which we borrow the notation. Smashing subcategories of $k A_2$ form the lattice
\[
\begin{tikzpicture}
\matrix(m)[matrix of math nodes,
row sep=2.6em, column sep=2.8em,
text height=1.5ex, text depth=0.25ex]
{ & k A_2  \\
P_1 & P_2 & S_2 \\
& 0 \\};
\path[-, font=\scriptsize]
(m-1-2) edge node[auto] {} (m-2-1);\\
\path[-, font=\scriptsize]
(m-1-2) edge node[auto] {} (m-2-2);
\path[-, font=\scriptsize]
(m-1-2) edge node[auto] {} (m-2-3);
\path[-,font=\scriptsize]
(m-2-1) edge node[auto] {} (m-3-2);
\path[-,font=\scriptsize]
(m-2-2) edge node[auto] {} (m-3-2);
\path[-,font=\scriptsize]
(m-2-3) edge node[auto] {} (m-3-2);
\end{tikzpicture}
\]
which is well-known to be non-distributive. 
One can show that that only trivial covers are covers for the homotopy Zariski topology. We refer to Example~\ref{exa:morphism_kronecker_to_A2quiver} for a proof of this fact. Now, the only covers for the homotopy Zariski topology in this case are $\{\Id_0\}$, $\{\Id_{P_1}\}$, $\{\Id_{P_2}\}$, $\{\Id_{S_2}\}$, and $\{\Id_{k A_2}\}$. Hence the list of all ideals for this topology is
\[ 0 := \{ 0 \}, \ 01 := \{0, P_1\}, \ 02 := \{0, P_2 \}, \ 03 := \{0, S_2\}, \]
\[ \ 012 := \{0, P_1, P_2\}, \ 013 := \{0, P_1, S_2 \}, \ 023 := \{0, P_2, S_2 \}   \]
\[ \ 0123 = \{0, P_1, P_2, S_2 \}, \ 01234 := \{0, P_1, P_2, S_2, k A_2 \}.   \]
The resulting frame is the following:
\[
\begin{tikzpicture}
\matrix(m)[matrix of math nodes,
row sep=2.6em, column sep=2.8em,
text height=1.5ex, text depth=0.25ex]
{ 
    &  01234  &   \\
    &  0123  &   \\
012 & 013 & 023  \\
01 & 02 & 03 \\
& 0 \\};
\path[-, font=\scriptsize]
(m-1-2) edge node[auto] {} (m-2-2);
\path[-, font=\scriptsize]
(m-2-2) edge node[auto] {} (m-3-1);
\path[-, font=\scriptsize]
(m-2-2) edge node[auto] {} (m-3-2);
\path[-, font=\scriptsize]
(m-2-2) edge node[auto] {} (m-3-3);
\path[-,font=\scriptsize]
(m-3-1) edge node[auto] {} (m-4-2);
\path[-,font=\scriptsize]
(m-3-2) edge node[auto] {} (m-4-2);
\path[-,font=\scriptsize]
(m-3-3) edge node[auto] {} (m-4-2);
\path[-,font=\scriptsize]
(m-3-1) edge node[auto] {} (m-4-1);
\path[-,font=\scriptsize]
(m-3-3) edge node[auto] {} (m-4-3);
\path[-,font=\scriptsize]
(m-3-2) edge node[auto] {} (m-4-1);
\path[-,font=\scriptsize]
(m-3-2) edge node[auto] {} (m-4-3);
\path[-,font=\scriptsize]
(m-4-1) edge node[auto] {} (m-5-2);
\path[-,font=\scriptsize]
(m-4-2) edge node[auto] {} (m-5-2);
\path[-,font=\scriptsize]
(m-4-3) edge node[auto] {} (m-5-2);
\end{tikzpicture}.
\]
It is then easy to check that $\SpecNC(k A_2)$ is the spectral space
\[
\begin{tikzpicture}
\matrix(m)[matrix of math nodes,
row sep=2.6em, column sep=2.8em,
text height=1.5ex, text depth=0.25ex]
{
\eta_1 & \eta_2 & \eta_3 \\
 & s &  \\};
\path[-, font=\scriptsize]
(m-1-1) edge node[auto] {} (m-2-2);
\path[-, font=\scriptsize]
(m-1-2) edge node[auto] {} (m-2-2);
\path[-, font=\scriptsize]
(m-1-3) edge node[auto] {} (m-2-2);
\end{tikzpicture}
\]
with three generic points that share a common specialization point. 

In order to describe $\SpecNC_\fine(k A_2)$, we observe that the family $\{k A_2 \to A_{P_1}, k A_2 \to A_{P_2}, k A_2 \to A_{S_2}\}$ is a non-trivial faithful family of functors. The presence of this extra cover has the effect of removing $0123$ from the list of ideals of the posite. 
It is not hard to check that the topological space associated with this latter frame is the discrete topological space with three points. More generally, we expect $\SpecNC_\fine(k A_n)$ to be a discrete space whose points correspond to indecomposable finite dimensional representations of $A_n$, and $\SpecNC(k A_n)$ to be the spectral topological space associated to the (finite) lattice of smashing localizations of $k A_n$ equipped with the trivial posite structure. Similar considerations can be made for quiver algebras of finite representation type.

\paragraph{Kronecker algebra.}
Let us consider the Kronecker algebra $A$ over a field $k$. This algebra can be defined as the path algebra of the (Kronecker) quiver $\star \rightrightarrows \star$, and is isomorphic to the matrix algebra
\[ \begin{pmatrix}
k & k^2 \\
0 & k
\end{pmatrix}\,, \]
that is, a four dimensional noncommutative algebra over the field $k$. Owing to the work of Beilinson \cite{Bei}, we know that there is an explicit equivalence of categories
\[ D(\bQ\bCoh(\P_k^1)) \cong D(\bMod_A)\,, \]
where on the left-hand side we have the derived category of quasi-coherent sheaves on $\P_k^1$. 
The smashing subcategories of $D(\bMod_A)$ have been classified in~\cite{KS}. Since $A$ is hereditary, the smashing subcategories of $D(\bMod_A)$ correspond to those of $\bH\bMod_A$ (because it is known that in this case homotopical epimorphisms are always in degree $0$, \ie given by ring epimorphisms, see \cite{BazSto}). So, by \cite{KS} we obtain the poset 
\[
\begin{tikzpicture}
\matrix(m)[matrix of math nodes,
row sep=2.6em, column sep=2.8em,
text height=1.5ex, text depth=0.25ex]
{ & & A  \\
 {} &  \\
\P_k^1 & \cdots & \cO(-1) & \cO & \cO(1) & \cdots \\
 {}& \\
& & 0 \\};
\path[-, font=\scriptsize]
(m-1-3) edge node[auto] {} (m-2-1);\\
\path[-, font=\scriptsize]
(m-1-3) edge node[auto] {} (m-3-3);
\path[-, font=\scriptsize]
(m-1-3) edge node[auto] {} (m-3-4);
\path[-, font=\scriptsize]
(m-1-3) edge node[auto] {} (m-3-5);
\path[-,font=\scriptsize]
(m-4-1) edge node[auto] {} (m-5-3);
\path[-,font=\scriptsize]
(m-3-3) edge node[auto] {} (m-5-3);
\path[-,font=\scriptsize]
(m-3-4) edge node[auto] {} (m-5-3);
\path[-,font=\scriptsize]
(m-3-5) edge node[auto] {} (m-5-3);
\draw (m-3-1) circle [x radius=1.4, y radius=1.2];
\end{tikzpicture}
\]
which is not distributive, and where on the left part of the diagram we have depicted the lattice of localizations corresponding to specialisation closed subsets of $\P_k^1$. The non-distributivity is due to the presence of sub-lattices of the form
\[
\begin{tikzpicture}
\matrix(m)[matrix of math nodes,
row sep=1.6em, column sep=2.8em,
text height=1.5ex, text depth=0.25ex]
{ & k A_2  \\
\{x , y \}  \\
&   & \cO & \\
\{x \}  \\
& 0 \\};
\path[-, font=\scriptsize]
(m-1-2) edge node[auto] {} (m-2-1);
\path[-, font=\scriptsize]
(m-1-2) edge node[auto] {} (m-3-3);
\path[-, font=\scriptsize]
(m-2-1) edge node[auto] {} (m-4-1);
\path[-,font=\scriptsize]
(m-4-1) edge node[auto] {} (m-5-2);
\path[-,font=\scriptsize]
(m-3-3) edge node[auto] {} (m-5-2);
\end{tikzpicture}.
\]
The topological space $\SpecNC(A)$ can be described analogously to $\SpecNC(k A_2)$ above, so we omit the details. The final outcome of the construction is that there exists infinitely many points corresponding to the twisting sheaves $\cO(n)$ and another component whose points correspond to ultrafilters of rational points of $\P_k^1$ plus a generic point (similarly to what has been described for $\SpecNC(\Z)$). 
Since no union of these sets covers the whole space, these are connected to a common specialization point. We can represent this as 
\[
\begin{tikzpicture}
\matrix(m)[matrix of math nodes,
row sep=2.6em, column sep=2.8em,
text height=1.5ex, text depth=0.25ex]
{ \P_k^1 & \cdots & p_{\cO(n)} & p_{\cO(n + 1)} & \cdots \\
  &  s  \\};
\path[-, font=\scriptsize]
(m-1-1) edge node[auto] {} (m-2-2);
\path[-, font=\scriptsize]
(m-1-3) edge node[auto] {} (m-2-2);
\path[-, font=\scriptsize]
(m-1-4) edge node[auto] {} (m-2-2);
\end{tikzpicture}.
\]
The picture for $\SpecNC_\fine(A)$ is similar, only the topology of the $\P_k^1$ component changes in this case. We postpone a more detailed study of this example until future work.

\subsection{Morphisms of spectra}

Before moving on to discuss our noncommutative version of Gelfand's duality, let us give a couple of relevant examples of continuous maps between noncommutative spectra. 

The first example, stems from~Example \ref{exa:Kanda_counterexample}.

\begin{exa} \label{exa:Kanda_morphism}

As in Example~\ref{exa:Kanda_counterexample}, consider a field $k$ and the diagonal inclusion morphism $\phi: k \oplus k \to M_2(k)$. The latter morphism induces a morphism of spectra $\SpecNC(M_2(k)) \to \SpecNC(k \oplus k)$ that we now describe. We have already seen that 
$M_2(k) \ast_{k \oplus k}^\L k \cong 0$ for both projections $k \oplus k \to k$. Moreover, the isomorphism $M_2(k) \ast_{k \oplus k}^\L k \oplus k \cong M_2(k)$ is obvious. Therefore, the morphism $\phi$ induces a functor
\[ \bOuv(k \oplus k) \to \bOuv(M_2(k)) \]
that is non-trivial outside the generic points of $\SpecNC(k \oplus k)$. More precisely, one can check that this means that the prime filter corresponding to the point of $\SpecNC(M_2(k))$ is sent to the prime filter corresponding to the specialization point of $\SpecNC(k \oplus k)$. 

Let us complete the picture by considering also the inclusion morphism $\SpecNC_\fine(k \oplus k) \to \SpecNC(k \oplus k)$. If we do so, we obtain the following diagram of continuous maps (solid arrows)

\[
\begin{tikzpicture}
\matrix(m)[matrix of math nodes,
row sep=2.6em, column sep=2.8em,
text height=1.5ex, text depth=0.25ex]
{ \SpecNC_\fine(M_2(k)) & \SpecNC_\fine(k \oplus k) \\
 \SpecNC(M_2(k))  & \SpecNC(k \oplus k)  \\};
\path[->, font=\scriptsize]
(m-1-1) edge node[auto] {$\cong$} (m-2-1);
\path[->, font=\scriptsize]
(m-1-2) edge node[auto] {} (m-2-2);
\path[->, font=\scriptsize]
(m-2-1) edge node[auto] {} (m-2-2);
\path[->, dashed, font=\scriptsize]
(m-1-1) edge node[auto] {} (m-1-2);
\end{tikzpicture}.
\]
Here the dashed horizontal map cannot be realised as a map of topological spaces because the image of the map $\SpecNC(M_2(k)) \to \SpecNC(k \oplus k)$ lies outside the image of the map $\SpecNC_\fine(k \oplus k) \to \SpecNC(k \oplus k)$. Therefore, the dashed arrow can only be the empty relation. 

Let us observe that even if there is no continuous map $\SpecNC_\fine(M_2(k)) \to \SpecNC_\fine(k \oplus k)$, we can always consider a sheaf on $\SpecNC_\fine(k \oplus k)$, push it to $\SpecNC(k \oplus k)$, and pull it back to $\SpecNC_\fine(M_2(k))$ --- and the other way around: push forward sheaves from $\SpecNC_\fine(M_2(k))$ to $\SpecNC(k \oplus k)$ and then pullback to $\SpecNC_\fine(k \oplus k)$, thus obtaining a rich enough geometrical setting.

\end{exa}

In the next example we show that the homotopical Zariski topology induces the trivial topology on the poset of smashing localizations of $k A_2$.

\begin{exa} \label{exa:morphism_kronecker_to_A2quiver}
Consider the inclusion morphism  
\[ k A_2 = \begin{pmatrix}
k & 0 \\
k & k
\end{pmatrix} \mapsto 
\begin{pmatrix}
k & 0 \\
(k, 0) & k
\end{pmatrix} \to A. \]
By functoriality, this induces a morphism of spectra
\[ \SpecNC(A) \to \SpecNC(k A_2).  \]
Now, suppose that the topology on $\bOuv(k A_2)$ is non-trivial. This means, borrowing the notation from Example~\ref{exa:Kanda_morphism}, that the family $\{ k A_2 \to A_{P_1}, k A_2 \to A_{P_2}, k A_2 \to A_{S_2} \}$ must be a cover, as it is the only non-trivial faithful family of localization functors. But then the morphism $k A_2 \to A$ must map this cover of $\bOuv(k A_2)$ to a cover of $\bOuv(A)$. However, $A$ does not have any finite family of faithful functors. Therefore, $\{ k A_2 \to A_{P_1}, k A_2 \to A_{P_2}, k A_2 \to A_{S_2} \}$ is not a cover for the homotopical Zariski topology on $\SpecNC(k A_2)$. 

Elaborating further on this example (by considering homotopical epimorphisms of $A$ that are disjoint from the image of $\SpecNC(k A_2)$ into $\SpecNC(A)$), one can provide explicit instances of non-zero algebras whose derived free product over $\SpecNC(k A_2)$ with all $A_{P_1}, A_{P_2}, A_{S_2}$ is zero, giving a more direct proof of the fact that $\{ k A_2 \to A_{P_1}, k A_2 \to A_{P_2}, k A_2 \to A_{S_2} \}$ does not satisfy the cover condition for the homotopical Zariski topology. We leave these further details to the reader.
\end{exa}

\section{Towards a noncommutative notion of sheaf} \label{sec:gelfand}
%\section{A duality for the category of rings}
%\label{sec:gelfand}

Gelfand's duality realises a duality between the category of compact Hausdorff topological spaces and the category of commutative $C^*$-algebras. Some years after Gelfand's work, Grothendieck observed that, in order to obtain a similar duality for commutative rings, some extra data is needed. Indeed, for example, if we just regard the spectrum of a commutative ring as a topological space, we cannot tell any field apart from any other: all their spectra are isomorphic to the singleton topological space. The extra data required to realise a precise duality between commutative rings and their spectra is the structural sheaf of rings whose stalks at any point are local rings. The datum of a topological space equipped with such a structure sheaf is called a \emph{locally ringed space}. Grothendieck has shown that the category of affine schemes embeds fully faithfully into the category of locally ringed spaces, realising an algebraic version of Gelfand's duality.

Our goal is to achieve a similar result in the noncommutative setting using the notion of spectrum introduced above. Since we are in the homotopical setting, we would like to replace the classical notion of sheaf with its homotopical version commonly used in derived algebraic geometry. However, the noncommutative situation is more complicated for a number of reasons. For starters, we have considered two spectra so far: $\SpecNC(A)$ and $\SpecNC_\fine(A)$. 
Finally --- and most crucially --- one cannot directly use the theory of (homotopical) comonadic descent as in (derived) algebraic geometry. 

We shall now perform an examination of these issues in some detail.

\subsection{Descent and sheaf theory}

In this subsection we would like start investigating applications of sheaf theory to $\SpecNC(A)$ and $\SpecNC_\fine(A)$. The main issue we will face is that we can provide an example of noncommutative ring where the natural structure pre-sheaf is a sheaf on $\SpecNC(A)$ but its pullback (as a pre-sheaf) to $\SpecNC_\fine(A)$ is not a sheaf. Furthermore, its sheafification does not satisfy the properties a structure sheaf on $\SpecNC_\fine(A)$ should possess. This motives us to generalise the notion of sheaf using the theory of comonadic descent.

\

We start by discussing an example which shows that the natural definition of the structure pre-sheaf on $\SpecNC_\fine(A)$ is not compatible with the fine Zariski topology, in the sense that it is not a sheaf for this topology.

\begin{comment}
    
this does not work: for noncommutative rings the structure pre-sheaf is not a sheaf --- not even a homotopical one. Nevertheless, we do not view this as a shortcoming of our theory! Instead, this is where interesting new mathematics is happening. In this section we start unravelling these new phenomena. Before doing so, let us analyse a simple explicit example of such a non-sheafy behaviour of the structure pre-sheaf for noncommutative rings, to guide our intuition.

\end{comment}

\begin{exa} [Sheafyness problem]\label{exa:structure_not_sheaf}
    Let us consider again $A = k A_2$, the path algebra of the $A_2$ quiver. In subsection \ref{sec:noncommutative_spectra} we have seen that $X = \SpecNC_\fine(A) = \{p_1, p_2, p_3\}$ is the discrete space with three points. These three points, being open subsets, correspond to  three homotopical epimorphisms $A \to k$, $A \to M_2(k)$ and $A \to k$ (see Example \ref{exa:A_2_descent} below for more information about how to compute these homotopical epimorphisms). Thus, the structural pre-sheaf is given by
    \[ \cO_X(p_1) = k, \ \  \cO_X(p_2) = M_2(k), \ \ \cO_X(p_3) = k. \]
   If this pre-sheaf were a sheaf, then one would have that the cover of $X$ made by singletons  gives
    \[ \cO_X(X) = k \oplus M_2(k) \oplus k\,; \]
    but $\cO_X(X) = A$, and the latter has dimension $3$ as $k$-vector space.
\end{exa}

This shows that the natural definition of the structural pre-sheaf $\cO_X$ does not satisfy in general the sheaf condition on $\SpecNC_\fine(A)$ when $A$ is not commutative. However, we know that the covers of $X$ for the homotopical Zariski topology induce faithful, hence conservative, functors at the level of base changes of modules (see Proposition \ref{prop:faithful_covers}). We will show that this implies that such family of functors satisfy descent (in the homotopical sense). Therefore, there is still a local-to-global principle that the pre-sheaf $\cO_X$ over $\SpecNC_\fine(A)$ and modules over it satisfy. A
localization and reconstruction of the data of such pre-sheaves does not follow the rules of sheaf theory but those of comonadic descent (of which sheaf theory is a particular case). Let us elaborate on this point for the special case of the path algebra of the $A_2$ quiver, before addressing the abstract theory.

\begin{exa}[Descent for the quiver $A_2$]\label{exa:A_2_descent}
Let us examine the example of $A = k A_2$ and its localizations more closely. 
The algebra $A$ over a field $k$ is isomorphic to the algebra of lower triangular matrices of the form
\[ A = \begin{pmatrix}
k & 0 \\
k & k
\end{pmatrix}. \]
Let us denote a $k$-basis of $A$ by $e_1, e_2, e_3$, so that
\[
A = \begin{pmatrix}
k e_1 & 0 \\
k e_2 & k e_3
\end{pmatrix}.
\]
We denote 
\[ P_1 = k e_3, \ \ P_2 = k e_1 \oplus k e_2, \ \ S_2 = k e_1. \]
Note that $P_1$ and $P_2$ are projective left $A$-modules, $A \cong P_1 \oplus P_2$ as a left $A$-module, whereas $S_2$ has the left $A$-module structure induced by the quotient map $P_2 \to S_2$ and it is obviously indecomposable. These submodules correspond to indecomposable representations of $A_2$ and to generators of the (non-trivial) smashing subcategories of ${}_A \bH \bMod$, as discussed in subsection \ref{sec:noncommutative_spectra}. 
We also have the right $A$-modules
\[ Q_1 = k e_1, \ \ Q_2 = k e_2 \oplus k e_3, \ \ T_2 = k e_3\,, \]
whose description is specular to that of $P_1, P_2, S_2$. We also have two surjective epimorphisms of algebras
\[ A \to P_1 = k e_3 \]
and 
\[ A \to S_2 = k e_1. \]
It is straightforward to check that these are homotopical epimorphisms that correspond to the localizations at $P_1$ and $S_2$ considered in subsection \ref{sec:noncommutative_spectra}. 
    The third and last homotopical epimorphism of $A$ is given by the inclusion morphism 
\[ A \to M_2(k). \]
The $A$-bimodule structure on $M_2(k)$ is given by  regarding the latter as two copies of $P_2$ as a left module and two copies of $Q_2$ as right module. These actions are compatible with matrix multiplication of $M_2(k)$. Again, checking that this morphism is a homotopical epimorphism can be done by means of straightforward (albeit tedious) computations or one can appeal to more powerful abstract theories. We thus get the canonical morphism of $A$-algebras
\[ A \to k \oplus M_2(k) \oplus k \]
mentioned in Example \ref{exa:structure_not_sheaf}. If we denote the three localizations of ${}_A \bH \bMod$ corresponding to localizing at $P_1$, $P_2$ and $S_2$ by $i_1, i_2, i_3$ respectively, we get
\[ i_1(P_1) = P_1, \ \ i_1(P_2) = 0, \ \ i_1(S_2) = P_1\,, \]
and thus $\ker(i_1) = \lt P_2 \gt$. Here and further on, we denote by $\lt \minus \gt$ the smashing localization generated by the elements within the brackets. Analogously, we have
\[ i_2(P_1) = P_2, \ \ i_2(P_2) = P_2, \ \ i_2(S_2) = 0, \]
so that $\ker(i_2) = \lt S_2 \gt$ and
\[ i_3(P_1) = 0, \ \ i_3(P_2) = S_2, \ \ i_3(S_2) = S_2, \]
so that 
$\ker(i_3) = \lt P_1 \gt$. Concretely, $i_1$ is obtained by tensoring by the bimodule $k e_3$, $i_2$ by the bimodule $M_2(k)$, and $i_3$ by the bimodule $k e_1$. If we now go back to the discrete topological space $X = \SpecNC_\fine(A) = \{ p_1, p_2, p_3 \}$, we would expect the intersection of any two points to be empty. Indeed, if we regard $A \to P_1$ as corresponding to the point $p_1$, then one can check that
\[ P_1 \ast_A^\L P_2 \cong 0, \ \ P_1 \ast_A^\L S_2 \cong 0. \]
But the above formula show that
\[ P_1 \otimes_A^\L P_2 \cong 0, \ \ P_1 \otimes_A^\L S_2 \cong P_1. \]
So, $p_1$ and $p_3$ are not ``completely disjoint" inside ${}_A \bH \bMod$. This does not, however, prevent us from reconstructing $A$ (and hence every $A$-module) from the data of the three homotopical epimorphisms above. 

Let us define the functor 
\[ M: {}_A \bH \bMod \to {}_k \bH \bMod \oplus {}_{M_2(k)}\bH \bMod \oplus {}_k \bH \bMod   \]
given by $M(R) = (i_1(R), i_2(R), i_3(R))$, that is, the base change functor of the algebra $A \to k \oplus M_2(k) \oplus k$. Then, we have three adjunction maps: the map
\[ \eta_{P_1}: P_1 \to M(P_1) = i_1(P_1) \oplus i_2(P_1) \oplus i_3(P_1) = P_1 \oplus P_2\,,  \]
\[  a \mapsto (a, \alpha(a))\,, \]
where the inclusion $\a: P_1 \to P_2$ is defined as $e_1 \mapsto e_2$. Thus
\[ \coker(\a) \cong S_2. \]
The map
\[ \eta_{P_2}: P_2 \to M(P_2) = i_1(P_2) \oplus i_2(P_2) \oplus i_3(P_2) = P_2 \oplus S_2\,,\]
\[  a \mapsto (a, \beta(a))\,, \]
where $\be: P_2 \to S_2$ is the quotient map whose kernel is $\a$. And finally the map
\[ \eta_{S_2}: S_2 \to M(S_2) = i_1(S_2) \oplus i_2(S_2) \oplus i_3(S_2) = P_1 \oplus S_2\,, \]
\[  a \mapsto (0, a)\,. \]
Applying the functor again, we obtain
\[ M(M(A)) = M(M(P_1 \oplus P_2)) = M(P_1) \oplus M(P_2^2) \oplus M(S_2) =  P_1^2 \oplus P_2^3 \oplus S_2^3. \]
Now, we have two maps
\[ M(A) \rightrightarrows M(M(A))  \]
given by $\eta_{M(A)}$ and $M(\eta_A)$. 
The map 
\[ \eta_A: A = P_1 \oplus P_2 \to M(A) = P_1 \oplus P_2 \oplus P_2 \oplus S_2 \]
is given by
\[ (a, b) = (a, \a(a), b, \be(b)). \]
We have that the identity $\eta_A = \eta_{P_1} \oplus \eta_{P_2}$ gives
\[   M(\eta_A) = M(\eta_{P_1}) \oplus M(\eta_{P_2}) \]
and
\[ \eta_{P_1} = (\id_{P_1} \oplus \a) \circ \Delta_{P_1}\, \]
where $\Delta_{P_1}: P_1 \to P_1 \oplus P_1$ is the diagonal embedding. So,
\[ M(\eta_{P_1}) = M((\id_{P_1} \oplus \a) \circ \Delta_{P_1}) = (\id_{M(P_1)}, M(\a)) \circ \Delta_{M(P_1)}. \]
and
\[ M(\a) = (\id_{P_2}, 0). \]
So,
\[ M(\eta_{P_1})(a, b) = (a, b, b, 0). \]
Similarly, one can compute that
\[ M(\eta_{P_2}): P_2 \oplus S_2 \to P_2 \oplus S_2\oplus P_2 \oplus S_2 \]
\[ M(\eta_{P_2})(c, d) = (c, d, 0, d). \]
On the other hand,
\[ \eta_{M(A)}: P_1 \oplus P_2 \oplus P_2 \oplus S_2 \to P_1 \oplus P_2 \oplus P_2 \oplus S_2 \oplus P_2 \oplus S_2 \oplus P_1 \oplus S_2 \]
\[ \eta_{M(A)}(a, b, c, d) = (a, \a(a), b, \be(b), c, \be(c), 0, d). \]
Therefore,
\[ \eta_{M(A)} - M(\eta_A) = (0, \a(a) - b, 0, \be(b), 0, d - \be(c), 0, 0).  \]
All in all, we have the sequence
\[ A \stackrel{\eta_A}{\to} M(A) \stackrel{\eta_{M(A)} - M(\eta_A)}{\longrightarrow }{M(M(A))}\,.\]
Let us recall that
\[ \eta_A(a, b) = (a, \a(a), b, \be(b)) \]
is injective. So, it is clear that $\eta_A(A) \subset \ker(\eta_{M(A)} - M(\eta_A))$. The other inclusion is also easy to check. This shows that $A \cong \ker( \eta_{M(A)} - M(\eta_A) )$, so that the above sequence of morphisms is exact at $M(A)$. \end{exa}

The computations performed in Example~\ref{exa:A_2_descent} are not accidental: they can be explained through the theory of comonadic descent, which gives the correct formulas for reconstructing $A$-modules from their local components at a cover for the fine Zariski topology that is faithful.
In the remainder of this section, we shall partially develop the abstract theory. We shall limit ourselves to the algebraic aspects of the theory here, deferring the development of a fully fledged geometric theory until future work. Thus, to use the machinery from comonadic descent one needs to know that covers for the fine Zariski topology are faithful, \ie Conjecture~\ref{conj:faithfulness_conjecture}. From now on we assume this conjecture to be true, since it is verified in all examples we know.

\begin{ass} \label{ass:conjecture}
In what follows we assume that Conjecture \ref{conj:faithfulness_conjecture} is true.
\end{ass}

We would like to use covers for the fine Zariski topology to write the category of modules as a finite limit, as in \cite[Proposition 10.5]{Scholze}.
Let us immediately warn the reader that the latter proposition is stated in the language of $\infty$-categories, which would be the best language suited to the formulation of the results of this section. However, we will stick with our choice, made (and justified) earlier in the paper, of avoiding the language of $\infty$-categories in this work, even if at times this somewhat hinders the clarity of the results.

In our language, for the topological space $X = \SpecNC_\fine(A)$ with $A \in \bH \bRings_\Z$ and given homotopical epimorphism $A \to B_U$, \cite[Proposition 10.5]{Scholze}  provides sufficient conditions under which the association 
\[\SpecNC_\fine(B_U)= U \mapsto \bH \bMod_{B_U}\,, \]  is a homotopical sheaf of pre-triangulated categories.
This condition implies that the association
\[ U \mapsto B_U \]
is a homotopical sheaf of rings, which means, in turn, that given a cover $\{ A \to B_{U_i} \}_{i \in I}$ for the homotopical Zariski topology we have
\[ A \cong \R \lim N^\bullet(B_{U_i})\,. \]
Here $N^\bullet(B_{U_i})$ denotes the \u{C}ech nerve of the cover, often referred to
 as \emph{cobar construction}, given by the cosimplicial object
\[ \prod_{i \in I} B_{U_i} \rightrightarrows \prod_{i, j \in I} B_{U_i} \otimes_{A}^\L B_{U_j}  \mathrel{\substack{\textstyle\rightarrow\\[-0.6ex]
\textstyle\rightarrow \\[-0.6ex]
\textstyle\rightarrow}}\cdots. \]
Note that condition (i) in \cite[Proposition 10.5]{Scholze} essentially never holds unless $A$ is either a commutative ring or a cdga, because it would require the tensor product (of the underlying bimodules) of algebras to be commutative. In particular, it does not hold for the cover for the fine Zariski topology of $k A_2$ as we have seen in Example~\ref{exa:A_2_descent}, hence the failure to obtain a homotopical sheaf. But \cite[Proposition 10.5]{Scholze} has another interpretation in terms of comonadic descent. Indeed, its statement implies that, under the given hypothesis, the adjunction
\begin{equation} \label{eq:comonadic_adjunction}
\bH \bMod_A \leftrightarrows \bigoplus_{i \in I} \bH \bMod_{B_{U_i}} 
\end{equation}
defined by base change and restriction of scalars functors, is homotopically comonadic. The additional piece of information given by (the proof of) \cite[Proposition 10.5]{Scholze} is that under the proposition's hypothesis, the descent data is given by a finite diagram\footnote{The theory of $\infty$-categories makes this statement very clear. We do not comment further here.}. In our situation instead we still have homotopical descent, but we shall see that the descent data is no longer contained in a finite diagram, and the resulting cobar construction of modules and algebras over $A$ are not equal to the restriction of section of sheaves on the topological space $X$ as in the commutative case. It is always possible to reconstruct a module $M \in \bH \bMod_A$  uniquely from its localizations $M \otimes_A B_{U_i}$ if the cover is faithful, and, furthermore, any descent data on such a cover $\{ A \to B_{U_i} \}_{i \in I}$, formally given by a comodule over the comonad associated to the adjunction \eqref{eq:comonadic_adjunction}, comes from a module over $A$. Thus, our goal is to prove that the adjunction \eqref{eq:comonadic_adjunction} is comonadic. To do this we introduce the following generalisation (and adaptation) of the terminology introduced in \cite{Mathew} in the commutative setting.

\begin{defn} \label{defn:descendable_algebra}
Consider $A \in \bH \bRings_\Z$ and an $A$-algebra $A \to B$. We say that $B$ \emph{admits descent} or is \emph{descendable} if the smallest smashing subcategory of $\bH \bMod_A$ containing $B$ is the whole $\bH \bMod_A$.
\end{defn}

\begin{rmk} \label{rmk:zariski_covers_are_descendable}
Let us point out that it is easy to check that if $\{ A \to B_{i} \}_{i \in I}$ is a cover for the homotopy Zariski topology, then the $A$-algebra $A \to \prod_{i \in I} B_{U_i}$ is descendable. Indeed, by Proposition~\ref{prop:faithful_covers} we have that the family of functors $\{ \bH \bMod_A \to \bH \bMod_{B_i} \}$ is faithful and in particular conservative, which implies that the smallest smashing subcategory containing $\bH \bMod_{B_i}$ in $\bH \bMod_A$ --- and hence the one generated by $\prod_{i \in I} B_i$ --- is the whole $\bH \bMod_A$. For the fine Zariski topology the faithfulness of the covers are ensured by Assumption~\ref{ass:conjecture}.
\end{rmk}

Let us now show how to recontruct $\bH \bMod_A$ from the data of a descendable algebra $A \to B$. We will follow closely the discussion from \cite[Section 3]{Mathew}.

\begin{prop} \label{prop:cobar_limit_diagram}
An $A$-algebra $A \to B$ is descendable if and only if the canonical map
\[ A \to \R \lim N^\bullet(B) \]
is a quasi-isomorphism.
\end{prop}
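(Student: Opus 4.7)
The plan is to prove both implications via the comonadic descent machinery encoded by the cobar construction. Write $T = B \otimes_A^\L (\minus) \colon \bH\bMod_A \to \bH\bMod_A$ for the comonad induced by the restriction/base-change adjunction between $\bH\bMod_A$ and $\bH\bMod_B$; the augmented cosimplicial object $A \to N^\bullet(B) = T^{\bullet+1}(A)$ is then precisely the standard cobar resolution, and $\R \lim N^\bullet(B)$ is its cosimplicial totalisation. The descendability condition from Definition~\ref{defn:descendable_algebra}, combined with Proposition~\ref{prop:iso_smashing_poset}, translates into the statement that the singleton $\{A \to B\}$ is a cover for the fine Zariski topology on $\bOuv(\Spec(A))$: the only homotopical epimorphism $A \to C$ with $C \ast_A^\L B \simeq 0$ is $C \simeq 0$.

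For the forward direction, assume $B$ is descendable. The crucial observation is that after tensoring with $B$ on the left the augmented cosimplicial diagram acquires an extra codegeneracy, provided by the multiplication $B \otimes_A^\L B \to B$, so the classical splitting lemma for cosimplicial objects with extra codegeneracies gives that $B \to B \otimes_A^\L \R \lim N^\bullet(B)$ is an equivalence. Setting $F = \mathrm{fib}(A \to \R \lim N^\bullet(B))$ and applying $B \otimes_A^\L (\minus)$ to the fiber sequence yields $B \otimes_A^\L F \simeq 0$, and symmetrically $F \otimes_A^\L B \simeq 0$. The class of $A$-bimodules $M$ with $B \otimes_A^\L M \simeq 0$ is closed under cones, retracts, shifts, and the tensor action, hence corresponds via Proposition~\ref{prop:iso_smashing_poset} to a smashing subcategory of $\bH\bMod_A$ which is disjoint from the smashing subcategory generated by $B$. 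Descendability forces this complement to be trivial, so $F \simeq 0$ and the canonical map is a quasi-isomorphism.

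For the reverse direction, assume $A \simeq \R \lim N^\bullet(B)$, and let $A \to C$ be a homotopical epimorphism with $C \ast_A^\L B \simeq 0$. By associativity of the derived free product, each term $C \ast_A^\L N^n(B) \simeq 0$. The map
\[ C \ast_A^\L \R \lim N^\bullet(B) \longrightarrow \R \lim\bigl( C \ast_A^\L N^\bullet(B) \bigr) \simeq 0 \]
is then an equivalence: this commutation is justified by the hypothesis itself, since the canonical equivalence $A \simeq \R \lim N^\bullet(B)$ forces the totalisation to converge with uniformly bounded amplitude in the connective setting (each Postnikov truncation stabilises after finitely many cosimplicial degrees). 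Combined with the chain $C \simeq C \ast_A^\L A \simeq C \ast_A^\L \R \lim N^\bullet(B)$, this yields $C \simeq 0$, proving that $\{A \to B\}$ is a fine Zariski cover and hence that $B$ is descendable.

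The main obstacle will be controlling the commutation of $\R \lim$ with the derived free product in the reverse direction. Unlike in commutative derived algebraic geometry, where the cobar construction enjoys an augmented simplicial enhancement coming from the base-change theorem for modules, the noncommutative free product satisfies no analogous base-change statement (see Remark~\ref{rmk:failure_base_chage}), so the interchange must be justified using the finiteness of the Postnikov filtration, which itself depends on the hypothesis. A secondary subtlety lies in the forward direction: one must verify that the extra-codegeneracy splitting of $B \otimes_A^\L N^\bullet(B)$ is set up at the correct level of cosimplicial $A$-bimodules rather than merely of $B$-algebras, so that the passage between the derived free product of algebras and the derived tensor product of bimodules is internally consistent throughout the argument.
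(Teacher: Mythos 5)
The paper does not give a self-contained argument here: its proof is a one-line reduction to \cite[Proposition 3.20]{Mathew}, together with the observation that Mathew's proof only uses the formal structure of the extension/restriction adjunction and so carries over to the noncommutative adjunction $\bH\bMod_A \leftrightarrows \bH\bMod_B$. Your attempt at a direct two-directional argument is therefore a different route, but as written it has genuine gaps, and they sit exactly at the points that make Mathew's proposition nontrivial.

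First, your translation of ``descendable'' at the outset is not Definition~\ref{defn:descendable_algebra}: descendability is a condition on smashing subcategories of $\bH\bMod_A$, not the algebra-level fine-Zariski cover condition (and $A \to B$ need not be a homotopical epimorphism here, e.g.\ $B = \prod_i B_i$ in Remark~\ref{rmk:zariski_covers_are_descendable}), so the two conditions cannot be identified without argument --- this is precisely the module/algebra discrepancy the paper keeps emphasising. Second, in the forward direction the extra-codegeneracy splitting gives $B \simeq \R\lim\bigl(B \otimes_A^\L N^\bullet(B)\bigr)$, but you then silently identify this with $B \otimes_A^\L \R\lim N^\bullet(B)$; that interchange of $\otimes_A^\L$ with the totalisation is the crux of Mathew's argument, where it is deduced from pro-constancy of the $\mathrm{Tot}$ tower, itself a consequence of descendability (in Mathew's thick-$\otimes$-ideal form). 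It does not follow from the splitting lemma. Third, in the reverse direction your justification for commuting $C \ast_A^\L(\minus)$ past $\R\lim$ is false: for a cosimplicial diagram of \emph{connective} objects, a fixed homotopy group of the totalisation receives contributions from infinitely many cosimplicial degrees in the Bousfield--Kan spectral sequence, so mere convergence of the limit to $A$ does not bound the amplitude or make the Postnikov truncations stabilise. Moreover $C \ast_A^\L N^n(B)$ is not well formed for $n \geq 1$, since the cobar terms $B \otimes_A^\L \cdots \otimes_A^\L B$ are $A$-bimodules rather than $A$-algebras in the noncommutative setting. The honest route is the paper's: reprove Mathew's Proposition 3.20 verbatim for the adjunction $\bH\bMod_A \leftrightarrows \bH\bMod_B$, where the nilpotence supplied by the descendability hypothesis is what controls all of these interchanges.
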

\begin{proof}
    This is a generalisation of \cite[Proposition 3.20]{Mathew}. One can check that the proof never uses the commutativity of the algebras but only the canonical morphisms of the extension of scalars and restriction of scalars functors. Therefore, it can be applied also for the adjunction $\bH \bMod_A \leftrightarrows \bH \bMod_B$ verbatim.
\end{proof}

The next theorem shows how comonadic descent is related to homotopy limits.

\begin{thm} \label{thm:comonadic_descent}
If the $A$-algebra $A \to B$ is descendable, then the adjunction
\[ \bH \bMod_A \leftrightarrows \bH \bMod_B \]
is comonadic.
\end{thm}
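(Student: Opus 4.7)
The plan is to verify the hypotheses of the comonadic Barr--Beck--Lurie theorem for the adjunction $f^* \dashv f_*$ with $f^* = (-)\otimes_A^\L B$, namely that (i) the functor $f^*$ is conservative, and (ii) $\bH\bMod_A$ admits and $f^*$ preserves totalizations of $f^*$-split cosimplicial objects. Existence of small homotopy limits in $\bH\bMod_A$ is automatic, since it is (equivalent to) a presentable stable $\infty$-category, so only conservativity and preservation genuinely require work.

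For (i), the input is Proposition~\ref{prop:cobar_limit_diagram}, which recasts descendability as the equivalence $A \simeq \R\lim N^\bullet(B)$. Following \cite[Proposition~3.22]{Mathew} in the commutative setting, one would upgrade this to a \emph{uniform finiteness} statement: the fibre $\mathrm{fib}(A\to B)$ is smashing-nilpotent with a uniform bound, so that the cobar totalization is computed by a bounded layer of the tower up to retract. This bounded-depth behaviour ensures that, for any $M\in\bH\bMod_A$, the tensor product with $M$ commutes with the totalization:
\[ M \simeq M\otimes_A^\L A \simeq M\otimes_A^\L \R\lim N^\bullet(B) \simeq \R\lim\bigl(M\otimes_A^\L N^\bullet(B)\bigr). \]
Conservativity follows immediately: if $f^*(M) = M\otimes_A^\L B \simeq 0$, then every term of the right-hand cobar totalization is zero (each one contains $M\otimes_A^\L B$ as a tensor factor), whence $M\simeq 0$.

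For (ii), the same uniform finiteness yields the preservation claim. An $f^*$-split cosimplicial object has its totalization realised as a retract of the $(-1)$-term via extra codegeneracies, a structure preserved by any functor; combined with the interchange of tensor and totalization displayed above, this ensures that $f^*$ commutes with the relevant cobar limit diagrams arising in $\bH\bMod_A$.

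The main obstacle is the careful translation of Mathew's nilpotence characterization of descendability from the symmetric monoidal setting to ours, where $\bH\bMod_A$ is only pre-triangulated and carries no intrinsic tensor structure. The strategy is to replace \emph{thick tensor ideals} by the smashing subcategories of Definition~\ref{defn:smashing_localization}, using Proposition~\ref{prop:smashing_homotopi_epi} to rewrite tensor idempotents as homotopy epimorphisms under $A$. Since the arguments of \cite[\S 3]{Mathew} rely only on formal properties of extension- and restriction-of-scalars, of cofibres, and of the triangulated structure---none of which invokes commutativity---I expect the proof to reduce to a careful bookkeeping exercise, in the spirit of the verbatim extension already performed for Proposition~\ref{prop:cobar_limit_diagram}.
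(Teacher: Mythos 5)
Your proposal is correct and follows essentially the same route as the paper, whose proof is a two-line reduction to the Lurie--Barr--Beck theorem via Mathew's Proposition~3.22, with the observation that only formal properties of the extension/restriction adjunction are used. You simply spell out the two Barr--Beck hypotheses (conservativity and preservation of split totalizations) and flag the symmetric-monoidal-to-noncommutative translation explicitly, which the paper glosses over (and partially addresses instead via Balmer's descent in Remark~\ref{rmk:Balmer_descent}).
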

\begin{proof}
It is enough to check that it is possible to apply the Lurie--Barr--Beck Theorem. This is done as in~\cite[Proposition 3.22]{Mathew}, where only abstract results from the theory of comonadic descent are used.
\end{proof}

\begin{rmk} \label{rmk:Balmer_descent}
Let us emphasise that we have been deliberately sketchy in the proofs of Theorem~\ref{thm:comonadic_descent} and Proposition~\ref{prop:cobar_limit_diagram}, because there exists another, more classical, way to obtain (essentially) the same results (under the stronger hypotheses that we will impose later on). Indeed, by the work of Balmer~\cite{Balmer}, we know that all faithful monads between triangulated categories satisfy descent\footnote{Here we employ Balmer's terminology on descent. Given a comonad $M: \bC \to \bC$, in Balmer's terminology, $M$ is said to satisfy \emph{descent} if the comparison functor $\bC \to \bCo \bMod_M$ is fully faithful and it is said to satisfy \emph{effective descent} if the comparison functor is an equivalence. We refer to \cite{Balmer} for details on these constructions. What we are calling comonadic descent in the present paper corresponds to Balmer's effective descent.} --- see \cite[Corollary 2.15]{Balmer}. Observe that Balmer's result can be stated by replacing the $\R \lim N^\bullet(B)$ with  the $\lim N^\bullet(B)$, which has the effect of ``switching" from the homotopical descent to the usual categorical descent. Therefore, Balmer's computations in \cite[Corollary 2.15]{Balmer} effectively represent the first step of the proof Theorem~\ref{thm:comonadic_descent}, under the faithfulness hypothesis. By iterating Balmer's computations, one obtains the full homotopical comonadic descent. Let us also emphasise that Balmer's computations yield the even stronger result that the complex obtained via comonadic descent in the faithful situation is not only exact but split exact.
\end{rmk}

In our situation, the meaning of Theorem~\ref{thm:comonadic_descent} is that the category $\bH \bMod_A$ can be reconstructed as a homotopical limit of the categories $\bH \bMod_{B_i}$. Thus, by following the prescription of comonadic descent, it is possible to reconstruct the global data (\ie $\bH \bMod_A$) from the local data (\ie the localizations $\bH \bMod_{B_i}$). But this reconstruction process does not give a sheaf in the usual sense on $\SpecNC_\fine(A)$, because in the noncommutative case the base change of algebras is not compatible with the base change of modules (concretely, Theorem \ref{thm:commutative_faithful_conjecture} fails). 

However, it is not difficult to check that it is still a well-defined pre-sheaf, as it is obvious by the property of stability of homotopical epimorphisms under composition. A less obvious statement is that for any object $M \in \bH \bMod_A$ the associatied pre-sheaf is well-defined. For the sake of completeness, we provide a proof of this basic fact below.

\begin{prop} \label{prop:associated_presheaf}
    Let $M \in \bH \bMod_A$. Then the association
    \[ \tilde{M}(U) = M \otimes_A^\L B_U \]
    is a pre-sheaf valued in $\bH \bMod_A$, on the site $\bOuv(\Spec(A))$, where $U$ is the open subset corresponding to the homotopical epimorphism $A \to B_U$.
\end{prop}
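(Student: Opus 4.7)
The plan is to verify the two constitutive properties of a pre-sheaf valued in $\bH\bMod_A$ on $\bOuv(\Spec(A))^\op$: the existence of restriction maps and their functoriality. The structural observation that streamlines the argument is that $\bOuv(\Spec(A))$ is equivalent to a poset by Proposition~\ref{prop:Loc_poset}, so all potential coherences reduce to the comparison of (at most) one morphism between any two given objects.

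First, I would construct the restriction maps. Given an inclusion $U \le V$ in $\bOuv(\Spec(A))$, dually we obtain a morphism $B_V \to B_U$ in $\bH\bRings_A$ (it is automatically a morphism of $A$-algebras since both structure maps $A \to B_V$ and $A \to B_U$ are part of the data of $\bLoc(A)$). Viewing this map through the forgetful functor as a morphism of underlying $A$-bimodules and applying the derived bifunctor $M \otimes_A^\L (-)$, we obtain the candidate restriction
\[
\rho_{U,V} \colon \tilde{M}(V) = M \otimes_A^\L B_V \longrightarrow M \otimes_A^\L B_U = \tilde{M}(U),
\]
which is a morphism in $\bH\bMod_A$ (after restriction of scalars back to $A$).

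Second, I would verify functoriality. For the identity $\id_U$, bifunctoriality of $\otimes_A^\L$ on the homotopy category immediately gives $\rho_{U,U} = \id_{\tilde{M}(U)}$. For a chain $U \le V \le W$, the composite $B_W \to B_V \to B_U$ and the direct arrow $B_W \to B_U$ in $\bH\bRings_A$ must coincide, because $\Hom_{\bH\bRings_A}(B_W, B_U)$ is either empty or a singleton by Proposition~\ref{prop:Loc_poset}. Applying the bifunctor $M \otimes_A^\L (-)$ then yields $\rho_{U,W} = \rho_{U,V} \circ \rho_{V,W}$ in $\bH\bMod_A$, which is exactly the pre-sheaf condition.

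I do not anticipate any genuine obstacle: once the poset structure of $\bOuv(\Spec(A))$ is invoked, essentially nothing remains to check beyond the fact that $\otimes_A^\L$ is a well-defined derived bifunctor on the relevant homotopy categories, which is a standard consequence of the model-theoretic machinery recalled in Section~\ref{sec:preliminaries}. The only point worth emphasising is that we obtain a \emph{strict} pre-sheaf (not merely a pseudofunctor), precisely because the $1$-categorical poset structure of $\bLoc(A)$ eliminates the higher-coherence issues that would otherwise be visible at the $\infty$-categorical level.
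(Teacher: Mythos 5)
Your proof is correct, and it reaches the conclusion by a slightly different route than the paper. The paper's own argument is a one-liner that reduces everything to the base-change compatibility
\[
M \otimes_A^\L C_V \;\cong\; \bigl(M \otimes_A^\L B_U\bigr) \otimes_{B_U}^\L C_V
\]
for a composite of homotopical epimorphisms $A \to B_U \to C_V$: this shows that restriction from $U$ to $V$ can be realised as base change of $\tilde M(U)$ along $B_U \to C_V$, so "composition is well-defined." You instead never invoke this associativity isomorphism: you observe that $\bLoc(A)$ is (equivalent to) a poset by Proposition~\ref{prop:Loc_poset}, so the assignment $U \mapsto B_U$ with the unique connecting maps is automatically a functor, and then you simply post-compose with the derived bifunctor $M \otimes_A^\L (-)$ (applied to the underlying bimodule maps, with restriction of scalars back to $A$). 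Your route is marginally more economical and makes transparent why no coherence needs checking at the level of homotopy categories; the paper's route establishes en passant the stepwise base-change identity, which is the fact actually used later (e.g.\ for Corollary~\ref{cor:NCpre-sheaf} and the descent discussion), where one wants $\tilde M(V)$ to be computable from $\tilde M(U)$ as a $B_U$-module rather than only from $M$ itself. Both arguments are complete for the statement as given.
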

\begin{proof}
    Being a pre-sheaf just means being a functor $\bLoc(A) \to \bH \bRings_\Z$. So, the task at hand reduces to check that composition is well-defined. Let $A \to B_U$, $B_U \to C_V$ be two homotopical epimorphism. Then we have the natural isomorphism
    \[ \tilde{M}(V) = M \otimes_A^\L C_V \cong M \otimes_A^\L B_U \otimes_{B_U}^\L C_V\,. \]
    This yields the claim.
\end{proof}

From Proposition \ref{prop:associated_presheaf} it is not hard to deduce the following more refined result.

\begin{cor}\label{cor:NCpre-sheaf}
The association 
\[ U \mapsto \bH \bMod_{B_U} \]
defines a pre-sheaf\footnote{Technically this is a 2-to-1 pseudofunctor, because it is valued in the category of additive categories. We do not dwell into the details of the theory of 2-to-1 pseudofunctors here, because their use is not needed in the framework of $\infty$-categories.} of categories on $\bOuv(A)$.
\end{cor}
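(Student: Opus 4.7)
The plan is to upgrade the module-level statement of Proposition~\ref{prop:associated_presheaf} into a statement about the assignment of categories, by packaging the base-change functors into a $2$-to-$1$ pseudofunctor $\bOuv(\Spec(A))^\op \to \mathbf{Cat}$ (equivalently, a covariant $2$-to-$1$ pseudofunctor $\bLoc(A) \to \mathbf{Cat}$). First, I would fix the data on objects and $1$-morphisms. To an object $U \in \bOuv(\Spec(A))$, corresponding to a homotopical epimorphism $f_U\colon A \to B_U$, I assign the category $\bH \bMod_{B_U}$. To an inclusion $V \subset U$ in $\bOuv(\Spec(A))$, corresponding to a morphism $B_U \to B_V$ in $\bLoc(A)$ (which is itself a homotopical epimorphism, by composition-stability of homotopical epimorphisms, cf.~Proposition~\ref{prop:Zar site}), I assign the base-change functor
\[
(-)\otimes_{B_U}^\L B_V\colon \bH \bMod_{B_U} \longrightarrow \bH \bMod_{B_V}
\]
from \eqref{eq:adj_L_mod}, which is a well-defined (additive, even triangulated) functor between homotopy categories of right modules.

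Next, I would spell out the composition constraints. Given $W \subset V \subset U$ in $\bOuv(\Spec(A))$, with associated homotopical epimorphisms $A \to B_U \to B_V \to B_W$, the associativity of the derived tensor product yields, for every $M \in \bH \bMod_{B_U}$, a natural isomorphism
\[
\alpha_{U,V,W}(M)\colon \bigl(M \otimes_{B_U}^\L B_V\bigr) \otimes_{B_V}^\L B_W \;\xrightarrow{\ \cong\ }\; M \otimes_{B_U}^\L B_W\,,
\]
which is exactly the content of Proposition~\ref{prop:associated_presheaf} applied objectwise; these $\alpha_{U,V,W}$ are the $2$-cells that witness functoriality up to coherent isomorphism. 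For the unit, the canonical isomorphism $M \otimes_{B_U}^\L B_U \cong M$ provides the required natural isomorphism between the base-change along the identity and the identity functor on $\bH \bMod_{B_U}$.

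It then remains to verify the coherence axioms of a $2$-to-$1$ pseudofunctor: the pentagon axiom for four nested opens $X \subset W \subset V \subset U$, and the unit triangle axioms. The pentagon reduces to a diagram of natural isomorphisms between iterated derived tensor products, which commutes because the associator for $\otimes^\L$ is itself coherent (MacLane's pentagon for the derived monoidal structure on $\bH\bMod_A$-bimodules, combined with the standard cancellation $B_V \otimes_{B_U}^\L B_V \cong B_V$ coming from Proposition~\ref{prop:characterization_homotopy_epi}); the unit triangles are similarly routine.

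The genuinely delicate ingredient is not any of the above verifications, but rather the one already dispatched in Proposition~\ref{prop:associated_presheaf}: namely that the base-change assignment $M \mapsto M \otimes_A^\L B_U$ produces a consistent pre-sheaf valued in modules, without running into the pathologies that distinguish $\otimes_A^\L$ from $\ast_A^\L$ in the noncommutative setting (see Remark~\ref{rmk:failure_base_chage}). Once that module-level statement is known, the categorification is essentially formal, and the corollary follows.
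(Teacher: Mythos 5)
Your proposal is correct and is essentially the argument the paper has in mind: the corollary is stated as an immediate refinement of Proposition~\ref{prop:associated_presheaf}, and your verification (base-change functors on morphisms of the poset $\bLoc(A)$, compositor $2$-cells from cancellation of derived tensor products, coherence from the associator of $\otimes^\L$) is exactly the routine deduction the authors leave implicit. Two cosmetic points: the fact that $B_U\to B_V$ is a homotopical epimorphism follows from left-cancellation (if $A\to B_V$ and $A\to B_U$ are homotopy epimorphisms then so is $B_U\to B_V$), not from stability under composition, and neither this fact nor the isomorphism $B_V\otimes_{B_U}^\L B_V\cong B_V$ is actually needed for the pseudofunctor structure itself.
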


With this in mind, we are in the position to provide a new notion of \emph{descendable pre-sheaf of rings.}

\begin{defn} \label{defn:descendable_pre_sheaf}
Let $X$ be a topological space (or more generally a site).
A \emph{descendable  pre-sheaf of rings} $\cO_X: \bOuv(X)^\op \to \bH \bRings_\Z$ is a contravariant functor from the category of open sets of $X$ to homotopical rings $\bH \bRings_\Z$ satisfying the \emph{descent condition} that for any cover $\{U_i \to U\}_{i \in I}$ such that  $U = \bigcup_{i \in I} U_i$  we have
\[ \cO_X(U) = \R \lim N^\bullet \left( \cO_X({U_i})\right) \,, \]
where  $N^\bullet(\cO_X({U_i}))$ denotes the Amistur-\u{C}ech nerve of the cover given by the cosimplicial object
\[ \prod_{i \in I} \cO_X({U_i}) \rightrightarrows \prod_{i, j \in I} \cO_X({U_i}) \otimes_{\cO_X({U})}^\L \cO_X({U_j})  \mathrel{\substack{\textstyle\rightarrow\\[-0.6ex]
                      \textstyle\rightarrow \\[-0.6ex]
                      \textstyle\rightarrow}}\cdots. \]
\end{defn}

\begin{rmk}
    Observe that in Definition~\ref{defn:descendable_pre_sheaf} only the underlying $\cO_X$-module is reconstructed via descent, not the algebra structure of $\cO_X$. This is due to the fact that the Amistur-\u{C}ech complex is only a complex of modules and not of algebras, in general.
\end{rmk}

\begin{thm} \label{thm:descendable_presheaf}
The structure pre-sheaf on $\SpecNC(A)$ is a descendable pre-sheaf. Furthermore, under Assumption~\ref{ass:conjecture} the same holds on $\SpecNC_\fine(A)$.
\end{thm}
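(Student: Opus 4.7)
The plan is to reduce the theorem to the descendability criterion of Proposition~\ref{prop:cobar_limit_diagram} applied to the total algebra $\prod_{i \in I} \cO_X(U_i)$ associated with any cover $\{U_i \to U\}$. Let me first fix a cover $\{U_i \to U\}_{i \in I}$ in $\bOuv(X) = \bOuv(\Spec(A))$ for the homotopical Zariski topology (resp.\ the fine Zariski topology, under Assumption~\ref{ass:conjecture}), and denote $B := \cO_X(U)$ and $B_i := \cO_X(U_i)$. Recall that the index set $I$ is finite by the coherence of both topologies (see Proposition~\ref{prop:enough_points} and Proposition~\ref{prop:fine_Zariski_well_defined}), and that the maps $B \to B_i$ are homotopical epimorphisms forming a cover of $\Spec(B)$.

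The first key step is to observe that the restriction of the cover $\{U_i \to U\}$ to the site $\bOuv(\Spec(B))$ is again a cover (for the corresponding topology), by stability of covers under base change established in Proposition~\ref{prop:Zar site} (resp.\ Proposition~\ref{prop:fine_Zariski_well_defined}). Combined with Remark~\ref{rmk:zariski_covers_are_descendable} — whose proof applies verbatim here, invoking Proposition~\ref{prop:faithful_covers} in the homotopy Zariski case and Assumption~\ref{ass:conjecture} in the fine case — this tells us that the total $B$-algebra
\[ B \longrightarrow \prod_{i \in I} B_i \]
is descendable in the sense of Definition~\ref{defn:descendable_algebra}. This is the crucial input: the conservativity of the base-change family $\{(\minus) \otimes_B^\L B_i\}_{i\in I}$ translates into the condition that the smallest smashing subcategory of $\bH \bMod_B$ containing $\prod_i B_i$ is all of $\bH \bMod_B$.

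Once descendability is secured, Proposition~\ref{prop:cobar_limit_diagram} yields directly the quasi-isomorphism
\[ B \;\xrightarrow{\;\simeq\;}\; \R \lim N^\bullet\!\left(\prod_{i \in I} B_i\right). \]
It then only remains to identify this cobar construction with the one prescribed by Definition~\ref{defn:descendable_pre_sheaf}. Since $I$ is finite, the derived tensor product commutes with the product, so at level $n$ of the cobar complex we have
\[ \left(\prod_{i \in I} B_i\right)^{\otimes_B^\L\, n} \;\cong\; \prod_{(i_1,\dots,i_n) \in I^n} B_{i_1} \otimes_B^\L \cdots \otimes_B^\L B_{i_n}, \]
which is precisely the $n$-th term of the Amitsur--\v{C}ech nerve appearing in Definition~\ref{defn:descendable_pre_sheaf}; the cosimplicial structure maps agree on the nose. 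This gives $\cO_X(U) \cong \R\lim N^\bullet(\cO_X(U_i))$, which is the descent condition required.

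The main — and in fact only substantive — obstacle is establishing that the family $\{B \to B_i\}$ being a cover implies the total algebra $B \to \prod_i B_i$ is descendable. In the homotopical Zariski case this is essentially free from Proposition~\ref{prop:faithful_covers}, since a faithful family of base-change functors is automatically conservative, and conservativity is exactly the condition that $\prod_i B_i$ generates the full smashing subcategory $\bH\bMod_B$ of itself. In the fine Zariski case, this requires the conjectural faithfulness statement of Conjecture~\ref{conj:faithfulness_conjecture}, which is why the second assertion is only conditional on Assumption~\ref{ass:conjecture}. Everything else is a formal manipulation of cobar complexes and a direct appeal to Proposition~\ref{prop:cobar_limit_diagram}.
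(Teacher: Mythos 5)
Your proof is correct and follows essentially the same route as the paper: descendability of $B \to \prod_i B_i$ from the faithfulness of covers (Proposition~\ref{prop:faithful_covers}, resp.\ Assumption~\ref{ass:conjecture}), then the cobar-limit formula. The only difference is that you invoke Proposition~\ref{prop:cobar_limit_diagram} directly where the paper cites Theorem~\ref{thm:comonadic_descent}, and you spell out the identification $(\prod_i B_i)^{\otimes_B^\L n} \cong \prod_{(i_1,\dots,i_n)} B_{i_1} \otimes_B^\L \cdots \otimes_B^\L B_{i_n}$ with the Amitsur--\v{C}ech nerve --- a detail the paper's two-line proof leaves implicit.
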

\begin{proof}
    Theorem~\ref{thm:comonadic_descent} and Proposition~\ref{prop:faithful_covers} directly imply the claim for $\SpecNC(A)$. Conjecture~\ref{conj:faithfulness_conjecture} allows one to apply Theorem~\ref{thm:comonadic_descent} to $\SpecNC_\fine(A)$, too. 
\end{proof}

In all examples described so far it is possible to check that the structure pre-sheaf is, in fact, a sheaf on $\SpecNC(A)$. This often happens for trivial reasons: in many examples the homotopical Zariski topology is the trivial topology on $\bOuv(A)$, and thus all pre-sheaves are sheaves. We conjecture the following.

\begin{conj} \label{conj:structure_sheaf}
The structure pre-sheaf on $\SpecNC(A)$ is a sheaf.
\end{conj}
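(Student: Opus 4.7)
The plan is to interpret the sheaf condition as a comonadic descent statement for algebras and then establish it using an analogue of Theorem~\ref{thm:comonadic_descent}, with the conservativity of algebra base change --- which is, by definition, exactly what a cover for the formal homotopy Zariski topology provides --- playing the role of the faithfulness hypothesis (Assumption~\ref{ass:conjecture}) used for $\SpecNC_\fine$. More precisely, given a cover $\{A \to B_i\}_{i \in I}$ in the formal homotopy Zariski topology, I would first show that the sheaf condition $\cO_X(U) \cong \R\lim(\cdots)$ translates, via Proposition~\ref{prop:iso_smashing_poset}, into the statement that $A$ is the homotopy totalisation of the cobar construction of the comonad $G = LR$ on $\bH\bRings_A$ associated with the adjunction
\[ L : \bH\bRings_A \leftrightarrows \prod_{i \in I} \bH\bRings_{B_i} : R\,, \qquad L(X) = (X \ast_A^\L B_i)_{i \in I}\,, \quad R((Y_i)_i) = \prod_i Y_i\,.\]
The cosimplicial object $G^\bullet(A)$ unfolds to
\[ \prod_i B_i \rightrightarrows \prod_{i,j} B_i \ast_A^\L B_j \mathrel{\substack{\textstyle\rightarrow\\[-0.6ex] \textstyle\rightarrow \\[-0.6ex] \textstyle\rightarrow}} \cdots\,, \]
which is precisely the Čech nerve relative to the lattice intersections in $\bOuv(\Spec A)$ --- crucially, using the free product $\ast_A^\L$ rather than $\otimes_A^\L$, which is why this statement differs from the descendable property of Theorem~\ref{thm:descendable_presheaf} and is not in conflict with Example~\ref{exa:structure_not_sheaf}.

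Next, I would verify comonadicity of $L$ by applying the Lurie--Barr--Beck theorem, mimicking the strategy of Proposition~\ref{prop:cobar_limit_diagram} and Theorem~\ref{thm:comonadic_descent} but in the category of algebras. The conservativity hypothesis is granted for free by Definition~\ref{defn:homotopical_Zariski_open_immersion}, so the only nontrivial input is that $L$ preserves totalisations of $L$-split cosimplicial diagrams. Being the restriction of a left Quillen functor (derived free product) combined with a finite product, $L$ has good formal behaviour, and split cosimplicial totalisations are absolute limits preserved by any functor, so this should go through once the appropriate $\infty$-categorical enhancement is invoked. Once comonadicity is established, the canonical comparison identifies $A$ with $\R\lim G^\bullet(A)$, yielding the sheaf condition for the basic opens; extension to arbitrary opens of $\SpecNC(A)$ is then formal, using that $\bOuv(\Spec A)$ is a basis and that sheafification commutes with restriction to a basis of a coherent site (Proposition~\ref{prop:enough_points}).

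The main obstacle I anticipate lies in the verification of the second Barr--Beck hypothesis: unlike the module-level adjunction, where the free product coincides with the tensor product in favourable situations (Theorem~\ref{thm:commutative_faithful_conjecture}), in the noncommutative setting $\ast_A^\L$ is genuinely more complex, as emphasised in Remark~\ref{rmk:failure_base_chage}. Concretely, one needs to rule out pathologies in the interaction of $\ast_A^\L$ with totalisations of $L$-split cosimplicial objects in $\bH\bRings_A$. In the formalism of connective dg-algebras (or simplicial rings), verifying this requires either passing to a fully $\infty$-categorical treatment --- where the statement becomes part of the standard machinery of monadicity over presentable $\infty$-categories --- or producing an explicit splitting argument in the model-categorical framework. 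A related difficulty, and arguably the conceptual heart of the matter, is that unlike the module-level descent of Theorem~\ref{thm:comonadic_descent} (which for commutative rings reduces to the classical sheaf condition thanks to Theorem~\ref{thm:commutative_faithful_conjecture}), the algebra-level descent we are after genuinely uses the noncommutative coarser topology of $\SpecNC$, so the proof cannot be reduced to the commutative case and must exploit intrinsically noncommutative features of homotopical epimorphisms.

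Finally, as a sanity check, I would verify the conjecture in the examples of Section~\ref{exa:spectra} where the homotopy Zariski topology is trivial (so all pre-sheaves are trivially sheaves), and in the hereditary-algebra examples such as the Kronecker algebra, where the lattice $\bOuv$ is explicit and the Čech computation can be carried out by hand; agreement in these cases would confirm that the algebraic cobar construction with $\ast_A^\L$ indeed reproduces the global algebra $A$, providing concrete evidence for the general statement.
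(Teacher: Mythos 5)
This statement is a \emph{conjecture} in the paper (Conjecture~\ref{conj:structure_sheaf}): the authors do not prove it, and they say explicitly that proving it ``amounts to applying Lurie--Barr--Beck to the adjunction $\bH\bRings_A \leftrightarrows \prod_{i\in I}\bH\bRings_{B_i}$'' and that ``the main obstacle in doing so is that it requires a good understanding of the homological properties of the free algebras $A\{x\} = \Z\lt x\gt \ast_\Z A$ \dots\ an understanding which we are lacking at the moment.'' Your proposal is therefore not a proof of a result the paper establishes by other means; it is a plan that coincides, almost verbatim, with the authors' own suggested strategy, including the correct observation that the Čech nerve for the sheaf condition on $\SpecNC(A)$ must be built from $\ast_A^\L$ (intersections in $\bOuv$) rather than $\otimes_A^\L$ (the descendable condition of Theorem~\ref{thm:descendable_presheaf}), and the correct identification of the second Barr--Beck hypothesis as the crux.

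The genuine gap is exactly where you wave your hands. You write that ``split cosimplicial totalisations are absolute limits preserved by any functor, so this should go through.'' This conflates two different diagrams: an $L$-split cosimplicial object is an object $X^\bullet$ of $\bH\bRings_A^{\Delta}$ whose \emph{image} $L(X^\bullet)$ admits a splitting; the totalisation of $L(X^\bullet)$ is indeed absolute, but what Barr--Beck requires is that $\bH\bRings_A$ admits, and $L$ preserves, the totalisation of the (non-split) diagram $X^\bullet$ itself. That preservation is not formal: it is precisely where one needs control of how $(\minus)\ast_A^\L B_i$ interacts with cosimplicial limits in $\bH\bRings_A$, i.e.\ the homological behaviour of the free algebras $A\{x\}$ that the authors say they cannot yet handle. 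Your ``sanity checks'' (trivial topologies, hereditary algebras) are consistent with the paper's remark that in all known examples the conjecture holds for essentially degenerate reasons, but they do not close this gap. As it stands, your proposal reproduces the authors' roadmap and their stated obstruction; it does not overcome it.
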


Proving Conjecture~\ref{conj:structure_sheaf} amounts to applying Lurie-Barr-Beck to the adjunction
\begin{equation} \label{eq:adj_cover_rings}
\bH \bRings_A \leftrightarrows \prod_{i \in I} \bH \bRings_{B_i} 
\end{equation}
for a cover $\{ A \to B_i \}_{i \in I}$ for the homotopical Zariski topology. The main obstacle in doing so is that it requires a good understanding of the homological properties of the free algebras $A\{ x \} = \Z \lt x \gt \ast_\Z A$ introduced earlier --- an understanding which we are lacking at the moment.

\begin{rmk} \label{rmk:final_descendable}
    Let us make a final remark about descendable pre-sheaves. 
    A reason for introducing this notion is our analysis of the example of the quiver path algebra $k A_2$. For the latter, we have described the canonical morphism
    \[ \iota_{k A_2}: \SpecNC_\fine(k A_2) \to \SpecNC(k A_2) \]
    as the inclusion of the discrete topological space with three points as the three generic points of $\SpecNC(k A_2)$. In this case the structure pre-sheaf is a sheaf, because the homotopical Zariski topology is trivial on $\bOuv(k A_2)$ and hence all pre-sheaves on $\bOuv(k A_2)$ are sheaves. Pulling back via $\iota_{k A_2}$ just gives the same pre-sheaf on $\bOuv(k A_2)$ (because $\iota_{k A_2}$ is induced by the identity functor on $\bOuv(k A_2)$). Then, Example~\ref{exa:structure_not_sheaf} tells us that the resulting pre-sheaf is not a sheaf on $\SpecNC_\fine(k A_2)$ and its sheafification leads to a sheaf whose global sections is not $k A_2$. This latter feature is not desirable if we want to study the geometry of $k A_2$. But Example \ref{exa:A_2_descent} shows that the structure pre-sheaf is a descendable pre-sheaf on $\SpecNC_\fine(k A_2)$ and thus it satisfies a local-to-global gluing process that is different from the one commonly used in sheaf theory. In view of the above, we would argue that sheaf theory is not adequate for studying the geometry of $\SpecNC_\fine(k A_2)$, and an extension along the lines described in this section is required.
\end{rmk}

In this subsection we have briefly outlined what a noncommutative version of sheaf theory may look like. In the next subsection we will show that a suitable subcategory of the category of pre-ringed spaces is equivalent to the opposite category of $\bH \bRings_\Z$, thus realising a weak form of Gelfand's duality for noncommutative dga's --- and, in particular, for noncommutative rings.

\subsection{Towards noncommutative Gelfand's duality}

In this section we provide a first version of a noncommutative extension of Gelfand's duality to the category of rings. This extension is carried out in the algebraic setting of this paper; Gelfand duality for $C^*$-algebras will be discussed in a future work, where the machinery from \cite{bambi2}, \cite{BKK} and related works will be employed. We will also elaborate on why the current version of our result is only a first step towards a more refined theory which will be carefully developed elsewhere.

Let us start by briefly recalling some key aspects of the duality between affine schemes and commutative rings. The algebraic version of Gelfand's duality, \ie the duality between affine schemes and commutative rings developed by Grothendieck, requires one to endow the topological spaces resulting from the construction of the spectrum with extra-structure. Indeed, this step is necessary to obtain a perfect duality. The extra-structure in question for $X = \SpecG(A)$ to be equipped with is the structure sheaf of rings $\cO_X$ which allows one to discriminate between affine schemes that are indistinguishable as mere topological spaces. For example, any field extension $k \to l$ induces a homeomorphism of the underlying spectra $\SpecG(l) \to \SpecG(k)$. Therefore, the setting of ringed spaces is what is needed for the theory of affine schemes. But it is still not enough to obtain a perfect duality, because if we consider two spectra of commutative rings $(X, \cO_X)$ and $(Y, \cO_Y)$ regarded as ringed spaces, then there exist morphisms $(X, \cO_X) \to (Y, \cO_Y)$ that do not come from a morphism of commutative rings. For the benefit of the reader, we recall the following well known example of a morphism with this property. 

\begin{exa} \label{exa:morphism_ringed_spaces_not_form_crings}
Let $X = (\SpecG(\Q), \cO_{\SpecG (\Q)})$ and $Y = (\SpecG(\Z), \cO_{\SpecG (\Z)})$ be the classical affine schemes associated to $\Q$ and $\Z$, and consider the continuous map \[f_p: \star = \SpecG(\Q) \to \{ p \} \subset \SpecG(\Z)\] 
sending the point of $X$ to a closed point of $Y$ identified by a prime number $p$. We have that 
\[ f_p^{-1}(\cO_{\SpecG(\Z)}) = \limind_{U \supset \{p\}} \cO_Y(U) \cong \Z_{(p)}. \]
Therefore, the canonical inclusion map $\Z_{(p)} \to \Q$ defines a morphism of ringed spaces, but this morphism does not come from a map of rings because the only morphism of rings $\Z \to \Q$ sends $\SpecG(\Q)$ to the generic point. 
\end{exa}

The key observation here is that the map $\Z_{(p)} \to \Q$ is not a local map of local rings, which naturally leads one to consider the category of locally ringed spaces. This latter is a non-full subcategory of the category of ringed spaces that realises a perfect duality with commutative rings.

Another way to rephrase the above observation is that it is possible to construct a functor
\[ \SpecG: \bC\bRings^\op = \bAff \to \bTop \]
given by $A \mapsto \SpecG(A)$ which is not faithful, because it is not conservative: the functor $\SpecG$ loses some information about the category $\bAff$. Therefore, it must be enhanced to a functor 
\begin{equation} \label{eq:duality_ring_spaces}
(\SpecG, \cO): \bAff \to \mathbf{RingSp}
\end{equation} 
to the category of ringed spaces. This latter is faithful, so does not lose information, but it is not fully faithful. 

At this point, the task is to describe the essential image of this functor. This has led to the introduction of the category of locally ringed spaces for which Grothendieck proved that the functor 
\begin{equation} \label{eq:duality_lo_ring_spaces}
(\SpecG, \cO): \bAff \to \mathbf{LocRingSp} 
\end{equation} 
is fully faithful, so that the latter duality is 
a duality, \ie a fully faithful inclusion. 
In this work we do not achieve the goal of describing a perfect duality between the category of rings and a suitable category of (homotopically) ringed spaces, like in the case of \eqref{eq:duality_lo_ring_spaces}. What we can achieve is the less ambitious task of constructing a faithful functor like in the case of \eqref{eq:duality_ring_spaces}, leaving the task of describing the essential image of the functor we construct for future work. 

\

Let us consider a variation of the definition of ringed space. The issue we faced at the beginning of this section is that in the noncommutative case we do not know whether the association $U \mapsto \cO_U$ is a (homotopical) sheaf (although we have Conjecture \ref{conj:structure_sheaf} in this direction), but only a pre-sheaf (see Corollary~\ref{cor:NCpre-sheaf}).

Therefore, we can generalise the notion of ringed space to that of pre-ringed space.

\begin{defn} \label{defn:pre-ringed_space}
    A \emph{pre-ringed space} (or \emph{site}) is a topological space (or site) $X$ equipped with a structure pre-sheaf $\cO_X : \bOuv(X)^\op \to \bH \bRings_\Z$. A morphism of a pre-ringed spaces $(X, \cO_X) \to (Y, \cO_Y)$ is a pair given by a continuous map $f: X \to Y$ and morphism of pre-sheaves $\cO_Y \to f_*(\cO_X)$. We denote the category of pre-ringed spaces (resp. sites) by $\mathbf{PreRingSp}$ (resp. $\mathbf{PreRingSites}$).
\end{defn}

The construction given above defines a ringed site $(\bZar_A, \cO_A)$ for any $A \in \bH \bRings_\Z$. In the next remark we explain how to extend this construction to the topological space $\SpecNC(A)$.

\begin{rmk} \label{rmk:pre-sheaf_extension}
    Given an open subset $U \subset \SpecNC(A)$, one can always write $U = \bigcup_{i \in I} U_i$ where $U_i = \SpecNC(B_i)$ are open subsets associated to homotopical epimorphisms $A \to B_i$. Then we define
    \[  \cO_{\SpecNC(A)}(U) = \R \lim_{i \in I} N^\bullet(\cO_X(U_i)) \cong \R \lim_{i \in I} N^\bullet(B_i)\,, \]
    where $N^\bullet(B_i)$ is the Amistur--\u{C}ech complex introduced earlier when discussing the comonadic descent. Since for any affine cover $\bigcup_{j \in J} V_j = U_i$ we have that
    \[ \cO_{\SpecNC(A)}(U_i) = \R \lim_{j \in J} N^\bullet(\cO_X(V_i)) \,, \]
    \ie the value of $\cO_{\SpecNC(A)}(U_i)$ does not depend on the cover used to compute it (because by Theorem \ref{thm:descendable_presheaf} the structure pre-sheaf is descendable), a standard computation shows that also the value of $\cO_{\SpecNC(A)}(U)$ is independent of the chosen cover. The problem with this procedure is that the resulting pre-sheaf on $\SpecNC(A)$ is only a pre-sheaf of $A$-modules, so we cannot endow sections with the structure of a ring.
    Assuming the validity of Conjecture~\ref{conj:structure_sheaf} we can also extend $\cO_X$ to a sheaf of homotopical rings on all $\SpecNC(A)$ using the comonadicity of the adjunction \eqref{eq:adj_cover_rings}. We keep using the pre-ringed site $(\bZar_A, \cO_A)$, although conjecturally its use can be replaced by the pre-ringed space $(\SpecNC(A), \cO_A)$.
\end{rmk}

Observe that, unlike in the classical notion of ringed space (or site), in Definition~\ref{defn:pre-ringed_space} we allow the values of $\cO_X$ to be homotopical rings; if these are concentrated in degree $0$ we obtain a classical pre-sheaf of rings. The important feature brought about by Definition~\ref{defn:pre-ringed_space}, when put in the context of the discussion of duality made thus far, is expressed by the following lemma.

\begin{lemma} \label{lemma:ringed_full_into_pre_ringed}
The category of ringed spaces (resp. sites) is a full subcategory of the category of pre-ringed spaces (resp. sites)\footnote{We prove the lemma only for classical ringed spaces and pre-ringed spaces, but the same result holds for the category of homotopically ringed spaces replacing hom-sets with mapping spaces. We refrain to enter in the technical details of the homotopical version of the lemma because the best way to state and prove it is using the language of $\infty$-categories. We also emphasise that in the noncommutative situation the homotopical version of the lemma is the more relevant one.}.
\end{lemma}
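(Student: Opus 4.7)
The plan is to exhibit the inclusion $\mathbf{RingSp} \hookrightarrow \mathbf{PreRingSp}$ as the obvious functor that forgets the sheaf condition on the structure pre-sheaf, and then verify that it is fully faithful by unpacking the definition of morphism on both sides.

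First I would observe that if $(X, \cO_X)$ is a ringed space, then $\cO_X$ is in particular a pre-sheaf of rings, hence $(X, \cO_X)$ can be viewed as an object of $\mathbf{PreRingSp}$. Given two ringed spaces $(X, \cO_X)$ and $(Y, \cO_Y)$, a morphism in $\mathbf{RingSp}$ consists of a continuous map $f : X \to Y$ together with a morphism of \emph{sheaves} $\phi : \cO_Y \to f_* \cO_X$, whereas a morphism in $\mathbf{PreRingSp}$ consists of the same data but with $\phi$ only a morphism of \emph{pre-sheaves}. Fullness of the inclusion therefore reduces to the claim that every morphism of pre-sheaves $\cO_Y \to f_* \cO_X$ is automatically a morphism of sheaves.

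The key step is then the standard observation that the pushforward $f_*$ along a continuous map sends sheaves to sheaves: for any open $V \subset Y$ and any cover $\{V_i\}$ of $V$, the cover $\{f^{-1}(V_i)\}$ of $f^{-1}(V)$ together with the sheaf property of $\cO_X$ on $X$ implies the sheaf property of $f_* \cO_X$ on $V$. Hence $f_* \cO_X$ is a sheaf on $Y$, and a morphism of pre-sheaves between two sheaves is the same datum as a morphism of sheaves, because the full subcategory $\bSh(Y) \subset \bPsh(Y)$ is, by definition, full. Faithfulness is immediate from the fact that the underlying data (the pair $(f,\phi)$) is the same on both sides.

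The statement for sites is proved verbatim in the same way, upon replacing the topological spaces $X$ and $Y$ by sites and the pushforward functor by the site-theoretic pushforward (which again preserves the sheaf condition). No step here is a genuine obstacle: the proof amounts to nothing more than carefully matching the two definitions and invoking the well-known fact that $f_*$ preserves sheaves. I therefore expect the proof to be short, and any technicalities to be confined to being explicit about whether $\cO_X$ is valued in ordinary rings or in $\bH \bRings_\Z$, in which case the argument is identical once ``sheaf'' is understood in the appropriate homotopical sense.
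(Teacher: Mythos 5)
Your proof is correct and follows the same route as the paper's: both reduce the claim, for each fixed continuous map $f$, to the identification $\Hom_{\bSh(Y)}(\cO_Y, f_*\cO_X) \cong \Hom_{\bPsh(Y)}(\cO_Y, f_*\cO_X)$, which holds because $f_*$ preserves sheaves and $\bSh(Y)$ is a full subcategory of $\bPsh(Y)$. No discrepancies to report.
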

\begin{proof}
We prove the lemma for ringed spaces; the same proof works for ringed sites as well.
Clearly, every ringed space is a pre-ringed space in a canonical way. We only need to check that the notions of morphism in the two categories agree. Let us consider two ringed spaces $(X, \cO_X) \to (Y, \cO_Y)$. Then, given a morphism of topological spaces $f: X \to Y$, the subset of 
\[ \Hom_{\mathbf{RingSp}}((X, \cO_X), (Y, \cO_Y)) \]
corresponding to $f$ is given by the set
\[ \Hom_{\bSh(Y)}(\cO_Y, f_*(\cO_X))\,. \]
But since the direct image of a sheaf is always a sheaf and the category of sheaves is a full subcategory of the category of pre-sheaves, we get a canonical bijection
\[ \Hom_{\bSh(Y)}(\cO_Y, f_*(\cO_X)) \cong \Hom_{\bPsh(Y)}(\cO_Y, f_*(\cO_X))\,. \]
But the latter is the subset of 
\[ \Hom_{\mathbf{PreRingSp}}((X, \cO_X), (Y, \cO_Y)) \]
corresponding to $f$, and we have the claim.
\end{proof}

In view of Lemma \ref{lemma:ringed_full_into_pre_ringed}, the 
classical duality issue of scheme theory of describing the essential image of the functor
\[ A \mapsto (\SpecG(A), \cO_{\SpecG(A)}) \]
can be formulated in an equivalent way in $\mathbf{RingSp}$ or $\mathbf{PreRingSp}$. The advantage is that the latter formulation makes sense also in the noncommutative case, where we do not know if the structure pre-sheaf is a sheaf. What remains to be checked in the noncommutative setting is that the resulting functor is faithful, \ie no information about the original category is lost. Our current version of Gelfand duality for dg-algebras precisely proves this claim.

\begin{thm} \label{thm:Gelfand_duality}
The (contravariant) functor 
\begin{equation} \label{eq:Gelfand_duality}
A \mapsto (\bZar_A, \cO_A)
\end{equation} 
from the category $\bH \bRings_\Z$ to the category $\mathbf{PreRingSites}$ is faithful.
\end{thm}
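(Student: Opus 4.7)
My plan is to prove faithfulness by unpacking the morphism of pre-ringed sites attached to $f:A\to B$ and showing that its component on global sections literally \emph{is} $f$. Hence two parallel morphisms that produce the same morphism of pre-ringed sites must coincide.

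The first step is to make the data assigned to a morphism completely explicit. Given $f:A\to B$ in $\bH\bRings_\Z$, and writing $X=\Spec(A)$, $Y=\Spec(B)$, Proposition~\ref{prop:funct} associates to $f^{\op}:Y\to X$ a continuous functor $\tilde f:\bZar_X\to\bZar_Y$ sending $(U\to X)$, $U=\Spec(A_U)$ with $A\to A_U$ a homotopical epimorphism, to $(U\times_X^\R Y\to Y)=\Spec(A_U\ast^\L_A B)$. The morphism of pre-ringed sites $(\bZar_B,\cO_B)\to(\bZar_A,\cO_A)$ produced by the functor~\eqref{eq:Gelfand_duality} then consists of $\tilde f$ together with a morphism of pre-sheaves $\varphi_f:\cO_A\to\tilde f_*\cO_B$ on $\bZar_A$ whose component at $U=\Spec(A_U)$ is the canonical base-change map $A_U\to A_U\ast^\L_A B$.

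The key step is to identify the \emph{terminal} object of $\bOuv(X)$. Since $\bOuv(X)$ is the opposite poset of $\bLoc(A)$ (Definition~\ref{def:loc-ouv}), and $\bLoc(A)$ has $\mathrm{id}_A:A\to A$ as its initial object, $X$ itself is the terminal open of $\bZar_X$. Evaluated there, the structure pre-sheaf yields $\cO_A(X)=A$ and $\tilde f_*(\cO_B)(X)=\cO_B(\tilde f(X))=\cO_B(Y)=A\ast^\L_A B\simeq B$. The component $\varphi_f(X)$ of the pre-sheaf morphism at the terminal open is the canonical map $A\to A\ast^\L_A B\simeq B$, which is precisely $f$ (up to the canonical identification $A\ast^\L_A B\simeq B$).

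Now suppose $f,g:A\to B$ are two morphisms in $\bH\bRings_\Z$ whose images under~\eqref{eq:Gelfand_duality} coincide as morphisms in $\mathbf{PreRingSites}$. Equality of morphisms of pre-ringed sites entails, in particular, equality of the pre-sheaf morphisms $\varphi_f=\varphi_g:\cO_A\to\tilde f_*\cO_B=\tilde g_*\cO_B$. Evaluating this equality at the terminal open $X\in\bZar_X$ and using the identification above yields $f=\varphi_f(X)=\varphi_g(X)=g$ in $\Hom_{\bH\bRings_\Z}(A,B)$. This proves faithfulness.

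I do not anticipate any real obstacle: the argument amounts to the elementary observation that global sections of the structure pre-sheaf recover the ring tautologically, since $\bOuv(X)$ has a terminal object corresponding to the trivial localization $A\to A$. The only care needed is bookkeeping the direction of the various arrows (the contravariance of $\Spec$, the covariance of $\tilde f$ on the site of the codomain, and the direction of the pre-sheaf morphism in Definition~\ref{defn:pre-ringed_space}), which is straightforward. Note that this proof does not use Assumption~\ref{ass:conjecture} nor Conjecture~\ref{conj:structure_sheaf}, in agreement with the fact that the result is stated at the weaker level of pre-ringed \emph{sites}.
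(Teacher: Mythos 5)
Your proof is correct and follows essentially the same route as the paper's: both arguments evaluate the structure pre-sheaf morphism at the top element of $\bZar_A$ (the trivial localization $A\to A$) and observe that the resulting component $A\to B$ is tautologically $f$ itself, so distinct maps yield distinct morphisms of pre-ringed sites. Your version merely spells out the identification $A\ast^\L_A B\simeq B$ and the functor $\tilde f$ a bit more explicitly than the paper does.
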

\begin{proof}
    Let $A, B \in \bH \bRings_\Z$. We need to check that the map
    \[ \Hom_{\bH \bRings_\Z}(A, B) \to \Hom_{\mathbf{PreRingSites}}((\bZar_B, \cO_{B}), (\bZar_A, \cO_{A})) \]
    is injective. Let $f, g: A \to B$ be two different maps. We have defined the maps of topological spaces $\tilde{f}, \tilde{g}: \SpecNC(B) \to \SpecNC(A)$ induced by $f$ and $g$ in Corollary \ref{cor::funct}, as the maps induced by the pseudofunctor of Proposition \ref{prop:funct}. It is immediate, by definition, that in both cases we have
    \[ f_*(\cO_{B})(X_A) = B, \qquad g_*(\cO_{B})(X_A) = B, \]
    where $X_A$ is the top element of the lattice of $\bZar_A$.
    Once again by definition, we have that the morphism
    \[ \cO_{A}(X_A) = A \to f_*(\cO_{B})(X_A) = B  \]
    is $f$, and the morphism 
    \[ \cO_{A}(X_A) = A \to g_*(\cO_{B})(X_A) = B \]
    is $g$. Thus the two morphisms of pre-ringed sites are different.
\end{proof}

Theorem \ref{thm:Gelfand_duality} has an immediate consequence, which can be stated in classical terms.

\begin{cor} \label{cor:Gelfand_duality}
    The category of rings is equivalent to a subcategory of the category $\mathbf{PreRingSites}$.
\end{cor}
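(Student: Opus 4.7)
The plan is to deduce the corollary as an essentially formal consequence of Theorem~\ref{thm:Gelfand_duality}, by identifying the relevant subcategory as the image of the functor provided by that theorem, restricted to discrete rings. The only real ingredient to be invoked beyond the theorem is the fully faithful inclusion of rings inside homotopical rings already established in Section~\ref{sec:homotopy_algebras}.

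First I would set up the composite functor. Theorem~\ref{thm:Gelfand_duality} gives a faithful contravariant functor
\[
\Gelf \colon \bH\bRings_\Z \longrightarrow \mathbf{PreRingSites}, \qquad A \mapsto (\bZar_A, \cO_A).
\]
Pre-composing with the canonical fully faithful embedding $(\minus)^{\disc}\colon \bRings_\Z \hookrightarrow \bH\bRings_\Z$ (which realises ordinary rings as homotopical rings concentrated in degree $0$), I obtain a contravariant functor
\[
\Phi \colon \bRings_\Z \longrightarrow \mathbf{PreRingSites}, \qquad A \mapsto (\bZar_{A^\disc}, \cO_{A^\disc}).
\]
Since $(\minus)^\disc$ is fully faithful, and $\Gelf$ is faithful, the composition $\Phi$ is faithful.

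Next I would define the desired subcategory $\sS \subseteq \mathbf{PreRingSites}$ by taking as objects the pre-ringed sites $\{(\bZar_{A^\disc}, \cO_{A^\disc}) \ | \ A \in \bRings_\Z\}$ and as morphisms precisely the $\Phi$-images of morphisms of $\bRings_\Z$. Functoriality of $\Phi$ immediately ensures that $\sS$ is closed under composition and contains all identities, so $\sS$ is genuinely a (non-full) subcategory of $\mathbf{PreRingSites}$. The corestriction
\[
\widetilde\Phi \colon \bRings_\Z^{\op} \longrightarrow \sS
\]
is then essentially surjective and full by construction, and remains faithful by the previous paragraph. Hence $\widetilde\Phi$ is an equivalence of categories, which is exactly the anti-equivalence of $\bRings_\Z$ with a subcategory of $\mathbf{PreRingSites}$ asserted in the corollary (in the same sense as the classical formulation of Theorem~\ref{thm:main}).

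There is no real obstacle here: all the substantive work has been absorbed into Theorem~\ref{thm:Gelfand_duality} and the construction of the embedding $(\minus)^\disc$. The only mild subtlety worth flagging is that, as discussed after Proposition~\ref{prop:funct}, the assignment $A \mapsto \bZar_A$ is \emph{a priori} a $2$-$1$ pseudofunctor rather than a strict functor; however, as also noted there, this nuance can be strictified without affecting the statement, so the definition of $\sS$ and the verification of functoriality of $\widetilde\Phi$ go through without change.
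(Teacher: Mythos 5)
Your proposal is correct and follows essentially the same route as the paper: compose the fully faithful inclusion $\bRings_\Z \hookrightarrow \bH\bRings_\Z$ with the faithful duality functor of Theorem~\ref{thm:Gelfand_duality} and identify the subcategory with the image. The extra details you supply (explicitly defining the subcategory $\sS$ and noting the pseudofunctor strictification) are harmless elaborations of the paper's one-line argument.
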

\begin{proof}
    The inclusion $\bRings_\Z \to \bH \bRings_\Z$ is fully faithful, therefore the composition with the Gelfand duality of Theorem~\ref{thm:Gelfand_duality} is a faithful functor.
\end{proof}

\begin{rmk} \label{rmk:Gelfand_duality_for_rings}
Let us observe that, although the statement of Corollary \ref{cor:Gelfand_duality} is of classical nature, to make sense of it it is not enough to remain in the framework of rings but it is necessary to use the (homotopy) category of dg-algebras, because the natural structural pre-sheaf on $\SpecNC(A)$ is a homotopical pre-sheaf of dg-algebras not necessarily concentrated in degree $0$ (if $A$ is not commutative) --- see, \eg, Example~\ref{exa:homotopy_epimorphism}.
\end{rmk}

Theorem~\ref{thm:Gelfand_duality} can be rightfully viewed as the noncommutative analogue of Gelfand's duality, in that it describes $\bH \bRings_\Z^\op$ (and hence its full subcategory $\bRings_\Z^\op$) as a (non-full) subcategory of $\mathbf{PreRingSites}$. This is formally the same as the duality between commutative rings and affine schemes described as a (non-full) subcategory of $\mathbf{PreRingSp}$. In order for this to be useful in doing geometry, one should also have a description of the essential image of the functor \eqref{eq:Gelfand_duality}. This consists of a very specific family of pre-ringed sites, because we have shown that modules on $A$ (and hence the associated pre-sheaf of $\cO_{A}$-modules they determine) satisfy comonadic descent, \ie they have a pre-sheafy local-to-global behaviour. We postpone this study in a future work.

\

We conclude this subsection with an improvement on the description of the essential image of the functor \eqref{eq:Gelfand_duality}. This is achieved by using the following variation of the notion of ringed space.

\begin{defn} \label{defn:descendable_pre_ringed_space}
    A pre-ringed space $(X, \cO_X)$ (resp. site) is called \emph{descendable} if the structure pre-sheaf is a descendable pre-sheaf in the sense of Definition~\ref{defn:descendable_pre_sheaf}. We denote the full subcategory of $\mathbf{PreRingSp}$ (resp. sites) of descendable pre-ringed spaces (resp. sites) by $\mathbf{DescPreRingSp}$ (resp. $\mathbf{DescPreRingSites}$).
\end{defn}

\begin{thm}
\label{thm:descendable_Gelfand_duality}
The essential image of the Gelfand duality functor \eqref{eq:Gelfand_duality} lies in the subcategory $\mathbf{DescPreRingSites}$. 
\end{thm}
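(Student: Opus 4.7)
The plan is to show that for every $A \in \bH\bRings_\Z$ the pre-ringed site $(\bZar_A, \cO_A)$ produced by the functor~\eqref{eq:Gelfand_duality} satisfies the descent condition of Definition~\ref{defn:descendable_pre_sheaf}, thereby placing it in $\mathbf{DescPreRingSites}$. This essentially repackages Theorem~\ref{thm:descendable_presheaf}, so the proof is mostly a matter of matching definitions.

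First I would unpack the data: a cover in $\bZar_A$ is, by Definition~\ref{defn:small_Zatiski_site}, a finite family $\{U_i \to U\}_{i\in I}$ of morphisms in $\bOuv(X)$ corresponding to a cover $\{f_i : B_U \to B_{U_i}\}_{i\in I}$ for the formal homotopy Zariski topology, where $B_U = \cO_A(U)$ and $B_{U_i} = \cO_A(U_i)$. What must be verified is that the canonical map
\[ B_U \longrightarrow \R\lim N^\bullet(B_{U_i}), \]
with $N^\bullet$ the Amistur--\v{C}ech nerve of the cover, is a quasi-isomorphism.

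Next I would invoke Remark~\ref{rmk:zariski_covers_are_descendable}, which asserts that for any cover $\{B_U \to B_{U_i}\}_{i\in I}$ for the homotopy Zariski topology the $B_U$-algebra $B_U \to \prod_{i \in I} B_{U_i}$ is descendable in the sense of Definition~\ref{defn:descendable_algebra}. Proposition~\ref{prop:cobar_limit_diagram} then translates descendability into the statement that the canonical morphism
\[ B_U \longrightarrow \R\lim N^\bullet\!\left(\textstyle\prod_{i\in I} B_{U_i}\right) \]
is a quasi-isomorphism. The remaining step is a purely formal identification: since finite products commute with the $B_U$-linear derived tensor products appearing in the cosimplicial structure, the cobar construction of $\prod_i B_{U_i}$ over $B_U$ agrees, as a cosimplicial object of $\bH\bMod_{B_U}$, with the Amistur--\v{C}ech nerve of the family $\{B_{U_i}\}_{i\in I}$ that features in Definition~\ref{defn:descendable_pre_sheaf}. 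Hence $\cO_A$ is a descendable pre-sheaf on $\bZar_A$, and $(\bZar_A, \cO_A) \in \mathbf{DescPreRingSites}$.

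The construction is 2-functorial by Proposition~\ref{prop:funct}, so the essential image of the Gelfand duality functor indeed lands in $\mathbf{DescPreRingSites}$, which is what we wanted. The only real subtlety — which I would flag rather than prove in detail — is that the descendability statement in Remark~\ref{rmk:zariski_covers_are_descendable} only uses the homotopy Zariski topology (not the fine version), so no appeal to Assumption~\ref{ass:conjecture} is required for this theorem; the descent manifest here is that of the conservative family of base-change functors $\{\L f_i^* : \bH\bMod_{B_U} \to \bH\bMod_{B_{U_i}}\}$, whose faithfulness is granted unconditionally by Proposition~\ref{prop:faithful_covers}.
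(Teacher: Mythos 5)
Your argument is correct and is essentially the paper's proof unpacked: the paper simply cites Theorem~\ref{thm:descendable_presheaf}, whose content you reconstruct via Remark~\ref{rmk:zariski_covers_are_descendable} and Proposition~\ref{prop:cobar_limit_diagram}, together with the (correct, and valid because covers are finite) identification of the cobar construction of $\prod_i B_{U_i}$ with the Amistur--\v{C}ech nerve of the family. Your observation that Assumption~\ref{ass:conjecture} is not needed here, since the functor~\eqref{eq:Gelfand_duality} uses $\bZar_A$ rather than the fine topology, also matches the paper.
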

\begin{proof}
Theorem~\ref{thm:descendable_presheaf} immediately implies that $\cO_{A}$ is always a descendable pre-sheaf.
\end{proof}

\begin{rmk}
Under Conjecture~\ref{conj:structure_sheaf} the statement of Theorem~\ref{thm:descendable_Gelfand_duality} can be upgraded to a faithful embedding into the category $\mathbf{PreRingSp}$.
\end{rmk}

The notion of descendable pre-sheaf is a possible noncommutative generalisation of the notion of quasi-coherent sheaf, as the upcoming definition and proposition indicate.

\begin{defn} \label{defn:descendable_sheaf_O_X-modules}
    Let $(X, \cO_X)$ be a descendable pre-ringed space (resp. site). A pre-sheaf of right (or left) $\cO_X$-modules $\sF$ is said to be \emph{descendable} if for all covers $U = \bigcup_{i \in I} U_i$ we have
    \[ \sF(U) \cong \R \lim_{i \in I}  \left (\prod_{i \in I} \sF({U_i}) \rightrightarrows \prod_{i, j \in I} \sF({U_i}) \otimes_{\cO_X({U})}^\L \cO_X({U_j})  \mathrel{\substack{\textstyle\rightarrow\\[-0.6ex]
    \textstyle\rightarrow \\[-0.6ex]
\textstyle\rightarrow}}\cdots \right ). \]
We denote the full subcategory of the category of pre-sheaves of $\cO_X$-modules consisting of descendable pre-sheaves of $\cO_X$-modules by $\mathbf{Desc}$-$\cO_X$-$\bMod$. 
\end{defn}

\begin{prop} \label{thm:quasicohrent_nc}
    Consider the site $(\bZar_A, \cO_{A})$ of an object $A \in \bH \bRings_\Z$. There is an equivalence of categories $\mathbf{Desc}$-$\cO_X$-$\bMod \cong \bH \bMod_A$.
\end{prop}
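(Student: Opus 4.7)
The plan is to construct explicit quasi-inverse functors
\[
\Phi : \bH\bMod_A \leftrightarrows \mathbf{Desc}\text{-}\cO_X\text{-}\bMod : \Psi \,,
\]
given by $\Phi(M)(U) := M \otimes_A^\L B_U$ for $U \leftrightarrow (A \to B_U)$, and $\Psi(\sF) := \sF(X)$, where $X$ denotes the top element of $\bZar_A$. The $\cO_X$-module structure on $\tilde M := \Phi(M)$ comes from the right $B_U$-action on $M \otimes_A^\L B_U$, while the $A$-module structure on $\Psi(\sF)$ is induced by the pre-sheaf $\cO_X$-module structure evaluated at $X$, since $\cO_X(X) = A$.

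First I would verify that $\Phi$ lands in $\mathbf{Desc}\text{-}\cO_X\text{-}\bMod$. That $\tilde M$ is a pre-sheaf is Proposition~\ref{prop:associated_presheaf}, and $\cO_X$-linearity of the restriction maps is straightforward. To check descendability, fix a cover $\{B_U \to C_i\}_{i \in I}$ in the fine Zariski topology: by Proposition~\ref{prop:faithful_covers} the associated family of base change functors on modules is faithful, so $\prod_i C_i$ is descendable over $B_U$ in the sense of Definition~\ref{defn:descendable_algebra} (\emph{cf.}~Remark~\ref{rmk:zariski_covers_are_descendable}). A module-theoretic variant of Theorem~\ref{thm:comonadic_descent} — obtained by applying Lurie--Barr--Beck to the adjunction $\bH\bMod_{B_U} \leftrightarrows \prod_i \bH\bMod_{C_i}$, or equivalently by Balmer's splitness argument of Remark~\ref{rmk:Balmer_descent} — then expresses every object of $\bH\bMod_{B_U}$, in particular $\tilde M(U)$, as the homotopy limit of its cobar complex along the cover; this is precisely the condition of Definition~\ref{defn:descendable_sheaf_O_X-modules}. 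The composition $\Psi \circ \Phi$ is canonically the identity, since $\tilde M(X) = M \otimes_A^\L A = M$ naturally in $M$.

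The main obstacle is proving $\Phi \circ \Psi \cong \id_{\mathbf{Desc}\text{-}\cO_X\text{-}\bMod}$. Given a descendable $\sF$ with $M := \sF(X)$, the restriction maps assembled with the $\cO_X$-module structure furnish a natural morphism of descendable pre-sheaves $\eta_\sF : \tilde M \to \sF$, whose value at $U \leftrightarrow (A \to B_U)$ is the $B_U$-linearisation $M \otimes_A^\L B_U \to \sF(U)$ of the $A$-linear restriction $\sF(X) \to \sF(U)$; the task is to show $\eta_\sF$ is pointwise an equivalence. The cleanest strategy is the abstract one: reinterpret the category $\mathbf{Desc}\text{-}\cO_X\text{-}\bMod$ as the homotopy limit
\[
\R\lim_{U \in \bZar_A} \bH\bMod_{\cO_X(U)}
\]
of module categories along the base change functors $(\minus) \otimes_{\cO_X(U)}^\L \cO_X(V)$ — a repackaging of Definition~\ref{defn:descendable_sheaf_O_X-modules}. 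Under this reformulation, the claim becomes the assertion that $\bH\bMod_A$ is itself this homotopy limit, which is a direct consequence of iterated comonadic descent for the structure pre-sheaf $\cO_A$, known to be descendable by Theorem~\ref{thm:descendable_presheaf}.

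The most delicate point, and the one I expect to demand the most care, is the interchange of homotopy limits with the derived tensor product that appears implicitly when unpacking the cobar complexes of $\tilde M$ and $\sF$: this interchange must be justified via the comonadicity and splitness of the cobar complexes (Remark~\ref{rmk:Balmer_descent}), not naively. An alternative, more hands-on tactic is to propagate the equivalence $\eta_\sF(X) = \id_M$ down to every open $U$ by iteratively refining covers and invoking the descendability condition termwise, but the fact that not every $U \in \bOuv(X)$ sits naturally inside a cover of $X$ makes the abstract, limit-theoretic formulation preferable.
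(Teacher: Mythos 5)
Your construction of $\Phi$ and $\Psi$, the verification that $\tilde M$ is descendable via comonadic descent along each cover, and the identity $\Psi \circ \Phi \cong \id$ are all sound, and together they are a faithful elaboration of the paper's proof, which consists of a one-line appeal to Theorem~\ref{thm:comonadic_descent}. The gap is in the direction $\Phi \circ \Psi \cong \id$. The assertion that $\mathbf{Desc}$-$\cO_X$-$\bMod$ is ``a repackaging'' of the homotopy limit $\R\lim_{U} \bH\bMod_{\cO_X(U)}$ is not correct: that limit is the category of \emph{cartesian} sections, i.e.\ pre-sheaves $\sF$ for which every restriction map induces an equivalence $\sF(U) \otimes^\L_{\cO_X(U)} \cO_X(V) \to \sF(V)$ for all $V \subseteq U$; and since $X$ is initial in $\bOuv(X)^\op$, that limit is tautologically $\bH\bMod_A$, with no descent input needed --- a sign that the reformulation is carrying the entire content of the claim. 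Definition~\ref{defn:descendable_sheaf_O_X-modules}, by contrast, constrains $\sF$ only along \emph{covers}, via the cobar condition. These are genuinely different requirements, and the difference is exactly the obstruction you flag in your ``hands-on'' alternative: a proper open $U$ need not belong to any cover of $X$, so the cobar condition never relates $\sF(U)$ to $\sF(X)$, and the abstract formulation does not circumvent this --- it silently assumes it away.

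To see that this is not merely expository, take $A = k A_2$: the paper shows that the homotopical Zariski topology on $\bOuv(k A_2)$ admits only the trivial covers $\{\Id_U\}$, so the condition of Definition~\ref{defn:descendable_sheaf_O_X-modules} is vacuous and \emph{every} pre-sheaf of $\cO_X$-modules is descendable. The pre-sheaf with $\sF(X) = M \neq 0$ and $\sF(U) = 0$ for all proper $U$ (zero restriction maps) is then descendable but lies outside the essential image of $\Phi$, and your $\eta_\sF$ is not an equivalence. The fix is to build quasi-coherence into the objects --- require the restriction maps to be base-change equivalences, equivalently restrict to pre-sheaves of the form $\tilde M$ of Proposition~\ref{prop:associated_presheaf} --- after which the equivalence with $\bH\bMod_A$ is essentially formal and descendability becomes a property these objects enjoy (your first paragraph) rather than the defining condition. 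As written, the crucial identification is asserted, not proved, and it is false for the definition as literally stated.
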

\begin{proof}
Proposition~\ref{thm:comonadic_descent} immediately implies the claim.
\end{proof}

% Theorem~\ref{thm:quasicohrent_nc} suggests that the notion of descendable pre-sheaves of $\cO_X$-modules could be a good noncommutative generalisation of the notion of quasi-cohrent sheaf of algebraic geometry.

\subsection{Concluding remarks}

We would like to make some further remarks on the constructions described in this last section. So far we have been focussing on understanding the noncommutative aspects of our definitions, but they apply as well to commutative rings. It is clear that by definition our notions are compatible with the classical notions of algebraic geometry, because they have been designed to have this property. Nevertheless, we would like to make such observations more explicit.

\begin{prop} \label{prop:sheafy}
    Let $A \in \bH \bRings_\Z$ be commutative. The pre-ringed space $(\SpecNC_\fine(A), \cO_{A})$ is a (homotopically) ringed space.
\end{prop}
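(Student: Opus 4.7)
The plan is to reduce the sheaf condition for $\cO_A$ to the descendable condition established in Theorem~\ref{thm:descendable_presheaf}, using crucially that in the commutative setting the derived free product and the derived tensor product agree on homotopical epimorphisms.

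First, I would recall that by Theorem~\ref{thm:descendable_presheaf} the structure pre-sheaf $\cO_A$ on $\SpecNC_\fine(A)$ is descendable: for every fine Zariski cover $\{A \to B_i\}_{i \in I}$ of a basic open $U = \SpecNC_\fine(A)$, the canonical map
\[ A \;\longrightarrow\; \R\lim N^\bullet(B_i) \]
is an equivalence in $\bH\bMod_A$, where $N^\bullet(B_i)$ denotes the Amitsur–\v{C}ech cobar construction with structure maps computed via the derived free product $\ast_A^\L$. The same holds on an arbitrary open by the gluing procedure of Remark~\ref{rmk:pre-sheaf_extension}.

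Second, I would exploit the commutativity of $A$: since each $A \to B_i$ is a homotopical epimorphism, Example~\ref{exa:homotopy_epimorphism}(iv) guarantees that every $B_i$ is again commutative, and Theorem~\ref{thm:commutative_faithful_conjecture} gives the canonical isomorphism $B_i \ast_A^\L B_j \cong B_i \otimes_A^\L B_j$. Iterating this identification on each term of the cobar complex yields
\[ B_{i_1} \ast_A^\L \cdots \ast_A^\L B_{i_n} \;\cong\; B_{i_1} \otimes_A^\L \cdots \otimes_A^\L B_{i_n}, \]
so the descendable cobar complex $N^\bullet(B_i)$ coincides with the standard \v{C}ech complex with respect to $\otimes_A^\L$. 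This is precisely the sheaf condition in derived algebraic geometry at the level of modules.

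Third, I would promote this to the level of algebras. Since $\otimes_A^\L$ is the coproduct in $\bH\bC\bRings_A$, the cobar diagram built from $\otimes_A^\L$ is naturally a cosimplicial object in the category of commutative dg-algebras, and its homotopy limit is computed termwise, so the module-level equivalence $A \simeq \R\lim N^\bullet(B_i)$ upgrades to an equivalence of commutative dg-algebras. This is the sheaf condition for $\cO_A$ as a pre-sheaf of rings. Consistency with the classical Zariski sheaf condition in the discrete commutative case follows from Lemma~\ref{lemma:fine_covers} together with Proposition~\ref{prop:compatision with Grothendieck}, which identify covers for $\SpecNC_\fine(A)$ with classical Zariski covers of $\SpecG(A)$ when $A$ is a commutative ring.

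The main obstacle — and indeed the crux of the argument — is the passage from free products to tensor products, which is precisely the content of Theorem~\ref{thm:commutative_faithful_conjecture}; once this identification is in hand, the sheaf condition is just a reformulation of the descendable condition already proved in the noncommutative generality.
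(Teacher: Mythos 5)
Your proof is correct and follows essentially the same route as the paper's: everything rests on Theorem~\ref{thm:commutative_faithful_conjecture}, which identifies $\ast_A^\L$ with $\otimes_A^\L$ for homotopical epimorphisms out of a commutative $A$ and thereby makes the descent (Amitsur--\v{C}ech) complex underlying Theorem~\ref{thm:descendable_presheaf} coincide with the sheaf-theoretic \v{C}ech complex, while the footnoted observation that all localizations of a commutative $A$ remain commutative handles the ring-structure upgrade. One small presentational slip: in Definition~\ref{defn:descendable_pre_sheaf} the cobar construction is already written with $\otimes^\L$, and it is the site-theoretic \v{C}ech complex --- whose \emph{intersections} of opens are computed by derived free products --- that needs converting, the reverse of what you wrote; since the theorem identifies the two products anyway, this does not affect the argument.
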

\begin{proof}
Theorem~\ref{cor:commutative_faithful_conjecture} implies that the Amistur-\u{C}ech complex of comonadic descent considered in Definition~\ref{defn:descendable_pre_ringed_space} agrees with the sheaf-theoretic \u{C}ech complex of $\cO_{A}$. Therefore, in this case being a descendable pre-sheaf is equivalent to being a (homotopical) sheaf\footnote{As already underlined, when $A$ is commutative all its localizations are commutative as well and concentrated in degree $0$. Therefore, $\cO_{\SpecNC_\fine(A)}(U)$ is a discrete commutative simplicial ring for all open subsets $U \subset \SpecNC(A)$ and hence the condition of homotopical sheafyness reduces to usual sheafyness.}.
\end{proof}

\begin{rmk} \label{rmk:sheafy_deg_0}
In many situations where we have that $A$ is discrete, \ie concentrated in degree $0$, and all its localizations are discrete as well (for example in the case of commutative Noetherian rings, see Example \ref{exa:homotopy_epimorphism}) Proposition \ref{prop:sheafy} implies that the space $(\SpecNC_\fine(A), \cO_{A})$ is an actual ringed space, not a homotopical one.
\end{rmk}

When $A$ is a discrete commutative ring, the structure of pre-sheaf of $\SpecNC_\fine(A)$ is compatible with that of $\SpecG(A)$.

\begin{prop} \label{prop:projection_spectra_prepringed}
Let $A$ be a discrete commutative ring.
The canonical map of topological spaces
\[ \pi_A: \SpecNC_\fine(A) \to \SpecG(A) \]
obtained in Proposition \ref{prop:compatision with Grothendieck} induces a canonical morphism of pre-ringed spaces
\[ \pi_A: (\SpecNC(A), \cO_{A}) \to (\SpecG(A), \cO_{A}) \]
that is a map of ringed spaces in the case when $A$ is Noetherian.
\end{prop}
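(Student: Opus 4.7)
The plan is to produce the missing datum --- a morphism of pre-sheaves $\pi_A^\sharp\colon \cO_{\SpecG(A)} \to (\pi_A)_* \cO_{A}$ --- and then verify that, under the Noetherian hypothesis, both sides become genuine sheaves of discrete rings so that $(\pi_A, \pi_A^\sharp)$ qualifies as a morphism of ringed spaces. First I would unwind the construction of $\pi_A$ in the proof of Proposition~\ref{prop:compatision with Grothendieck}: it is induced by the inclusion of posites $\iota$ sending a basic Zariski open $D(f) \subset \SpecG(A)$ to the classical localization $A \to A[f^{-1}]$, which by Lemma~\ref{lemma:fine_covers}(1) is a homotopical epimorphism. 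Under this identification, tautologically $\cO_{\SpecG(A)}(D(f)) = A[f^{-1}] = \cO_{A}(\iota(D(f)))$, so on the base $\{D(f)\}_f$ one can define $\pi_A^\sharp$ to be the identity; functoriality with respect to inclusions $D(fg) \subset D(f)$ follows from the universal property of localization.

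Extending $\pi_A^\sharp$ to arbitrary opens $U \subset \SpecG(A)$ then requires comparing two \v{C}ech-style diagrams. On the classical side, $\cO_{\SpecG(A)}(U)$ is the equaliser of the ordinary \v{C}ech diagram attached to a cover $U = \bigcup_i D(f_i)$. On the noncommutative side, by Remark~\ref{rmk:pre-sheaf_extension}, $\cO_{A}(\iota(U))$ is the homotopy limit of the Amitsur--\v{C}ech complex, whose pairwise terms involve $A[f_i^{-1}] \ast_A^\L A[f_j^{-1}]$ rather than $A[f_i^{-1}] \otimes_A A[f_j^{-1}]$. The hard part will be reconciling these two expressions, and this is precisely the content of Theorem~\ref{thm:commutative_faithful_conjecture}, which identifies the derived free product with the derived tensor product for commutative $A$; combined with the flatness of Zariski localizations, this collapses every higher term of the Amitsur--\v{C}ech complex into degree zero and reduces the homotopy limit to the classical equaliser, yielding a canonical isomorphism $\cO_{\SpecG(A)}(U) \cong \cO_{A}(\iota(U))$ that is natural in $U$. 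This will deliver an (in fact invertible) morphism of pre-sheaves $\pi_A^\sharp$, hence the desired morphism of pre-ringed spaces.

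For the Noetherian case I would invoke Example~\ref{exa:homotopy_epimorphism}(iv), which ensures that every homotopical epimorphism out of $A$ remains discrete and commutative. Proposition~\ref{prop:sheafy} combined with Remark~\ref{rmk:sheafy_deg_0} then upgrades $(\SpecNC_\fine(A), \cO_{A})$ from a descendable pre-ringed site to an honest ringed space of discrete commutative rings, so that $(\pi_A, \pi_A^\sharp)$ becomes a morphism of ringed spaces in the traditional sense. Throughout, the only genuinely delicate step is the reconciliation of the two \v{C}ech diagrams above; everything else amounts to routine bookkeeping at the level of posites and pre-sheaves.
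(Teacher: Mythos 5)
Your proposal is correct and follows essentially the same route as the paper: $\pi_A^\sharp$ is the identity on the base of affine opens $D(f)$ (since the posite of $\SpecG(A)$ sits inside $\bOuv(A)$ and classical localizations are homotopical epimorphisms), and the Noetherian ringed-space claim is exactly Proposition~\ref{prop:sheafy} together with Remark~\ref{rmk:sheafy_deg_0}; your reconciliation of the Amitsur--\v{C}ech complex with the classical \v{C}ech complex via Theorem~\ref{thm:commutative_faithful_conjecture} is precisely what underlies that proposition. One small caveat: your parenthetical claim that $\pi_A^\sharp$ is invertible on \emph{all} opens overreaches --- on a non-affine open $U$ the homotopy limit $\R\lim N^\bullet$ can carry higher \v{C}ech cohomology (\cf Remark~\ref{rmk:finite_unions}), whereas $\cO_{\SpecG(A)}(U)$ is only the $0$-th equaliser, so in general one only gets the canonical map $\lim \to \R\lim$; this does not affect the existence of the morphism of pre-sheaves, which is all the statement requires.
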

\begin{proof}
We remark that since $\SpecG(A)$ is a spectral space, it is enough to specify the values of a sheaf on a base of compact open subsets. Therefore, for any affine open subset $U \subset \SpecG(A)$ we have that the map $\pi_A$ induces the identity morphism
\[ \cO_{A}(U) \stackrel{\cong}{\longrightarrow} (\pi_A)_*(\cO_{A})(U) = \cO_{A}(\pi_A^{-1}(U)) \]
that is obviously a morphism of pre-sheaves. The claim in the Noetherian case is a consequence of Proposition \ref{prop:sheafy}.
\end{proof}

We can finally also sketch how a full theory of derived noncommutative schemes based on the results above looks like. The notion of descendable pre-ringed space permits to give a reasonable definition of derived noncommutative scheme.

\begin{defn} \label{defn:derived_noncommutative_scheme}
    A \emph{derived noncommutative scheme} is a descendable pre-ringed site $(X, \cO_X)$ such that there is an open cover of open sub-pre-ringed sites $X = \bigcup_{i \in I} U_i$ such that $(U_i, \cO_{U_i}) \cong (\bZar_{A_i}, \cO_{A_i})$ for some $A_i \in \bH \bRings_\Z$.
\end{defn}

\begin{rmk}
Under Conjecture~\ref{conj:structure_sheaf}, Defnition~\ref{defn:derived_noncommutative_scheme} can be upgraded using the ringed spaces $(\SpecNC(A), \cO_{A})$ associated with affine noncommutative schemes.  
\end{rmk}

We emphasise that Definition~\ref{defn:derived_noncommutative_scheme} possibly gives a good notion of noncommutative scheme, but without a better understanding of the essential image of the functor \eqref{eq:Gelfand_duality} it is not clear what the correct notion of morphism between such spaces should be. Therefore, the first step to proceed in the study of derived noncommutative scheme theory is to complete the study of the noncommutative Gelfand duality obtained above.

%\section{An application to algebraic $K$-theory}

%\input{k-theory.tex}

%\section{Future outlook?}

%\begin{itemize}
%\item menzionare che ci sono molte altre possibili applicazioni.
%\item menzionare spazi glocabi, stack e altre topologie
%\item menzionare il caso analitico.
%\item menzionare che abbiamo 
%\item menzionare il caso degli $A^\infty$-rings e chromatic homotopy theory.
%\item menzionare il lavoro di Scholze in complex analytic geometry e anche i D-moduli.
%\item menzionare che il calcolo degli spettri di anelli noncommumtativi è interessante. Forse specialmente nel caso di finia presentazione relativamente ad un campo.
%\item ...
%\end{itemize}

\end{document}